\newcommand*\tasklabelformat[1]{#1)}
\numberwithin{equation}{section}
\newtheorem*{rep@theorem}{\rep@title}
\newcommand{\newreptheorem}[2]{%
\newenvironment{rep#1}[1]{%
 \def\rep@title{#2 \ref{##1}}%
 \begin{rep@theorem}}%
 {\end{rep@theorem}}}
\theoremstyle{theorem}
\newtheorem{thm}{Theorem}[section]
\newtheorem*{thm*}{Theorem}
\theoremstyle{definition}
\newtheorem{prop}[thm]{Proposition}
\newtheorem*{prop*}{Proposition}
\newtheorem{defn}[thm]{Definition}
\newtheorem{lem}[thm]{Lemma}
\newtheorem{cor}[thm]{Corollary}
\newtheorem*{cor*}{Corollary}
\theoremstyle{remark}
\newtheorem{rem}[thm]{Remark}
\title{\vspace*{-1.5cm} Geometric quantization on big line bundles}
\author
{Siarhei Finski
}
\date{}
\newcommand{\imun} {\sqrt{-1}}
\newcommand{\vol}{v}
\newcommand{\res}{{\rm{Res}}}
\newcommand{\sym}{{\rm{Sym}}}
\newcommand{\comp}{\mathbb{C}}
\newcommand{\real}{\mathbb{R}}
\newcommand{\nat}{\mathbb{N}}
\newcommand{\enmr}[1]{\text{End}{(#1)}}
\newcommand{\ccal}{\mathscr{C}}
\newcommand{\dbar}{ \overline{\partial} }
\newcommand{\ddc}{\mathrm{d} \mathrm{d}^c}
\renewcommand{\Re}{\operatorname{Re}}
\renewcommand{\Im}{\operatorname{Im}}
\newcommand{\ban}{\operatorname{Ban}}
\newcommand{\hilb}{\operatorname{Hilb}}
\newcommand{\psh}{\operatorname{PSH}}
\DeclareFontFamily{OMX}{MnSymbolE}{}
\DeclareSymbolFont{MnLargeSymbols}{OMX}{MnSymbolE}{m}{n}
\DeclareFontShape{OMX}{MnSymbolE}{m}{n}{
    <-6>  MnSymbolE5
   <6-7>  MnSymbolE6
   <7-8>  MnSymbolE7
   <8-9>  MnSymbolE8
   <9-10> MnSymbolE9
  <10-12> MnSymbolE10
  <12->   MnSymbolE12
}{}
\DeclareFontShape{OMX}{MnSymbolE}{b}{n}{
    <-6>  MnSymbolE-Bold5
   <6-7>  MnSymbolE-Bold6
   <7-8>  MnSymbolE-Bold7
   <8-9>  MnSymbolE-Bold8
   <9-10> MnSymbolE-Bold9
  <10-12> MnSymbolE-Bold10
  <12->   MnSymbolE-Bold12
}{}
\let\llangle\@undefined
\let\rrangle\@undefined
\DeclareMathDelimiter{\llangle}{\mathopen}%
                     {MnLargeSymbols}{'164}{MnLargeSymbols}{'164}
\DeclareMathDelimiter{\rrangle}{\mathclose}%
                     {MnLargeSymbols}{'171}{MnLargeSymbols}{'171}
\newenvironment{sciabstract}{}
\begin{document}

\maketitle 

\vspace*{-0.7cm}

\vspace*{0.3cm}

\begin{sciabstract}
  \textbf{Abstract.} 
	We extend several geometric quantization results to the setting of big line bundles. 
	More precisely, we prove the asymptotic isometry property for the map that associates to a metric on a big line bundle the corresponding sup-norms on the spaces of holomorphic sections of its tensor powers.
	Building on this, we show that submultiplicative norms on section rings of big line bundles are asymptotically equivalent to sup-norms.
	As an application, we show that any bounded submultiplicative filtration on the section ring of a big line bundle naturally gives rise to a Mabuchi geodesic ray, and the speed of this ray encodes the statistical invariants of the filtration.
\end{sciabstract}

\pagestyle{fancy}
\lhead{}
\chead{Geometric quantization on big line bundles}
\rhead{\thepage}
\cfoot{}


\newcommand{\Addresses}{{
  \bigskip
  \footnotesize
  \noindent \textsc{Siarhei Finski, CNRS-CMLS, École Polytechnique F-91128 Palaiseau Cedex, France.}\par\nopagebreak
  \noindent  \textit{E-mail }: \texttt{finski.siarhei@gmail.com}.
}} 

\vspace*{0.25cm}

\par\noindent\rule{1.25em}{0.4pt} \textbf{Table of contents} \hrulefill

\vspace*{-1.5cm}

\tableofcontents

\vspace*{-0.2cm}

\noindent \hrulefill


\section{Introduction}\label{sect_intro}
	The main goal of this paper is to show that several geometric quantization results, which were previously established in the ample case, extend to the setting of big line bundles.
	For this, we fix a complex projective manifold $X$, $\dim_{\comp} X = n$, and a \textit{big line bundle} $L$ over $X$, i.e., so that for 
	\begin{equation}
		n_k := \dim H^0(X, L^{\otimes k}),
	\end{equation}
	for some $c > 0$, we have $n_k \geq c \cdot k^n$, for sufficiently large $k \in \nat$.
	\par 
	The results of this paper are closely connected to \emph{Mabuchi geometry}, which consists of a family of the so-called Mabuchi-Darvas distances $d_p$, $p \in [1, +\infty[$, defined on the space of metrics on $L$ whose potentials are plurisubharmonic (psh) with minimal singularities. 
	We refer the reader to Section \ref{sect_min_sing} for the definition of potentials with minimal singularities, and to Section \ref{sect_mab_big} for the definition of $d_p$.
	Here, we only mention that the resulting metric space is geodesically complete.
	In the ample case, this theory was established through the works of Mabuchi \cite{Mabuchi}, Chen \cite{ChenGeodMab}, Darvas \cite{DarvWeakGeod}, and others; the extension to the big setting is a more recent development, culminated in the work of Gupta \cite{GuptaPrakhar} and building on the contributions of Di Nezza-Lu \cite{DiNezzaLuBigNef}, Xia \cite{XiaBig}, Darvas-Di Nezza-Lu \cite{DDLL1}, Trusiani \cite{TrusianiL1}.
	\par 
	Now, for any bounded metric $h^L$ on $L$ and any $k \in \nat$, one can define the associated sup-norm on $H^0(X, L^{\otimes k})$, which we denote by $\ban^{\infty}_k(h^L)$; here, $\ban$ stands for Banach, following the established convention of denoting the $L^2$-norm associated with $h^L$ and a measure $\mu$ by $\mathrm{Hilb}_k(h^L, \mu)$, where $\mathrm{Hilb}$ stands for Hilbert, see the Notation paragraph below for the formal definition.
	\par 
	The space of Hermitian norms on a given finitely dimensional vector space $V$ admits natural distances $d_p$, $p \in [1, +\infty[$.
	These distances can be extended to quasi-distances on the space of all norms on $V$ that we shall denote by the same letter $d_p$, see Section \ref{sect_quas_metr}.
	\par 
	In \cite{TianBerg}, Tian demonstrated that the space of smooth positive metrics on $L$ can be approximated, via the $\mathrm{Hilb}_k$-maps, by the spaces of norms on $H^0(X, L^{\otimes k})$ as $k \to \infty$. 
	As both of these spaces possess natural metrics, it is essential to examine how these metrics are related through the aforementioned approximation.
	\par 
	For ample line bundles, Phong-Sturm in \cite{PhongSturm} proved that one can construct the Mabuchi geodesics between two smooth positive metrics through geometric quantization, i.e., from the geodesics between the respective $\mathrm{Hilb}$-images.
	Chen-Sun \cite{ChenSunQuant} and Berndtsson \cite{BerndtProb} proved that a similar relation holds for Mabuchi-Darvas distances.
	One of the main goals of this paper is to show that versions of these results continue to hold without the ampleness assumption.
	\par 
	To state our first result, for a given continuous metric $h^L$ on $L$, we denote by $P(h^L)$ the associated envelope, defined as $P(h^L) := h^L \cdot \exp(- 2 V_{\theta})$, where $\theta := c_1(L, h^L)$, and
	\begin{equation}\label{eq_v_theta}
		V_{\theta} := \sup \big\{ \psi \in \psh(X, \theta) : \psi \leq 0 \big\}.
	\end{equation}	 
	Above, $\psh(X, \theta)$ is the space of $\theta$-psh functions on $X$, see the Notation paragraph below for the formal definition.
	As we shall recall in Section \ref{sect_fs_oper}, $V_{\theta} \in \psh(X, \theta)$, and $V_{\theta}$ has minimal singularities.
	This allows us to state our first result.
	\begin{thm}\label{thm_isom}
		For any continuous metrics $h^L_0$, $h^L_1$ on $L$, and any $p \in [1, +\infty[$, we have
		\begin{equation}\label{eq_thm_isom}
			d_p(P(h^L_0), P(h^L_1))
			=
			\lim_{k \to \infty}
			\frac{d_p(\ban^{\infty}_k(h^L_0), \ban^{\infty}_k(h^L_1))}{k}.
		\end{equation}
	\end{thm}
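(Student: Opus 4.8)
The plan is to establish \eqref{eq_thm_isom} by proving separately the two inequalities between $d_p(P(h^L_0), P(h^L_1))$ and, respectively, $\liminf_k$ and $\limsup_k$ of the quotient on the right; together these identify the limit and yield its existence. Both inequalities will be produced by the two quantization maps: the Fubini--Study map $\mathrm{FS}_k$, sending a norm on $H^0(X, L^{\otimes k})$ to a positively curved metric on $L^{\otimes k}$, whose rescaling $\frac{1}{k}\mathrm{FS}_k$ is non-expanding for $d_p$; and, conversely, the sup-norm map $h^L \mapsto \ban^\infty_k(h^L)$, whose interaction with $d_p$-lengths will be controlled via Bergman-kernel equidistribution. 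I will use two elementary facts. First, $\ban^\infty_k(h^L) = \ban^\infty_k(P(h^L))$ for every $k$: since $P(h^L) \leq h^L$ this holds in one direction, and in the other it follows from the observation that $\frac{1}{2k}\log |s|^2_{(h^L)^{\otimes k}}$ is $\theta$-psh for $s \in H^0(X, L^{\otimes k})$, so each of its constant upper translates that is $\leq 0$ lies below $V_\theta$. Second, for a continuous $h^L$ the sup-norms are asymptotically $d_p$-equivalent to the $L^2$-norms $\hilb_k(h^L, \mu)$, $\mu$ a fixed smooth probability measure: one has $\hilb_k(h^L, \mu) \leq \ban^\infty_k(h^L) \leq c_k \cdot \hilb_k(h^L, \mu)$ with $\frac{1}{k} \log c_k \to 0$, the second inequality being a sub-mean-value estimate in which the oscillation of the weight of $h^L$ over a ball of radius $k^{-1}$ produces the factor $c_k$ and tends to $0$ by uniform continuity.

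For the lower bound I apply the contraction property of $\frac{1}{k}\mathrm{FS}_k$ to the pair of norms $\ban^\infty_k(h^L_0), \ban^\infty_k(h^L_1)$:
\begin{equation*}
	d_p\Big( \tfrac{1}{k}\mathrm{FS}_k(\ban^\infty_k(h^L_0)),\ \tfrac{1}{k}\mathrm{FS}_k(\ban^\infty_k(h^L_1)) \Big) \ \leq \ \frac{d_p(\ban^\infty_k(h^L_0), \ban^\infty_k(h^L_1))}{k} .
\end{equation*}
The analytic input here is the big-case Bergman-kernel/equilibrium asymptotics: $\frac{1}{k}\mathrm{FS}_k(\ban^\infty_k(h^L)) \to P(h^L)$, with convergence in a topology along which $d_p$ is lower semicontinuous (convergence in $d_p$, or in energy, suffices). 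Then $\liminf_k$ of the left-hand side above is at least $d_p(P(h^L_0), P(h^L_1))$, which gives the desired inequality. Replacing $\ban^\infty_k$ by $\hilb_k(\cdot, \mu)$ throughout, which is legitimate by the $d_p$-equivalence above, yields the variant closer to Phong--Sturm, Chen--Sun and Berndtsson.

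For the upper bound I quantize the Mabuchi geodesic $(u_t)_{t \in [0,1]}$ joining $P(h^L_0)$ to $P(h^L_1)$ — which exists by the results recalled in Section~\ref{sect_intro} — via the sup-norm, forming $N^{(k)}_t := \ban^\infty_k(u_t)$. The virtue of the sup-norm is that, by the envelope-invariance above, the endpoints are \emph{exactly} the required norms, $N^{(k)}_0 = \ban^\infty_k(h^L_0)$ and $N^{(k)}_1 = \ban^\infty_k(h^L_1)$, so that $\frac{1}{k}d_p(\ban^\infty_k(h^L_0), \ban^\infty_k(h^L_1)) \leq \frac{1}{k}\,\mathrm{length}_{d_p}\big(N^{(k)}_\bullet\big)$ with no boundary error to control. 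It then remains to show that the right-hand side converges to the Finsler length of $(u_t)$. The logarithmic $t$-derivative of $N^{(k)}_t(s)$ equals $-k\,\dot u_t\big(x^{(k)}_t(s)\big)$, where $x^{(k)}_t(s)$ is the point of $X$ at which $|s|^2_{u_t^{\otimes k}}$ is maximal; evaluating this along a basis of $H^0(X, L^{\otimes k})$ adapted to the relative spectrum, and using that the extremal points $x^{(k)}_t(\cdot)$ equidistribute, as $k \to \infty$, towards the normalized non-pluripolar Monge--Ampère measure $\langle (\theta_0 + \ddc u_t)^n \rangle / \mathrm{vol}(L)$, uniformly in $t$, one gets that the $d_p$-speed of $N^{(k)}_\bullet$ at time $t$ is $k\big( \frac{1}{\mathrm{vol}(L)}\int_X |\dot u_t|^p \langle(\theta_0 + \ddc u_t)^n\rangle\big)^{1/p} + o(k)$, i.e.\ $k$ times the $d_p$-Finsler speed of the geodesic. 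Integrating in $t$, and using the constancy in $t$ of $\int_X |\dot u_t|^p \langle(\theta_0 + \ddc u_t)^n\rangle$ along geodesics, yields $\limsup_k \frac{1}{k}d_p(\ban^\infty_k(h^L_0), \ban^\infty_k(h^L_1)) \leq d_p(P(h^L_0), P(h^L_1))$, which combined with the lower bound proves \eqref{eq_thm_isom}.

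The principal obstacle is that both analytic inputs — the convergence $\frac{1}{k}\mathrm{FS}_k(\ban^\infty_k(h^L)) \to P(h^L)$ and the equidistribution of the extremal points of $\ban^\infty_k(u_t)$ towards $\langle(\theta_0 + \ddc u_t)^n\rangle$ — are governed by partial Bergman density asymptotics that degenerate along the augmented base locus $\mathbf{B}_+(L)$, and their passage from the ample to the big setting is the real content of the theorem; I would handle this either by passing to a Fujita approximation (a modification $\pi : X' \to X$ with $\pi^* L^{\otimes k} = A \otimes E$, $A$ ample, $E$ effective and $\mathrm{vol}(A)/k^n \to \mathrm{vol}(L)$) and tracking the resulting errors, or by working directly with envelopes with minimal singularities and the non-pluripolar Monge--Ampère operator, together with their continuity and monotone-convergence properties. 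Secondary technical points are the existence and sufficient regularity of the geodesic $(u_t)$, the $d_p$-contraction of $\frac{1}{k}\mathrm{FS}_k$ (and, in the $\hilb$-variant, of $\hilb_k$) in the presence of minimal singularities, and the constancy of the Finsler energy along geodesics — all within the scope of the recent work of Di Nezza--Lu, Xia, Darvas--Di Nezza--Lu, Trusiani and Gupta cited above.
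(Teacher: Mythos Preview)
Your two-inequality plan is the natural shape of such a proof in the ample setting, but in the big case both halves rest on analytic inputs that are not available in the form you need, and the paper in fact proceeds by a completely different route.

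For the lower bound, the inequality
\[
d_p\Big(\tfrac{1}{k}\mathrm{FS}_k(\ban^\infty_k(h^L_0)),\,\tfrac{1}{k}\mathrm{FS}_k(\ban^\infty_k(h^L_1))\Big)\ \leq\ \tfrac{1}{k}d_p\big(\ban^\infty_k(h^L_0),\ban^\infty_k(h^L_1)\big)
\]
is not even well-posed as written: the metrics $\mathrm{FS}_k(\ban^\infty_k(h^L_i))^{1/k}$ have algebraic singularities along the base locus of $L^{\otimes k}$ and in general do \emph{not} have minimal singularities, so Gupta's $d_p$ is not defined between them. You could try to work in the relative class $\psh(X,\theta,\psi_k)$ for a suitable $\psi_k$, but then you need both the contraction property of $\mathrm{FS}_k$ in that relative setting and the lower semicontinuity of $d_p$ under the passage $\psi_k\nearrow V_\theta$; neither is available off the shelf, and producing them amounts to redoing a large part of the theory. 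The convergence $\tfrac{1}{k}\mathrm{FS}_k(\ban^\infty_k(h^L))\to P(h^L)$ you invoke is only uniform on compacta away from $\mathbb{B}_+(L)$ (Theorem~\ref{thm_conv_fs}), which does not by itself give lower semicontinuity of $d_p$.

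For the upper bound, the length argument has two separate problems. First, the $d_p$ between general (non-Hermitian) norms are only quasi-distances (see \eqref{eq_gen_triangle}), so ``length of a path bounds distance between endpoints'' fails a priori; replacing by John ellipsoids costs $\log n_k$ per step and a continuous path contains infinitely many steps. Second, and more seriously, your speed computation relies on equidistribution of the extremal points of $\ban^\infty_k(u_t)$ towards $\langle(\theta+\ddc u_t)^n\rangle$, uniformly in $t$, for a geodesic $u_t$ that is merely of minimal singularities and need not be continuous. No such result is known in the big case; the partial Bergman asymptotics you mention degenerate along $\mathbb{B}_+(L)$ and the ``uniformly in $t$'' is exactly where the difficulty concentrates. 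Your own remark that one should ``pass to a Fujita approximation'' is correct in spirit, but that is not a patch to the present argument---it is the argument, and it has to be carried out precisely.

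The paper does exactly this, but on the norm side rather than the metric side, thereby avoiding both issues entirely. Fixing $\epsilon>0$, one chooses a strictly $\theta$-psh $\psi$ with neat algebraic singularities approximating $d_p(P(h^L_0),P(h^L_1))$ to within $\epsilon$ (Lemma~\ref{lem_choice_psi_cl}), resolves its singularities on a birational model $\hat X$, and perturbs by a small exceptional divisor to obtain an ample $\mathbb{Q}$-line bundle $A$ on $\hat X$ with $\mathrm{vol}(A)\geq\mathrm{vol}(L)-\epsilon$ and natural inclusions $\iota_k:H^0(\hat X,A^{\otimes k})\hookrightarrow H^0(X,L^{\otimes k})$. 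A linear-algebra comparison (Lemma~\ref{lem_comp_rest}) shows that the restriction $\iota_k^*$ changes $d_p$ between norms by $O(\epsilon)\cdot k$; Proposition~\ref{prop_pull_back} identifies $\iota_k^*\ban^\infty_k(h^L_i)$ as genuine sup-norms on the ample line bundle $A$; and one then invokes the known ample case (Proposition~\ref{thm_isom_ample_ban}) directly, with the correspondence \eqref{eq_metr_pot_dist_corresp} matching the resulting distance to $d_p(P_\theta[\psi](\phi_0)^*,P_\theta[\psi](\phi_1)^*)$. A final argument using multiplication by a fixed section and Bernstein--Markov passes from multiplicative subsequences to all $k$. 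At no point does one need a big-case $\mathrm{FS}$-contraction, equidistribution along a geodesic, or a length estimate for a path of sup-norms.
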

	\begin{rem}\label{rem_thm_isom}
		a) For $p = 1$, Theorem \ref{thm_isom} is related with the previous work of Berman-Boucksom \cite{BermanBouckBalls}.
		To explain this, we denote by $B_{k, i} \subset H^0(X, L^{\otimes k})$, $i = 1, 2$, the unit balls of the norms $\ban^{\infty}_k(h^L_i)$.
		For every $k \in \nat$, we fix a Hermitian norm $H_k$ on $H^0(X, L^{\otimes k})$, which allows us to calculate the volumes $\vol(\cdot)$ of measurable subsets in $H^0(X, L^{\otimes k})$. 
		Note that while such volumes depend on the choice of $H_k$, their ratio does not.
		Then, as we shall see in Section \ref{sect_quant_mab}, when $p = 1$, (\ref{eq_thm_isom}) implies
		\begin{equation}\label{eq_rem_thm_isom}
			d_1(P(h^L_0), P(h^L_1))
			=
			\lim_{k \to \infty}
			\frac{\log(\vol(B_{k, 0})) + \log(\vol(B_{k, 1})) - 2 \log(\vol(B_{k, 0} \cap B_{k, 1}))}{k \cdot n_k}.
		\end{equation}
		Recall that for two given metrics $h^L_0$, $h^L_1$ on $L$ with potentials of minimal singularities, one can define the difference between their Monge-Ampère energies as 
		\begin{equation}
			\mathscr{E}(h^L_1) - \mathscr{E}(h^L_0)
			=
			\frac{1}{2(n + 1) \cdot {\rm{vol}}(L)} \sum_{i = 0}^{n} \int \log(h^L_0 / h^L_1) \cdot c_1(L, h^L_0)^i \wedge c_1(L, h^L_0)^{n - i},
		\end{equation}
		where the volume of a line bundle $L$ is defined as
		\begin{equation}\label{defn_vol}
			{\rm{vol}}(L) := \limsup_{k \to \infty} \frac{n!}{k^n} n_k,
		\end{equation}
		and the intersection products above are to be interpreted in the sense of the non-pluripolar product, see \cite{BEGZ}.
		Note that Fujita's theorem \cite{Fujita} states that the $\limsup$ in (\ref{defn_vol}) is actually a limit.
		\par 
		Recall, cf. \cite[Theorem 5.7]{GuptaPrakhar}, that for any metrics $h^L_0$, $h^L_1$ on $L$ with psh potentials with minimal singularities, verifying $h^L_0 \geq h^L_1$, the following relation holds
		\begin{equation}
			d_1(h^L_0, h^L_1)
			=
			\mathscr{E}(h^L_1) - \mathscr{E}(h^L_0).
		\end{equation}
		Due to the above fact, we see that for any continuous metrics $h^L_0, h^L_1$ on $L$ verifying $h^L_0 \geq h^L_1$, (\ref{eq_rem_thm_isom}) recovers the formula 
		\begin{equation}\label{rem_thm_isom2}
			\mathscr{E}(P(h^L_1)) - \mathscr{E}(P(h^L_0))
			=
			\lim_{k \to \infty}
			\frac{\log(\vol(B_{k, 1})) - \log(\vol(B_{k, 0}))}{k \cdot n_k}.
		\end{equation}
		We note that even if $h^L_0$ and $h^L_1$ are not ordered as above, (\ref{rem_thm_isom2}) still holds.
		Indeed, replacing $h^L_0$ by $C \cdot h^L_0$ for a sufficiently large $C > 0$, one reduces to the ordered case, as both the volume and the energy behave predictably under such a scaling.
		The formula (\ref{rem_thm_isom2}) is precisely the main result of \cite{BermanBouckBalls}.
		Our work therefore provides an alternative proof of \cite{BermanBouckBalls} and extends its main theorem from the $d_1$-distance to $d_p$, for any $p \in [1, +\infty[$.
		\par
		\begin{sloppypar} 
		b)
		By the Bernstein-Markov property, cf. Proposition \ref{prop_bm_volume}, the norms $\ban^{\infty}_k(h^L_0)$ and $\ban^{\infty}_k(h^L_1)$ on the right-hand side of (\ref{eq_thm_isom}) can just as well be replaced by $\hilb_k(h^L_0, dV_X)$ and $\hilb_k(h^L_1, dV_X)$ for an arbitrary volume form $dV_X$, see Section \ref{sect_ample_case} for details. 
		In the ample case, for positive and smooth $h^L_0$ and $h^L_1$, the above version of Theorem \ref{thm_isom} was established by Chen-Sun \cite{ChenSunQuant} for $p = 2$ and Berndtsson \cite{BerndtProb} for $p \in [1, +\infty[$.
		The regularity assumptions on $h^L_0$ and $h^L_1$ were further relaxed by Darvas-Lu-Rubinstein \cite{DarvLuRub}.
		\par 
		Moreover, the analogues of (\ref{rem_thm_isom2}) for $L^2$-norms on $H^0(X, L^{\otimes k} \otimes K_X)$ were established by Berman-Freixas \cite{BerFreix} for ample $L$ and metrics $h^L_i$ with psh potentials in the finite energy space, and more recently by Hou \cite{YuChiHou} for big and carrying a semi-positive metric $L$ and metrics $h^L_i$ with bounded psh potentials.
		\par 
		We choose to formulate our results in terms of sup-norms rather than $L^2$-norms, since this is the form required for later applications.
		Moreover, sup-norms are often more convenient to work with, as they behave better under the formation of psh envelopes, see (\ref{eq_max_prin}), under restriction to subvarieties (even in the singular setting), see Theorem \ref{thm_bost_ext}, and they are submultiplicative.
		\end{sloppypar} 
	\end{rem}
	\par 
	In \cite[Remark 1.4c)]{FinNarSim}, the author raised the question of whether the theory of submultiplicative norms developed in \cite{FinNarSim} for ample line bundles extends to the big case. 
	By relying on Theorem \ref{thm_isom}, we answer this question affirmatively below. 
	To state our result, we define the \textit{section ring}
	\begin{equation}
		R(X, L) := \oplus_{k = 0}^{\infty} H^0(X, L^{\otimes k}).
	\end{equation}
	A graded norm $N = (N_k)_{k = 0}^{\infty}$, $N_k := \| \cdot \|_k$, over $R(X, L)$ is called \textit{submultiplicative} if for any $k, l \in \nat^*$, $f \in H^0(X, L^{\otimes k})$, $g \in H^0(X, L^{\otimes l})$, we have
	\begin{equation}\label{eq_subm_s_ring}
		\| f \cdot g \|_{k + l} \leq 
		\| f \|_k \cdot
		\| g \|_l.
	\end{equation}
	\par 
	\begin{sloppypar}
	As a basic example, any bounded metric $h^L$ on $L$ induces the sup-norm $\ban^{\infty}(h^L) := (\ban^{\infty}_k(h^L))_{k = 0}^{\infty}$, which is clearly submultiplicative.
	We show below that, assuming a suitable boundedness condition on $N$, sup-norms are, up to a natural equivalence relation, the only submultiplicative norms.
The relevant equivalence relation is the following: two graded norms $N = (N_k)_{k = 0}^{\infty}$ and $N' = (N_k')_{k = 0}^{\infty}$ are $p$-\textit{equivalent} ($N \sim_p N'$), $p \in [1, +\infty[$, if
 	\begin{equation}\label{defn_equiv_relp}
		\frac{1}{k} d_p(N_k, N_k') \to 0, \qquad \text{as } k \to \infty.
	\end{equation}
	We say that a graded norm $N$ on $R(X, L)$ is \textit{bounded} if
	\begin{equation}\label{eq_bound_metr}
		\ban^{\infty}_k(h^L_0) \leq N_k \leq \ban^{\infty}_k(h^L_1),
	\end{equation}
	for certain bounded metrics $h^L_0$, $h^L_1$ on $L$.
	We underline that in the ample case, our definition of the bounded norm in \cite{FinNarSim} did not require the upper bound from (\ref{eq_bound_metr}).
	This is due to the fact that the upper bound in the ample case follows from the submultiplicativity assumption and the fact that $R(X, L)$ is finitely generated, see \cite[(3.16)]{FinNarSim}.
	As $R(X, L)$ is not finitely generated for arbitrary big line bundles, one can easily see that the upper bound of (\ref{eq_bound_metr}) in the big setting doesn't follow from  the submultiplicativity, see the end of Section \ref{sect_sm_sing} for details.
	\end{sloppypar}
	\par 
	To explain the construction of the metric corresponding to a submultiplicative norm, recall that any norm $N_k$ on $H^0(X, L^{\otimes k})$ induces the Fubini-Study metric $FS(N_k)$ on $L^{\otimes k}$, see Section \ref{sect_fs_oper}.
	For $L$ ample, the resulting metric is continuous, but for big $L$ it might have singularities.
	Nevertheless, for any bounded submultiplicative norm $N$, the sequence of metrics $FS(N_k)^{\frac{1}{k}}$ converges pointwise, as $k \to \infty$, to a metric on $L$, which we denote by $FS(N)$, cf. Lemma \ref{lem_fs_sm}. 
	In Section \ref{sect_fs_oper}, we show that the lower semicontinuous regularization $FS(N)_*$ of $FS(N)$ has a psh potential with minimal singularities.
	While we initially defined the sup-norms solely for bounded metrics, the definition makes sense for metrics with potentials of minimal singularities, as we explain in Section \ref{sect_fs_oper}.
	This allows us to state our second result.
	\begin{thm}\label{thm_char}
		Assume that a graded norm $N = (N_k)_{k = 0}^{\infty}$ over the section ring $R(X, L)$ of a big line bundle $L$ is submultiplicative and bounded.
		Then for any $p \in [1, + \infty[$, we have
		\begin{equation}\label{eq_char}
			N \sim_p \ban^{\infty}(FS(N)_*).
		\end{equation}
	\end{thm}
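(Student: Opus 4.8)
Write $h := FS(N)_*$; by Lemma~\ref{lem_fs_sm} and Section~\ref{sect_fs_oper}, the metrics $FS(N_k)^{1/k}$ converge to $FS(N)$, and $h$ has a psh potential with minimal singularities, so every quantity in (\ref{eq_char}) is defined. First I would establish the lower estimate $\ban^\infty_k(h) \leq N_k$ for all $k$, with \emph{no} loss. Indeed, the defining property of the Fubini--Study operator gives $\ban^\infty_k(FS(N_k)^{1/k}) \leq N_k$, while the submultiplicativity (\ref{eq_subm_s_ring}) translates into the submultiplicativity $FS(N_{a+b}) \leq FS(N_a) \otimes FS(N_b)$ of the metrics, so that Fekete's lemma yields $FS(N_k)^{1/k} \geq FS(N) \geq FS(N)_* = h$; combining these gives $\ban^\infty_k(h) \leq N_k$. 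Thus the relative jumping numbers $\lambda_i := \lambda_i(N_k, \ban^\infty_k(h))$ are $\geq 0$, and the same computation together with the boundedness (\ref{eq_bound_metr}) (which also yields $h^L_0 \leq h$) forces $\lambda_i \leq C k$ for a fixed $C$. It therefore suffices to show that, for every $\epsilon > 0$, only an $\epsilon$-proportion of the $\lambda_i$ exceed $\epsilon k$, since then $\frac{1}{k^p}d_p(N_k, \ban^\infty_k(h))^p = \frac{1}{k^p n_k}\sum_i \lambda_i^p \to 0$ for all $p \in [1, +\infty[$.

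\textbf{The upper estimate.}
Fix a level $m$ and let $W_m \subseteq H^0(X, L^{\otimes k})$ be the image of the multiplication maps from $\sym^{j}H^0(X, L^{\otimes m})$, $j = \lfloor k/m\rfloor$ (up to a bounded correction accounting for the remainder $k - mj$). On $W_m$, submultiplicativity bounds $N_k$ by the projective-tensor push-forward of $N_m$; a finite-level Bergman--Bernstein--Markov comparison (Proposition~\ref{prop_bm_volume}) replaces $N_m$ by $\mathrm{poly}(n_m) \cdot \ban^\infty_m(FS(N_m)^{1/m})$, and a Chebyshev-transform estimate for the multiplication maps turns the resulting tensor norm into a genuine sup-norm up to a subexponential factor, so that on $W_m$
\[
	N_k \;\leq\; \mathrm{poly}(n_m)^{k/m}\, e^{o(k)}\, \ban^\infty_k(FS(N_m)^{1/m}).
\]
Since $FS(N_m)^{1/m} \geq FS(N) \geq h$, this shows that for the indices attached to $W_m$ one has $\lambda_i \leq \big(\tfrac{\log \mathrm{poly}(n_m)}{m} + o(1)\big)k + \lambda_i\big(\ban^\infty_k(FS(N_m)^{1/m}), \ban^\infty_k(h)\big)$; and by Theorem~\ref{thm_isom} — applied to continuous approximations of $FS(N_m)^{1/m}$ and $h$, using the monotonicity of $d_p$, or to its extension to metrics with minimal singularities — the jumping numbers of the pair $\big(\ban^\infty_k(FS(N_m)^{1/m}), \ban^\infty_k(h)\big)$ have $\ell^p$-average asymptotic to $k \cdot d_p(FS(N_m)^{1/m}, h)$, which tends to $0$ as $m \to \infty$.

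\textbf{Passing to the limit.}
On the complement of $W_m$ one invokes only the a priori bound $\lambda_i \leq Ck$; what makes this harmless is that, $L$ being big, Fujita's theorem \cite{Fujita} forces $\dim\big(H^0(X, L^{\otimes k})/W_m\big) \leq c_m\, n_k$ with $c_m \to 0$ as $m \to \infty$. Collecting the contributions of $W_m$ and of its complement gives, for a constant $C_p$ depending only on $p$,
\[
	\limsup_{k \to \infty} \frac1k d_p(N_k, \ban^\infty_k(h)) \;\leq\; C_p \Big( \frac{\log \mathrm{poly}(n_m)}{m} + d_p(FS(N_m)^{1/m}, h) + c_m^{1/p} \Big),
\]
and since $n_m = O(m^n)$, $FS(N_m)^{1/m} \to FS(N)$ with $d_p(FS(N), h) = 0$, and $c_m \to 0$, letting $m \to \infty$ gives $\limsup_k \frac1k d_p(N_k, \ban^\infty_k(h)) = 0$, i.e. (\ref{eq_char}).

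\textbf{Main obstacle.}
The crux is the upper estimate and its big-variety features: because $R(X, L)$ is not finitely generated there is no level at which the multiplication maps are surjective, so one must control at the same time (i) the Chebyshev defect of sup-norms along the multiplication maps and (ii) the cokernels of those maps, the latter being precisely where the upper bound in the boundedness hypothesis (\ref{eq_bound_metr}) — unnecessary in the ample case of \cite{FinNarSim} — is used. Theorem~\ref{thm_isom} is the key device that upgrades the otherwise merely \emph{bounded} comparison of $\ban^\infty_k(FS(N_m)^{1/m})$ with $\ban^\infty_k(h)$ to a quantitative one that disappears as $m \to \infty$.
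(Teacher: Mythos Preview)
Your outline has the right skeleton (lower bound for free, upper bound on the image $W_m$ of multiplication maps, control of the complement via Lazarsfeld--Musta\c{t}\u{a}/Fujita, then let $m\to\infty$), and this is indeed the paper's strategy. But two of the steps you treat as black boxes are precisely where the work lies, and as written they do not go through.

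First, the ``Chebyshev-transform estimate'' that converts the projective tensor push-forward of $N_m$ into a sup-norm on $W_m$ up to a subexponential factor is not a standard input; it is the content of the paper's Theorem~\ref{thm_char2} (the extension of the ample characterization to \emph{singular} varieties). The point is that $W_m\cong H^0(Y_m,A_m^{\otimes k})$ for the singular image $Y_m$ of the Kodaira map, and on $R(Y_m,A_m)$ one needs a replacement for the Ohsawa--Takegoshi input used in \cite{FinNarSim}; the paper uses the Bost--Randriambololona sup-norm extension theorem (Theorem~\ref{thm_bost_ext}) together with Theorem~\ref{thm_sym_equiv}. Without this, the passage from $[N_m\otimes_\pi\cdots\otimes_\pi N_m]$ to $\ban^\infty_k(FS(N_m))$ on $W_m$ is unproved.

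Second, and more seriously, your invocation of Theorem~\ref{thm_isom} to show that the relative spectrum of $\big(\ban^\infty_k(FS(N_m)^{1/m}),\,\ban^\infty_k(h)\big)$ is asymptotically $k\cdot d_p(FS(N_m)^{1/m},h)$ is not legitimate: Theorem~\ref{thm_isom} is stated for \emph{continuous} metrics, while $FS(N_m)^{1/m}$ has algebraic (not minimal) singularities and $h=FS(N)_*$ need not be continuous. The quantity $d_p(FS(N_m)^{1/m},h)$ is not even defined in the Mabuchi framework, since one side lacks minimal singularities. The paper resolves this by (i) proving that Fubini--Study metrics are \emph{regularizable from above} (Proposition~\ref{prop_fs_regul_blw}), (ii) establishing a version of the isometry theorem for such metrics (Theorem~\ref{thm_isom_reg}), and (iii) making the comparison after restriction to $W_m$, where on the birational model $\hat X_m$ the relevant line bundle $F_m$ is big and the metrics in play do have minimal singularities (cf.\ Lemma~\ref{lem_incr} and (\ref{eq_mab_appl_isom_ress01aaeee})). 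Your parenthetical ``continuous approximations\ldots or its extension to metrics with minimal singularities'' glosses over exactly this; a monotone approximation argument alone does not produce the needed two-sided convergence to zero.
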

	\begin{rem}\label{rem_char}
		a) If a submultiplicative norm is replaced by a submultiplicative filtration, Rees \cite{ReesValI, ReesValII} and Boucksom-Jonsson \cite{BouckJohn21} proved the analogous result in the ample setting. 
		Reboulet \cite{Reboulet21} further extended this theory to arbitrary non-Archimedean norms.
		\par 
		b) For ample $L$, Theorem \ref{thm_char} was established by the author in \cite{FinNarSim}.
		In Theorem \ref{thm_char2}, we present a stronger version of Theorem \ref{thm_char} for the case where $L$ is ample on a singular variety $X$, thereby extending the result of \cite{FinNarSim}, which was proven only for the non-singular case.
	\end{rem}
	\par 
	It is natural to ask if $FS(N)_*$ is the unique metric which can be put on the right-hand side of (\ref{eq_char}).
	While, as in the case of ample line bundles, the statement does not hold when one allows arbitrary metrics on the line bundle, we shall explain in Section \ref{sect_reg_metr} that it becomes valid if we restrict our attention to the so-called regularizable from above psh metrics.
	Moreover, Proposition \ref{prop_fs_regul_blw} implies that $FS(N)_*$ is a regularizable from above psh metric.
	\par 
	We now describe two applications of Theorems \ref{thm_isom} and \ref{thm_char}. 
	Before doing so, we note that both applications rely on the uniqueness of metric geodesics associated with $d_p$, $p \in ]1, +\infty[$, which in the ample case is due to Darvas-Lu \cite{DarLuGeod}, and in the big setting to Gupta \cite{GuptaPrakhar}. 
	For the $d_1$-distance, the geodesics are not unique; consequently, the $d_1$-versions of Theorem \ref{thm_isom} mentioned in Remark \ref{rem_thm_isom} do not appear to imply our corollaries.
	\par 
	The first application concerns the relation between the geodesics in the space of metrics and norms.
	More specifically, we fix two continuous metrics $h^{L, 0}_0$, $h^{L, 0}_1$ on $L$.
	We denote by $h^L_t$ the Mabuchi geodesic between the associated envelopes $P(h^{L, 0}_0)$, $P(h^{L, 0}_1)$, which are defined as in (\ref{eq_v_theta}).
	For any $k \in \nat$, $t \in [0, 1]$, we consider the complex interpolation $N_{k, t}$ between $\ban^{\infty}(h^{L, 0}_0)$ and $\ban^{\infty}(h^{L, 0}_1)$, see (\ref{eq_defn_cx_inter}) for the definition of complex interpolation.
	\begin{thm}\label{cor_appr_geod}
		For any $t \in [0, 1]$, the sequence of metrics $FS(N_{k, t})^{\frac{1}{k}}$ on $L$ converges outside of a pluripolar subset towards $h^L_t$, as $k \to \infty$.
	\end{thm}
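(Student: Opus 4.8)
The plan is to recognize the curve $t \mapsto FS(N_{k,t})$ as a geometric-quantization approximation of the Mabuchi geodesic $h^L_t$: the sup-norms $\ban^{\infty}(h^{L,0}_0), \ban^{\infty}(h^{L,0}_1)$ quantize the envelopes $P(h^{L,0}_0), P(h^{L,0}_1)$, complex interpolation quantizes the Mabuchi geodesic joining them, and Theorems~\ref{thm_isom} and~\ref{thm_char} allow one to pass to the limit. Fix once and for all some $p \in [1, +\infty[$ with $p > 1$; since $N_{k,t}$ does not depend on $p$, it suffices to run the argument for this single value.

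First I would check that the graded norm $N_t := (N_{k,t})_{k=0}^{\infty}$ is submultiplicative and bounded, so that Theorem~\ref{thm_char} applies. Submultiplicativity is inherited from that of $\ban^{\infty}(h^{L,0}_0)$ and $\ban^{\infty}(h^{L,0}_1)$: given $f \in H^0(X, L^{\otimes k})$, $g \in H^0(X, L^{\otimes l})$ and near-optimal holomorphic extensions $F, G$ across the strip in the definition (\ref{eq_defn_cx_inter}), the product $F \otimes G$ is an admissible extension of $f \cdot g$ whose boundary norms are dominated by the products of those of $F$ and $G$. Boundedness follows because $h^{L,0}_0$ and $h^{L,0}_1$ are mutually bounded continuous metrics, so $\ban^{\infty}_k(h^{L,0}_0)$ and $\ban^{\infty}_k(h^{L,0}_1)$ differ by a factor $e^{\pm Ck}$, and the complex interpolation of two such norms remains pinched between them up to the same factor (using the norm-one inclusions $A \cap B \hookrightarrow [A,B]_t \hookrightarrow A+B$), hence between $\ban^{\infty}_k(e^{-2C}h^{L,0}_0)$ and $\ban^{\infty}_k(e^{2C}h^{L,0}_0)$. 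Thus $N_t \sim_p \ban^{\infty}(FS(N_t)_*)$, and the whole statement reduces to the identity $FS(N_t)_* = h^L_t$. For the endpoints, $N_0 = \ban^{\infty}(h^{L,0}_0)$ and $N_1 = \ban^{\infty}(h^{L,0}_1)$, so the identity $FS(\ban^{\infty}(h^L))_* = P(h^L)$ recalled in Section~\ref{sect_fs_oper} gives $FS(N_0)_* = P(h^{L,0}_0)$ and $FS(N_1)_* = P(h^{L,0}_1)$.

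The core of the proof is to show that $t \mapsto FS(N_t)_*$ --- a curve in the complete metric space of metrics with psh potentials of minimal singularities, by the properties of $FS$ --- is a constant-speed minimizing $d_p$-geodesic. By the triangle inequality it is enough to establish $d_p(FS(N_s)_*, FS(N_t)_*) \le |s-t| \cdot d_p(P(h^{L,0}_0), P(h^{L,0}_1))$ for all $s,t \in [0,1]$. For the left-hand side: by Theorem~\ref{thm_char}, $\frac{1}{k} d_p(N_{k,s}, \ban^{\infty}_k(FS(N_s)_*)) \to 0$ and likewise for $t$; combined with the version of Theorem~\ref{thm_isom} for metrics with minimal singularities and the fact that the normalized limits $\lim_k \frac{1}{k} d_p(\cdot,\cdot)$ obey the triangle inequality (the quasi-triangle defects of $d_p$ on norms being $o(k)$), this gives $d_p(FS(N_s)_*, FS(N_t)_*) = \lim_k \frac{1}{k} d_p(N_{k,s}, N_{k,t})$. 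Now $t \mapsto N_{k,t}$ is a $d_p$-geodesic on the space of norms on $H^0(X, L^{\otimes k})$ (Section~\ref{sect_quas_metr}), so $d_p(N_{k,s}, N_{k,t}) = |s-t| \, d_p(N_{k,0}, N_{k,1})$; dividing by $k$, using $N_{k,0} = \ban^{\infty}_k(h^{L,0}_0)$ and $N_{k,1} = \ban^{\infty}_k(h^{L,0}_1)$, and invoking Theorem~\ref{thm_isom} for the continuous metrics $h^{L,0}_0, h^{L,0}_1$ yields the required bound. Hence $t \mapsto FS(N_t)_*$ is a constant-speed $d_p$-geodesic from $P(h^{L,0}_0)$ to $P(h^{L,0}_1)$, and for $p>1$ the uniqueness of such geodesics in the big setting (Gupta~\cite{GuptaPrakhar}) forces $FS(N_t)_* = h^L_t$. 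Finally, by Lemma~\ref{lem_fs_sm}, $FS(N_{k,t})^{1/k} \to FS(N_t)$ pointwise on $X$, and $FS(N_t) = FS(N_t)_* = h^L_t$ off a pluripolar set, since the set where the pointwise limit of the potentials of $FS(N_{k,t})^{1/k}$ differs from its regularization is negligible, hence pluripolar by Bedford--Taylor; this is the claimed convergence.

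The step I expect to be the main obstacle is the geodesy estimate above: one has to reconcile the exact $d_p$-geodesy of complex interpolation on the finite-dimensional spaces with the quasi-metric defects of $d_p$ on norms and with the extension of the asymptotic isometry (Theorem~\ref{thm_isom}) to minimal-singularity metrics, so that the length of the limiting curve $t \mapsto FS(N_t)_*$ is controlled by $d_p(P(h^{L,0}_0), P(h^{L,0}_1))$ rather than by a strictly larger quantity. The other ingredients --- submultiplicativity and boundedness of $N_t$, the endpoint identity, and the pluripolar-set bookkeeping --- are routine once the quantization results already proved are in hand.
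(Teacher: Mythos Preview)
Your overall strategy matches the paper's proof almost exactly: show that $N_t$ is submultiplicative and bounded, apply Theorem~\ref{thm_char} to get $N_t \sim_p \ban^{\infty}(FS(N_t)_*)$, use an asymptotic isometry to compute $d_p(FS(N_s)_*, FS(N_t)_*)$ as $\lim_k \tfrac{1}{k} d_p(N_{k,s}, N_{k,t})$, show this equals $|s-t|\, d_p(P(h^{L,0}_0), P(h^{L,0}_1))$, and conclude by uniqueness of $d_p$-geodesics for $p>1$. The endpoint and pluripolar bookkeeping are also as in the paper.

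There is, however, one genuine gap. You assert that ``$t \mapsto N_{k,t}$ is a $d_p$-geodesic on the space of norms on $H^0(X, L^{\otimes k})$ (Section~\ref{sect_quas_metr}), so $d_p(N_{k,s}, N_{k,t}) = |s-t|\, d_p(N_{k,0}, N_{k,1})$''. Section~\ref{sect_quas_metr} establishes this only for \emph{Hermitian} norms (via Stein--Weiss), and the endpoints $N_{k,0}=\ban^{\infty}_k(h^{L,0}_0)$, $N_{k,1}=\ban^{\infty}_k(h^{L,0}_1)$ are sup-norms, not Hermitian. For general norms $d_p$ is only a quasi-distance and no exact geodesy statement for complex interpolation is available. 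The paper does not claim exact geodesy here; instead it proves the approximate version
\[
\big|\, d_p(N_{k,s}, N_{k,t}) - |s-t|\, d_p(N_{k,0}, N_{k,1}) \,\big| \le C \log n_k,
\]
which suffices after dividing by $k$. This is obtained by sandwiching $N_{k,0}, N_{k,1}$ between Hermitian norms $H_{k,0}, H_{k,1}$ via John's ellipsoid theorem (error $\tfrac{1}{2}\log n_k$), using the monotonicity of complex interpolation to propagate this sandwich to all $t$, and then using that the interpolation $H_{k,t}$ between Hermitian norms \emph{is} the exact $d_p$-geodesic. Your closing paragraph correctly identifies this step as the main obstacle, but the proposal as written treats it as already known, which it is not.

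A smaller point: the asymptotic isometry you invoke for the pair $FS(N_s)_*, FS(N_t)_*$ is not Theorem~\ref{thm_isom} (stated for continuous metrics) but its extension Theorem~\ref{thm_isom_reg} to regularizable-from-above metrics; to apply it you also need Proposition~\ref{prop_fs_regul_blw}, which says $FS(N_t)_*$ is regularizable from above. You flag this concern, but it should be made explicit in the argument.
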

	\begin{rem}
		As we shall explain in Section \ref{sect_ample_case}, the norm $N_{k, t}$ above can just as well be replaced by the metric geodesic between $\hilb_k(h^{L, 0}_0, dV_X)$ and $\hilb_k(h^{L, 1}_1, dV_X)$ for an arbitrary fixed volume form $dV_X$. 
		In such formulation, for ample line bundles, Theorem \ref{cor_appr_geod} was established by Phong-Sturm \cite{PhongSturm} and Berndtsson \cite{BerndtProb}.
		Note, however, that, unlike in the ample case, one should not expect that the convergence of $FS(N_{k, t})^{\frac{1}{k}}$ is uniform, as already for the endpoints $t = 0, 1$ it is not necessarily the case, see Remark \ref{rem_non_unif_gq} for details. 
		It is, however, reasonable to expect that the convergence takes place outside of the augmented base locus of $L$, see (\ref{eq_aug_bl}) for the definition, but we do not pursue it here.
	\end{rem}
	\par 
	Another application of Theorems \ref{thm_isom}, \ref{thm_char} lies in the theory of submultiplicative filtrations.
	Recall that a \textit{decreasing $\real$-filtration} $\mathcal{F}$ of a vector space $V$ is a map from $\real$ to vector subspaces of $V$, $t \mapsto \mathcal{F}^t V$, verifying $\mathcal{F}^t V \subset \mathcal{F}^s V$ for $t > s$, and such that $\mathcal{F}^t V  = V$ for sufficiently small $t$ and $\mathcal{F}^t V = \{0\}$ for sufficiently large $t$.
	It is \textit{left-continuous} if for any $t \in \real$, there is $\epsilon_0 > 0$ such that $\mathcal{F}^t V = \mathcal{F}^{t - \epsilon} V $ for any $0 < \epsilon < \epsilon_0$.
	A filtration $\mathcal{F}$ on $R(X, L)$ is a collection $(\mathcal{F}_k)_{k = 0}^{\infty}$ of decreasing left-continuous filtrations $\mathcal{F}_k$ on $H^0(X, L^{\otimes k})$.
	We say that $\mathcal{F}$ is \textit{submultiplicative} if for any $t, s \in \real$, $k, l \in \nat$ we have 
	\begin{equation}\label{eq_sumb_filt}
		\mathcal{F}^t_k H^0(X, L^{\otimes k}) \cdot \mathcal{F}^s_l H^0(X, L^{\otimes l}) \subset \mathcal{F}^{t + s}_{k + l} H^0(X, L^{\otimes (k + l)}).
	\end{equation}
	In what follows, we sometimes omit the subscript from $\mathcal{F}_k$ when it is clear from the context.
	\par 
	We say that $\mathcal{F}$ is \textit{bounded} if there is $C > 0$ such that for any $k \in \nat^*$, we have
	\begin{equation}\label{eq_bnd_filt}
		\mathcal{F}^{ C k} H^0(X, L^{\otimes k}) = \{0\}, \qquad \mathcal{F}^{- C k} H^0(X, L^{\otimes k}) = H^0(X, L^{\otimes k}).
	\end{equation}
	Similarly to the remark we already made for norms, in the ample case, the usual definition of bounded filtrations does not require the upper bound from (\ref{eq_bnd_filt}), as it follows from submultiplicativity and the finite generation of $R(X, L)$, cf. \cite[Example 2.1.30]{LazarBookI}.
	\par 
	The theory of submultiplicative filtrations on rings was initiated by Samuel \cite{SamuelAsymptIdeals} and subsequently developed from an algebraic perspective by Nagata \cite{NagataSamuelConj}, Rees \cite{ReesBook}, and others.
	In complex geometry, interest in submultiplicative filtrations emerged from their connection to the theory of canonical metrics, as highlighted in the study of constant scalar curvature Kähler metrics in \cite{SzekeTestConf} and \cite{NystOkounTest}.
	Remarkably, the Mabuchi geometry -- originally developed to address the constant scalar curvature equation -- now provides new insights and results concerning submultiplicative filtrations.
	Moreover, submultiplicative filtrations have also found applications in other versions of canonical metrics, such as in the study of the Wess-Zumino-Witten equation \cite{FinHYM}.
	\par 
	Let us explain how Mabuchi geometry arises in the study of submultiplicative filtrations. 
	In Section \ref{sect_geod_ray}, for any continuous metric $h^L$ on $L$ and a bounded submultiplicative filtration $\mathcal{F}$, we construct a ray of metrics $h^L_t$, $t \in [0, +\infty[$, on $L$ departing from $P(h^L)$.
	We show that for any $t \in [0, +\infty[$, $h^L_t$ has a psh potential with minimal singularities, and, remarkably, the ray $h^L_t$ is a geodesic ray with respect to all distances $d_p$, $p \in [1, +\infty[$.
	In the ample setting, the above construction was carried out by Phong-Sturm \cite{PhongSturmTestGeodK} for finitely generated filtrations (i.e., those associated with a test configuration) and then by Ross-Witt Nystr{\"o}m \cite{RossNystAnalTConf} for arbitrary bounded submultiplicative filtrations.
	\par 
	\begin{sloppypar}
	Of course, the above construction is useful only if the resulting ray captures sufficient information about the filtration. 
	In our final result, we show that the speed of this ray encodes all of the statistical invariants of the filtration.
	To explain the latter point in detail, we define the sequence of \textit{jumping measures} $\mu_{\mathcal{F}, k}$, $k \in \nat^*$, on $\real$ of $\mathcal{F}$ as follows
	\begin{equation}\label{eq_jump_meas_d}
		\mu_{\mathcal{F}, k} := \frac{1}{n_k} \sum_{j = 1}^{n_k} \delta \Big[ \frac{e_{\mathcal{F}}(j, k)}{k} \Big], 
	\end{equation}
	where $\delta[x]$ is the Dirac mass at $x \in \real$ and $e_{\mathcal{F}}(j, k)$ are the \textit{jumping numbers}, defined as
	\begin{equation}\label{eq_defn_jump_numb}
		e_{\mathcal{F}}(j, k) := \sup \Big\{ t \in \real : \dim \mathcal{F}^t H^0(X, L^{\otimes k}) \geq j \Big\}.
	\end{equation}
	Now, for the above geodesic ray $h^L_t$, $t \in [0, +\infty[$, one can define its \textit{spectral measure} as 
	\begin{equation}\label{eq_muf_defn}
		\mu_{\mathcal{F}} := \Big(- \frac{\dot{h}^L_0}{2} \Big)_* \Big( \frac{c_1(L, h^L_0)^n}{ {\rm{vol}}(L) } \Big).
	\end{equation}
	The derivative $\dot{h}^L_0 := (h^L_t)^{-1} d h^L_t/d t|_{t = 0}$ from (\ref{eq_muf_defn}) is a bounded function defined using the convexity of the ray, see Section \ref{sect_mab_big}, despite the possible absence of regularity of $h^L_t$.
	When $L$ is ample, the metric $h^L_0$ is bounded, and the measure $c_1(L, h^L_0)^n$ above can be interpreted in the sense of Bedford-Taylor \cite{BedfordTaylor}.
	As we assume that $L$ is only big, $h^L_0$ is not necessarily bounded, and $c_1(L, h^L_0)^n$ is defined in the sense of non-pluripolar product of Boucksom-Eyssidieux-Guedj-Zeriahi \cite{BEGZ}.
	In either case, $\mu_{\mathcal{F}}$ is a probability measure: when $L$ is ample, this is a restatement of the asymptotic Riemann-Roch theorem, when $L$ is merely big, this follows from the result of Boucksom \cite{BouckVol}, which we recall in Theorem \ref{thm_bouck_vol}.
	\end{sloppypar}
	\begin{thm}\label{thm_filt}
		For any bounded submultiplicative filtration $\mathcal{F}$ on a section ring $R(X, L)$ of a big line bundle $L$, the jumping measures $\mu_{\mathcal{F}, k}$, $k \in \nat^*$, converge weakly, as $k \to \infty$, to $\mu_{\mathcal{F}}$.
	\end{thm}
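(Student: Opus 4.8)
The plan is to derive the weak convergence $\mu_{\mathcal{F}, k} \to \mu_{\mathcal{F}}$ from the convergence of the quantities $\int |x - c|^p \, d\mu_{\mathcal{F}, k}$ towards $\int |x - c|^p \, d\mu_{\mathcal{F}}$ for every $p \in [1, +\infty[$ and every $c \in \real$. Since the $\mu_{\mathcal{F}, k}$ are probability measures supported in a fixed compact interval (by the boundedness (\ref{eq_bnd_filt}) of $\mathcal{F}$), $\mu_{\mathcal{F}}$ is a probability measure with bounded support (as $\dot{h}^L_0$ is bounded), and the functions $1$ and $|x - c|$, $c \in \real$, span a dense subspace of the continuous functions on that interval, this convergence is equivalent to weak convergence. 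I would obtain the convergence of these moments by applying Theorems \ref{thm_isom} and \ref{thm_char} along the geodesic ray $h^L_t$ attached to $\mathcal{F}$, and then upgrading it by a shifting trick.

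First, I would recall from Section \ref{sect_geod_ray} the description of $h^L_t$ through twisted norms. Fix the continuous reference metric $h^L$ used to build the ray and set $N_k := \ban^{\infty}_k(h^L)$. The filtration $\mathcal{F}_k$ twists $N_k$ into a norm $N_{k, t}$ on $H^0(X, L^{\otimes k})$, obtained by rescaling the graded pieces of $\mathcal{F}_k$ by the factors $e^{- t \cdot e_{\mathcal{F}}(j, k)}$; the graded norm $N_{\bullet, t} := (N_{k, t})_{k = 0}^{\infty}$ is submultiplicative (this combines (\ref{eq_sumb_filt}) with the submultiplicativity of $\ban^{\infty}(h^L)$) and bounded (by (\ref{eq_bnd_filt})), one has $N_{\bullet, 0} = \ban^{\infty}(h^L)$, and $h^L_0 = P(h^L) = FS(\ban^{\infty}(h^L))_*$, $h^L_t = FS(N_{\bullet, t})_*$.

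Next comes the heart of the argument: two expressions for $d_p(h^L_0, h^L_t)$. On the one hand, $h^L_t$ is a $d_p$-geodesic ray issuing from $h^L_0$ with initial velocity $- \dot{h}^L_0 / 2$, so the definition (\ref{eq_muf_defn}) of $\mu_{\mathcal{F}}$ together with the $L^p$-nature of $d_p$ (Section \ref{sect_mab_big}) gives $d_p(h^L_0, h^L_t)^p = t^p \int |x|^p \, d\mu_{\mathcal{F}}$. On the other hand, Theorem \ref{thm_char} applied to $N_{\bullet, t}$ yields $N_{\bullet, t} \sim_p \ban^{\infty}(h^L_t)$; combined with the asymptotic isometry property for $d_p$ between sup-norms of metrics with minimal singularities (Theorem \ref{thm_isom}, in the form of Section \ref{sect_quant_mab}, which is exactly what underlies the construction of the geodesic ray) and the triangle inequality for the quasi-distance $d_p$, and using $N_{k, 0} = \ban^{\infty}_k(h^L) = \ban^{\infty}_k(h^L_0)$, this gives $d_p(h^L_0, h^L_t) = \lim_{k \to \infty} \frac1k d_p(N_{k, 0}, N_{k, t})$. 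To compute the last limit I would replace $N_k$ by the Hermitian norm $H_k := \hilb_k(h^L, dV_X)$ for a fixed volume form $dV_X$; by the Bernstein–Markov property (Proposition \ref{prop_bm_volume}), $d_{\infty}(N_k, H_k) = o(k)$, and since the twist respects the comparison of norms this propagates to $d_{\infty}(N_{k, t}, H_{k, t}) = o(k)$, hence $d_p(N_{k, 0}, N_{k, t}) = d_p(H_{k, 0}, H_{k, t}) + o(k)$. Diagonalizing $H_{k, t}$ relative to $H_k$ in an $H_k$-orthonormal basis adapted to the flag $\mathcal{F}_k$ — whose jumping values are the $e_{\mathcal{F}}(j, k)$, independently of the norm — the distance $\frac1k d_p(H_{k, 0}, H_{k, t})$ becomes $t$ times the $L^p$-norm of the function $j \mapsto e_{\mathcal{F}}(j, k)/k$ against $\frac{1}{n_k} \sum_{j} \delta_j$, i.e. $t \cdot \big( \int |x|^p \, d\mu_{\mathcal{F}, k} \big)^{1/p}$. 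Comparing the two expressions for $d_p(h^L_0, h^L_t)$ gives $\int |x|^p \, d\mu_{\mathcal{F}, k} \to \int |x|^p \, d\mu_{\mathcal{F}}$ for every $p \in [1, +\infty[$.

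Finally, I would rerun the whole argument for the shifted filtrations $\mathcal{F}(\lambda)$ defined by $\mathcal{F}(\lambda)^t_k H^0(X, L^{\otimes k}) := \mathcal{F}^{t - \lambda k}_k H^0(X, L^{\otimes k})$, $\lambda \in \real$, which are again bounded and submultiplicative; their jumping measures are the translates $(x \mapsto x + \lambda)_* \mu_{\mathcal{F}, k}$, and a short computation with the scaling $h^{L, (\lambda)}_t = e^{- 2 \lambda t} h^L_t$ of the associated rays shows their spectral measures are the corresponding translates of $\mu_{\mathcal{F}}$. This produces $\int |x - c|^p \, d\mu_{\mathcal{F}, k} \to \int |x - c|^p \, d\mu_{\mathcal{F}}$ for all $p \in [1, +\infty[$ and $c \in \real$, which by the first paragraph gives the weak convergence. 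The main obstacle is the step transferring the asymptotic isometry to the (possibly singular, minimal-singularities) endpoints $h^L_t$ of the ray and matching it with the $d_p$-speed $t^p \int |x|^p \, d\mu_{\mathcal{F}}$ of the geodesic — i.e. feeding Theorems \ref{thm_isom} and \ref{thm_char} together with the geodesic-ray construction of Section \ref{sect_geod_ray} into the computation of $d_p(N_{k,0}, N_{k,t})$; a lesser but genuine technical point is the commutation, up to $o(k)$ in $d_p$, between passing to a Hermitian norm and twisting by the filtration.
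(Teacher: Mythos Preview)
Your proposal is correct and follows essentially the same route as the paper. Both arguments compute $d_p(h^L_0,h^L_t)$ in two ways: once via the speed of the geodesic ray (Theorem \ref{thm_mab_geod}), giving $t\cdot\big(\int |x|^p\,d\mu_{\mathcal{F}}\big)^{1/p}$, and once via quantization (Theorem \ref{thm_char} together with the regularizable-from-above isometry Theorem \ref{thm_isom_reg}), giving $\lim_k \frac{1}{k} d_p(N_{k,0},N_{k,t}) = t\cdot\big(\int |x|^p\,d\mu_{\mathcal{F},k}\big)^{1/p}$; the comparison of $N_{k,t}$ with a Hermitian ray is handled exactly as in (\ref{eq_nkt_hkt_comp_rays})--(\ref{eq_dp_nkt_hkt}).

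The only genuine difference is in the closing step. The paper performs a \emph{single} shift so that all jumping measures are supported in $(-\infty,0]$; then $\int |x|^p\,d\mu = (-1)^p\int x^p\,d\mu$ for integer $p$, and the convergence of all integer moments determines a compactly supported measure. You instead shift by \emph{every} $\lambda$, obtaining convergence of $\int |x-c|\,d\mu_{\mathcal{F},k}$ for all $c$, and conclude via density of piecewise linear functions. Both are clean; yours in fact only needs $p=1$ after the shift, while the paper's single shift needs all integer $p$. One small point: the asymptotic isometry you invoke is really Theorem \ref{thm_isom_reg} (for regularizable-from-above metrics), not Theorem \ref{thm_isom} as stated, since $h^L_t=FS(N_t)_*$ need not be the envelope of a continuous metric; you flag this as ``the main obstacle'' and it is precisely what Section \ref{sect_reg_metr} supplies.
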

	\begin{rem}
		The existence of the weak limit was proved by Boucksom-Chen \cite{BouckChen}, refining an earlier work of Chen \cite{ChenHNolyg}.
		The limiting measure can then be interpreted through Okounkov bodies \cite{BouckChen} and \cite{XiaPartialOkounkov}, but the main point of Theorem \ref{thm_filt} concerns a different expression using the associated geodesic ray.
		When the filtration is induced by an ample test configuration and $h^L$ is smooth and positive, the relation between the limiting measure and the geodesic ray from Theorem \ref{thm_filt} was established for product test configurations by Witt Nystr{\"o}m \cite[Theorems 1.1 and 1.4]{NystOkounTest}, and for general test configurations by Hisamoto in \cite[Theorem 1.1]{HisamSpecMeas}, proving a conjecture \cite[after Theorem 1.4]{NystOkounTest}.
		For arbitrary bounded submultiplicative filtrations on the section ring of ample line bundles, Theorem \ref{thm_filt} was established by the author in \cite{FinNarSim}.
		\par 
		In this article, we follow the approach from  \cite{FinNarSim}.
		In \cite{FinSubmToepl} the author also established a local analogue of Theorem \ref{thm_filt} in the ample case, relying on the analysis of the corresponding weighted Bergman kernel. 
		Extending this to the big setting is a natural direction for future work.
	\end{rem}
	\par 
	In conclusion, we briefly outline the main technical ideas underlying the proofs of Theorems \ref{thm_isom} and \ref{thm_char}.
	The key strategy in both cases is to reduce the problem to the ample setting. 
	This is achieved by constructing sufficiently large subrings of section rings of ample line bundles inside the section ring of the given big line bundle, and then showing that the distance between two norms on a vector space can be approximated by the distances between their restrictions to sufficiently large subspaces.
	For Theorem \ref{thm_char}, one must moreover extend our previous characterization of submultiplicative norms in the ample case to singular varieties and, in order to analyze the sup‐norms associated with a possibly non-continuous metric $FS(N)_*$, introduce a refined notion of singularity class for plurisubharmonic functions that sharpens the classical notion of analytic singularities.
	\par 
	This article is organized as follows.
	In Section \ref{sect_2}, we recall the necessary preliminaries.
	Section \ref{sect_3} contains the proof of Theorem \ref{thm_isom} as well as several technical results required in later sections.
	Section \ref{sect_4} is devoted to the proof of Theorem \ref{thm_char}.
	In Section \ref{sect_5}, we then establish a number of applications, among them Theorems \ref{cor_appr_geod} and  \ref{thm_filt}.
	\par
	\textbf{Notation.}
	We denote by $d = \partial + \dbar$ the usual decomposition of the exterior derivative in terms of its $(1, 0)$ and $(0, 1)$ parts, and we set 
	\begin{equation}
		d^c := \frac{\partial - \dbar}{2 \pi \imun}.	
	\end{equation}
	The Poincaré-Lelong formula gives us the following: for any smooth metric $h^L$ on a line bundle $L$, any $s \in H^0(X, L)$, $s \neq 0$, for the divisor $E := [s = 0]$, we have
	\begin{equation}\label{eq_poinc_lll}
		c_1(L, h^L) + \ddc \log |s(x)|_{h^L} = [E],
	\end{equation}
	where $c_1(L, h^L)$ is the first Chern form of $(L, h^L)$, defined as $\frac{\imun}{2 \pi} R^L$, where $R^L$ is the curvature of the Chern connection and $[E]$ is the current of integration along $E$.
	Note also that for an arbitrary smooth function $\phi: X \to \real$, we have
	\begin{equation}\label{eq_c1_bc}
		c_1(L, h^L \cdot \exp(-2 \phi))
		=
		c_1(L, h^L)
		+
		\ddc \phi.
	\end{equation}
	\par 
	We say that $\phi$ is a potential of a metric $h^L$ on $L$ with respect to another metric $h^L_0$ if 
	\begin{equation}
		h^L = h^L_0 \cdot \exp(-2 \phi).
	\end{equation}
	Occasionally, we will refer to a local potential of $h^L$, which is a potential with respect to a metric that trivializes a particular local holomorphic frame.
	By a continuous (resp. singular or bounded) metric on $L$ we mean a metric with a continuous (resp. $L^1_{loc}$ or bounded) potential.
	Through (\ref{eq_c1_bc}), for singular metrics $h^L$, we extend the definition of $c_1(L, h^L)$ as a $(1, 1)$-current.
	\par 
	Throughout the article, by a filtration on a finite-dimensional vector space $V$, we always mean a decreasing left-continuous filtration on $V$. 
	A norm on $V$ is called Hermitian if it is associated with a Hermitian product. 
	To better distinguish between norms (on vector spaces) and metrics (on line bundles), we prefer to use the terminology “metric" instead of “Hermitian metric", even though all the metrics in this article are indeed Hermitian.
	\par 
	By a metric $h^L$ on a $\mathbb{Q}$-line bundle, we shall always mean an equivalence class of metrics $h^{L^{\otimes kN}}$ on $L^{\otimes k N}$, where $k \in \mathbb{N}^*$ and $N \in \mathbb{N}^*$ is the minimal integer such that $L^{\otimes N}$ is a genuine line bundle.  
	Two metrics are considered equivalent if some of their powers coincide.  
	We then define $c_1(L, h^L)$ as $\frac{1}{N} c_1(L^{\otimes N}, h^{L^{\otimes N}})$, and one can freely speak of potentials, and so on.
	\par 
	Throughout the article, for a function $f: X \to [-\infty, +\infty]$, we denote by $f^*$ its upper semicontinuous regularization, defined for any $x \in X$ as 
	\begin{equation}
		f^*(x) := \lim_{\epsilon \to 0} \sup_{B(x, \epsilon)} f,
	\end{equation}
	where $B(x, \epsilon) \subset X$ is a ball of radius $\epsilon$ around $x$ (with respect to some fixed metric). 
	The lower semicontinuous regularization of a metric $h^L$, which we denote by $h^L_*$, is obtained by replacing the potential of $h^L$ with its upper semicontinuous regularization.
	\par 
	By a canonical singular metric $h^E_{{\rm{sing}}}$ on a line bundle $\mathscr{O}(E)$ associated with a divisor $E$ we mean a singular metric, defined such that for any $x \in X \setminus E$, we have $|s_E(x)|_{h^E_{{\rm{sing}}}} = 1$, where $s_E$ is the \textit{canonical holomorphic section} of $\mathscr{O}(E)$, verifying ${\rm{div}}(s_E) = E$.
	\par 	
	A function $\phi: X \to [-\infty, +\infty[$ is called quasi-plurisubharmonic (qpsh) if it can be locally written as the sum of a plurisubharmonic function and a smooth function. 
	For a smooth closed $(1, 1)$-form $\theta$, we say $\phi$ is $\theta$-plurisubharmonic ($\theta$-psh) if it is qpsh and $\theta + \ddc \phi > 0$ in the sense of currents.
	We let $\psh(X, \theta)$ denote the set of $\theta$-psh functions that are not identically $-\infty$.
	We say that $\phi \in \psh(X, \theta)$ is strictly $\theta$-psh if $\theta + \ddc \phi$ is a Kähler current, i.e., such that there is a Kähler form $\omega$ on $X$ so that $\theta + \ddc \phi \geq \omega$.	
	\par 
	When the class $[\alpha] \in H^2(X, \real) \cap H^{1, 1}(X)$ contains a Kähler current, such a class is called \textit{big}. When it contains a Kähler form, it is called \textit{Kähler}.
	We call $[\alpha]$ \textit{nef} if for a Kähler class $[\omega]$ and any $\epsilon > 0$, the class $[\alpha] + \epsilon [\omega]$ is Kähler.
	\par 
	For a bounded metric $h^L$ on $L$ and a positive Borel measure $\mu$, we denote by ${\textrm{Hilb}}_k(h^L, \mu)$ the positive semi-definite form on $H^0(X, L^{\otimes k})$ defined for arbitrary $s_1, s_2 \in H^0(X, L^{\otimes k})$ as
	\begin{equation}\label{eq_defn_l2}
			\langle s_1, s_2 \rangle_{{\textrm{Hilb}}_k(h^L, \mu)} = \int_X \langle s_1(x), s_2(x) \rangle_{(h^L)^{k}} \cdot d \mu(x).
	\end{equation}
	\par 
	For many objects in this article, if we wish to explicitly state the manifold to which the object is associated, we add it in the notation.
	In this way, for a holomorphic line bundle $L$ on a manifold $X$, we sometimes denote ${\rm{vol}}(L)$ by ${\rm{vol}}_X(L)$, and for a bounded metric $h^L$ on $L$, we sometimes write $\ban^{\infty}(X, h^L)$ instead of $\ban^{\infty}(h^L)$.
	\par
	\textbf{Acknowledgement.}
	 This work was supported by the CNRS, École Polytechnique, and in part by the ANR projects QCM (ANR-23-CE40-0021-01), AdAnAr (ANR-24-CE40-6184) and STENTOR (ANR-24-CE40-5905-01).
	 The author is grateful to the members and staff of Westlake University, China, for their warm hospitality during the preparation of a part of this work, and especially to Huayi Chen for his kind invitation.
	 The author also thanks Sébastien Boucksom and Tamás Darvas for pointing out the subtlety discussed in Remark \ref{rem_dem_appr}, and Yu-Chi Hou for informing us about his recent preprint \cite{YuChiHou} and pointing out several misprints in the current paper.
	 
	\section{Preliminaries from quantization, pluripotential theory and linear algebra}\label{sect_2}
	
	This section collects preliminary results from linear algebra, pluripotential theory, and geometric quantization.  
	Section \ref{sect_min_sing} covers potentials with minimal, analytic, and algebraic singularities.  
	Section \ref{sect_rel_pp} reviews relative pluripotential theory.  
	Section \ref{sect_mab_big} presents Mabuchi geometry.  
	Section \ref{sect_fs_oper} discusses the Fubini–Study operator and the link between norms and metrics.  
	Section \ref{sect_quas_metr} introduces a family of distances on the spaces of Hermitian norms on finite-dimensional vector spaces and their quasi-distance extensions to all norms.  
	Section \ref{sect_ample_case} summarizes geometric quantization results for ample line bundles and their relation with the main results.

	\subsection{Potentials with minimal and algebraic singularities}\label{sect_min_sing}
	
	Positive metrics on ample line bundles form a bridge between complex and algebraic geometry.
	When the line bundle is merely big but not ample, Kodaira's theorem implies that the line bundle admits no positive metrics.
	In this case, the role of positive metrics is taken over by Kähler currents, which will be the main focus of this section.
	\par 
	Similarly to Kodaira characterization of ample line bundles, it is known that a line bundle $L$ is big if and only if there is a closed positive $(1, 1)$-current in $c_1(L)$, moreover, a manifold carries a big line bundle if and only if it is Moishezon, see Bonavero \cite{Bonavero}, Ji-Shiffman \cite{JiShiffman}, Demailly \cite{DemIntroHodge}, cf. also \cite[Theorem 2.3.28]{MaHol}.
	While an arbitrary Moishezon manifold is bimeromorphic to a projective manifold, if the Moishezon manifold carries a Kähler metric, it is automatically projective, cf. \cite[Theorem 2.2.26]{MaHol}.
	\par 
	General Kähler currents are often difficult to analyze, but those with potentials having analytic singularities can be effectively studied using tools from algebraic geometry.
	We will refer to such currents as \textit{currents with analytic singularities}.
	Recall that a function $\psi: X \to [-\infty, +\infty[$ is said to have \textit{analytic singularities} if, locally, it can be written as
	\begin{equation}\label{eq_phi_alg_sing}
		\psi = \frac{c}{2} \cdot \log(\sum |f_i|^2) + g,
	\end{equation}
	where $c > 0$, $g$ is a locally bounded function, and the $f_i$ are local holomorphic functions.  
	We say that $\psi$ has \emph{neat analytic singularities} if the function $g$ above can be chosen to be smooth. 
	It is often more convenient to work with functions having \textit{(neat) algebraic singularities}: these are functions with (neat) analytic singularities for which the constant $c$ in (\ref{eq_phi_alg_sing}) can be chosen rational. 
	\par 
	We shall need the following basic result on these functions.
	\begin{prop}\label{prop_max_alg_sing}
		Assume that $\psi_i : X \to [-\infty, +\infty[$, $i = 0, 1$, have algebraic singularities. 
		Then $\max\{\psi_0, \psi_1\}$ also has algebraic singularities.
	\end{prop}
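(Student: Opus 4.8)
The plan is to reduce the statement to a purely local one and then exploit the fact that, after clearing denominators, algebraic singularities are governed by ideal sheaves, for which the maximum corresponds to the sum of ideals. Since having algebraic singularities is a local property, I would fix a point $x_0 \in X$ and work in a small coordinate ball $U$ on which both $\psi_i$ admit the form (\ref{eq_phi_alg_sing}): $\psi_i = \frac{c_i}{2} \log\big(\sum_j |f_{i,j}|^2\big) + g_i$ with $c_i \in \rat_{>0}$, $f_{i,j} \in \mathscr{O}(U)$, and $g_i$ locally bounded on $U$. The first step is to pass to a common rational exponent: writing $c_i = p_i/q$ with a common denominator $q \in \nat^*$, one replaces the tuple $(f_{i,j})_j$ by the collection of all degree-$p_i$ monomials in the $f_{i,j}$, so that $\psi_i = \frac{1}{2q}\log\big(\sum_j |F_{i,j}|^2\big) + g_i$ with $F_{i,j} \in \mathscr{O}(U)$; this does not change $\psi_i$ up to a locally bounded error absorbed into $g_i$.

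The second step handles the bounded perturbations $g_i$. Set $\varphi_i := \frac{1}{2q}\log\big(\sum_j |F_{i,j}|^2\big)$, so $\psi_i = \varphi_i + g_i$ with $g_i$ bounded on (a possibly smaller) $U$; choose $M > 0$ with $|g_i| \leq M$. Then on $U$,
\begin{equation}\label{eq_max_sandwich}
	\max\{\varphi_0, \varphi_1\} - M \leq \max\{\psi_0, \psi_1\} \leq \max\{\varphi_0, \varphi_1\} + M,
\end{equation}
so $\max\{\psi_0,\psi_1\} = \max\{\varphi_0, \varphi_1\} + h$ for a bounded function $h$ on $U$. Thus it suffices to prove that $\max\{\varphi_0, \varphi_1\}$ has algebraic singularities, i.e. that it differs from a function of the form $\frac{c}{2}\log(\sum |f_k|^2)$, $c \in \rat$, by a locally bounded function.

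The third and central step is the identity, valid pointwise on $U$,
\begin{equation}\label{eq_max_as_ideal}
	\max\Big\{ \tfrac{1}{2q}\log\big(\textstyle\sum_j |F_{0,j}|^2\big),\ \tfrac{1}{2q}\log\big(\textstyle\sum_j |F_{1,j}|^2\big) \Big\}
	=
	\tfrac{1}{4q}\log\big(\textstyle\sum_{j} |F_{0,j}|^2\big)\big(\textstyle\sum_{j'} |F_{1,j'}|^2\big)^{-1}\cdots
\end{equation}
— more cleanly: one checks that
\begin{equation}\label{eq_max_clean}
	\max\{a, b\} = \tfrac12\log\big(\exp(2a) + \exp(2b)\big) + O(1),
\end{equation}
with the $O(1)$ bounded by $\tfrac12\log 2$ uniformly. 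Applying this with $a = \varphi_0$, $b = \varphi_1$ and substituting gives
\begin{equation}
	\max\{\varphi_0, \varphi_1\} = \tfrac{1}{2q}\log\Big( \big(\textstyle\sum_j |F_{0,j}|^2\big)^{1/q \cdot q} + \cdots \Big);
\end{equation}
it is cleaner to first raise to the power $q$: with $\Phi_i := \sum_j |F_{i,j}|^2$ one has $2q\max\{\varphi_0,\varphi_1\} = \max\{\log \Phi_0^{1/q}, \log\Phi_1^{1/q}\}\cdot q$, and using (\ref{eq_max_clean}) the point is simply that $\max\{\log \Phi_0, \log\Phi_1\} = \log(\Phi_0 + \Phi_1) + O(1)$, while $\Phi_0 + \Phi_1 = \sum_j |F_{0,j}|^2 + \sum_{j'} |F_{1,j'}|^2 = \sum_k |G_k|^2$ is again a finite sum of squared moduli of holomorphic functions $G_k$ (namely the $F_{0,j}$ together with the $F_{1,j'}$). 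Hence
\begin{equation}
	\max\{\varphi_0,\varphi_1\} = \tfrac{1}{2q}\log\big(\textstyle\sum_k |G_k|^2\big)^{?}
\end{equation}
— here one must be careful about the power: $\varphi_i = \frac{1}{2q}\log\Phi_i = \frac{1}{2}\log \Phi_i^{1/q}$ only if $c_i$ has denominator $q$; after the monomial trick of step one, $\varphi_i = \frac{1}{2q}\log\Phi_i$ exactly, so $\max\{\varphi_0,\varphi_1\} = \frac{1}{2q}\max\{\log\Phi_0,\log\Phi_1\} = \frac{1}{2q}\log(\Phi_0+\Phi_1) + O(1) = \frac{1}{2q}\log(\sum_k|G_k|^2) + O(1)$, which exhibits algebraic singularities with rational constant $c = 1/q$. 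Combining with step two, $\max\{\psi_0,\psi_1\}$ differs from $\frac{1}{2q}\log(\sum_k|G_k|^2)$ by a bounded function on a neighborhood of $x_0$, and since $x_0$ was arbitrary this proves the claim.

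The only genuine obstacle is bookkeeping of the exponents in steps one and three: one must make sure that replacing $c_i/2 \cdot \log(\sum|f_{i,j}|^2)$ by $\frac{1}{2q}\log$ of the $p_i$-th order monomials is an \emph{exact} rewriting (not merely up to $O(1)$) — it is, since $\sum_j |f_{i,j}|^2)^{p_i}$ expands as $\sum$ of $|{\rm monomial}|^2$ up to multinomial coefficients, which only contribute an $O(1)$ and can again be absorbed into $g_i$. Everything else is the elementary inequality (\ref{eq_max_clean}) and the observation that a finite sum of terms of the form $\sum|{\rm holomorphic}|^2$ is again of that form. If one prefers to avoid the monomial expansion, an alternative is to invoke Proposition-style stability of ideal sheaves under sum: having algebraic singularities with constant $c = p/q$ near $x_0$ is equivalent to $\psi_i - \frac{p}{2q}\log(\sum|h_{i,l}|^2) $ bounded where $(h_{i,l})$ generate an ideal $\mathfrak{a}_i \subset \mathscr{O}_{X,x_0}$, and then $\max\{\psi_0,\psi_1\}$ is, up to $O(1)$ and after passing to a common $q$, governed by the ideal $(\mathfrak{a}_0^{p_0} + \mathfrak{a}_1^{p_1})$ — which is again a finitely generated ideal, giving algebraic singularities with constant $1/q$. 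I would present whichever of these two packagings is shorter given the conventions already fixed in Section \ref{sect_min_sing}.
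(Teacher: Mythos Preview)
Your approach is essentially identical to the paper's: pass to a common rational denominator, use the elementary inequality $\max\{\log a,\log b\}=\log(a+b)+O(1)$ (your (\ref{eq_max_clean}), the paper's $\log(a+b)-\log 2\le\max\{\log a,\log b\}\le\log(a+b)$), and then expand the resulting integer powers as sums of squared moduli of monomials. The only cosmetic difference is ordering: you expand the monomials first and then add, whereas the paper writes $\frac{1}{q_0q_1}\log\big((\sum|f_{0,j}|^2)^{p_0q_1}+(\sum|f_{1,j}|^2)^{p_1q_0}\big)$ and expands at the end; these are the same computation. Your write-up has several garbled display equations (e.g.\ (\ref{eq_max_as_ideal}) and the line following (\ref{eq_max_clean})) that should be cleaned up, but the mathematics is correct.
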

	\begin{rem}
		It is also immediate that a sum of two potentials with algebraic singularities has algebraic singularities.
	\end{rem}
	\begin{proof}
		While the proof is classical, cf. \cite[Proposition 4.1.8]{DemaillyOnCohPSEF}, we reproduce it for the reader's convenience and also because its arguments are employed in proving Proposition \ref{prop_max_alg_sing_weak_neat}, which refines Proposition \ref{prop_max_alg_sing}.
		Since $\phi_i$, $i = 0, 1$, have algebraic singularities, for some $p_i, q_i \in \nat^*$, local holomorphic functions $f_{i, j}$ and bounded $g_i$, we can write $\psi_i = \frac{p_i}{q_i} \cdot \log(\sum |f_{i, j}|^2) + g_i$.
		Note that for any $a, b > 0$, we have $\log(a+b) - \log(2) \leq \max\{ \log(a), \log(b) \} \leq \log(a + b)$. 
		Hence for a certain bounded function $g_2$, we can write
		\begin{equation}\label{eq_max_g3}
		\max \{ \psi_0, \psi_1 \}
		=
		\frac{1}{q_0 q_1} \log \Big(
			\big( \sum |f_{0, j}|^2 \big)^{p_0 q_1} + \big( \sum |f_{1, j}|^2 \big)^{p_1 q_0}
		\Big)
		+
		g_2.
		\end{equation}
		Once we expand the sum under the exponent, the proof concludes. 
	\end{proof}
	One of the central results in the theory of positive currents is Demailly's regularization theorem \cite{DemRegul}, which quite often reduces the study of arbitrary positive currents to those with algebraic singularities. 
	Since we will repeatedly use this theorem in what follows, and because several formulations exist in the literature, we explicitly state the precise version that will be needed here.
	\begin{thm}[{Demailly \cite{DemRegul} }]\label{thm_dem_appr}
		We fix a closed positive $(1,1)$-current $T$ on $X$, so that $T \geq \omega$ for some Kähler form $\omega$.
		We write $T = \theta + \ddc \psi$ for some smooth $(1,1)$-form $\theta$ and $\theta - \omega$-psh function $\psi$.
		Then there is a sequence $\epsilon_k > 0$, which decreases to $0$, and a decreasing sequence $\psi_k : X \to [-\infty, +\infty[$ of $\theta - \omega + \epsilon_k \omega$-psh functions with neat algebraic singularities on some birational model $\hat{X}_k$ of $X$ such that $\psi_k$ converges pointwise towards $\psi$.
		In particular, the currents $T_k = \theta + \ddc \psi_k$ are such that $T_k \geq (1 - \epsilon_k) \omega$, and $T_k$ converge weakly towards $T$, as $k \to \infty$.
	\end{thm}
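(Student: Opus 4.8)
The plan is to recognize the statement as a bookkeeping-heavy repackaging of Demailly's regularization theorem \cite{DemRegul} and to track the three modifications that separate its usual formulation from ours: passing from analytic to neat \emph{algebraic} singularities, realizing these on varying birational models, and upgrading the approximating family to a \emph{monotone} one.

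First I would recall Demailly's local construction. Cover $X$ by coordinate balls $U_\alpha$ on which $\theta - \omega = \ddc h_\alpha$ with $h_\alpha$ smooth, so that $\psi + h_\alpha$ is psh on $U_\alpha$; for each $m$ set $\psi_{\alpha,m} := \frac{1}{2m}\log\bigl(\sum_j |\sigma_{\alpha,m,j}|^2\bigr) - h_\alpha$, where $(\sigma_{\alpha,m,j})_j$ is an orthonormal basis of the Bergman space of holomorphic functions on $U_\alpha$ that are $L^2$ against $e^{-2m(\psi+h_\alpha)}$. The Ohsawa--Takegoshi extension theorem yields $\psi_{\alpha,m}\ge\psi - C/m$, the sub-mean-value inequality yields $\psi_{\alpha,m}(x)\le \sup_{B(x,r)}\psi + \tfrac{1}{m}\log(C r^{-2n})$, and each $\psi_{\alpha,m}$ has analytic singularities; letting $m\to\infty$ then $r\to 0$ and using upper semicontinuity of $\psi$ gives $\psi_{\alpha,m}\to\psi$ pointwise. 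Gluing the $\psi_{\alpha,m}$ by Demailly's regularized maximum and absorbing the error from the transition data together with the curvature of the $h_\alpha$ into a term $\epsilon_m\omega$, $\epsilon_m\downarrow 0$, produces global $(\theta-\omega+\epsilon_m\omega)$-psh functions $\tilde\psi_m$ with analytic singularities converging pointwise to $\psi$.

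Next I would run the three upgrades. (i) Algebraic singularities: near each point $\tilde\psi_m = \tfrac{c_m}{2}\log(\sum_i|f_{m,i}|^2)+g_m$ with $g_m$ bounded; replacing $c_m$ by a nearby rational number perturbs $\tilde\psi_m$ and $\epsilon_m$ by $o(1)$, so we may assume $c_m\in\rat$. (ii) Neat singularities on a birational model: by a log resolution / principalization $\pi_m:\hat X_m\to X$ the pulled-back ideal sheaf $(f_{m,i})_i$ becomes locally principal with divisorial zero locus, and after a further blow-up $\pi_m^* g_m$ may be taken smooth, so $\pi_m^*\tilde\psi_m$ has neat algebraic singularities; crucially the $\psi_m$ themselves remain functions on $X$, so this step is invisible to any ordering on $X$. (iii) Monotonicity: the construction is nearly subadditive along dyadic scales, $\tilde\psi_{2m}\le\tilde\psi_m + C/m$ with $C$ independent of $m$, which one extracts from the subadditivity property of multiplier ideal sheaves; setting $\psi_k := \tilde\psi_{2^k} + 2C\,2^{-k}$ gives $\psi_{k+1}\le\psi_k$, with $(\psi_k)$ still $(\theta-\omega+\epsilon_{2^k}\omega)$-psh, still of neat algebraic singularities on $\hat X_{2^k}$, and still converging pointwise to $\psi$. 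Relabel $\epsilon_k:=\epsilon_{2^k}$, arranged to be strictly decreasing. The ``in particular'' clause is then immediate: $(\theta-\omega+\epsilon_k\omega)$-psh-ness of $\psi_k$ gives $T_k = \theta+\ddc\psi_k\ge(1-\epsilon_k)\omega$, and a decreasing sequence of qpsh functions converging pointwise to a qpsh function converges in $L^1$, whence $T_k\to T$ weakly.

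The main obstacle is step (iii): the raw Demailly family is not monotone, and the repair rests on the near-subadditivity estimate $\tilde\psi_{2m}\le\tilde\psi_m+C/m$, whose proof requires the subadditivity of multiplier ideals together with uniform (in $m$) control of the implied constants over the compact manifold $X$. A secondary point to check carefully is the compatibility of step (ii) with step (iii), which is painless only because we do not ask the $\psi_k$ to live on $\hat X_k$ but merely to acquire neat algebraic singularities there.
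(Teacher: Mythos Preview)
The paper does not supply its own proof of this theorem: it is stated as a result of Demailly \cite{DemRegul}, and the surrounding remark (Remark~\ref{rem_dem_appr}) discusses precisely the subtlety you are trying to handle. There the author concedes he could not locate a proof of the version with neat algebraic singularities on $X$ itself, and for the only case actually used in the paper (currents in rational multiples of $c_1(L)$) he points to the approximating family $\phi_m + C/m$ from \cite[Theorem~14.21]{DemBookAnMet}, restricted to the dyadic subsequence $m=2^k$, with $C$ the constant $C_8$ of \cite[p.~18]{DPSPseudoeff}. This is exactly your step~(iii), so on the monotonicity issue you and the paper are aligned.

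Your sketch is in the right spirit, but two points deserve more care. First, step~(i) is a non-issue: in Demailly's Bergman-kernel construction the coefficient is $1/m$ from the outset, hence already rational, so no perturbation is needed. Second, in step~(ii) you assert that ``after a further blow-up $\pi_m^* g_m$ may be taken smooth''; this is not automatic for a merely bounded remainder and is the genuinely delicate point. In the global line-bundle version (which is what the paper actually invokes via \cite[Theorem~14.21]{DemBookAnMet}) one has $\psi_m = \tfrac{1}{2m}\log\sum_j |s_j|^2_{(h^L)^m}$ with the $s_j$ global holomorphic sections, so the remainder is smooth from the start and principalization alone yields neat algebraic singularities on the blow-up. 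In your local-Bergman-plus-gluing route, the regularized maximum preserves smoothness of the non-singular part, but you should say so explicitly rather than appeal to an unspecified ``further blow-up''. With that clarification your outline is sound and matches what the paper's remark indicates.
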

	\begin{rem}\label{rem_dem_appr}
		The author was unable to find a proof of the more natural statement requiring $\psi_k$ to have neat algebraic singularities on $X$ itself rather than on a birational model.  
		\par 
		We note, however, that in this article, Theorem \ref{thm_dem_appr} is used exclusively for currents $T$ representing rational multiples of $c_1(L)$ of a holomorphic line bundle $L$ on $X$.
		For such currents, the aforementioned stronger form of the regularization theorem follows by restricting the approximating sequence $\phi_m + \frac{C}{m}$ from \cite[Theorem 14.21]{DemBookAnMet} to the subsequence $m = 2^k$, $k \in \nat$, for a suitable constant $C > 0$ corresponding to $C_8$ from \cite[p.18]{DPSPseudoeff}.
		\par 
		For the general results of Sections \ref{sect_rel_pp} and \ref{sect_mab_big}, whenever we say that a potential has (neat) algebraic (or analytic) singularities on $X$, we always mean that this property holds on some birational model of $X$.
		Note that any two birational models are dominated by a third one, and since our arguments involve only finitely many potentials, this poses no problem.
	\end{rem}
	\par 
	By blowing up the coherent ideal sheaf generated by the holomorphic functions $f_i$ as in (\ref{eq_phi_alg_sing}), and resolving the singularities of the blow-up, we see that for any function $\psi$ with analytic singularities on $X$, we can find a complex manifold $\hat{X}$, a birational model $\pi : \hat{X} \to X$ such that $\pi^* \psi$ has singularities along the divisor with normal crossings.
	More specifically, let $E = \sum_{i = 1}^{n} a_i E_i$, $a_i \in \mathbb{Q}$, $a_i \geq 0$, be a divisor on $\hat{X}$ such that for $c > 0$, we can write
	\begin{equation}\label{eq_resol_psi_sing}
		\psi \circ \pi(x)
		=
		c \cdot \log |s(x)|_{h^E} + g, 
	\end{equation}	 
	where $s = \prod_{i = 1}^{n} s_i^{a_i}$, $s_i$ are canonical holomorphic sections of $\mathscr{O}(E_i)$, $h^E$ is a smooth metric on the $\mathbb{Q}$-line bundle $\mathscr{O}(E) := \otimes_{i = 1}^{n} \mathscr{O}(a_i E_i)$, and $g$ is a bounded function (resp. smooth if $\psi$ has neat analytic singularities) on $\hat{X}$.
	Then, by the Poincaré-Lelong formula, see (\ref{eq_poinc_lll}), applied for $(\mathscr{O}(E), h^E)$, the following version of the Siu decomposition, cf. \cite[(2.18)]{DemBookAnMet}, takes place
	\begin{equation}\label{eq_siu_rel}
		\pi^* (\theta + \ddc \psi)
		=
		c \cdot [E] + \hat{\theta} + \ddc g,
	\end{equation}
	where $\hat{\theta} := \pi^* \theta - c \cdot c_1(\mathscr{O}(E), h^E)$, and $\hat{\theta} + \ddc g$ is a positive $(1, 1)$-current.
	Note that if $\psi$ has algebraic singularities, we can even take $c = 1$ upon rescaling $a_i \in \mathbb{Q}$ above.
	\begin{prop}\label{prop_b_i_beta}
		The cohomology class $[\hat{\theta}]$ of $\hat{\theta}$ is nef.
		Moreover, for any Kähler class $[\omega]$ on $X$, there are some rational $b_i > 0$, $i = 1, \ldots, n$, such that for any $\epsilon \in ]0, 1]$, the cohomology class $[\hat{\theta}] - \epsilon \sum b_i [E_i] + \epsilon \pi^* [\omega]$ is Kähler.
		\par 
		Finally, if $\psi$ has neat analytic singularities and the current $\theta + \ddc \psi$ is Kähler, then the $(1, 1)$-form $\hat{\theta} + \ddc g$ is semi-positive, and there is a closed $(1, 1)$-form $\beta$ in the class $-\sum b_i [E_i]$ such that for any $\epsilon \in ]0, 1]$, the form $\hat{\theta} + \ddc g + \epsilon \beta$ is strictly positive.
	\end{prop}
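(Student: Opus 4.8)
The plan is to argue on the birational model $\hat X$, where $\psi$ (hence its pullback) has normal-crossing singularities along $E = \sum a_i E_i$, and to extract nefness of $[\hat\theta]$ directly from the Siu decomposition (\ref{eq_siu_rel}). First I would observe that $\hat\theta + \ddc g = \pi^*(\theta + \ddc\psi) - c\,[E]$ is a positive $(1,1)$-current (this is the content of the last sentence of the paragraph preceding the statement), so $[\hat\theta]$ is pseudoeffective; to upgrade this to nefness I would intersect with curves: for any irreducible curve $C \subset \hat X$ not contained in $\mathrm{Supp}(E)$ one has $[\hat\theta]\cdot C = \int_C (\hat\theta + \ddc g) \geq 0$ by positivity of the current, while for curves inside the exceptional locus one uses that $\pi^*\theta$ is a pullback (so trivial on $\pi$-contracted curves) together with the known negativity/structure of the exceptional divisors — more robustly, one invokes Demailly's regularization (Theorem \ref{thm_dem_appr}) applied to $\theta + \ddc\psi \geq$ (a Kähler current, after the standard $\epsilon$-perturbation $\theta+\epsilon\omega$) to write $\hat\theta = \lim$ of classes with algebraic singularities whose positive parts are nef, or simply cites \cite[Proposition 4.1.8]{DemaillyOnCohPSEF}/\cite{BoucksomThesis} for nefness of the ``residual'' class in a Siu-type decomposition on a log resolution. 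The cleanest route: $\hat\theta + \ddc g \geq 0$ shows $[\hat\theta]$ is represented by a closed semipositive-in-the-limit current, and combined with $\pi$-relative triviality along fibers it is nef.

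For the second assertion I would use that $\pi : \hat X \to X$ is a (composition of) blow-up(s) along smooth centers followed by a resolution, so the exceptional divisor supports a $\pi$-relatively ample $\mathbb Q$-divisor: there exist rational $b_i > 0$ with $-\sum b_i [E_i]$ relatively ample for $\pi$. Then for a Kähler class $[\omega]$ on $X$, the class $\epsilon\pi^*[\omega] - \epsilon\sum b_i[E_i]$ is Kähler on $\hat X$ for every $\epsilon \in ]0,1]$ (this is the standard fact that $\pi^*(\text{ample}) + \text{small}\cdot(\text{rel. ample})$ is ample/Kähler, e.g. \cite[Lemma in §1]{DPSPseudoeff} or Kleiman's criterion). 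Adding the nef class $[\hat\theta]$ to a Kähler class gives a Kähler class, so $[\hat\theta] - \epsilon\sum b_i[E_i] + \epsilon\pi^*[\omega]$ is Kähler. The rationality of the $b_i$ is automatic from the algebraic construction of the resolution.

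For the final assertion, assume now $\psi$ has neat analytic singularities (so $g$ is smooth) and $\theta + \ddc\psi$ is a Kähler current, say $\theta + \ddc\psi \geq \omega_0$ for a Kähler form $\omega_0$ on $X$. Pulling back, $\pi^*(\theta+\ddc\psi) \geq \pi^*\omega_0 \geq 0$; subtracting the effective current $c\,[E]$ and using the Poincaré–Lelong identity $c\,[E] = c\,c_1(\mathscr O(E),h^E) + \ddc(c\log|s|_{h^E})$, the remaining smooth form $\hat\theta + \ddc g$ equals $\pi^*(\theta+\ddc\psi)$ minus the current of integration along $E$, which on $X \setminus \mathrm{Supp}(E)$ (a dense open set) is $\geq \pi^*\omega_0 \geq 0$, and by smoothness this inequality extends to all of $\hat X$ — so $\hat\theta + \ddc g$ is semipositive. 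To produce $\beta$: take $b_i$ as above and let $\beta := -\sum b_i\,c_1(\mathscr O(E_i), h^{E_i}_{\mathrm{sm}})$ for smooth metrics with $\pi^*\omega_0 - \epsilon_0\sum b_i c_1(\mathscr O(E_i))$ strictly positive for some small $\epsilon_0$ (possible since $-\sum b_i[E_i]$ is $\pi$-relatively Kähler, so $\pi^*\omega_0$ dominates it after scaling); then $\hat\theta + \ddc g + \epsilon\beta \geq \pi^*\omega_0 + \epsilon\beta > 0$ for $\epsilon \in ]0, \epsilon_0]$, and for $\epsilon \in ]\epsilon_0, 1]$ one rescales. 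I expect the main obstacle to be the first assertion — pinning down nefness of $[\hat\theta]$ rigorously, since one must handle intersections with curves inside the exceptional locus and cannot merely cite positivity of the current there; the safest fix is to route through Demailly regularization or the structure theory of the Siu decomposition on a log resolution rather than attempting an ad hoc curve-by-curve argument.
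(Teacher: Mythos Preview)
Your proposal ultimately lands on the same ingredients as the paper --- Demailly regularization for nefness, the relatively ample exceptional $\mathbb{Q}$-divisor for the second claim, and a direct form-level argument for the third --- but the path to nefness is muddled and one of your suggested routes is actually incorrect.

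For the first claim, the paper's argument is a single line: the current $\hat\theta + \ddc g$ is closed, positive, and has \emph{bounded} potential $g$; by Demailly's regularization (Theorem \ref{thm_dem_appr}), any class containing such a current is nef, since the regularizing approximants $g_k$ inherit the boundedness of $g$ and hence are smooth, producing smooth $(\hat\theta + \epsilon_k\omega)$-psh functions. You eventually arrive at Demailly, but along the way you propose two alternatives that do not work. The curve-by-curve argument you yourself flag as problematic. More seriously, your ``cleanest route'' --- that $\hat\theta + \ddc g \geq 0$ combined with ``$\pi$-relative triviality along fibers'' gives nefness --- is not a valid argument: positivity of a current only gives pseudoeffectivity, and $\hat\theta = \pi^*\theta - c\cdot c_1(\mathscr{O}(E), h^E)$ is \emph{not} $\pi$-trivial (the exceptional term is nonzero on fibers). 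The key point you should isolate is the boundedness of $g$, which is what makes Demailly's approximation produce smooth forms rather than merely currents with analytic singularities.

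For the second claim, your argument matches the paper's: the cited fact (in the paper, \cite[Proposition 2.1.8]{MaHol}) gives rational $b_i > 0$ with $-\sum b_i[E_i] + \pi^*[\omega]$ K\"ahler, and nef plus K\"ahler is K\"ahler.

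For the third claim, your semipositivity argument is correct and the same as the paper's. Your construction of $\beta$ is essentially right, but the phrase ``for $\epsilon \in ]\epsilon_0, 1]$ one rescales'' is opaque: you cannot change $\beta$ midway. The clean fix is to rescale the $b_i$ at the outset (replacing $b_i$ by $\epsilon_0 b_i$, which preserves the K\"ahler condition of part two since K\"ahler plus nef is K\"ahler), so that $\pi^*\omega_0 + \beta$ is already a K\"ahler form; then for any $\epsilon \in ]0,1]$ one has $\hat\theta + \ddc g + \epsilon\beta \geq \pi^*\omega_0 + \epsilon\beta = (1-\epsilon)\pi^*\omega_0 + \epsilon(\pi^*\omega_0 + \beta) > 0$ by convexity. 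This is precisely how the paper routes the argument through the second statement of the first part.
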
	
	\begin{proof}
		We begin by proving the first part of the proposition, concerning general functions with analytic singularities.
		Since the current $\hat{\theta} + \ddc g$ is positive and has a bounded potential, the first claim follows directly from Theorem \ref{thm_dem_appr}.
		The second claim then follows from the first together with the well-known fact, see \cite[Proposition 2.1.8]{MaHol}, that there exist rational numbers $b_i > 0$ such that the class $- \sum b_i [E_i] + \pi^* [\omega]$ is Kähler.
		\par
		We now prove the second part of the proposition, which concerns Kähler currents and potentials with neat analytic singularities.
		From (\ref{eq_siu_rel}) and the fact the current $\theta + \ddc \psi$ is Kähler, we deduce that for any Kähler form $\omega$ on $X$, there exists $\epsilon > 0$ such that $\hat{\theta} + \ddc g \geq \epsilon \pi^* \omega$.
		This establishes the first claim.
		The second claim follows from the first and from the second statement in the first part of the proposition.
	\end{proof}
	\par 
	Potentials with analytic singularities are often convenient to work with, but there are many geometrically significant potentials that do not possess analytic singularities.
	Most notably, the potential $V_\theta$ defined in (\ref{eq_v_theta}) does not always have analytic singularities, as illustrated in \cite[Example 5.4]{BEGZ}.
	Nevertheless, $V_\theta$ has \textit{minimal singularities}, as explained below.
	\par 
	To define the potentials with minimal singularities, we first introduce a partial order on the space of $\theta$-psh functions on $X$. 
	We say that a $\theta$-psh function $\phi_0$ is \emph{more singular} than $\phi_1$ (and denote it by $\phi_0 \preceq \phi_1$) if there exists a constant $C > 0$ such that $\phi_0 \leq \phi_1 + C$.  
	Furthermore, we define the equivalence class of $\theta$-psh functions associated with $\phi_0$ with respect to this order as 
	\begin{equation}
		[\phi_0] = \Big\{ \psi \in \psh(X, \theta) : \phi_0 \preceq \psi \text{ and } \psi \preceq \phi_0 \Big\},
	\end{equation}
	and call it the \textit{singularity type} of $\phi_0$.
	\par 
	As has been observed by Demailly, cf. \cite{DPSPseudoeff}, in a given cohomological class $[\alpha] \in H^{1, 1}(X) \cap H^2(X, \real)$ carrying a positive current, there is always a closed positive current with a potential of minimal singularities.
	To construct such a current, consider an arbitrary current $T$ from $[\alpha]$ with a bounded potential (for example a smooth closed $(1, 1)$-form from $[\alpha]$), and consider $P(T) := T + \ddc V_T^*$, where $V_T$ is defined analogously to (\ref{eq_v_theta}).
	The fact that the resulting current has minimal singularities follows immediately from the definitions, and we see that the difference between two potentials with minimal singularities is bounded.
	\par 
	We extend the definition of minimal singularities to non-necessarily psh functions by saying that a function $\phi: X \to [-\infty, +\infty[$ has minimal singularities if $|\psi - V_T^*|$ is bounded.
	\par 
	Many invariants of big line bundles admit expressions in terms of the intersection theory of currents.
	Most importantly for this paper, such description exists for the volume of the line bundle.
	Recall that the intersection of positive currents with locally bounded potentials has been developed by Bedford-Taylor in \cite{BedfordTaylor}, and later extended for general positive currents by Boucksom-Eyssidieux-Guedj-Zeriahi in \cite{BEGZ}.
	All in all their construction produces a Radon measure $T^n$ on $X$ for every positive $(1, 1)$-current $T$.
	Let us recall some basic results from this theory; for this, we fix a smooth closed $(1, 1)$-form $\theta$ in a big class $[\theta]$. 
	\par 
	Immediately from the definition of the non-pluripolar product, we see that it is invariant under the pullback of a birational map.
	More specifically, if the map $\pi : Y \to X$ is birational, then for any $\phi \in \psh(X, \theta)$, we have
	\begin{equation}\label{eq_bir_nnpp}
		\pi_* (\pi^* \theta + \ddc \pi^* \phi)^n =  (\theta + \ddc \phi)^n.
	\end{equation}
	\par 
	\begin{thm}[{Witt Nystr{\"o}m \cite{WittNystrMonotonicity}, cf. also \cite{BEGZ}, \cite{LuChinhCompMA}, \cite{LuNguen}, \cite{VuDucRelNonpp} }]\label{thm_wn_monot}
		We fix $\phi_0, \phi_1 \in \psh(X, \theta)$, so that $\phi_0$ is less singular than $\phi_1$.
		Then, for $T_i := \theta + \ddc \phi_i$, $i = 0, 1$, we have
		\begin{equation}
			\int T_1^n \leq \int T_0^n.
		\end{equation}
	\end{thm}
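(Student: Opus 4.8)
The plan is to reduce, by means of Demailly's regularization theorem \ref{thm_dem_appr}, to the case where the \emph{less singular} potential $\phi_0$ has algebraic singularities, and then to settle that case directly on a log resolution by combining the Siu-type decomposition (\ref{eq_siu_rel}) with the birational invariance (\ref{eq_bir_nnpp}) of the non-pluripolar product. Since only the singularity type matters, after adding constants I may assume $\phi_1 \le \phi_0 \le 0$.

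The first move is the reduction to $\phi_0$ having algebraic singularities. Replacing $\theta$ by $\theta + \epsilon \omega$ for a fixed Kähler form $\omega$ (and using that $\int((\theta + \epsilon\omega) + \ddc \phi_i)^n \to \int T_i^n$ as $\epsilon \downarrow 0$, which I would check by expanding the non-pluripolar products), I may assume that $(\theta + \epsilon\omega) + \ddc \phi_0 \ge \epsilon \omega$. Theorem \ref{thm_dem_appr} then produces a decreasing sequence $\psi_k$ with algebraic singularities on birational models $\rho_k : \hat{X}_k \to X$, with $\psi_k \searrow \phi_0$. Granting the algebraic-singularities case for the pair $\rho_k^* \phi_1 \le \psi_k$ on $\hat{X}_k$, and transporting the left-hand mass back to $X$ via (\ref{eq_bir_nnpp}), I would obtain $\int_X T_1^n \le \int_{\hat{X}_k}(\rho_k^*(\theta + \epsilon\omega) + \ddc \psi_k)^n$ for every $k$; letting $k \to \infty$ and then $\epsilon \to 0$ concludes, \emph{provided} the masses of the regularizations converge to $\int_X T_0^n$. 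This last point — convergence of the non-pluripolar \emph{masses}, not merely the weak convergence of currents asserted in \ref{thm_dem_appr} — is the main obstacle: non-pluripolar masses are genuinely discontinuous along arbitrary decreasing sequences, and one has to invoke the quasi-equisingularity of Demailly's approximants, which I would cite from the literature surrounding \ref{thm_dem_appr}.

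Now suppose $\phi_0$ has algebraic singularities and let $\pi : Y \to X$ be a log resolution, so that (\ref{eq_siu_rel}) reads $\pi^*(\theta + \ddc \phi_0) = [E] + \gamma$ with $E$ an effective normal-crossing $\rat$-divisor and $\gamma = \theta_E + \ddc g$, where $\theta_E$ is smooth and closed and $g$ is bounded (here $c = 1$, as noted after (\ref{eq_siu_rel})). Since $[E]$ is carried by a pluripolar set, (\ref{eq_bir_nnpp}) gives $\int_X T_0^n = \int_Y \gamma^n = \int_Y \theta_E^n$, the last equality by integration by parts as $g$ is bounded. For $\phi_1$: from $\pi^* \phi_1 \le \pi^* \phi_0$ and the explicit form of $\pi^* \phi_0$ near a generic point of each component $E_i$, the generic Lelong number of $\pi^*(\theta + \ddc \phi_1)$ along $E_i$ is at least the coefficient of $E_i$ in $E$, so the Siu decomposition shows that $S := \pi^*(\theta + \ddc \phi_1) - [E]$ is a closed positive current. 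Writing $S = \theta_E + \ddc u$ with $u := \pi^* \phi_1 - \log |s|_{h^E}$, the function $u$ is $\theta_E$-psh and bounded above (by $g$), and again by (\ref{eq_bir_nnpp}) we get $\int_X T_1^n = \int_Y S^n = \int_Y (\theta_E + \ddc u)^n$.

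It remains to show $\int_Y (\theta_E + \ddc u)^n \le \int_Y \theta_E^n$ for a $\theta_E$-psh function $u$ bounded above, and this is immediate: the truncations $u_C := \max(u, -C)$ are bounded $\theta_E$-psh, hence $\int_Y (\theta_E + \ddc u_C)^n = \int_Y \theta_E^n$ by integration by parts, whereas, by the very definition of the non-pluripolar product, $\int_Y (\theta_E + \ddc u)^n = \lim_{C \to \infty} \int_{\{u > -C\}} (\theta_E + \ddc u_C)^n \le \int_Y \theta_E^n$. Combining this with the two identities of the previous paragraph yields $\int_X T_1^n \le \int_X T_0^n$. As indicated, the one serious input is the convergence of masses along Demailly regularizations; an approach circumventing it would instead express $\int T_i^n$ through the canonical truncations $\max(\phi_i, V_\theta - C)$, which have minimal singularities, and invoke the comparison principle for potentials with minimal singularities — merely relocating the difficulty.
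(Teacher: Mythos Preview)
The paper does not give its own proof of this theorem: it is quoted from the literature (Witt Nystr{\"o}m \cite{WittNystrMonotonicity}, with related results in \cite{BEGZ}, \cite{LuChinhCompMA}, \cite{LuNguen}, \cite{VuDucRelNonpp}) and used as a black box. There is therefore no paper proof to compare your attempt against.

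That said, your sketch follows the standard route taken in those references. The reduction to $\phi_0$ with algebraic singularities via Demailly regularization, the passage to a log resolution, the Siu-type decomposition (\ref{eq_siu_rel}), and the final truncation argument for $\int_Y(\theta_E + \ddc u)^n \le \int_Y \theta_E^n$ are all correct and are essentially how the result is proved in \cite{BEGZ} (for the big-class case) and \cite{WittNystrMonotonicity} (in general). You have also correctly isolated the genuine difficulty: the convergence of non-pluripolar \emph{masses} along the Demailly approximants $\psi_k \searrow \phi_0$. This is not automatic from weak convergence and is precisely the content of the ``quasi-equisingular'' approximation results (e.g.\ \cite[Theorem 1.16]{BEGZ}); your proof is not self-contained without that input, but you are right that it is available in the cited literature. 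One small point: in the perturbation step you should check that the mixed non-pluripolar products $\omega^j \wedge (\theta + \ddc \phi_i)^{n-j}$ have finite mass so that the expansion of $\int((\theta+\epsilon\omega)+\ddc\phi_i)^n$ is legitimate; this follows from the basic boundedness properties of non-pluripolar products in \cite{BEGZ}.
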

	The next result makes a relation between the non-pluripolar product and the volume.
	\begin{thm}[{Boucksom \cite{BouckVol}}]\label{thm_bouck_vol}
		For any positive closed $(1, 1)$-current $T_{\min}$ with potential of minimal singularities in a class $[c_1(L)]$, where $L$ is a big line bundle, we have
		\begin{equation}
			\int T_{\min}^n = {\rm{vol}}(L).
		\end{equation}
	\end{thm}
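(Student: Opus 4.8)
The plan is to establish the two inequalities $\int T_{\min}^n \le {\rm{vol}}(L)$ and $\int T_{\min}^n \ge {\rm{vol}}(L)$ separately. First note that, since any two potentials with minimal singularities differ by a bounded function, Theorem \ref{thm_wn_monot} applied in both directions shows that $\int T_{\min}^n$ does not depend on the choice of $T_{\min}$, so I am free to fix a convenient representative. Write $\theta := c_1(L, h^L)$ for a smooth metric $h^L$; then $T_{\min} = \theta + \ddc V_\theta^*$ is the least singular positive current in $c_1(L)$.

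For the lower bound, for each $k$ with $n_k > 0$ I would form the Bergman-type current $T_k := \theta + \tfrac1k \ddc \psi_k$, where $\psi_k := \tfrac12 \log \sum_j |s_j|_{(h^L)^k}^2$ for a basis $(s_j)$ of $H^0(X, L^{\otimes k})$; this is a positive current in $c_1(L)$ with algebraic singularities along the base locus of $|L^{\otimes k}|$. Resolving that base locus by $\pi_k : \hat{X}_k \to X$ and writing $\pi_k^* L^{\otimes k} = \mathscr{O}(M_k) \otimes \mathscr{O}(F_k)$ with $M_k$ base-point-free, one checks, using the birational invariance (\ref{eq_bir_nnpp}), that $\pi_k^* T_k = \tfrac1k([F_k] + \Theta_k)$ with $\Theta_k$ a smooth semi-positive form in $c_1(\mathscr{O}(M_k))$; since the non-pluripolar product puts no mass on the pluripolar set $F_k$, this gives $\int T_k^n = k^{-n}(M_k)^n$. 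As $T_{\min}$ is the least singular positive current in $c_1(L)$, Theorem \ref{thm_wn_monot} yields $\int T_{\min}^n \ge \int T_k^n = k^{-n}(M_k)^n$ for every $k$, and Fujita's approximation theorem \cite{Fujita} identifies $\sup_k k^{-n}(M_k)^n$ with ${\rm{vol}}(L)$; hence $\int T_{\min}^n \ge {\rm{vol}}(L)$.

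For the upper bound I would first prove $\int T^n \le {\rm{vol}}(L)$ for an arbitrary Kähler current $T \in c_1(L)$. Applying Theorem \ref{thm_dem_appr} to $T$ produces currents $T_j \in c_1(L)$ with neat algebraic singularities on birational models $\pi_j : \hat{X}_j \to X$, still Kähler currents for $j$ large and less singular than $T$ (their potentials decrease to that of $T$), so $\int T^n \le \int T_j^n$ by Theorem \ref{thm_wn_monot}. The Siu-type decomposition (\ref{eq_siu_rel}) together with Proposition \ref{prop_b_i_beta} lets me write $\pi_j^* T_j = [E_j] + \beta_j$ with $\beta_j$ smooth semi-positive, $[\beta_j] = \pi_j^* c_1(L) - [E_j]$ nef and $[E_j]$ effective; hence $\int T_j^n = \int_{\hat{X}_j} \beta_j^n = [\beta_j]^n = {\rm{vol}}([\beta_j]) \le {\rm{vol}}(\pi_j^* L) = {\rm{vol}}(L)$, using that the volume coincides with the top self-intersection on nef classes and is monotone under adding effective classes. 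For a general minimal-singularities $T_{\min}$ I would then choose a Kähler form $\omega$ with $[\omega] = c_1(A)$ for an ample $\mathbb{Q}$-line bundle $A$, apply the previous case to the Kähler current $T_{\min} + \tfrac1m\omega \in c_1(L + \tfrac1m A)$ to get $\int (T_{\min} + \tfrac1m\omega)^n \le {\rm{vol}}(L + \tfrac1m A)$, and let $m \to \infty$: the left side converges to $\int T_{\min}^n$ by multilinearity and finiteness of the non-pluripolar products (the mixed masses $\int T_{\min}^{n-i}\wedge\omega^i$ are finite since $V_\theta^*$ has minimal singularities), and the right side converges to ${\rm{vol}}(L)$ by continuity of the volume on the big cone.

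The main obstacle I anticipate is twofold: invoking Fujita's approximation theorem in the correct birational ``moving part'' form for the lower bound, and controlling the finiteness and continuity of non-pluripolar Monge-Amp\`ere products needed to pass to the limit in the perturbation $T_{\min} + \tfrac1m\omega \to T_{\min}$, which is precisely where the possible non-smoothness of $T_{\min}$ is felt. Since this is Boucksom's theorem, one may of course alternatively just cite \cite{BouckVol}.
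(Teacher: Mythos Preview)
The paper does not give a proof of this statement: it is quoted as a theorem of Boucksom with a citation to \cite{BouckVol} and used as a black box throughout. So there is no ``paper's own proof'' to compare against, and your closing remark (``one may of course alternatively just cite \cite{BouckVol}'') is in fact exactly what the paper does.

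That said, your sketch is a faithful outline of the original argument and is essentially correct. Two minor comments. First, the reference \cite{Fujita} in this paper is used for the statement that the $\limsup$ in the definition of ${\rm vol}(L)$ is a limit, not for Fujita's approximation theorem; the latter is what you actually need for the lower bound, so if you were writing this out you would want a separate citation (e.g.\ \cite[\S11.4]{LazarBookI} or Demailly--Ein--Lazarsfeld). Second, in the upper-bound step you use monotonicity of the volume under subtracting an effective class and the identity ${\rm vol}([\beta]) = [\beta]^n$ for nef $[\beta]$; both are standard but lie outside what the present paper recalls, so again a citation would be needed. The continuity issue you flag for the perturbation $T_{\min} + \tfrac{1}{m}\omega \to T_{\min}$ is real but is handled in \cite{BEGZ}, which the paper already cites for the non-pluripolar product.
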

	Following \cite{BouckVol}, we then extend the definition of the volume from line bundles to general big classes $[\theta] \in H^{1, 1}(X) \cap H^2(X, \real)$ as 
	\begin{equation}\label{eq_vol_new_defn}
		{\rm{vol}}([\theta]) := \int T_{\min}^n,
	\end{equation}
	where $T_{\min}$ is a positive closed $(1, 1)$-current with a potential of minimal singularities in $[\theta]$.
	From Theorem \ref{thm_wn_monot}, ${\rm{vol}}([\theta])$ is well-defined, and by Theorem \ref{thm_bouck_vol}, such a definition is compatible with the previously given one of the volume of a line bundle in the sense that ${\rm{vol}}(L) = {\rm{vol}}(c_1(L))$.
	One can extended the definition of the volume from (\ref{defn_vol}) to $\mathbb{Q}$-line bundles in a natural way, cf. \cite[Remark 2.2.39]{LazarBookI}, and such a definition is again compatible with (\ref{eq_vol_new_defn}).
	\begin{thm}[{Boucksom-Eyssidieux-Guedj-Zeriahi \cite[Theorem 2.17]{BEGZ} and Darvas-Di Nezza-Lu \cite[Theorem 2.3 and Remark 2.5]{DDLMonoton}}]\label{thm_cont_begz}
		We fix $\phi \in \psh(X, \theta)$, and $\phi_i \in \psh(X, \theta)$, $i \in \nat$.
		Then for $T_i := \theta + \ddc \phi_i$, and $T := \theta + \ddc \phi$, the sequence of measures $T_i^n$ converges weakly towards $T^n$ as long as one of the following two conditions are satisfied
		\begin{itemize}
			\item $\phi_i$ increase towards $\phi$ almost everywhere, as $i \to \infty$.
			\item $\phi$ has minimal singularities and $\phi_i$ decrease towards $\phi$, as $i \to \infty$.
		\end{itemize}
	\end{thm}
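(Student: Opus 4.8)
The plan is to reduce the statement to the Bedford--Taylor continuity theorem for the Monge-Ampère operator on locally bounded plurisubharmonic functions, using the monotonicity of non-pluripolar masses (Theorem \ref{thm_wn_monot}) and the description of the volume (Theorem \ref{thm_bouck_vol} and (\ref{eq_vol_new_defn})) to rule out loss of mass in the limit. Since weak convergence of Radon measures is a local matter and the non-pluripolar product is local in the plurifine topology, I would first cover $X$ by coordinate balls on which $\theta = \ddc \rho_\alpha$ with $\rho_\alpha$ smooth; writing $u_i := \phi_i + \rho_\alpha$ and $u := \phi + \rho_\alpha$, which are plurisubharmonic with $u_i \to u$ monotonically (almost everywhere in the increasing case), the problem becomes the monotone continuity, on a fixed pseudoconvex domain, of the non-pluripolar Monge-Ampère measure $\lim_j \uparrow \mathbf{1}_{\{u_i > -j\}} (\ddc \max(u_i, -j))^n$. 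For each fixed truncation level $j$, the functions $\max(u_i,-j)$ are locally bounded and converge monotonically to $\max(u,-j)$, so Bedford--Taylor gives that $(\ddc \max(u_i,-j))^n$ converges weakly to $(\ddc \max(u,-j))^n$; the whole difficulty is to reconcile the moving plurifine-open sets $\{u_i > -j\}$ with the fixed limiting truncation, and then to interchange the limits $i \to \infty$ and $j \to \infty$.

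In the decreasing case this interchange is made possible precisely by the minimal singularities hypothesis. If $\phi$ has minimal singularities there is $C>0$ with $\phi \geq V_\theta - C$, hence $\phi_i \geq \phi \geq V_\theta - C$ for all $i$; and since $\phi_i \leq \phi_1 \leq V_\theta + \sup_X \phi_1$, the whole sequence $(\phi_i)$ lies in the singularity type of $V_\theta$. Applying Theorem \ref{thm_wn_monot} in both directions together with (\ref{eq_vol_new_defn}) then forces $\int_X T_i^n = \int_X T^n = \mathrm{vol}([\theta])$ for every $i$, so there is no mass available to escape to the pluripolar set $\{\phi = -\infty\}$. One concludes as follows: any weak limit $\nu$ of $(T_i^n)$ has $\int_X \nu = \mathrm{vol}([\theta])$ by continuity of the total mass on the compact $X$, while the level-$j$ Bedford--Taylor convergence above, combined with plurifine locality and the fact that $\mathbf{1}_{\{\phi_i > -j\}}$ increases pointwise to $\mathbf{1}_{\{\phi > -j\}}$, shows $\nu \geq \mathbf{1}_{\{\phi > -j\}}(\theta + \ddc \max(\phi,-j))^n$ for every $j$, hence $\nu \geq T^n$; comparing total masses gives $\nu = T^n$.

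The increasing case is the one I expect to be the main obstacle. Here $i \mapsto \int_X T_i^n$ is non-decreasing and bounded above by $\int_X T^n$ (again by Theorem \ref{thm_wn_monot}), but one must still prove that the limit equals $\int_X T^n$, i.e. rule out loss of mass under the increasing limit; once that is known, the identification $T_i^n \to T^n$ weakly follows exactly as in the decreasing case. To prove the mass statement I would first reduce, applying Demailly's regularization theorem (Theorem \ref{thm_dem_appr}, together with the line-bundle refinement of Remark \ref{rem_dem_appr}) to the Kähler currents $\theta + \epsilon \omega + \ddc \phi_i$ and running a diagonal argument in $\epsilon \downarrow 0$, to the case of potentials with algebraic singularities; for these the ideal-sheaf and base-locus description makes the concentration of Monge-Ampère mass along $\{\phi = -\infty\}$ explicit and accessible to the comparison principle, and one transfers the resulting mass equality back to the general case by semicontinuity. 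The delicate point throughout is the bookkeeping with the plurifine-open sets $\{u_i > -j\}$, whose euclidean interiors may be empty, against the fixed bounded truncations $\max(u,-j)$; this is the technical heart of the argument, carried out in detail in \cite{BEGZ} and refined, using the monotonicity theorem, in \cite{DDLMonoton}.
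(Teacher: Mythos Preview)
The paper does not prove this theorem; it is stated as a known result, attributed to \cite[Theorem 2.17]{BEGZ} and \cite[Theorem 2.3 and Remark 2.5]{DDLMonoton}, and used as a black box throughout. There is therefore no in-paper proof to compare against.

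Evaluating your sketch on its own merits: the decreasing case is essentially correct in outline, modulo a small slip. Since $\phi_i \searrow \phi$, the indicators $\mathbf{1}_{\{\phi_i > -j\}}$ \emph{decrease} (not increase) to $\mathbf{1}_{\{\phi > -j\}}$. This does not break the inequality $\nu \geq T^n$, because on the fixed plurifine open set $\{\phi > -j\}$ all the $\phi_i$ already satisfy $\phi_i > -j$, and Bedford--Taylor applies directly there; but you should state it the right way round.

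The increasing case, as written, is not a proof. Demailly regularization (Theorem \ref{thm_dem_appr}) produces potentials with algebraic singularities that \emph{decrease} to a given $\phi_i$, which runs against the direction $\phi_i \nearrow \phi$ you are trying to exploit; the diagonal argument you allude to would have to reconcile two opposite monotone limits, and you give no indication how. More to the point, you yourself identify the equality $\lim_i \int_X T_i^n = \int_X T^n$ as the crux and then defer the ``technical heart'' to the cited references --- so this paragraph is a description of where the difficulty lies, not a resolution of it. If you want a self-contained route, the argument in \cite{DDLMonoton} is cleaner than the regularization approach you sketch: one shows that monotone a.e.\ convergence of quasi-psh functions implies convergence in capacity, and then invokes the stability of the non-pluripolar product under convergence in capacity together with the mass inequality $\int_X T_i^n \leq \int_X T^n$ from Theorem \ref{thm_wn_monot}. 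That is exactly the mechanism exploited in the paper's Proposition \ref{prop_sm_weak_conv} and Lemma \ref{lem_incr}.
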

	\begin{rem}\label{rem_conv_out_pp}
		Both of the above conditions are equivalent to the corresponding statements in which convergence is required only outside a pluripolar set, see \cite[Proposition 8.4]{GuedjZeriahBook}.
	\end{rem}
	\par	
	In this article, superadditive families of functions appear more naturally than increasing ones.
	We state below a weaker version of Theorem \ref{thm_cont_begz} that is tailored to this context.
	\begin{prop}\label{prop_sm_weak_conv}
		Let $\phi_i \in \psh(X,\theta)$, $i \in \nat$, be a sequence which is uniformly bounded from above. 
		Assume that $i \phi_i$ is superadditive, i.e., $i \phi_i + j \phi_j \leq (i+j) \phi_{i+j}$ for all $i,j \in \nat$.
		By Fekete's lemma, the limit $\phi^0 := \lim_{i \to \infty} \phi_i$ exists pointwise. 
		Let $\phi = (\phi^0)^*$. 
		Then $\phi \in \psh(X,\theta)$, and in the notations of Theorem \ref{thm_cont_begz}, the sequence of measures $T_i^n$ converges weakly towards $T^n$.
	\end{prop}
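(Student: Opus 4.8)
The plan is to reduce the assertion to the increasing case of Theorem~\ref{thm_cont_begz} along a carefully chosen subsequence, and then to recover the full sequence by a separate argument controlling the total non-pluripolar masses. For the first part, after replacing $\phi_i$ by $\phi_i - C$ I may assume $\phi_i \le 0$. Writing $a_i := i\phi_i$ and iterating superadditivity one gets $a_{ki} \ge k\,a_i$ for all $k, i \in \nat$; hence $(m+1)!\,\phi_{(m+1)!} = a_{(m+1)!} \ge (m+1)\,a_{m!} = (m+1)!\,\phi_{m!}$, so the subsequence $(\phi_{m!})_m$ is nondecreasing, and $a_{m!} \ge (m!/i)\,a_i$ gives $\phi_{m!} \ge \phi_i$ whenever $i \le m$. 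Since by Fekete's lemma $\phi^0 = \lim_i \phi_i = \sup_i \phi_i$ pointwise, these two facts force $\phi_{m!} \nearrow \phi^0$. Therefore $\phi = (\phi^0)^*$ is $\theta$-psh (and $\not\equiv -\infty$, as $\phi \ge \phi_1$), the convergence $\phi_{m!} \to \phi$ holds outside a pluripolar set, and Theorem~\ref{thm_cont_begz} (first bullet) together with Remark~\ref{rem_conv_out_pp} gives $(\theta + \ddc\phi_{m!})^n \to T^n$ weakly; in particular $\int(\theta + \ddc\phi_{m!})^n \to \int T^n$ as $m \to \infty$.

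Next I would establish that $\int(\theta + \ddc\phi_i)^n \to \int T^n$ for the whole sequence. The upper bound $\int(\theta + \ddc\phi_i)^n \le \int T^n$ is immediate from $\phi_i \le \phi^0 \le \phi$ and Theorem~\ref{thm_wn_monot}. For the matching lower bound, fix $m$ and, for each $i$, write $i = q\,m! + r$ with $0 \le r < m!$; superadditivity yields $i\phi_i = a_i \ge q\,a_{m!} + a_r$, that is, $\phi_i \ge u_{m,i} := t_i\,\phi_{m!} + (1-t_i)\,\phi_r$ with $t_i := q\,m!/i \to 1$ as $i \to \infty$. The function $u_{m,i}$ is $\theta$-psh, being a convex combination of $\theta$-psh functions, so $\phi_i$ is less singular than $u_{m,i}$ and Theorem~\ref{thm_wn_monot} gives $\int(\theta + \ddc u_{m,i})^n \le \int(\theta + \ddc\phi_i)^n$. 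Expanding $(\theta + \ddc u_{m,i})^n$ by multilinearity of the non-pluripolar product and using that $r$ runs through the finite set $\{0, \dots, m!-1\}$ while $t_i \to 1$, one obtains $\int(\theta + \ddc u_{m,i})^n \to \int(\theta + \ddc\phi_{m!})^n$ as $i \to \infty$. Hence $\liminf_i \int(\theta + \ddc\phi_i)^n \ge \int(\theta + \ddc\phi_{m!})^n$ for every $m$, and letting $m \to \infty$ and combining with the upper bound gives $\int(\theta + \ddc\phi_i)^n \to \int T^n$.

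Finally I would upgrade this to weak convergence of the measures. The same sandwich $u_{m,i} \le \phi_i \le \phi_{i!}$, now with $m = m(i) \to \infty$ chosen slowly enough that $t_i \to 1$ is still preserved, forces $\phi_i \to \phi$ in capacity: the upper term does so because monotone sequences of $\theta$-psh functions converge in capacity to the regularization of their limit, and the lower term because $u_{m(i),i} - \phi_{m(i)!} = (1-t_i)\big(\phi_{r(i)} - \phi_{m(i)!}\big) \to 0$ in capacity. Convergence in capacity together with the convergence of total non-pluripolar masses from the previous step then yields $(\theta + \ddc\phi_i)^n \to T^n$ weakly, which is the claim.

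I expect the last step — passing from the monotone subsequence to the full sequence — to be the main obstacle, for the following reason: the non-pluripolar Monge–Ampère operator is not monotone with respect to the potential, so the pointwise bounds $u_{m,i} \le \phi_i \le \phi_{i!}$ do not by themselves sandwich the corresponding measures, and one genuinely needs the quantitative input (Witt Nyström's monotonicity of masses, multilinearity of the non-pluripolar product, and convergence in capacity) rather than a soft comparison. By contrast, identifying the limit $\phi$, choosing the factorial subsequence, and carrying out the superadditivity manipulations are routine.
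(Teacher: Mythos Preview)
Your approach is correct but takes a considerably longer route than the paper's. Both arguments ultimately invoke the criterion from \cite[Theorem 2.3]{DDLMonoton} (convergence in capacity together with a mass condition implies weak convergence of the non-pluripolar Monge--Amp\`ere measures), but they differ in how convergence in capacity is established.

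The paper observes that the standard proof that an increasing, uniformly bounded-above sequence of $\theta$-psh functions converges in capacity to the usc regularization of its supremum (e.g.\ \cite[Proposition 4.25]{GuedjZeriahBook}) uses monotonicity in exactly one place: the appeal to Dini's lemma. Since Dini's lemma has a superadditive analogue (cf.\ \cite[\S A.2]{FinSecRing}), the argument carries over verbatim and yields $\phi_i \to \phi$ in capacity for the full sequence in one stroke. The mass inequality $\int T_i^n \leq \int T^n$ then follows from $\phi_i \leq \phi^0 \leq \phi$ and Theorem \ref{thm_wn_monot}, and the criterion applies directly.

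Your route --- passing to the factorial subsequence to reduce to the increasing case, then squeezing $\phi_i$ between $u_{m(i),i}$ and $\phi$ via the convex-combination bound coming from superadditivity --- is a legitimate alternative that treats \cite[Proposition 4.25]{GuedjZeriahBook} as a black box rather than reopening its proof. Two remarks: first, the capacity-convergence of the lower barrier $u_{m(i),i}$ relies on a uniform sublevel-set estimate of the type $\mathrm{Cap}_\omega(\{\psi < -s\}) \leq C/s$ valid over all $\theta$-psh $\psi$ with $\sup\psi \leq 0$; this is standard but worth stating explicitly, since $r(i)$ is not confined to a finite set once $m(i)\to\infty$. Second, your middle step --- establishing full convergence $\int T_i^n \to \int T^n$ via multilinearity of the non-pluripolar product --- is correct but unnecessary: the criterion in \cite[Theorem 2.3]{DDLMonoton} only needs the one-sided bound $\int T_i^n \leq \int T^n$, which you already have from Theorem \ref{thm_wn_monot}. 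Dropping that step shortens your argument substantially and brings it closer to the paper's.
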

	\begin{proof}
		Remark first that by Fekete's lemma, 
		\begin{equation}\label{eq_fek_sup}
			\phi^0 = \sup_{i \in \nat} \phi_i.
		\end{equation}
		Hence, by the uniform boundedness of $\phi_i$, it is standard that $\phi \in \psh(X, \theta)$, cf. \cite[Proposition I.4.24]{DemCompl}. 
		By \cite[Theorem 2.3]{DDLMonoton}, it suffices to show that $\int_X T^{\,n} \geq \int_X T_i^{\,n}$ and that $\phi_i$ converges to $\phi$ in capacity, see \cite[Definition 4.23]{GuedjZeriahBook} for the definition of the convergence in capacity. 
		The first condition follows from Theorem \ref{thm_wn_monot} and (\ref{eq_fek_sup}).
		\par 
		The proof that $\phi_i$ converges to $\phi$ in capacity is a minor modification of the usual argument showing that a uniformly bounded from above increasing sequence of psh functions converges to the upper semicontinuous regularization of its supremum in capacity, see \cite[Proposition 4.25]{GuedjZeriahBook}. 
		Indeed, the only step of \cite[Proposition 4.25]{GuedjZeriahBook} where monotonicity is used is the application of Dini's lemma, but the latter remains valid for supermultiplicative sequences, cf. \cite[\S A.2]{FinSecRing}.
	\end{proof}
		
	\subsection{Pluripotential theory with prescribed singularities}\label{sect_rel_pp}
	The main goal of this section is to recall some basic facts from the relative pluripotential theory developed by Ross-Witt Nystr{\"o}m in \cite{RossWNEnvelop}, \cite{RossNystAnalTConf} and Darvas-Di Nezza-Lu in \cite{DarvDiNezLuSingType}, \cite{DDLL1}, \cite{DDLMonoton}.
	\par 
	We fix a smooth closed $(1, 1)$-form $\theta$ from a big class $[\theta]$.
	Recall that we already defined the partial order $\preceq$ on the space of $\theta$-psh functions in Section \ref{sect_min_sing}.
	Instead of considering the space of all $\theta$-psh functions, the relative pluripotential theory concerns solely $\theta$-psh functions which have singularities which are not better than those of a fixed $\psi \in \psh(X, \theta)$.
	More precisely, it deals with the following subset of $\psh(X, \theta)$:
	\begin{equation}
		\psh(X, \theta, \psi)
		:=
		\{	\phi \in \psh(X, \theta) : \phi \preceq \psi
		\}.
	\end{equation}
	Remark that this space depends solely on the singularity type $[\psi]$.
	\par 
	Let us now describe the analogue of the envelope construction in the relative setting.
	First, given a non-empty subset $V \subset X$ and a bounded function $f : V \to \mathbb{R}$, we define 
	\begin{equation}\label{eq_p_theta_env}
		P_{\theta, V}(f) := 
		\sup \big\{
			\phi \in \psh(X, \theta) : \phi \leq f \text{ on } V 
		\big\}.
	\end{equation}
	For brevity, we also note $P_{\theta}(f) := P_{\theta, X}(f)$.
	Then it is standard that if $V$ is non-pluripolar, then $P_{\theta, V}(f)$ is uniformly bounded from above, and hence $P_{\theta, V}(f)^*$ is $\theta$-psh, cf. \cite[Theorem 9.17]{GuedjZeriahBook}.
	In particular, we can extend the definition $P_{\theta}(f)$ for an arbitrary bounded from below function $f : X \to \mathbb{R} \cup \{ +\infty \}$, which is bounded from above on a non-pluripolar subset.
	Indeed, if $V$ is such a subset, then one can easily verify that for $M := \sup P_{\theta, V}(f)$, we have
	\begin{equation}\label{eq_env_min}
		P_{\theta}(f) = P_{\theta}(\min\{ f, M \}).
	\end{equation}
	\par 
	It is also immediate that the envelope behaves well under homotheties: for any $f$ as above and any $t \in ]0, +\infty[$, we have
	\begin{equation}\label{eq_env_homot}
		P_{t \cdot \theta}(t \cdot f) = t \cdot P_{\theta}(f).
	\end{equation}
	This property allows us to extend the definition of envelopes to metrics on $\mathbb{Q}$-line bundles.
	\par 
	Moreover, the envelope behaves well with respect to birational maps, cf. \cite[Proposition 2.9]{BermanBouckBalls}.
	More specifically, if the map $\pi : Y \to X$ is birational, then for any $f$ as in (\ref{eq_env_min}), we have
	\begin{equation}\label{eq_bir_inv_env}
		P_{\pi^* \theta}(\pi^* f) = \pi^* P_{\theta}(f).
	\end{equation}
	\par 
	Note that we also immediately have $P_{\theta}(f)^* \leq f^*$, cf. \cite[Theorem 9.17]{GuedjZeriahBook}, and so, if $f$ is upper semicontinuous, then $P_{\theta}(f)$ is also upper semicontinuous, and $P_{\theta}(f) \in \psh(X, \theta)$.
	\par 
	Now, for any fixed $\psi \in \psh(X, \theta)$, we define the envelope
	\begin{equation}
		P_{\theta}[\psi](f) := 
		\lim_{C \to + \infty}
		P_{\theta}(\min\{ \psi + C, f \}).
	\end{equation}
	If $f$ is bounded from above on some non-pluripolar subset, then by a similar argument as in (\ref{eq_env_min}), $P_{\theta}(\min\{ \psi + C, f \})^*$ is an increasing sequence of $\theta$-psh functions, which is uniformly bounded from above.
	Hence, we have $P_{\theta}[\psi](f)^* \in \psh(X, \theta)$.
	Moreover, as for $M := - \sup_{x \in X} \psi(x) + \inf_{x \in X} f(x)$, we have $\psi + M \leq f$, we obtain $\psi + M \leq P_{\theta}[\psi](f)^*$.
	In particular, for the singularity types, we have $[\psi] \preceq [P_{\theta}[\psi](f)^*]$. 
	\par 
	Directly from (\ref{eq_env_min}), (\ref{eq_bir_inv_env}) and \cite[Remark 4.6]{RossNystAnalTConf}, \cite{RossWNEnvelop}, we obtain the following result.
	\begin{prop}\label{prop_model_pot}
		If $f$ is bounded from above on some non-pluripolar subset and from below everywhere, then $[\psi] = [P_{\theta}[\psi](f)^*]$ for every $\psi$ with analytic singularities.
	\end{prop}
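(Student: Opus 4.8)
The plan is to prove the two inequalities $[\psi] \preceq [P_{\theta}[\psi](f)^*]$ and $[P_{\theta}[\psi](f)^*] \preceq [\psi]$ separately. The first was already noted in the discussion preceding the statement: with $M_0 := \inf_X f - \sup_X \psi$ one has $\psi + M_0 \leq f$ and $\psi + M_0 \leq \psi + C$ for $C$ large, so $\psi + M_0$ lies in the defining family of $P_{\theta}[\psi](f)$, whence $\psi + M_0 \leq P_{\theta}[\psi](f)^*$ and $[\psi] \preceq [P_{\theta}[\psi](f)^*]$. Thus the whole content is the reverse inclusion, i.e. producing a constant $C$ with $P_{\theta}[\psi](f)^* \leq \psi + C$.

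For this, the first step is to replace $f$ by a constant \emph{uniformly in the envelope parameter $C$}. I fix a non-pluripolar set $V \subset X$ on which $f$ is bounded above and set $M := \sup_X P_{\theta, V}(f)$, which is a well-defined real number since $V$ is non-pluripolar and $f|_V$ is bounded, cf. the discussion around (\ref{eq_env_min}). For $C > 0$ put $g_C := \min\{\psi + C, f\}$. Because $g_C \leq f$, every $\phi \in \psh(X, \theta)$ with $\phi \leq g_C$ on $V$ lies in the family defining $P_{\theta, V}(f)$, hence $\phi \leq P_{\theta, V}(f) \leq M$ everywhere, so $\phi \leq \min\{g_C, M\}$; the converse inclusion of defining families being trivial, we get $P_{\theta}(g_C) = P_{\theta}(\min\{g_C, M\})$ — this is exactly the argument behind (\ref{eq_env_min}), and it does not require lower-boundedness of $g_C$. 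Now $\min\{g_C, M\} \leq \min\{\psi + C, M\} = M + \min\{(\psi - M) + C, 0\}$, and the elementary translation invariance of $P_{\theta}$ gives $P_{\theta}(g_C) \leq M + P_{\theta}(\min\{\psi + (C - M), 0\})$. Letting $C \to \infty$ and passing to upper semicontinuous regularizations yields $P_{\theta}[\psi](f)^* \leq M + P_{\theta}[\psi](0)^*$.

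The remaining step, where the hypothesis on $\psi$ enters and where the substance lies, is the model property of potentials with analytic singularities, $[P_{\theta}[\psi](0)^*] = [\psi]$, which is \cite[Remark 4.6]{RossNystAnalTConf} together with \cite{RossWNEnvelop}; in particular $P_{\theta}[\psi](0)^* \leq \psi + C_0$ for some $C_0$, so combined with the previous paragraph $P_{\theta}[\psi](f)^* \leq \psi + (M + C_0)$, i.e. $[P_{\theta}[\psi](f)^*] \preceq [\psi]$, which together with the first inequality finishes the proof. If $\psi$ has analytic singularities only on a birational model $\pi \colon \hat X \to X$ — which, by the conventions of Remark \ref{rem_dem_appr}, is what the hypothesis means in general — I would first record $\pi^* P_{\theta}[\psi](f) = P_{\pi^*\theta}[\pi^*\psi](\pi^* f)$, a consequence of the birational invariance (\ref{eq_bir_inv_env}) of envelopes after passing to the limit in $C$, apply the model property upstairs on $\hat X$, and push the resulting estimate back down using that $\pi^*$ preserves the order $\preceq$ on psh functions. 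I expect the only delicate point to be the bookkeeping in the first step, namely making sure the constant $M$ from (\ref{eq_env_min}) is genuinely independent of $C$; once that is in place, the model property does the real work.
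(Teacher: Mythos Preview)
Your proof is correct and follows essentially the same route as the paper, which merely cites the three ingredients (\ref{eq_env_min}), (\ref{eq_bir_inv_env}), and \cite[Remark 4.6]{RossNystAnalTConf}, \cite{RossWNEnvelop} without spelling out the reduction. Your careful tracking of the constant $M$ uniformly in $C$ and the explicit inequality $P_{\theta}[\psi](f)^* \leq M + P_{\theta}[\psi](0)^*$ are exactly what the paper's terse reference to (\ref{eq_env_min}) is encoding; the only minor slip is that when you write ``$\phi \leq g_C$ on $V$'' you really need $\phi \leq g_C$ on all of $X$ (since you are computing $P_{\theta}(g_C) = P_{\theta, X}(g_C)$) in order to conclude $\phi \leq \min\{g_C, M\}$ globally, but this is a fortiori true and the argument goes through unchanged.
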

	\par 
	Let us now point out the following simple property of the above envelope construction.
	Assume that $\psi_1, \psi_2 \in \psh(X, \theta)$ are such that $\psi_2 \preceq \psi_1$.
	Then for any $f$ as in Proposition \ref{prop_model_pot},
	\begin{equation}\label{eq_comp_law_env}
		P_{\theta}[\psi_2](f)
		=
		P_{\theta}[\psi_2](P_{\theta}[\psi_1](f)).
	\end{equation}
	\par 
	\begin{sloppypar}
	We will now fix $\psi \in \psh(X, \theta)$ with analytic singularities.
	We use the notation $\hat{X}$, $\hat{\theta}$, $g$ for the objects introduced in (\ref{eq_resol_psi_sing}).
	We define the order-preserving bijection between $\psh(X, \theta, \psi)$ and $\psh(\hat{X}, \hat{\theta})$ as
	\begin{equation}\label{eq_isom_hat}
		\psh(X, \theta, \psi) \ni u \mapsto \hat{u} := (u - \psi) \circ \pi  + g \in \psh(\hat{X}, \hat{\theta}).
	\end{equation}
	For the proof of the fact that it is indeed a bijection, see \cite[Theorem 3.1]{GuptaPrakhar}. 
	Note that it implies that $u \in \psh(X, \theta, \psi)$ has the same singularity type as $\psi$ if and only if $\hat{u}$ has minimal singularities.
	\par 
	From the fact that (\ref{eq_isom_hat}) is an order-preserving bijection, it is immediate to see that for any $f$ as in Proposition \ref{prop_model_pot}, we have
	\begin{equation}
		(P_{\theta}[\psi](f) - \psi) \circ \pi  + g = P_{\hat{\theta}}((f - \psi) \circ \pi + g).
	\end{equation}
	By re-arranging the terms, taking the upper semicontinuous regularization and using the fact that $\psi \circ \pi - g$ is upper semicontinuous, see (\ref{eq_resol_psi_sing}), we deduce that under (\ref{eq_isom_hat}), we have
	\begin{equation}\label{eq_env_corresp_one_an}
		P_{\theta}[\psi](f)^* \mapsto P_{\hat{\theta}}((f - \psi) \circ \pi + g)^*.
	\end{equation}
	\end{sloppypar}
	\par 
	The class $[\hat{\theta}]$ is nef by Proposition \ref{prop_b_i_beta}.
	If we moreover assume that $\int (\theta + \ddc \psi)^n > 0$, then by (\ref{eq_siu_rel}), $\int (\hat{\theta} + \ddc g)^n > 0$, and the class $[\hat{\theta}]$ is also big.
	In particular, the relative pluripotential theory in a big class $[\theta]$ associated with a potential $\phi$ with analytic singularities verifying $\int (\theta + \ddc \psi)^n > 0$, reduces to the pluripotential theory associated with a big and nef class $[\hat{\theta}]$.
	This observation is crucial for the extension of Mabuchi geometry in the big setting, as we recall in the next section.
	\par 
	When $\psi$ has neat analytic singularities, the correspondence described above can be refined so that its image consists of potentials associated with a semi-positive form. 
	Indeed, by Proposition \ref{prop_b_i_beta}, the form $\hat{\theta}(g) := \hat{\theta} + \ddc g$ is semi-positive. 
	Moreover, in complete analogy with~ (\ref{eq_isom_hat}), one obtains an order-preserving bijection between $\psh(X, \theta, \psi)$ and $\psh(\hat{X}, \hat{\theta}(g))$, defined as
	\begin{equation}\label{eq_isom_hat2}
		\psh(X, \theta, \psi) \ni u \mapsto \hat{u} := (u - \psi) \circ \pi  \in \psh(\hat{X}, \hat{\theta}(g)).
	\end{equation}
	Although (\ref{eq_isom_hat}) and (\ref{eq_isom_hat2}) introduce an apparent notational conflict, we will always specify which bijection is being used, so no ambiguity will arise.
	 
	 \subsection{Mabuchi geometry on ample and big classes}\label{sect_mab_big}
	The main goal of this section is to recall the definition of the Mabuchi-Darvas distance on the space of Kähler forms within a given Kähler class $[\omega]$. 
	We then extend this definition to the space of bounded $\omega$-psh potentials, where $\omega$ is a Kähler form representing $[\omega]$. 
	In this part, we follow Mabuchi \cite{Mabuchi}, Chen \cite{ChenGeodMab}, and Darvas \cite{DarvasFinEnerg}. 
	Building on these results, and following Di Nezza-Lu \cite{DiNezzaLuBigNef}, we explain how to extend this construction to big and nef classes $[\beta]$. 
	Finally, following Gupta \cite{GuptaPrakhar}, we further extend the construction to the general big classes $[\theta]$.
 	\par 
	Now, let us fix a Kähler form $\omega$ on $X$ in the class $[\omega]$. 
	On the space of Kähler potentials $\mathcal{H}_{\omega} := \{ \phi \in \mathscr{C}^{\infty}(X) : \omega + \ddc \phi > 0 \}$, we define a collection of $L^p$-type Finsler metrics, $p \in [1, +\infty[$, as follows.
	If $u \in \mathcal{H}_{\omega}$ and $\xi \in T_u \mathcal{H}_{\omega} \simeq \ccal^{\infty}(X, \real)$, then the $L^p$-length of $\xi$ is given by the following expression
	\begin{equation}\label{eq_finsl_dist_fir}
		\| \xi \|_p^u
		:=
		\sqrt[p]{
		\frac{1}{\int [\omega]^n}
		 \int_X |\xi(x)|^p \cdot (\omega + \ddc u)^n(x)}.
	\end{equation}
	For $p = 2$, this was introduced by Mabuchi \cite{Mabuchi}, and for $p \in [1, +\infty[$ by Darvas \cite{DarvasFinEnerg}.
	\par 
	Extending the analysis of Chen \cite{ChenGeodMab}, Darvas in \cite{DarvasFinEnerg} studied the metric completions $(\mathcal{E}^p_{\omega}, d_p)$, $p \in [1, +\infty[$, of the path length metric structures $(\mathcal{H}_{\omega}, d_p)$ associated with (\ref{eq_finsl_dist_fir}).
	He proved that these completions are geodesic metric spaces and showed that they coincide with the space of finite energy classes previously introduced by Guedj-Zeriahi \cite{GuedjZeriahiMAEnergy}.
	The phrase path length metric structure above means that the distance between $\phi_0 \in \mathcal{H}_{\omega}$ and $\phi_1 \in \mathcal{H}_{\omega}$ is defined as the infimum of the length $l(\gamma) := \int_0^1 \|\gamma'(t)\|^{\gamma(t)}_p dt$, where $\gamma$ is a piecewise smooth path in $\mathcal{H}_{\omega}$ joining $\phi_0$ and $\phi_1$.
	What will be important for us is that the above distance is well-defined on ${\rm{PSH}}(X, \omega) \cap L^{\infty}(X)$, as ${\rm{PSH}}(X, \omega) \cap L^{\infty}(X) \subset \mathcal{E}^p_{\omega}$ for any $p \in [1, +\infty[$, cf. \cite[Exercise 10.2]{GuedjZeriahBook}.
	\par 
	\begin{sloppypar}
	Di Nezza-Lu \cite{DiNezzaLuBigNef} further extended this theory to the setting of big and nef classes $[\beta]$. 
	To explain how this was done, we fix a smooth closed $(1, 1)$-form $\beta$ in $[\beta]$.
	Then for any $\epsilon > 0$, while the smooth closed $(1, 1)$-form $\beta + \epsilon \omega$ is not necessarily positive, it lies inside of the class $[\beta + \epsilon \omega]$, which is Kähler.
	For $f, g \in \mathscr{C}^0(X, \real)$, Di Nezza-Lu defined the distance between the envelopes $P_{\beta}(f), P_{\beta}(g) \in \psh(X, \beta)$ as follows
	\begin{equation}\label{eq_dl_defn}
		d_p(P_{\beta}(f), P_{\beta}(g))
		:=
		\lim_{\epsilon \to 0} d_p(P_{\beta + \epsilon \omega}(f), P_{\beta + \epsilon \omega}(g)).
	\end{equation}
	Note that the right-hand side of (\ref{eq_dl_defn}) is well-defined, as $[\beta + \epsilon \omega]$ is Kähler, and so $P_{\beta + \epsilon \omega}(f), P_{\beta + \epsilon \omega}(g)$ are bounded.
	For arbitrary $u, v \in \psh(X, \beta)$ with minimal singularities, the distance can then be defined through an additional approximation.
	More specifically, we consider decreasing sequences $f_i, g_i \in \mathscr{C}^0(X, \real)$, $i \in \nat$, so that $\lim_{i \to \infty} f_i = u$ and $\lim_{i \to \infty} g_i = v$ (such sequences exist as $u$ and $v$ are upper semicontinuous). 
	Then we define 
	\begin{equation}\label{eq_mab_dl_co}
		d_p(u, v)
		:=
		\lim_{i \to \infty} d_p(P_{\beta}(f_i), P_{\beta}(g_i)).
	\end{equation}
	Di Nezza-Lu proved that this defines a distance function on the space of $\beta$-psh potentials with minimal singularities, independent of all auxiliary choices. 
	\end{sloppypar}
	\par 
	Gupta \cite{GuptaPrakhar} further extended this theory to the setting of big classes $[\theta]$. 
	To do so, he proceeded by approximation of the space $\psh(X, \theta)$ with $\psh(X, \theta, \psi_k)$, for a certain sequence of $\psi_k \in \psh(X, \theta)$ with analytic singularities. 
	Within each $\psh(X, \theta, \psi_k)$, he defined the metrics using the above definition of Di Nezza-Lu.
	\par 
	Let us first explain how the second step is done and then proceed to the description of the first one.
	For this, we fix $\psi \in \psh(X, \theta)$ with analytic singularities and so that $\int (\theta + \ddc \psi)^n > 0$.
	We describe how to endow with the distance the subspace of $\psh(X, \theta, \psi)$ consisting of $\theta$-psh potentials of the singularity type as $[\psi]$.
	This is done through the use of the bijection (\ref{eq_isom_hat}), from which we use the notations below.
	As we explained in Section \ref{sect_rel_pp}, $\theta$-psh potentials of the singularity type as $[\psi]$ are in one-to-one correspondence with $\hat{\theta}$-psh potentials with minimal singularities.
	Moreover, as we explained, the class $[\hat{\theta}]$ is big and nef.
	Hence, the distance can be imported from (\ref{eq_mab_dl_co}).
	Gupta in \cite[Theorem 3.14]{GuptaPrakhar} proved that the resulting distance does not depend on the choice of the resolution $\pi : \hat{X} \to X$.
	\par 
	Now, we describe the approximation of the space $\psh(X, \theta)$ by $\psh(X, \theta, \psi_k)$.
	We fix an arbitrary $\psi \in \psh(X, \theta)$ with $\psi \leq -1$ and analytic singularities, which is strictly $\theta$-psh (such $\psi$ exists by Theorem \ref{thm_dem_appr}). 
	Gupta showed in \cite[\S 4]{GuptaPrakhar} that there exists an increasing sequence $\psi_k \in \psh(X, \theta)$, $k \in \nat$, with analytic singularities, satisfying $\psi_k \geq \psi$, and such that $\psi_k \to V_{\theta}$, as $k \to \infty$, outside a pluripolar set, where $V_{\theta}$ is defined in (\ref{eq_v_theta}).
	We next show that the sequence $\psi_k$ may be chosen to have algebraic singularities, and so that $\psi_k$ is strictly $\theta$-psh.
	To this end, we adapt Gupta's construction with only minor modifications.
	\par 
	Note first that since $\psi \leq -1$, we have $\psi \leq V_\theta - 1$.
	Define $\varphi_j = \frac{1}{j} \psi + \frac{j-1}{j} V_\theta$.
	Then $\varphi_j \leq V_\theta$ and $\varphi_j \nearrow V_\theta$ outside a pluripolar set. 
	Although $\varphi_j$ do not necessarily have analytic singularities, each $\varphi_j$ is strictly $\theta$-psh (as $\psi$ is) and $\varphi_j \geq \psi$. 
	By Theorem \ref{thm_dem_appr} and a version of Dini's theorem, we may choose a strictly $\theta$-psh potential with algebraic singularities $\phi_j$, verifying $V_\theta \geq \phi_j \geq \varphi_j$. 
	The problem now is that the sequence $\phi_j$, $j \in \nat$, is not necessarily monotone.
	\par 
	Consider the envelope $P_\theta[\phi_j]$.
	Since $\varphi_j \le \phi_j$ and $\phi_j \le V_\theta$, we have $\varphi_j \leq P_\theta[\phi_j]^* \leq V_\theta$.
	Because $\varphi_j \nearrow V_\theta$ outside a pluripolar set, it follows that $P_\theta[\phi_j]^* \to V_\theta$ pointwise outside a pluripolar set. 
	Moreover, since $\phi_j$ has algebraic singularities and $[P_\theta[\phi_j]^*] = [\phi_j]$ by Proposition \ref{prop_model_pot}, the envelopes $P_\theta[\phi_j]^*$ also have algebraic singularities.
	Define
	\begin{equation}\label{eq_gupt_approx_const}
		\psi_k := \max\{P_\theta[\phi_1]^*, \dots, P_\theta[\phi_k]^*\}.
	\end{equation}
	Then $\psi_k$ has algebraic singularities by Proposition \ref{prop_max_alg_sing}, and $\psi_k \nearrow V_\theta$ outside a pluripolar set.
	Moreover, as $P_\theta[\phi_j]^* \geq \varphi_j \geq \psi$, we see that $\psi_k \geq \psi$.
	Note that by considering $\frac{k - 1}{k} \psi_k + \frac{1}{k} \psi$ instead of $\psi_k$, without loosing the generality, we may further assume that $\psi_k$ is strictly $\theta$-psh. 
	\par 
	Note that by Theorem \ref{thm_wn_monot}, we have $\int (\theta + \ddc \psi_k)^n \geq \int (\theta + \ddc \psi)^n > 0$, and so each space $\psh(X, \theta, \psi_k)$ falls into the mold of the considerations after (\ref{eq_isom_hat}).
	Then, for any $f, g \in \mathscr{C}^2(X)$, Gupta in \cite[(7)]{GuptaPrakhar} defined the distance between $P_{\theta}(f)$ and $P_{\theta}(g)$ as follows
	\begin{equation}\label{eq_def_mab_psi_k}
		d_p(P_{\theta}(f)^*, P_{\theta}(g)^*)
		:=
		\lim_{k \to \infty}
		d_p(P_{\theta}[\psi_k](f)^*, P_{\theta}[\psi_k](g)^*).
	\end{equation}
	Remark that the distances on the right-hand side of (\ref{eq_def_mab_psi_k}) are well-defined.
	This is because $\psi_k$ has analytic singularities, and so $[P_{\theta}[\psi_k](f)^*] = [P_{\theta}[\psi_k](g)^*] = [\psi_k]$ by Proposition \ref{prop_model_pot}.
	\par 
	Gupta verified in \cite[Theorem 4.7]{GuptaPrakhar} that the distance (\ref{eq_def_mab_psi_k}) does not depend on the choice of $\psi_k$ as long as this sequence verifies the two properties above: $\psi_k \in \psh(X, \theta)$, is an increasing sequence with analytic singularities, and $\psi_k \to V_{\theta}$, as $k \to \infty$, outside a pluripolar set.
	Further, Gupta in \cite[Definition 4.10]{GuptaPrakhar} defined $d_p(u, v)$ for arbitrary $u, v \in \psh(X, \theta)$ with minimal singularities in the same way as in (\ref{eq_mab_dl_co}) and verified that this is well-defined.
	In what follows, we will occasionally write $d^{\theta}_p(u, v)$ in place of $d_p(u, v)$ in order to emphasize the dependence on $\theta$; however, in most instances this will be clear from the context.
	\par 
	Following previous work of Trusiani \cite{TrusianiL1}, done for $p = 1$, Gupta in \cite[Theorem 7.3]{GuptaPrakhar} verified that for any $u, v \in \psh(X, \theta)$ with minimal singularities, and any $\psi \in \psh(X, \theta)$ with analytic singularities, the following contraction property is satisfied
	\begin{equation}\label{eq_contraction_prop}
		d_p(P_\theta[\psi](u)^*, P_\theta[\psi](v)^*) \leq d_p(u, v).
	\end{equation}
	\par 
	We now proceed to establish the following statement using the results discussed above.
	\begin{lem}\label{lem_choice_psi_cl}
		For any $f, g \in \mathscr{C}^2(X)$, $\epsilon > 0$, there is a strictly $\theta$-psh potential $\psi$ with neat algebraic singularities, so that 
		\begin{equation}\label{eq_choice_psi_cl}
			d_p(P_{\theta}(f)^*, P_{\theta}(g)^*) - \epsilon \leq d_p(P_{\theta}[\psi](f)^*, P_{\theta}[\psi](g)^*) \leq d_p(P_{\theta}(f)^*, P_{\theta}(g)^*).
		\end{equation}
		Moreover, one can choose $\psi$ so that 
		\begin{equation}\label{eq_choice_psi_cl2}
			\int (\theta + \ddc \psi)^n \geq {\rm{vol}}(\theta) - \epsilon.
		\end{equation}
	\end{lem}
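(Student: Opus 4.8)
The plan is to obtain the two inequalities in (\ref{eq_choice_psi_cl}) from the contraction property (\ref{eq_contraction_prop}) together with Gupta's approximation (\ref{eq_def_mab_psi_k}), and then to replace the reference potential so produced by one with neat algebraic singularities by means of Demailly's regularization theorem, Theorem \ref{thm_dem_appr}. I would first record the right-hand inequality of (\ref{eq_choice_psi_cl}), which in fact holds for every strictly $\theta$-psh potential $\psi$ with algebraic singularities: since $f$ is bounded, $P_{\theta}(f)^*$ differs from $V_{\theta}$ by a bounded function and so has minimal singularities (and likewise $P_{\theta}(g)^*$), while a direct comparison of competitors in the definition of $P_{\theta}[\psi]$, using that $P_{\theta}(f)^* \leq f$ as $f$ is continuous, shows $P_{\theta}[\psi](f) = P_{\theta}[\psi](P_{\theta}(f)^*)$ and similarly for $g$. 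Hence the right-hand inequality is precisely (\ref{eq_contraction_prop}) applied to the minimal-singularity potentials $P_{\theta}(f)^*$, $P_{\theta}(g)^*$.

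For the left-hand inequality and the mass bound (\ref{eq_choice_psi_cl2}), I would start from the increasing sequence $\psi_k$ with algebraic singularities constructed in (\ref{eq_gupt_approx_const}), which is strictly $\theta$-psh and satisfies $\psi_k \nearrow V_{\theta}$ outside a pluripolar set. By the definition (\ref{eq_def_mab_psi_k}), $d_p(P_{\theta}[\psi_k](f)^*, P_{\theta}[\psi_k](g)^*) \to d_p(P_{\theta}(f)^*, P_{\theta}(g)^*)$. Moreover, since $\psi_k \nearrow V_{\theta}$ outside a pluripolar set, Theorem \ref{thm_cont_begz} gives that $(\theta + \ddc\psi_k)^n$ converges weakly to $(\theta + \ddc V_{\theta})^n$, hence, testing against the constant function $1$, the total masses converge, so $\int(\theta + \ddc\psi_k)^n \to {\rm{vol}}(\theta)$ by Theorem \ref{thm_bouck_vol}. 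I would then fix $k$ large enough that simultaneously $d_p(P_{\theta}[\psi_k](f)^*, P_{\theta}[\psi_k](g)^*) \geq d_p(P_{\theta}(f)^*, P_{\theta}(g)^*) - \epsilon$ and $\int(\theta + \ddc\psi_k)^n \geq {\rm{vol}}(\theta) - \epsilon$.

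It remains to upgrade $\psi_k$ to neat algebraic singularities while preserving these estimates. Since $\psi_k$ is strictly $\theta$-psh, $\theta + \ddc\psi_k$ is a Kähler current, so Theorem \ref{thm_dem_appr} applied to it produces a decreasing family $\psi_{k,m} \searrow \psi_k$ of potentials with neat algebraic singularities satisfying $\theta + \ddc\psi_{k,m} \geq (1-\epsilon_m)\omega$ with $\epsilon_m \searrow 0$. Choosing $m$ with $\epsilon_m < 1$ and setting $\psi := \psi_{k,m}$, the potential $\psi$ is strictly $\theta$-psh with neat algebraic singularities and is less singular than $\psi_k$, so Theorem \ref{thm_wn_monot} gives $\int(\theta + \ddc\psi)^n \geq \int(\theta + \ddc\psi_k)^n \geq {\rm{vol}}(\theta) - \epsilon$, which is (\ref{eq_choice_psi_cl2}); the right-hand inequality of (\ref{eq_choice_psi_cl}) for $\psi$ is covered by the first paragraph. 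For the left-hand inequality it then suffices to prove the monotonicity $d_p(P_{\theta}[\psi_k](f)^*, P_{\theta}[\psi_k](g)^*) \leq d_p(P_{\theta}[\psi](f)^*, P_{\theta}[\psi](g)^*)$; I would establish it by transporting everything, via the order-preserving bijection (\ref{eq_isom_hat}) attached to $\psi$, into the big and nef class $[\hat\theta]$, where, using the composition law (\ref{eq_comp_law_env}) to express $P_{\theta}[\psi_k](f)^*$ as an envelope of the minimal-singularity potential corresponding to $P_{\theta}[\psi](f)^*$, the desired inequality becomes the contraction property (\ref{eq_contraction_prop}) in the class $[\hat\theta]$, the distances being preserved by the transport.

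The step I expect to be the main obstacle is precisely this last one: checking, with the relative-pluripotential bookkeeping, that the envelope operators $P_{\theta}[\,\cdot\,]$ intertwine correctly with the bijection (\ref{eq_isom_hat}), and hence that the quantity $d_p(P_{\theta}[\,\cdot\,](f)^*, P_{\theta}[\,\cdot\,](g)^*)$ is monotone in the singularity type of the reference potential, so that regularizing $\psi_k$ to the less singular potential $\psi$ cannot decrease it. Everything else is a direct combination of Theorems \ref{thm_dem_appr}, \ref{thm_wn_monot}, \ref{thm_cont_begz} and \ref{thm_bouck_vol} with Proposition \ref{prop_model_pot} and the contraction property (\ref{eq_contraction_prop}).
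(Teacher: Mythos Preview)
Your proposal is correct and follows essentially the same route as the paper: pick one of Gupta's approximants $\psi_k$ satisfying the distance and mass bounds, then Demailly-regularize it to a less singular $\psi$ with neat algebraic singularities, using Witt Nystr\"om monotonicity for the mass and the relative contraction property for the distances. The paper additionally threads in an auxiliary approximation sequence $\psi_r^0$ built from $\psi$, but the crucial inequality $d_p(P_\theta[\psi_k](f)^*,P_\theta[\psi_k](g)^*)\le d_p(P_\theta[\psi](f)^*,P_\theta[\psi](g)^*)$ is obtained there exactly as you propose, and your flagged bookkeeping goes through: on a common log resolution both $\psi_k$ and $\psi$ have divisorial potentials, and since $\psi_k\le\psi$ the coefficients dominate, so $\hat\psi_k=(\psi_k-\psi)\circ\pi+g$ again has algebraic singularities and (\ref{eq_contraction_prop}) applies in $[\hat\theta]$.
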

	\begin{proof}
		We fix a sequence $\psi_k$ of strictly $\theta$-psh potentials described after (\ref{eq_def_mab_psi_k}).
		By (\ref{eq_def_mab_psi_k}), there is $k \in \nat$ such that 
		\begin{equation}\label{eq_choice_psi_cl3}
			d_p(P_{\theta}(f)^*, P_{\theta}(g)^*) - \epsilon \leq d_p(P_{\theta}[\psi_k](f)^*, P_{\theta}[\psi_k](g)^*).
		\end{equation}
		Moreover, upon increasing $k$, one may further assume by Theorems \ref{thm_bouck_vol} and \ref{thm_cont_begz} that 
		\begin{equation}\label{eq_choice_psi_cl4}
			\int (\theta + \ddc \psi_k)^n \geq {\rm{vol}}(\theta) - \epsilon,
		\end{equation}
		and upon replacing $\psi_k$ by $\psi_k - 1$, we may further assume that $\psi_k \leq -1$.
		Note that while both (\ref{eq_choice_psi_cl}) and (\ref{eq_choice_psi_cl2}) are satisfied for $\psi := \psi_k$, the resulting $\psi$ is not with neat algebraic singularities.
		\par 
		To remedy this, we consider a sequence of strictly $\theta$-psh potentials $\phi_l$, $l \in \nat$, with algebraic singularities decreasing towards $\psi_k$ as in Theorem \ref{thm_dem_appr}.
		Then by our assumption $\psi_k \leq -1$, a version of Dini's theorem implies that we have $\phi_l \leq -1/2$, for sufficiently large $l$.
		We fix such $l$ from now on, and to simplify further exposition, denote $\phi := \phi_l$.
		In particular, by the construction (\ref{eq_gupt_approx_const}), we can consider a sequence $\psi_r^0 \in \psh(X, \theta)$, $r \in \nat$, with algebraic singularities satisfying $\psi_r^0 \geq \phi$, and such that $\psi_r^0 \to V_{\theta}$, as $r \to \infty$, outside a pluripolar set.
		Then immediately from the fact that the definition (\ref{eq_def_mab_psi_k}) doesn't depend on the choice of the approximation sequence, we conclude that 
		\begin{equation}
			d_p(P_{\theta}(f)^*, P_{\theta}(g)^*)
			=
			\lim_{r \to \infty}
			d_p(P_{\theta}[\psi_r^0](f)^*, P_{\theta}[\psi_r^0](g)^*).
		\end{equation}
		From this, (\ref{eq_contraction_prop}) and (\ref{eq_choice_psi_cl3}), we conclude that 
		\begin{multline}
			d_p(P_{\theta}(f)^*, P_{\theta}(g)^*)
			-
			\epsilon
			\leq
			d_p(P_{\theta}[\psi_k](f)^*, P_{\theta}[\psi_k](g)^*)
			\\
			\leq
			d_p(P_{\theta}[\phi](f)^*, P_{\theta}[\phi](g)^*) 
			\leq 
			d_p(P_{\theta}[\psi_1^0](f)^*, P_{\theta}[\psi_1^0](g)^*) 
			\leq 
			d_p(P_{\theta}(f)^*, P_{\theta}(g)^*)
		\end{multline}
		In particular, we deduce that (\ref{eq_choice_psi_cl}) is verified for $\psi := \phi$.
		By Theorem \ref{thm_wn_monot} and (\ref{eq_choice_psi_cl4}), we deduce that (\ref{eq_choice_psi_cl2}) is also verified for such a choice of $\psi$, finishing the proof.
	\end{proof}
	\par 
	The extension of the Mabuchi-Darvas distance for arbitrary $u, v \in \psh(X, \theta)$ with minimal singularities is then done as follows.
	As before, we consider decreasing sequences $f_i, g_i \in \mathscr{C}^2(X, \real)$, $i \in \nat$, so that $\lim_{i \to \infty} f_i = u$ and $\lim_{i \to \infty} g_i = v$. 
	Then we define 
	\begin{equation}\label{eq_ext_dp_gennn}
		d_p(u, v)
		:=
		\lim_{i \to \infty} d_p(P_{\theta}(f_i)^*, P_{\theta}(g_i)^*).
	\end{equation}
	Gupta in \cite[Theorems 4.11, 4.13]{GuptaPrakhar} verified that it gives a well-defined distance on $\theta$-psh potentials with minimal singularities.
	In \cite[Theorem 5.6]{GuptaPrakhar}, he also verified that his construction coincides with the one of Di Nezza-Lu if the class $[\theta]$ is big and nef.
	\par 
	Now, let us explain that if $u, v \in \psh(X, \theta)$ are such that for some $C > 0$, we have $|u - v| \leq C$, 
	\begin{equation}\label{eq_dp_sup_nm_bnd}
		d_p(u, v) \leq C.
	\end{equation}
	Indeed, when then class $[\theta]$ is Kähler, and $u, v \in \ccal^{\infty}(X)$ so that both $\theta + \ddc u$ and $\theta + \ddc v$ are Kähler forms, the conclusion follows immediately from (\ref{eq_finsl_dist_fir}).
	Since the general definition reduces to this case, the result holds without any additional hypotheses.
	\par 
	By approximating $\mathscr{C}^0$-functions with $\mathscr{C}^2$-functions, (\ref{eq_dp_sup_nm_bnd}) implies that (\ref{eq_def_mab_psi_k}) remains valid for $f, g \in \mathscr{C}^0(X)$.
	\par 
	In our further analysis, the following lemma will be important.
	\begin{lem}\label{lem_incr}
		Consider $\psi \in \psh(X, \theta)$ with analytic singularities and an increasing uniformly bounded from above sequence $u_k \in \psh(X, \theta)$, $k \in \nat$, so that $\psi \leq u_k$, for any $k \in \nat$, and for a certain $u \in \psh(X, \theta)$ with minimal singularities, $u_k \to u$, as $k \to \infty$, outside a pluripolar set.
		Consider an arbitrary $v \in \psh(X, \theta)$ with minimal singularities, verifying $\psi \leq v$.
		Then 
		\begin{equation}\label{eq_lem_incr}
			\lim_{k \to \infty}
			d_p(P[\psi](u_k)^*, P[\psi](v)^*) 
			=
			d_p(P[\psi](u)^*, P[\psi](v)^*).
		\end{equation}
	\end{lem}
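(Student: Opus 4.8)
The plan is to reduce, via the triangle inequality, to the single statement $d_p(P_{\theta}[\psi](u_k)^*, P_{\theta}[\psi](u)^*) \to 0$ as $k \to \infty$. Indeed, granting this, the general inequality $|d_p(a, c) - d_p(b, c)| \le d_p(a, b)$, applied with $a = P_{\theta}[\psi](u_k)^*$, $b = P_{\theta}[\psi](u)^*$, $c = P_{\theta}[\psi](v)^*$, yields (\ref{eq_lem_incr}) at once.

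First I would record that the three potentials $P_{\theta}[\psi](u_k)^*$, $P_{\theta}[\psi](u)^*$, $P_{\theta}[\psi](v)^*$ all have singularity type exactly $[\psi]$. The relation $[\psi] \preceq [\,\cdot\,]$ follows from $u_k, u, v \ge \psi$ (for $u$, note $u_k \le u$ everywhere since $\theta$-psh functions are determined off pluripolar sets), since then e.g. $P_{\theta}[\psi](u_k)^* \ge P_{\theta}[\psi](\psi) = \psi$; the relation $[\,\cdot\,] \preceq [\psi]$ follows because $P_{\theta}[\psi](u_k)^* \le P_{\theta}[\psi](\max\{u_k, -N\})^* \le P_{\theta}[\psi](\max\{u, -N\})^*$ and the last potential has singularity type $[\psi]$ by Proposition \ref{prop_model_pot} (its argument being bounded), and likewise with $u$ or $v$ in place of $u_k$. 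Consequently, through the order-preserving bijection (\ref{eq_isom_hat}) these potentials correspond to potentials $\hat{P}_k := \widehat{P_{\theta}[\psi](u_k)^*}$, $\hat{P}_{\infty} := \widehat{P_{\theta}[\psi](u)^*}$, $\hat{Q} := \widehat{P_{\theta}[\psi](v)^*}$ with minimal singularities on the birational model $\hat{X}$, in the class $[\hat{\theta}]$, which is big and nef by Proposition \ref{prop_b_i_beta}. By the very construction of the Mabuchi--Darvas distance on potentials of singularity type $[\psi]$ (imported from $[\hat{\theta}]$ via (\ref{eq_isom_hat})), we then have $d_p(P_{\theta}[\psi](u_k)^*, P_{\theta}[\psi](u)^*) = d_p^{\hat{\theta}}(\hat{P}_k, \hat{P}_{\infty})$, with $\hat{P}_k$ increasing in $k$.

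The key step is to identify the limit of this increasing sequence: I claim $(\sup_k \hat{P}_k)^* = \hat{P}_{\infty}$, equivalently $\hat{P}_k \to \hat{P}_{\infty}$ outside a pluripolar set. Since (\ref{eq_isom_hat}) intertwines pointwise suprema with upper semicontinuous regularizations (because $\psi \circ \pi - g$ is continuous, cf. (\ref{eq_resol_psi_sing}) and (\ref{eq_env_corresp_one_an})), this is equivalent to $(\sup_k P_{\theta}[\psi](u_k)^*)^* = P_{\theta}[\psi](u)^*$. Using the singularity-type computation above — in particular $P_{\theta}[\psi](\max\{u, -N\})^* \le \psi + C$ for a suitable $C > 0$, so that $P_{\theta}[\psi](u_k)^*, P_{\theta}[\psi](u)^* \le \psi + C$ — one gets from the definition of $P_{\theta}[\psi]$ a single constant $C$ with $P_{\theta}[\psi](u_k)^* = P_{\theta}(\min\{\psi + C, u_k\})^*$ for all $k$ and $P_{\theta}[\psi](u)^* = P_{\theta}(\min\{\psi + C, u\})^*$. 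This reduces the claim to the continuity of the ordinary envelope $P_{\theta}$ along the increasing sequence of quasi-continuous functions $\min\{\psi + C, u_k\} \nearrow \min\{\psi + C, u\}$ (cut down to bounded ones by (\ref{eq_env_min})). I expect this envelope continuity — provable by a Dini/convergence-in-capacity argument in the spirit of the proof of Proposition \ref{prop_sm_weak_conv}, cf. also \cite[Proposition 4.25]{GuedjZeriahBook} and \cite{RossWNEnvelop}, \cite{RossNystAnalTConf} — together with the bookkeeping of upper semicontinuous regularizations and pluripolar sets around it, to be the main obstacle of the proof.

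Finally, once $\hat{P}_k \nearrow \hat{P}_{\infty}$ outside a pluripolar set, all with minimal singularities in the big and nef class $[\hat{\theta}]$, it remains to deduce $d_p^{\hat{\theta}}(\hat{P}_k, \hat{P}_{\infty}) \to 0$. For $p = 1$ this follows from the monotone formula $d_1(\hat{P}_k, \hat{P}_{\infty}) = \mathscr{E}(\hat{P}_{\infty}) - \mathscr{E}(\hat{P}_k)$ (valid since $\hat{P}_k \le \hat{P}_{\infty}$, cf. \cite[Theorem 5.7]{GuptaPrakhar}) together with the continuity of $\mathscr{E}$ along the increasing sequence, which itself follows from Theorem \ref{thm_cont_begz} and monotone convergence. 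For general $p \in [1, +\infty[$ one uses in addition the standard comparison of $d_p(\hat{P}_k, \hat{P}_{\infty})$ with the $L^p$-norms of the bounded function $\hat{P}_{\infty} - \hat{P}_k$ against $(\hat{\theta} + \ddc \hat{P}_k)^n$ and $(\hat{\theta} + \ddc \hat{P}_{\infty})^n$: since $\hat{P}_{\infty} - \hat{P}_k \searrow 0$ and is uniformly bounded, both integrals tend to $0$ by a monotone-convergence argument combined with Theorem \ref{thm_cont_begz}; alternatively one invokes directly the continuity of $d_p$ along increasing sequences in the finite-energy space of a big and nef class, due to Di Nezza--Lu \cite{DiNezzaLuBigNef}. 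This gives $d_p^{\hat{\theta}}(\hat{P}_k, \hat{P}_{\infty}) \to 0$ and, by the first paragraph, completes the proof.
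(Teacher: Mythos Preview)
Your approach is essentially the same as the paper's: reduce by the triangle inequality to $d_p(P_{\theta}[\psi](u_k)^*, P_{\theta}[\psi](u)^*) \to 0$, establish that $P_{\theta}[\psi](u_k)^* \to P_{\theta}[\psi](u)^*$ off a pluripolar set, and deduce $d_p \to 0$. The paper's execution is, however, considerably more direct on the two points where you hedge.

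First, the step you flag as ``the main obstacle'' --- the envelope continuity $P_{\theta}[\psi](u_k)^* \nearrow P_{\theta}[\psi](u)^*$ off a pluripolar set --- is dispatched in the paper by a single citation to \cite[Proposition 2.2]{GuedjLuZeriahEnv}, without passing to the birational model or unwinding $P_{\theta}[\psi]$ into $P_{\theta}(\min\{\psi + C, \cdot\})$. Your reduction is valid but unnecessary once that reference is in hand. Second, for the final step $d_p \to 0$, the paper does not split into $p=1$ versus general $p$ or invoke $\mathscr{E}$ or Di Nezza--Lu continuity; instead it argues: convergence off a pluripolar set implies convergence in capacity (\cite[Proposition 4.25]{GuedjZeriahBook}), which by \cite[Proposition 1.9]{GuedjLuZeriahEnv} gives $I_p(P_{\theta}[\psi](u_k)^*, P_{\theta}[\psi](u)^*) \to 0$, and the two-sided comparison $d_p \asymp I_p$ from \cite[Lemma 4.12]{GuptaPrakhar} finishes. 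This chain works uniformly in $p$ and avoids the bookkeeping of your alternatives. Your argument would go through, but the paper's route is shorter and leans on results already black-boxed elsewhere in the text.
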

	\begin{proof}
	\begin{sloppypar}
		Note first that since $\psi$ has analytic singularities, by Proposition \ref{prop_model_pot}, we deduce that $[P_{\theta}[\psi](u_k)^*] = [P_{\theta}[\psi](u)^*] = [P_{\theta}[\psi](v)^*] = [\psi]$.
		In particular, the left-hand side of (\ref{eq_lem_incr}) is well-defined.
		By the triangle inequality, to establish (\ref{eq_lem_incr}), it suffices to verify
		\begin{equation}\label{eq_lem_incr222}
			\lim_{k \to \infty}
			d_p(P[\psi](u_k)^*, P[\psi](u)^*) 
			=
			0.
		\end{equation}
		For this, note that since $u_k \to u$, as $k \to \infty$, outside a pluripolar set, we deduce by \cite[Proposition 2.2]{GuedjLuZeriahEnv} that $P[\psi](\psi_k)^* \to P[\psi](u)^*$, as $k \to \infty$, outside a pluripolar set.
		By \cite[Proposition 4.25]{GuedjZeriahBook}, we conclude that $P[\psi](\psi_k)^*$ converges in capacity to $P[\psi](u)^*$, as $k \to \infty$.
		By \cite[Proposition 1.9]{GuedjLuZeriahEnv}, it implies that for $u, v \in \psh(X, \theta)$ and $I_p(u, v) \geq 0$ defined as
		\begin{equation}
			I_p(u, v)
			:=
			\int |u - v|^p \cdot \big( (\theta + \ddc u)^n + (\theta + \ddc v)^n \big),
		\end{equation}
		and extended to $u, v \in \psh(X, \theta, \psi)$ using the bijection (\ref{eq_isom_hat}), we have
		\begin{equation}
			\lim_{k \to \infty}
			I_p(P[\psi](u_k)^*, P[\psi](u)^*) 
			=
			0.
		\end{equation}
		\end{sloppypar}
		\par 
		However, by \cite[Lemma 4.12]{GuptaPrakhar}, there is $C > 0$, so that for any $u, v \in \psh(X, \theta)$ with minimal singularities, we have
		\begin{equation}
			\frac{1}{C}
			I_p(u, v)
			\leq
			d_p(u, v)
			\leq
			C \cdot
			I_p(u, v)
		\end{equation}
		A combination of the above three statements imply (\ref{eq_lem_incr222}).
	\end{proof}
	\par 
	We emphasize that the Mabuchi-Darvas distance between potentials depends only on the associated currents.
	Namely, if $g: X \to \real$ is any smooth function, then for any $u, v \in \psh(X, \theta)$ with minimal singularities, the Mabuchi-Darvas distance between $u$ and $v$ coincides with the distance between the shifted potentials $u - g, v - g$, considered with respect to $\psh(X, \theta(g))$, where $\theta(g) := \theta + \ddc g$, see for instance \cite[Proposition 3.2]{DiNezzaLuBigNef}.
	We summarize this as
	\begin{equation}\label{eq_transl}
		d_p^{\theta}(u, v)
		=
		d_p^{\theta(g)}(u - g, v - g).
	\end{equation}
	\par 
	Note also that with our normalization of the Mabuchi distance, for any $u, v \in \psh(X, \theta)$ with minimal singularities and any $t \in ]0, +\infty[$, the potentials $t \cdot u, t \cdot v \in \psh(X, t \cdot \theta)$ have minimal singularities, and we have
	\begin{equation}\label{eq_homoth}
		d_p^{t \cdot \theta}(t \cdot u, t \cdot v)
		=
		d_p^{\theta}(u, v).
	\end{equation}
	\par 
	Following previous work of Darvas \cite{DarvasFinEnerg}, done in the Kähler setting, Gupta in \cite[Theorem 5.5]{GuptaPrakhar} verified the so-called Pythagorean identity, which states that for any $u, v \in \psh(X, \theta)$ with minimal singularities, for $P_\theta(u, v) := P_\theta(\min \{u, v \})$, we have
	\begin{equation}
		d_p(u, v)^p = d_p(u, P_\theta(u, v))^p + d_p(P_\theta(u, v), v)^p.
	\end{equation}
	\par 
	Let us now proceed to the description of geodesics associated with the above distances.
	For this, we consider the strip $S = \{ z \in \mathbb{C} : 0 < \Re z < 1 \}$, and let $p : X \times S \to X$ be the natural projection map.
	For any $u_0, u_1 \in \psh(X, \theta)$, we say that $v_t \in \psh(X, \theta)$, $t \in ]0, 1[$, is a subgeodesic between $u_0, u_1$ if the map $X \times S \ni (x, z) \mapsto v_{\Re(z)}(x)$ is a $p^* \theta$-psh function so that $\limsup_{t \to 0+} v_t \leq u_0$ and $\limsup_{t \to 1-} v_t \leq u_1$.
	\par 
	It is immediate that if both $u_0$ and $u_1$ have minimal singularities, then there are subgeodesics joining them.
	We define the \textit{weak geodesic} $u_t$, $t \in ]0, 1[$, between $u_0, u_1 \in \psh(X, \theta)$ with minimal singularities as the supremum over all subgeodesics joining them.
	\par 
	It is then immediate that for any $t \in ]0, 1[$, $u_t$ has minimal singularities.
	Moreover, for any $x \in X$, the function $]0, 1[ \ni t \mapsto u_t(x)$ extends continuously to $[0, 1]$ by the values $u_0(x), u_1(x)$.
	In the ample case, this was observed in \cite{BernBrunnMink}, and the same proof works in general, as observed in \cite{DDLL1}.
	Indeed, we assume that $C > 0$ is so that $u_0 - C \leq u_1 \leq u_0 + C$.
	Then both $v^0_t := u_0 - C t$ and $v^1_t := u_1 - C (1-t)$ are subgeodesics.
	Since they verify that $\liminf_{t \to 0+} v^0_t = u_0$ and $\liminf_{t \to 1-} v^1_t = u_1$, the same would hold for the weak geodesic.
	\par 
	Any subgeodesic is convex in $t$-variable.
	Moreover, the above also shows that for any $x \in X$, the function $[0, 1] \ni t \mapsto u_t(x)$ is a $C$-Lipshitz function.
	In particular, the partial derivative at $t = 0$ of the weak geodesic is well-defined. 
	We denote it by $\dot{u}_0$.
	\par 
	The relevance of the weak geodesics to the Mabuchi geometry comes from the following result, which can be found in \cite[Lemma 5.3, Theorems 5.4 and Corollary 8.5]{GuptaPrakhar}.
	\begin{thm}\label{thm_mab_geod}
		For any $u_0, u_1 \in \psh(X, \theta)$ with minimal singularities, the weak geodesic $u_t$, $t \in [0, 1]$, between them is a metric geodesic, i.e., for any $t, s \in [0, 1]$, we have $d_p(u_t, u_s) = |t - s| \cdot d_p(u_0, u_1)$.
		Moreover, if $p \in ]1, +\infty[$, this is the unique metric geodesic between $u_0$ and $u_1$ for $d_p$ .
		Finally, if $u_0 = P_{\theta}(f)$ for some $f \in \mathscr{C}^2(X)$, then 
		\begin{equation}\label{eq_mab_geod_sp}
			d_p(u_0, u_1)^p
			=
			\frac{1}{{\rm{vol}}(\theta)}
			\int |\dot{u}_0|^p \cdot (\theta + \ddc u_0)^n,
		\end{equation}
		where $(\theta + \ddc u_0)^n$ is defined in the sense of non-pluripolar product.
	\end{thm}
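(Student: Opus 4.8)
\noindent\emph{Proof proposal.} The plan is to transfer to the big setting the three assertions in their already–established Kähler form — the weak geodesic is a metric geodesic (Chen \cite{ChenGeodMab}, Darvas \cite{DarvasFinEnerg}), it is the unique $d_p$-geodesic for $p\in\,]1,+\infty[$ (Darvas--Lu \cite{DarLuGeod}), and it obeys the first–variation formula for envelope endpoints (Darvas--Lu--Rubinstein \cite{DarvLuRub}) — along the two approximation steps built into the definition of $d_p$ in Section~\ref{sect_mab_big}: first the passage from a general big class to big and nef classes $[\hat\theta_k]$ on birational models $\hat X_k$ via the bijection~(\ref{eq_isom_hat}), and then the passage from big and nef classes to Kähler classes, which was already carried out by Di Nezza--Lu \cite{DiNezzaLuBigNef}. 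Accordingly one may take the big and nef case as known and concentrate on the first reduction.

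First I would treat $u_0=P_\theta(f)^*$, $u_1=P_\theta(g)^*$ with $f,g\in\mathscr{C}^2(X)$. Fix the increasing sequence $\psi_k$ of strictly $\theta$-psh potentials with algebraic singularities from~(\ref{eq_gupt_approx_const}), with $\psi_k\nearrow V_\theta$ off a pluripolar set and $\int(\theta+\ddc\psi_k)^n\to{\rm{vol}}(\theta)$ (Theorems~\ref{thm_bouck_vol} and~\ref{thm_cont_begz}), and set $u_i^{(k)}:=P_\theta[\psi_k](u_i)^*$, of singularity type $[\psi_k]$ by Proposition~\ref{prop_model_pot}. Under~(\ref{eq_isom_hat}) these correspond to potentials with minimal singularities in the big and nef class $[\hat\theta_k]$ (Proposition~\ref{prop_b_i_beta}), and since~(\ref{eq_isom_hat}) is an order isomorphism commuting with the strip variable, it carries the weak geodesic $u_t^{(k)}$ between $u_0^{(k)},u_1^{(k)}$ to the weak geodesic of the images; hence $u_t^{(k)}$ is a $d_p$-geodesic, unique for $p>1$, satisfying the analogue of~(\ref{eq_mab_geod_sp}) with normalizing mass $\int(\theta+\ddc\psi_k)^n$ (by Theorem~\ref{thm_wn_monot}, $(\theta+\ddc u_0^{(k)})^n$ and $(\theta+\ddc\psi_k)^n$ have the same total mass) and integrand $|\dot{u}_0^{(k)}|^p$ against $(\theta+\ddc u_0^{(k)})^n$. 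Now I would let $k\to\infty$: since $\psi_k$ increases, $u_i^{(k)}\nearrow u_i$ and $u_t^{(k)}\nearrow$ a limit whose upper semicontinuous regularization is, by the standard increasing‐endpoint argument, the weak geodesic $u_t$ between $u_0,u_1$. As $d_p(u_0^{(k)},u_1^{(k)})\to d_p(u_0,u_1)$ by the very definition~(\ref{eq_def_mab_psi_k}), and $d_p$ is lower semicontinuous along such monotone sequences (reducing, via $I_p$ and \cite[Lemma~4.12]{GuptaPrakhar}, to the Monge--Ampère convergence of Theorem~\ref{thm_cont_begz}), the identities $d_p(u_0^{(k)},u_t^{(k)})=t\,d_p(u_0^{(k)},u_1^{(k)})$ and $d_p(u_t^{(k)},u_1^{(k)})=(1-t)\,d_p(u_0^{(k)},u_1^{(k)})$ pass to the limit to give $d_p(u_0,u_t)\le t\,d_p(u_0,u_1)$ and $d_p(u_t,u_1)\le(1-t)\,d_p(u_0,u_1)$; the triangle inequality forces equality. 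The case of arbitrary $u_0,u_1$ with minimal singularities then follows by choosing $\mathscr{C}^2$ decreasing approximants $f_i\downarrow u_0$, $g_i\downarrow u_1$: the weak geodesics decrease to $u_t$ and the distances converge by~(\ref{eq_ext_dp_gennn}).

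For the speed formula I keep $u_0=P_\theta(f)^*$, $f\in\mathscr{C}^2(X)$, and run the same limit. With $u_1=P_\theta(g)^*$, $g\in\mathscr{C}^2(X)$, one has on $\hat X_k$ that $d_p(u_0^{(k)},u_1^{(k)})^p$ equals the integral of $|\dot{u}_0^{(k)}|^p$ against $(\theta+\ddc u_0^{(k)})^n$, normalized by $\int(\theta+\ddc\psi_k)^n$. As $k\to\infty$: the left side tends to $d_p(u_0,u_1)^p$; the normalizing masses to ${\rm{vol}}(\theta)$; the measures $(\theta+\ddc u_0^{(k)})^n$ converge weakly to $(\theta+\ddc u_0)^n$ by the increasing half of Theorem~\ref{thm_cont_begz} (or Proposition~\ref{prop_sm_weak_conv}); and the slopes $\dot{u}_0^{(k)}=\inf_{t>0}(u_t^{(k)}-u_0^{(k)})/t$ are bounded uniformly (by the common $t$-Lipschitz constant, controlled by $\|f-g\|_{\infty}$) and increase to $\dot{u}_0$, interchanging the two monotone limits using convexity of $t\mapsto u_t^{(k)}$. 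A routine Egorov/uniform‐boundedness argument on the finite, non‐pluripolar measure $(\theta+\ddc u_0)^n$ then yields convergence of the integrals, proving~(\ref{eq_mab_geod_sp}) for $\mathscr{C}^2$ data; finally, keeping $u_0$ fixed and letting $g_i\downarrow u_1$ with $g_i\in\mathscr{C}^2(X)$, the geodesics decrease, $\dot{u}_0^{(i)}\downarrow\dot{u}_0$ pointwise, and dominated convergence on $(\theta+\ddc u_0)^n$ together with~(\ref{eq_ext_dp_gennn}) extends the formula to every $u_1$ with minimal singularities.

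For uniqueness when $p\in\,]1,+\infty[$, let $\gamma_t$ be any $d_p$-metric geodesic from $u_0$ to $u_1$; since $d_p$ is defined only on minimal‐singularity potentials, each $\gamma_t$ has minimal singularities. For each $k$, $P_\theta[\psi_k](\gamma_t)^*$ is a path of type $[\psi_k]$ from $u_0^{(k)}$ to $u_1^{(k)}$, and the contraction property~(\ref{eq_contraction_prop}) gives $d_p(u_0^{(k)},P_\theta[\psi_k](\gamma_t)^*)+d_p(P_\theta[\psi_k](\gamma_t)^*,u_1^{(k)})\le t\,d_p(u_0,u_1)+(1-t)\,d_p(u_0,u_1)=d_p(u_0,u_1)$, while $d_p(u_0^{(k)},u_1^{(k)})\to d_p(u_0,u_1)$; thus this path is, up to an error tending to $0$, on a $d_p$-geodesic of the big and nef space $[\hat\theta_k]$, where geodesics are unique and $d_p$ is uniformly convex, so $P_\theta[\psi_k](\gamma_t)^*$ approaches $u_t^{(k)}$. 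On the other hand $\psi_k\nearrow V_\theta$ off a pluripolar set and $\gamma_t$ has minimal singularities, so $P_\theta[\psi_k](\gamma_t)^*\nearrow\gamma_t$ off a pluripolar set; combining these two facts with the lower semicontinuity of $d_p$ yields $\gamma_t=u_t$. The two places where I expect the real work to lie are the increasing‐endpoint continuity of weak geodesics when the endpoints converge only off a pluripolar set (used throughout the limits $k\to\infty$) and the quantitative stability statement that an almost‐geodesic between $u_0^{(k)},u_1^{(k)}$ lies $d_p$-near the genuine geodesic (used in the uniqueness step); both are by now standard in the Kähler and big–nef settings but must be invoked with some care.
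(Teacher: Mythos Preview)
The paper does not prove this theorem at all: it is quoted as a black box from Gupta's work, with the explicit attribution ``which can be found in \cite[Lemma 5.3, Theorems 5.4 and Corollary 8.5]{GuptaPrakhar}'' immediately preceding the statement. So there is no proof in the paper to compare your proposal against.

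Your sketch is in the right spirit --- the reduction to the big and nef case via the bijection~(\ref{eq_isom_hat}) and the approximation sequence $\psi_k$ is indeed how Gupta builds the theory --- and it is plausible that this is close to how the proofs go in \cite{GuptaPrakhar}. That said, you correctly flag the two genuinely delicate points at the end: the continuity of weak geodesics under increasing convergence of endpoints off a pluripolar set, and the quantitative stability (``almost-geodesics are near geodesics'') needed for uniqueness. The latter in particular is not a routine consequence of uniform convexity in infinite-dimensional path spaces and would require a real argument; in the Kähler case Darvas--Lu use a Busemann-type nonpositive curvature (CAT(0)-like) inequality, and one would need to know that this survives the big and nef extension of Di Nezza--Lu and then the further limit to big classes. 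If you want to turn this into a self-contained proof rather than a citation, those two points are where the work lies; otherwise, citing \cite{GuptaPrakhar} as the paper does is the appropriate move.
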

	\par 
	Finally, we note that the Mabuchi-Darvas distance is invariant under birational maps $\pi : Y \to X$.
	Note first that $\pi^* [\theta]$ is a big class on $Y$, see Theorem \ref{thm_bouck_vol}.
	We fix $u, v \in \psh(X, \theta)$ with minimal singularities.
	From (\ref{eq_bir_inv_env}), it follows that $\pi^* u, \pi^* v \in \psh(Y, \pi^* \theta)$ have minimal singularities, and the distance $d_p(\pi^* u, \pi^* v)$ is well-defined.
	We then claim that 
	\begin{equation}\label{eq_bir_eq_mab}
		d_p(\pi^* u, \pi^* v)
		=
		d_p(u, v).
	\end{equation}
	First of all, by (\ref{eq_bir_inv_env}) and (\ref{eq_ext_dp_gennn}), it suffices to establish (\ref{eq_bir_eq_mab}) for $u = P(f)$, $v = P(g)$ for some $\mathscr{C}^2$-functions $f, g : X \to \real$.
	For such $u, v$, the result then follows from Theorem \ref{thm_mab_geod}. 
	Indeed, by using again the relation between the envelopes on $X$ and $Y$ as in (\ref{eq_bir_inv_env}), we see that Mabuchi geodesic between $\pi^* u$ and $\pi^* v$ is the pullback of the Mabuchi geodesic between $u$ and $v$.
	Then (\ref{eq_bir_eq_mab}) follows from (\ref{eq_mab_geod_sp}) and the fact that the non-pluripolar product is invariant under pullback.
	\par 
	Now, note that the Mabuchi geometry equips the space of metrics on a big line bundle $L$ -- with plurisubharmonic potentials with minimal singularities -- with a natural metric structure.
	Indeed, fixing a smooth metric $h^L_0$ on $L$ and setting  $\theta = c_1(L, h^L_0)$, any metric $h^L$ on $L$ with psh potentials with minimal singularities can be written as $h^L = h^L_0 \cdot \exp(-2 \phi)$, where $\phi \in \psh(X, \theta)$ has minimal singularities. 
	The distance between two such metrics is then defined as the distance between their potentials. 
	By (\ref{eq_transl}), such a distance does not depend on the choice of $h^L_0$, and by (\ref{eq_homoth}), such a definition extends naturally to $\mathbb{Q}$-line bundles.
	\par 
	Assume a big line bundle $L$ decomposes as $L = F \otimes \mathscr{O}(E)$, where $E$ is an effective divisor and $F$ is a big line bundle. 
	We fix a continuous metric $h^L$ on $L$.
	Let $h^E_{{\rm{sing}}}$ be the canonical singular metric on $\mathscr{O}(E)$.
	We define the singular metric $h^F := h^L / h^E_{{\rm{sing}}}$.
	\begin{lem}\label{lem_cont_sing_m}
		There is a continuous metric $h^F_1$ on $F$ such that $P(h^F) = P(h^F_1)$.
	\end{lem}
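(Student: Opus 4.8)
The plan is to produce $h^F_1$ explicitly, by truncating the (generalized) potential of $h^F$ at a level dictated by the envelope itself. I would fix a smooth metric $h^F_{{\rm{sm}}}$ on $F$ and a smooth metric $h^E_{{\rm{sm}}}$ on $\mathscr{O}(E)$, and set $\theta_F := c_1(F, h^F_{{\rm{sm}}})$, a smooth form in the big class $c_1(F)$. By the definition of the canonical singular metric, $h^E_{{\rm{sing}}} = h^E_{{\rm{sm}}} \cdot \exp(- 2 \log |s_E|_{h^E_{{\rm{sm}}}})$, while, as $h^L$ is continuous, $h^L = (h^F_{{\rm{sm}}} \otimes h^E_{{\rm{sm}}}) \cdot \exp(- 2 a)$ for some continuous $a : X \to \real$. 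Hence $h^F = h^L / h^E_{{\rm{sing}}} = h^F_{{\rm{sm}}} \cdot \exp(- 2 \varphi)$ with $\varphi := a - \log |s_E|_{h^E_{{\rm{sm}}}}$. The function $\varphi : X \to \real \cup \{ + \infty \}$ is finite and continuous on $X \setminus E$, identically $+ \infty$ on $E$, lower semicontinuous, bounded from below on $X$, and -- this is the crucial point -- blows up uniformly near $E$: for every $N > 0$ there is an open set $U_N \supset E$ with $\varphi > N$ on $U_N$. Unwinding the definition of the envelope operator, $P(h^F) = h^F_{{\rm{sm}}} \cdot \exp(- 2 P_{\theta_F}(\varphi)^*)$, which is meaningful since $\varphi$ is bounded from below on $X$ and bounded from above on the non-pluripolar set $X \setminus U_1$.

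Now I would apply (\ref{eq_env_min}) with $V := X \setminus U_1$ and $M := \sup P_{\theta_F, V}(\varphi) < \infty$, which yields $P_{\theta_F}(\varphi) = P_{\theta_F}(\min \{ \varphi, M \})$. Set $\varphi_1 := \min \{ \varphi, M \}$ and $h^F_1 := h^F_{{\rm{sm}}} \cdot \exp(- 2 \varphi_1)$. The key point is that $\varphi_1$ is continuous on all of $X$: on $X \setminus E$ it is a minimum of two continuous functions; on the open set $U_M \supset E$ one has $\varphi > M$, hence $\varphi_1 \equiv M$ there; and at a point of the closed set $\{ \varphi = M \} \subset X \setminus E$, where $\varphi$ is continuous, both branches converge to the value $M$. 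Thus $h^F_1$ is a continuous metric on $F$. Since $\varphi_1$ is continuous, hence upper semicontinuous, $P_{\theta_F}(\varphi_1)$ is already $\theta_F$-psh and no regularization is needed, so computing the envelope of the continuous metric $h^F_1$ gives $P(h^F_1) = h^F_{{\rm{sm}}} \cdot \exp(- 2 P_{\theta_F}(\varphi_1)) = h^F_{{\rm{sm}}} \cdot \exp(- 2 P_{\theta_F}(\varphi)^*) = P(h^F)$, as desired.

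The argument is short and there is no deep obstacle; the two points requiring care are that the unbounded locus of $\varphi$ coincides precisely with the log-pole of $h^E_{{\rm{sing}}}$, namely the effective divisor $E$, so that a single truncation removes the singularity (this is where effectiveness of $E$ enters), and the elementary verification that $\varphi_1$ is continuous across the level set $\{ \varphi = M \}$. Everything else is a matter of organizing the reference metrics so that the potential of $h^F$ takes the displayed form and then invoking (\ref{eq_env_min}) with a non-pluripolar $V$ on which $\varphi$ is bounded from above.
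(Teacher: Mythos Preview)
Your proof is correct and follows essentially the same approach as the paper: both invoke (\ref{eq_env_min}) to replace the singular potential $\varphi$ by its truncation $\min\{\varphi, M\}$, and then observe that this truncation is continuous. The paper's proof is a single sentence pointing to (\ref{eq_env_min}) and the fact that the minimum of two continuous functions is continuous, while you have unpacked the reference metrics, identified the potential explicitly as $a - \log|s_E|_{h^E_{{\rm{sm}}}}$, and verified continuity of the truncated potential across the level set and near $E$; this is the same argument with the details filled in.
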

	\begin{proof}
		It follows immediately from (\ref{eq_env_min}) and the fact that the minimum of two continuous functions is continuous.
	\end{proof}
	\par 
	Let $h^L_i := h^L_0 \cdot \exp(- 2\phi_i)$, $i = 1, 2$, be two metrics with psh potentials with minimal singularities $\phi_i \in \psh(X, \theta)$.
	Let $\psi \in \psh(X, \theta)$ be a potential with algebraic singularities.
	We borrow the notations for $\pi: \hat{X} \to X$, $E$, $h^E$, from (\ref{eq_resol_psi_sing}).
	We define the $\mathbb{Q}$-metric $h^F_i := \pi^* h^L_i / h^E_{{\rm{sing}}}$ on $F := \pi^* L \otimes \mathscr{O}(-E)$.
	Let us consider the envelopes $P(h^F_i)$ defined as in (\ref{eq_v_theta}) and (\ref{eq_env_homot}).
	It follows immediately from the way we defined our metrics and from (\ref{eq_env_corresp_one_an}) that for any $p \in [1, +\infty[$, the following identity holds
	\begin{equation}\label{eq_metr_pot_dist_corresp}
		d_p(P(h^F_0)_*, P(h^F_1)_*)
		=
		d_p(P_{\theta}[\psi](\phi_0)^*, P_{\theta}[\psi](\phi_1)^*).
	\end{equation}

	\subsection{Norms, metrics and envelopes}\label{sect_fs_oper}
	In geometric quantization, the correspondence between metrics on line bundles and norms on their spaces of holomorphic sections plays a central role.
	The main goal of this section is to recall the basics of this correspondence.
	\par 
	We fix a holomorphic line bundle $L$ over a compact complex manifold $X$.
	We assume that the space $H^0(X, L^{\otimes k})$ is non-empty for some $k \in \nat$.
	For any norm $N_k = \| \cdot \|_k$ on $H^0(X, L^{\otimes k})$, we associate a singular metric $FS(N_k)$ on $L^{\otimes k}$ as follows: for any $x \in X$, $l \in L^{\otimes k}_x$, we let
	\begin{equation}\label{eq_fs_norm}
		|l|_{FS(N_k), x}
		=
		\inf_{\substack{s \in H^0(X, L^{\otimes k}) \\ s(x) = l}}
		\| s \|_k,
	\end{equation}
	with the convention that the infimum equals $+ \infty$ if there is no $s \in H^0(X, L^{\otimes k})$ such that $s(x) = l$.
	Note that immediately from the definitions, for any $k \in \nat$, we have 
	\begin{equation}\label{eq_nk_fs_lw_bnd}
		N_k \geq \ban^{\infty}_k(FS(N_k)^{\frac{1}{k}}).
	\end{equation}
	Moreover, $N_k \mapsto FS(N_k)$ is a monotone procedure in the sense that for any norms $N_{k, 0}$, $N_{k, 1}$ verifying  $N_{k, 0} \geq N_{k, 1}$, we have
	\begin{equation}\label{eq_fs_mono}
		FS(N_{k, 0}) \geq FS(N_{k, 1}).
	\end{equation}
	\par 
	When the line bundle $L^{\otimes k}$ is very ample, the above metric can be alternatively described through the Kodaira embeddings
	\begin{equation}\label{eq_kod}
		{\rm{Kod}}_k : X \hookrightarrow \mathbb{P}(H^0(X, L^{\otimes k})^*).
	\end{equation}
	which embeds $X$ in the space of hyperplanes in $H^0(X, L^{\otimes k})$.
	The evaluation maps provide the isomorphism 
	$
		L^{\otimes (-k)} \to {\rm{Kod}}_k^* \mathscr{O}(-1),
	$
	where $\mathscr{O}(-1)$ is the tautological bundle over $\mathbb{P}(H^0(X, L^{\otimes k})^*)$.
	We endow $H^0(X, L^{\otimes k})^*$ with the dual norm $N_k^*$ and induce from it a metric $h^{FS}(N_k)$ on $\mathscr{O}(-1)$ over $\mathbb{P}(H^0(X, L^{\otimes k})^*)$. 
	The reader will check that the resulting metric $FS(N_k)$ on $L^{\otimes k}$ is the only metric verifying under the dual of the above isomorphism the identity $FS(N_k) = {\rm{Kod}}_k^* ( h^{FS}(N_k)^* )$.
	A version of the above identity also exists in the big setting, and we recall it in Section \ref{sect_subr}.
	\par 
	For the latter use, we introduce the \textit{base locus} of a holomorphic line bundle $L'$ on $X$ as
	\begin{equation}
		Bs(L') := \Big\{ x \in X : \text{there is } s \in H^0(X, L'), \text{ such that } s(x) \neq 0 \Big\}.
	\end{equation}
	The \textit{stable base locus} is defined as
	\begin{equation}
		\mathbb{B}(L') := \cap_{k = 1}^{+\infty} Bs((L')^{\otimes k}).
	\end{equation}
	The notion of the stable base locus extends naturally to $\mathbb{Q}$-line bundles, a fact we use to define the \textit{augmented base locus} as
	\begin{equation}\label{eq_aug_bl}
		\mathbb{B}_+(L') := \mathbb{B}(L' - \epsilon A),
	\end{equation}
	where $A$ is an arbitrary ample line bundle and $\epsilon$ is a sufficiently small rational number.
	From Noether's finiteness result, the above definition doesn't depend on $\epsilon > 0$ as long as it is very small.
	From this, it is immediate to see that it doesn't depend on the choice of the ample line bundle $A$.
	\par 
	Let us now recall that $FS(N_k)$ has a psh potential if $H^0(X, L^{\otimes k}) \neq \{0\}$.
	To see this, we fix an arbitrary smooth metric $h^L$ on $L$, then the function $\phi_k: X \to [-\infty, +\infty[$, defined so that $FS(N_k) = (h^L)^k \cdot \exp(- 2k \cdot \phi_k)$, can be described as
	\begin{equation}\label{eq_phi_k_fs_k}
		\phi_k(x) :=  \sup \Big\{
			\frac{1}{k} \log |s(x)|_{(h^L)^k} : s \in H^0(X, L^{\otimes k}), \| s \|_{N_k} \leq 1
		\Big\}.
	\end{equation}
	Note that by the Poincaré-Lelong formula, the functions under the supremum in (\ref{eq_phi_k_fs_k}) are in $\psh(X, \theta)$, where $\theta := c_1(L, h^L)$.
	Since the norm $N_k$ is bounded from below by a sup-norm of some metric on $L$, we see that the functions under the supremum in (\ref{eq_phi_k_fs_k}) also form a uniformly bounded from above subset of functions, and hence $\phi_k^* \in \psh(X, \theta)$.
	However, it is immediate to see that $\phi_k$ is continuous outside of the base-locus, where it takes $-\infty$ values, and so $\phi_k^* = \phi_k$; see also (\ref{eq_fs_n_k_a_k_rel}) for an alternative proof of the last fact.
	For Hermitian norms $N_k$, the above formula can be further simplified
	\begin{equation}\label{eq_phi_k_fs_k_herm}
		\phi_k(x) := \frac{1}{2k} \log \Big( \sum_{i = 1}^{n_k} |s_i(x)|^2_{(h^L)^k} \Big).
	\end{equation}
	Hence, if $N_k$ is a Hermitian norm, then $\phi_k$ has neat algebraic singularities. 
	Moreover, since general norms $N_k$ can be bounded above and below by Hermitian norms, it follows that $\phi_k$ always has algebraic singularities, see also the proof of Proposition \ref{prop_fs_regul_blw} for a refinement of this result.
	\par
	We can now study the convergence properties of Fubini-Study metrics.
	Recall that in (\ref{eq_v_theta}), for a continuous metric $h^L$ on $L$, we defined an envelope $P(h^L)$.
	It was established by Berman \cite[Theorem 1.1]{BermanEnvProj} that $P(h^L)$ is $\mathscr{C}^{1, 1}$ on $X \setminus \mathbb{B}_+(L)$ if $h^L$ is $\mathscr{C}^2$ on $X$.
	By approximation and the fact that taking the envelope $P$ is a monotone operation, we deduce that $P(h^L)$ is continuous on $X \setminus \mathbb{B}_+(L)$ if $h^L$ is continuous on $X$.
	It is also standard, cf. \cite[Theorem 9.17]{GuedjLuZeriahEnv}, that $P(h^L)_*$ has a psh potential.
	Moreover, by the continuity of $h^L$, by the reasoning as after (\ref{eq_p_theta_env}), we have $P(h^L)_* = P(h^L)$.
	We now establish a version of a result due to Berman \cite{BermanEnvProj}.
	\begin{sloppypar}
	\begin{thm}\label{thm_conv_fs}
		Let $h^L$ be a continuous metric on $L$, then the sequence of singular metrics $FS(\ban^{\infty}_k(h^L))^{\frac{1}{k}}$ on $L$ converges towards $P(h^L)$ uniformly over any compact subset of $X \setminus \mathbb{B}_+(L)$.
	\end{thm}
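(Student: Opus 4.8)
The plan is to establish the two one‑sided estimates separately: the bound $FS(\ban^{\infty}_k(h^L))^{\frac1k}\ge P(h^L)$ is elementary, while the matching asymptotic lower bound on $FS(\ban^{\infty}_k(h^L))^{\frac1k}$ is the real content and will be obtained by $L^2$‑methods. Since $h^L\mapsto\ban^{\infty}_k(h^L)$, $N_k\mapsto FS(N_k)$ (see (\ref{eq_fs_mono})) and $h^L\mapsto P(h^L)$ are all monotone, and $P(e^{2\delta}h^L)=e^{2\delta}P(h^L)$ for a constant $\delta$, it suffices to treat smooth $h^L$: sandwiching a continuous $h^L$ between smooth metrics $h^L_-\le h^L\le e^{2\delta}h^L_-$ and applying the smooth case to $h^L_-$ pins $FS(\ban^{\infty}_k(h^L))^{\frac1k}/P(h^L)$ between $e^{-\epsilon-2\delta}$ and $e^{\epsilon+2\delta}$ on any compact $K\subset X\setminus\mathbb{B}_+(L)$ for $k$ large, and $\epsilon,\delta>0$ are arbitrary. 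So let $h^L$ be smooth, set $\theta:=c_1(L,h^L)$, and write $FS(\ban^{\infty}_k(h^L))^{\frac1k}=h^L\cdot\exp(-2\phi_k)$ with $\phi_k$ as in (\ref{eq_phi_k_fs_k}) and $P(h^L)=h^L\cdot\exp(-2V_{\theta})$; the goal becomes $\phi_k\to V_{\theta}$ uniformly on compacts of $X\setminus\mathbb{B}_+(L)$, where recall $V_{\theta}$ is continuous.

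\textbf{Upper bound.} For $s\in H^0(X,L^{\otimes k})$ with $\|s\|_{\ban^{\infty}_k(h^L)}\le 1$, the Poincaré–Lelong formula gives $\tfrac1k\log|s(\cdot)|_{(h^L)^k}\in\psh(X,\theta)$, and it is $\le 0$ by the normalisation, hence $\le V_{\theta}$; taking the supremum in (\ref{eq_phi_k_fs_k}) yields $\phi_k\le V_{\theta}$ on $X$ for every $k$, i.e. $FS(\ban^{\infty}_k(h^L))^{\frac1k}\ge P(h^L)$.

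\textbf{Lower bound.} Fix a compact $K\subset X\setminus\mathbb{B}_+(L)$ and $\epsilon>0$. Since $K\cap\mathbb{B}_+(L)=\emptyset$ and $\mathbb{B}_+(L)$ is the non‑Kähler locus of $[\theta]$ (or, arguing chart by chart along a finite cover of $K$), Demailly's regularization (Theorem \ref{thm_dem_appr}) provides $\rho\in\psh(X,\theta)$ with analytic singularities, $\rho\le 0$, smooth on a neighbourhood of $K$, with $\theta+\ddc\rho\ge\delta\omega$ for some Kähler $\omega$ and $\delta>0$. For small $\tau>0$ put $\psi_\tau:=(1-\tau)V_{\theta}+\tau\rho\in\psh(X,\theta)$; then $\psi_\tau\le 0$, $\theta+\ddc\psi_\tau\ge\tau\delta\omega$, $\psi_\tau$ is continuous near $K$, and by continuity of $V_{\theta}$ and $\rho$ near $K$ we may fix $\tau$ with $\psi_\tau\ge V_{\theta}-\epsilon/2$ on $K$. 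Now fix $x_0\in K$ and $k$. Viewing sections of $L^{\otimes k}$ as $L^{\otimes k}\otimes K_X^{-1}$‑valued $(n,0)$‑forms and using the weight $(h^L)^k\exp(-2k\psi_\tau)$, whose curvature current dominates $\tfrac12 k\tau\delta\omega$ once $k$ is large enough to absorb the curvature of $K_X^{-1}$, the Ohsawa–Takegoshi extension from the point $x_0$ (equivalently a Hörmander estimate for $\dbar$ with an auxiliary weight singular at $x_0$, cut off at scale $1/k$) produces $s_k\in H^0(X,L^{\otimes k})$ with $s_k(x_0)\neq 0$ and
\begin{equation*}
	\int_X |s_k|^2_{(h^L)^k}\exp(-2k\psi_\tau)\,dV_X \;\le\; \frac{C}{k}\,|s_k(x_0)|^2_{(h^L)^k}\exp(-2k\psi_\tau(x_0)),
\end{equation*}
with $C$ independent of $k$ and of $x_0\in K$. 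Combining this with $\exp(-2k\psi_\tau)\ge 1$ and the standard submean‑value bound $\sup_X|s|^2_{(h^L)^k}\le C'k^n\int_X|s|^2_{(h^L)^k}dV_X$ (with $C'$ independent of $k$), one gets $|s_k(x_0)|^2_{(h^L)^k}/\sup_X|s_k|^2_{(h^L)^k}\ge (CC'k^{n-1})^{-1}\exp(2k\psi_\tau(x_0))$. Rescaling $s_k$ to unit sup‑norm and using it as a competitor in (\ref{eq_phi_k_fs_k}) gives $\phi_k(x_0)\ge\psi_\tau(x_0)-O((\log k)/k)\ge V_{\theta}(x_0)-\epsilon/2-O((\log k)/k)$, uniformly in $x_0\in K$. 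Hence $\phi_k\ge V_{\theta}-\epsilon$ on $K$ for $k$ large; together with the upper bound this gives $\sup_K|\phi_k-V_{\theta}|\le\epsilon$ for $k$ large, which is the assertion.

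\textbf{On the main difficulty.} The heart of the matter is the lower bound: one must build holomorphic sections of $L^{\otimes k}$ that are simultaneously large at $x_0$ in the $(h^L)^k$‑metric and small in the $\ban^{\infty}_k(h^L)$‑norm, with all constants uniform over $K$. This is what forces the strictly $\theta$‑positive comparison weight $\psi_\tau$ — hence the use of Demailly's regularization together with the identification of $\mathbb{B}_+(L)$ with the non‑Kähler locus, so that $\psi_\tau$ can be taken $\le 0$ and smooth near $K$ — and the passage from the $L^2$‑estimate furnished by $L^2$‑extension to the required $L^{\infty}$‑control through the submean‑value inequality. Alternatively, the theorem can be deduced from the Bergman‑kernel form of Berman's result \cite{BermanEnvProj}, stated for $\hilb_k(h^L,dV_X)$, via the Bernstein–Markov property of sup‑norms, cf. Proposition \ref{prop_bm_volume}.
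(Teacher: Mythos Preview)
Your proof is correct and essentially re-derives, inline, the relevant part of Berman's result \cite{BermanEnvProj}. The paper's own proof takes a much shorter route: it invokes the Bernstein--Markov property (Proposition \ref{prop_bm_volume}) to replace $\ban^{\infty}_k(h^L)$ by $\hilb_k(h^L,dV_X)$, then cites Berman's theorem directly for the $\mathscr{C}^2$ case, and finishes by approximation for continuous $h^L$ --- precisely the alternative you mention in your closing paragraph.

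Your direct argument via Demailly regularization and $L^2$-extension is more self-contained: it avoids quoting Berman as a black box and makes explicit the analytic mechanism (peak sections built against a strictly $\theta$-psh weight smooth off $\mathbb{B}_+(L)$, followed by the sup--$L^2$ comparison). Two small points worth tightening. First, the identification of $\mathbb{B}_+(L)$ with the non-K\"ahler locus of $[\theta]$ --- which is what lets you take $\rho$ smooth near $K$ --- is a theorem of Boucksom and should be cited as such; Theorem \ref{thm_dem_appr} by itself does not control where the singularities of the approximants land, and your parenthetical ``arguing chart by chart'' does not produce a global $\rho$. Second, the exact form $C/k$ of the Ohsawa--Takegoshi constant is inessential and perhaps not quite what the standard statement gives; any polynomial dependence on $k$ suffices, since only $\tfrac{1}{k}\log(\text{constant})\to 0$ enters the final estimate.
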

	\end{sloppypar}
	\begin{rem}\label{rem_non_unif_gq}
		When $L$ is ample and $h^L$ is smooth and positive, the rate of convergence can be analyzed more precisely, following the results of Tian \cite{TianBerg}, Zelditch \cite{ZeldBerg}, and others. 
		However, such uniform behavior should not be expected in the big case.
		This is for the following reason.
		We already explained that $FS(\ban^{\infty}_k(h^L))^{\frac{1}{k}}$ has a potential with algebraic singularities. 
		But $P(h^L)$ has a potential with minimal singularities.
		As singularity class cannot change under the uniform convergence, the uniform convergence of potentials of $FS(\ban^{\infty}_k(h^L))^{\frac{1}{k}}$ doesn't hold if potentials with minimal singularities do not have algebraic singularities.
	\end{rem}
	We now fix an arbitrary smooth volume form $dV_X$ on $X$.
	\begin{prop}\label{prop_bm_volume}
		For any $\rho \in L^{\infty}(X)$, $\rho \geq 0$ such that ${\rm{ess\,supp}}(\rho) = X$, the following holds.
		For any $\epsilon > 0$ and any continuous metric $h^L$ on $L$, there is $k_0 > 0$ such that, for any $k \geq k_0$,
		\begin{equation}
			\hilb_k(h^L, \rho \cdot dV_X)
			\geq
			\ban^{\infty}_k(h^L) \cdot \exp(- \epsilon k).
		\end{equation}
	\end{prop}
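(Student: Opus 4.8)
The reverse bound $\hilb_k(h^L,\rho\,dV_X)\le\big(\int_X\rho\,dV_X\big)^{1/2}\cdot\ban^{\infty}_k(h^L)$ is immediate, so the content is the opposite inequality; concretely, it suffices to show that for every $\epsilon>0$ there is $k_0$ such that $\int_X|s|^2_{(h^L)^{k}}\,\rho\,dV_X\ge\exp(-\epsilon k)$ for all $k\ge k_0$ and all $s\in H^0(X,L^{\otimes k})$ with $\sup_X|s|_{(h^L)^{k}}=1$, and since $\epsilon$ is arbitrary this is equivalent to the statement. I would first reduce to the case of a smooth metric: writing $h^L=h^L_0\exp(-2\phi)$ with $h^L_0$ smooth and $\phi$ continuous, and approximating $\phi$ uniformly by smooth functions, one obtains, for any $\eta>0$, a smooth metric $h^L_1$ with $h^L=h^L_1\exp(-2\psi)$ and $\|\psi\|_{\infty}<\eta$. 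Since $|s|^2_{(h^L)^{k}}=|s|^2_{(h^L_1)^{k}}\exp(-2k\psi)$, both the sup-norm and the $L^2$-norm are distorted by at most $\exp(k\|\psi\|_{\infty})$ when passing between $h^L$ and $h^L_1$, so the statement for $h^L_1$ with rate $\epsilon/2$ together with $\eta$ small enough yields it for $h^L$ with rate $\epsilon$.

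Assume now $h^L$ smooth and set $\theta:=c_1(L,h^L)$. I would argue by contradiction: if the claim fails for some $\epsilon>0$, there are $k_m\to\infty$ and $s_m\in H^0(X,L^{\otimes k_m})$ with $\sup_X|s_m|_{(h^L)^{k_m}}=1$ but $\int_X|s_m|^2_{(h^L)^{k_m}}\rho\,dV_X<\exp(-\epsilon k_m)$. Put $v_m:=\tfrac{1}{k_m}\log|s_m|_{(h^L)^{k_m}}$; by the Poincaré–Lelong formula (\ref{eq_poinc_lll}) one has $\theta+\ddc v_m\ge0$, so $v_m\in\psh(X,\theta)$ and $\sup_X v_m=0$. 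By the standard $L^1$-compactness of the family $\{u\in\psh(X,\theta):\sup_X u=0\}$ (see, e.g., \cite{GuedjZeriahBook}), after passing to a subsequence $v_m\to v_{\infty}$ in $L^1(X,dV_X)$ and, after a further subsequence, $dV_X$-a.e., with $v_{\infty}\in\psh(X,\theta)$ and $\sup_X v_{\infty}=0$.

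Now fix $\delta:=\epsilon/4$. On $\{v_m>-\delta\}$ we have $|s_m|^2_{(h^L)^{k_m}}=\exp(2k_m v_m)>\exp(-2k_m\delta)$, hence
\[
\int_{\{v_m>-\delta\}}\rho\,dV_X<\exp(2k_m\delta-\epsilon k_m)=\exp(-\epsilon k_m/2)\to0 .
\]
Since $v_m\to v_{\infty}$ a.e., whenever $v_{\infty}(x)>-\delta$ we have $v_m(x)>-\delta$ for $m$ large, so $\liminf_m\mathbf{1}_{\{v_m>-\delta\}}\ge\mathbf{1}_{\{v_{\infty}>-\delta\}}$ a.e.; Fatou's lemma applied to $\mathbf{1}_{\{v_m>-\delta\}}\rho\ge0$ then gives $\int_{\{v_{\infty}>-\delta\}}\rho\,dV_X=0$. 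Thus $v_{\infty}\le-\delta$ holds $dV_X$-a.e. on $\{\rho>0\}$. Because $\mathrm{ess\,supp}(\rho)=X$, the set $\{\rho>0\}$ meets every nonempty open subset of $X$ in positive measure, so deleting from it the $dV_X$-null set $\{v_{\infty}>-\delta\}\cap\{\rho>0\}$ still leaves a dense subset of $X$ on which $v_{\infty}\le-\delta$. By upper semicontinuity of $v_{\infty}$ it follows that $v_{\infty}\le-\delta$ everywhere on $X$, contradicting $\sup_X v_{\infty}=0$.

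The step I expect to be the only genuine obstacle is precisely the weakness of the hypothesis on $\rho$: $\mathrm{ess\,supp}(\rho)=X$ does not exclude $\mathrm{ess\,inf}\,\rho=0$, so a direct submean-value (Bernstein–Markov) estimate at the maximum point of $s_m$ controls only the $dV_X$-mass of the set $\{|s_m|_{(h^L)^{k}}\gtrsim\exp(-\epsilon k)\}$, and that mass could a priori sit where $\rho$ is negligible. Passing to the weak limit $v_{\infty}$ is what removes this: for $v_{\infty}$, upper semicontinuity upgrades ``$v_{\infty}\le-\delta$ off a $\rho$-null set'' to ``$v_{\infty}\le-\delta$ everywhere'', which is impossible. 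The remaining ingredients — the reduction to smooth $h^L$ and the $L^1$-compactness of normalized $\theta$-psh functions — are routine.
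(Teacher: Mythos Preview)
Your compactness argument is a genuinely different route from the paper's (which quotes the Bernstein--Markov machinery of \cite{BerBoucNys}), but the last step does not go through. You assert that since $v_\infty$ is upper semicontinuous and $v_\infty\le-\delta$ on a dense set, $v_\infty\le-\delta$ everywhere. This is the wrong direction: for an usc function $f$ the sets $\{f<c\}$ are open, so what propagates from a dense set to all of $X$ is a \emph{lower} bound, not an upper one (the implication you want would hold for \emph{lower} semicontinuous functions, since then $\{f>-\delta\}$ is open and hence empty once it misses a dense set). A qpsh function does satisfy the stronger rigidity ``$v_\infty\le-\delta$ Lebesgue-a.e.\ $\Rightarrow$ $v_\infty\le-\delta$ everywhere'' via the submean-value inequality, but you have only obtained $v_\infty\le-\delta$ $\rho\,dV_X$-a.e., and under $\mathrm{ess\,supp}(\rho)=X$ alone the set $\{\rho=0\}$ can have positive Lebesgue measure. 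Applying the submean-value inequality at the max point $x_0$ where $v_\infty(x_0)=0$ shows only that $\{v_\infty\le-\delta\}$ has Lebesgue density $0$ at $x_0$; this is perfectly compatible with $\{\rho>0\}\subset\{v_\infty\le-\delta\}$ having positive but vanishingly small measure in every ball around $x_0$, so no contradiction results.

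Under the stronger hypothesis $\rho>0$ $dV_X$-a.e.\ your method does close, and without passing to $v_\infty$: since $h^L$ is smooth, each $v_m=\tfrac{1}{k_m}\log|s_m|_{(h^L)^{k_m}}$ is continuous into $[-\infty,0]$, so $\{v_m>-\delta\}$ is open; the submean-value inequality at a maximum point $x_m$ gives a uniform lower bound $|\{v_m>-\delta\}|\ge c_0>0$ depending only on $\theta$ and $\delta$; choosing $\eta>0$ with $|\{\rho<\eta\}|<c_0/2$ then yields $\int_{\{v_m>-\delta\}}\rho\,dV_X\ge \eta c_0/2$ for all $m$, contradicting the decay. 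For the weaker hypothesis $\mathrm{ess\,supp}(\rho)=X$ this uniform positivity of $\rho$ on large sets is unavailable, and one has to argue differently; the paper handles this by invoking the equivalence of the Bernstein--Markov and determining properties for full-support measures on a regular set from \cite{BerBoucNys}.
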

	\begin{proof}
		The result follows immediately from \cite[Theorem 7.1]{FinEigToepl}, but we provide below a slightly simpler proof adapted to our setting.
		Using the standard terminology, Proposition \ref{prop_bm_volume} simply asserts that the measure $\mu := \rho \cdot dV_X$ satisfies the Bernstein-Markov property, cf. \cite[\S 0.3]{BerBoucNys}. For the special case $\rho = 1$, this is a classical result, cf. \cite[Lemma 3.2]{BermanBouckBalls}.
		\par To extend this to a general density $\rho$, observe that by our assumption ${\rm{ess\,supp}}(\rho) = X$, the measure $\mu$ has full support $X$. Moreover, $X$ is regular in the sense of \cite[Definition 1.5]{BerBoucNys}, cf. \cite[Proposition 1.5]{BerBoucNys}. 
		Hence, by \cite[Proposition 1.12]{BerBoucNys}, $\mu$ satisfies the Bernstein-Markov property if and only if it is determining in the sense of \cite[Proposition 1.10]{BerBoucNys}.
		\par 
		From the definition, the determining property depends only on the absolute continuity class of the measure. 
		Since ${\rm ess\,supp}(\rho) = X$, the measure $\mu$ belongs to the same absolute continuity class as $dV_X$. 
		As $dV_X$ is Bernstein-Markov, it follows that $\mu$ is Bernstein-Markov as well.
	\end{proof}
	\begin{proof}[Proof of Theorem \ref{thm_conv_fs}]
		From Proposition \ref{prop_bm_volume}, it suffices to establish the result for $\hilb_k(h^L, dV_X)$ instead of $\ban^{\infty}_k(h^L)$.
		When $h^L$ is $\mathscr{C}^2$, a more refined statement has been proven by Berman in \cite[Theorem 1.4]{BermanEnvProj}.
		The general case of continuous metrics follows easily by the approximation.
	\end{proof}
	The following result was crucial in the formulation of Theorem \ref{thm_char}.
	\begin{lem}\label{lem_fs_sm}
		The sequence of Fubini-Study metrics $FS(N_k)$, $k \in \nat^*$, is submultiplicative for any submultiplicative graded norm $N = (N_k)_{k = 0}^{\infty}$.
		If $N$ is bounded, the sequence of metrics $FS(N_k)^{\frac{1}{k}}$ on $L$ converges, as $k \to \infty$, to a singular metric, $FS(N)$, verifying $FS(N) = \inf_{k \in \nat^*} FS(N_k)^{\frac{1}{k}}$, and the metric $FS(N)_*$ has a psh potential with minimal singularities
	\end{lem}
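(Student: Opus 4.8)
The plan is to establish the three assertions of the lemma in turn; the first uses only submultiplicativity of $N$, while the last two use that $N$ is bounded. \emph{Submultiplicativity.} This follows directly from the defining formula (\ref{eq_fs_norm}). Fix $k,l\in\nat^*$, a point $x\in X$, elements $l_1\in L^{\otimes k}_x$, $l_2\in L^{\otimes l}_x$, and $\epsilon>0$; if no holomorphic section of $L^{\otimes k}$ passes through $l_1$ or none of $L^{\otimes l}$ passes through $l_2$, the desired inequality is trivial, so pick $s\in H^0(X,L^{\otimes k})$ and $t\in H^0(X,L^{\otimes l})$ with $s(x)=l_1$, $t(x)=l_2$, $\|s\|_k\leq|l_1|_{FS(N_k),x}+\epsilon$ and $\|t\|_l\leq|l_2|_{FS(N_l),x}+\epsilon$. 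Then $st\in H^0(X,L^{\otimes(k+l)})$ satisfies $(st)(x)=l_1\otimes l_2$, so (\ref{eq_subm_s_ring}) gives $|l_1\otimes l_2|_{FS(N_{k+l}),x}\leq\|st\|_{k+l}\leq\|s\|_k\|t\|_l$; letting $\epsilon\to0$ yields $FS(N_{k+l})\leq FS(N_k)\otimes FS(N_l)$, the asserted submultiplicativity.

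\emph{Convergence.} Fix a smooth metric $h^L$ on $L$, set $\theta:=c_1(L,h^L)$, and write $FS(N_k)=(h^L)^{\otimes k}\exp(-2k\phi_k)$ with $\phi_k$ as in (\ref{eq_phi_k_fs_k}), so $\phi_k\in\psh(X,\theta)$. Rewriting the inequality just proved in terms of these potentials gives the superadditivity $(k+l)\phi_{k+l}\geq k\phi_k+l\phi_l$ on $X$. Using $N_k\geq\ban^\infty_k(h^L_0)$, the monotonicity (\ref{eq_fs_mono}), and the elementary bound $FS(\ban^\infty_k(h^L_0))\geq(h^L_0)^{\otimes k}$ (immediate from (\ref{eq_fs_norm})), we get $FS(N_k)^{\frac{1}{k}}\geq h^L_0$, hence $\phi_k$ is bounded above uniformly in $k$ by a bounded potential of $h^L_0$. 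Fekete's lemma then yields $\phi_k\to\phi^0:=\sup_m\phi_m$ pointwise, so $FS(N_k)^{\frac{1}{k}}=h^L\exp(-2\phi_k)\to h^L\exp(-2\phi^0)=:FS(N)$ pointwise, and since $\exp(-2\phi^0)=\inf_m\exp(-2\phi_m)$ also $FS(N)=\inf_m FS(N_m)^{\frac{1}{m}}$. As the $\phi_m$ are $\theta$-psh and uniformly bounded above, $(\phi^0)^*=(\sup_m\phi_m)^*\in\psh(X,\theta)$ (cf. \cite[Proposition I.4.24]{DemCompl}), so $FS(N)_*$ has a psh potential.

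\emph{Minimal singularities.} It remains to show $|(\phi^0)^*-V_\theta|$ is bounded. Choose smooth metrics $g_0,g_1$ on $L$ (e.g. suitable constant multiples of $h^L$, so that potentials of $P(g_0)$ and $P(g_1)$ differ from $V_\theta$ by constants) with $g_0\leq h^L_0$ and $g_1\geq h^L_1$; then $\ban^\infty_k(g_0)\leq N_k\leq\ban^\infty_k(g_1)$, so (\ref{eq_fs_mono}), taking $k$-th roots and infima over $k$, together with the convergence part applied also to the (submultiplicative, bounded) sup-norms of $g_0$ and $g_1$, give $FS(\ban^\infty(g_0))\leq FS(N)\leq FS(\ban^\infty(g_1))$, where $FS(\ban^\infty(g_i)):=\inf_k FS(\ban^\infty_k(g_i))^{\frac{1}{k}}$. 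Since $g_i$ is continuous, Theorem \ref{thm_conv_fs} shows $FS(\ban^\infty_k(g_i))^{\frac{1}{k}}\to P(g_i)$ on $X\setminus\mathbb{B}_+(L)$; combined with the pointwise convergence from the convergence part, potentials of $FS(\ban^\infty(g_i))$ and of $P(g_i)$ agree on $X\setminus\mathbb{B}_+(L)$. As $L$ is big, $\mathbb{B}_+(L)$ is a proper analytic — hence pluripolar, hence Lebesgue-null — subset of $X$, so these two $\theta$-psh functions agree almost everywhere, hence everywhere; thus $FS(\ban^\infty(g_i))_*=P(g_i)$, whose potential has minimal singularities. Taking lower semicontinuous regularizations in $FS(\ban^\infty(g_0))\leq FS(N)\leq FS(\ban^\infty(g_1))$ sandwiches $(\phi^0)^*$ between two $\theta$-psh potentials of minimal singularities, forcing $(\phi^0)^*$ to have minimal singularities as well.

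\emph{Main obstacle.} The delicate part is this last step: one must keep careful track of the order reversal between metrics and their potentials, and — more substantially — one must know that the pointwise Fubini–Study limit of the sup-norms of a continuous metric agrees with the envelope $P$ on all of $X$, not merely off $\mathbb{B}_+(L)$. This is exactly where the pluripolarity of $\mathbb{B}_+(L)$ (which relies on $L$ being big) and Theorem \ref{thm_conv_fs} enter.
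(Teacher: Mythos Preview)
Your proof is correct and follows the same route as the paper: submultiplicativity from the infimum formula (\ref{eq_fs_norm}), pointwise convergence via Fekete applied to the superadditive potentials, and minimal singularities by sandwiching $FS(N)$ between the Fubini--Study limits of the sup-norms of two bounding metrics and invoking Theorem~\ref{thm_conv_fs}. The paper's own argument is terser --- it simply asserts $P(h^L_0)\leq FS(N)\leq P(h^L_1)$ from Theorem~\ref{thm_conv_fs} --- while you spell out the passage from convergence on $X\setminus\mathbb{B}_+(L)$ to the global inequality, and you also take care to replace the merely bounded metrics $h^L_0,h^L_1$ by continuous ones $g_0,g_1$ before applying Theorem~\ref{thm_conv_fs}.

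One small imprecision: when you write ``these two $\theta$-psh functions agree almost everywhere, hence everywhere'', the potential of $FS(\ban^\infty(g_i))$ is a raw supremum $\sup_k\phi_k^{(i)}$ and need not be upper semicontinuous, hence is not itself $\theta$-psh. The correct phrasing is that its usc regularization is $\theta$-psh and coincides with $\sup_k\phi_k^{(i)}$ outside a pluripolar (hence null) set, so it agrees a.e.\ with the potential of $P(g_i)$; since both are now $\theta$-psh, they agree everywhere. This is precisely the conclusion $FS(\ban^\infty(g_i))_*=P(g_i)$ you then use, so the argument stands.
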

	\begin{rem}
		By submultiplicativity of $FS(N_k)$, the sequence of metrics $FS(N_k)^{\frac{1}{k}}$ doesn't increase over multiplicative subsequences in $\nat$.
	\end{rem}
	\begin{proof}
		The statement follows directly from Fekete’s lemma and (\ref{eq_fs_norm}), and was explained in \cite[Lemma 3.3]{FinNarSim}, except for the final part, which we describe in detail below.
		From the boundedness assumption and the obvious monotonicity in the definition of the Fubini-Study metric, we see that there are bounded metrics $h^L_0$ and $h^L_1$ such that
		\begin{equation}
			FS(\ban^{\infty}_k(h^L_0)) \leq FS(N_k) \leq FS(\ban^{\infty}_k(h^L_1)).
		\end{equation}
		From this and Theorem \ref{thm_conv_fs}, $P(h^L_0) \leq FS(N) \leq P(h^L_1)$.
		Since both $P(h^L_0)_*$ and $P(h^L_1)_*$ have potentials with minimal singularities, we conclude that the same is true for $FS(N)_*$.
	\end{proof}
	\par 
	Now, even though we initially defined the sup-norms $\ban^{\infty}_k(h^L)$ for bounded metrics $h^L$, the definition actually makes sense for any metric with a potential bounded from below by a potential with minimal singularities.
	To see this, we consider such a metric $h^L$, and write $h^L = h^L_0 \cdot \exp(- 2 \psi)$ for some bounded metric $h^L_0$.
	Then for $s \in H^0(X, L^{\otimes k})$, we define
	\begin{equation}\label{eq_sup_potent}
		\| s \|_{\ban^{\infty}_k(h^L)} := \exp( k \cdot \sup_{x \in X} (\phi - \psi)),
	\end{equation}
	where $\phi := \frac{1}{k} \log |s|_{(h^L_0)^k}$.
	As $\phi \in \psh(X, \theta)$ by Poincaré-Lelong, we deduce that the supremum on the right-hand side of (\ref{eq_sup_potent}) is finite if $\psi$ is bounded from below by a potential with minimal singularities.
	Hence, by the above, the right-hand side of (\ref{eq_char}) is well-defined.
	\par 
	From the above description it is also immediate, cf. \cite[Lemma 2.8]{BermanBouckBalls}, that for any metric $h^L$ as in (\ref{eq_sup_potent}), we have the following identity
	\begin{equation}\label{eq_max_prin}
		\ban^{\infty}_k(h^L)
		=
		\ban^{\infty}_k(P(h^L)),
	\end{equation}
	where $P(h^L)$ is the envelope defined as in (\ref{eq_v_theta}), see the discussion after (\ref{eq_ind_metr_eq}) for a brief proof and Proposition \ref{thm_regul_blw} for a further refinement.
	\par 
	From the above, we see that for any $h^L$ with a potential of minimal singularities, the norm $\ban^{\infty}(h^L)$ is bounded.
	Indeed, since $h^L$ has a potential with minimal singularities and for any continuous $h^L_0$, $P(h^L_0)$ has a potential with minimal singularities, we deduce that there is $C > 0$, such that $P(h^L_0) \cdot \exp(-C) \leq h^L \leq P(h^L_0) \cdot \exp(C)$.
	It then follows from (\ref{eq_max_prin}) that 
	\begin{equation}\label{eq_bound_norm_min_sing}
		\exp(-Ck) \cdot \ban^{\infty}_k(h^L_0)
		\leq
		\ban^{\infty}_k(h^L)
		\leq
		\exp(Ck) \cdot \ban^{\infty}_k(h^L_0).
	\end{equation}
	\par 
	Consider now the following situation.
	Assume that a big line bundle $L$ can be written as $L = F \otimes \mathscr{O}(E)$, where $E$ is an effective divisor and $F$ is an arbitrary line bundle. 
	We fix a singular metric $h^L$ on $L$, and introduce the following metric on $F$: $h^F := h^L / h^E_{{\rm{sing}}}$, where $h^E_{{\rm{sing}}}$ is the canonical singular metric on $\mathscr{O}(E)$.
	It is immediate to verify that if the line bundle $F$ is psh, and the potential of the metric $h^L$ is bounded from below by a potential with minimal singularities, then the same holds true for the induced metric on $F$.
	Moreover, the embedding
	\begin{equation}\label{eq_can_emb}
		\iota_k : H^0(X, F^{\otimes k}) \to H^0(X, L^{\otimes k}), \qquad s \mapsto s \cdot s_E^k,
	\end{equation}
	is isometric, when $H^0(X, F^{\otimes k})$ is endowed with the norm $\ban^{\infty}_k(h^F)$ and $H^0(X, L^{\otimes k})$ is endowed with the norm $\ban^{\infty}_k(h^L)$.
	\par 
	Let us establish the following result, generalizing (\ref{eq_max_prin}).
	\begin{sloppypar}
	\begin{prop}\label{prop_pull_back}
		If the potential of the metric $h^L$ is bounded from below by a potential with minimal singularities, then for any $k \in \nat$, $\iota_k^* \ban^{\infty}_k(h^L) = \ban^{\infty}_k(P(h^F))$.
	\end{prop}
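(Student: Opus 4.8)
The plan is to derive the identity from two facts already in place: the isometry property of $\iota_k$ recorded just above the statement, and the invariance of sup-norms under the envelope operation, (\ref{eq_max_prin}). So the proof should amount to checking that both are applicable here and then composing them. (If $F$ is not pseudoeffective, then $H^0(X, F^{\otimes k}) = 0$ for $k \geq 1$ and the statement is vacuous, so we may assume $F$ is pseudoeffective, i.e. psh.)

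First I would fix smooth reference metrics $h^L_0$ on $L$ and $h^E_0$ on $\mathscr{O}(E)$, write $h^L = h^L_0 \cdot \exp(-2\psi_L)$, and set $h^F_0 := h^L_0 / h^E_0$, $\theta_L := c_1(L, h^L_0)$, $\theta_E := c_1(\mathscr{O}(E), h^E_0)$, $\theta_F := \theta_L - \theta_E$. The canonical singular metric is $h^E_{{\rm{sing}}} = h^E_0 \cdot \exp(-2\phi_E)$ with $\phi_E = \log|s_E|_{h^E_0}$, a $\theta_E$-psh function bounded from above, so $h^F$ has potential $\psi_F = \psi_L - \phi_E$ with respect to $h^F_0$. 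To ensure the sup-norm $\ban^{\infty}_k(h^F)$ and the envelope $P(h^F)$ entering the statement are well-defined, I would observe, as in the paragraph preceding the proposition, that the potential of $h^F$ is again bounded from below by a potential with minimal singularities: by hypothesis $\psi_L \geq V_{\theta_L} - C$, while $V_{\theta_F} + \phi_E - \sup_X \phi_E$ is a non-positive $\theta_L$-psh function (here one uses $\theta_F + \theta_E = \theta_L$ together with $V_{\theta_F} \not\equiv -\infty$), hence $\leq V_{\theta_L}$, and therefore $\psi_F = \psi_L - \phi_E \geq V_{\theta_F} - C - \sup_X \phi_E$.

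Next I would check the isometry directly from (\ref{eq_sup_potent}). For $s \in H^0(X, F^{\otimes k})$ one has $\iota_k(s) = s \cdot s_E^k$, and since $(h^L_0)^k = (h^F_0)^k \otimes (h^E_0)^k$ and $\psi_L = \psi_F + \phi_E$, the two occurrences of $\frac{1}{k}\log|s_E^k|_{(h^E_0)^k} = \phi_E$ cancel, so that $\frac{1}{k}\log|\iota_k(s)|_{(h^L_0)^k} - \psi_L = \frac{1}{k}\log|s|_{(h^F_0)^k} - \psi_F$ pointwise on $X$. Taking $\sup_X$ and exponentiating yields $\iota_k^* \ban^{\infty}_k(h^L) = \ban^{\infty}_k(h^F)$, which is exactly the isometry asserted just above the statement. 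Finally, applying (\ref{eq_max_prin}) to the metric $h^F$ --- legitimate by the previous paragraph --- gives $\ban^{\infty}_k(h^F) = \ban^{\infty}_k(P(h^F))$, and combining the last two identities proves the claim.

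I do not expect a serious obstacle here: once the isometry and (\ref{eq_max_prin}) are granted, the conclusion is immediate. The only point calling for a little care is the verification that dividing by the canonical singular metric preserves the property of having a potential bounded from below by a minimal-singularity potential, i.e. the elementary comparison $V_{\theta_L} \geq V_{\theta_F} + \phi_E - \sup_X \phi_E$; this is precisely the observation already recorded in the paragraph preceding the proposition, so in practice the proof is very short.
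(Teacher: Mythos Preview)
Your proposal is correct and follows essentially the same approach as the paper: establish the isometry $\iota_k^* \ban^{\infty}_k(h^L) = \ban^{\infty}_k(h^F)$ and then invoke the envelope invariance $\ban^{\infty}_k(h^F) = \ban^{\infty}_k(P(h^F))$. One small organizational remark: in the paper, the proof of (\ref{eq_max_prin}) is in fact deferred to the proof of this very proposition (``see the discussion after (\ref{eq_ind_metr_eq})''), so citing (\ref{eq_max_prin}) here is mildly circular in the paper's internal logic --- but since (\ref{eq_max_prin}) is also externally justified (cf.\ \cite[Lemma 2.8]{BermanBouckBalls}) and your argument reproduces that justification anyway, there is no genuine issue.
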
 
	\end{sloppypar}
	\begin{proof}
		First of all, for any $s \in H^0(X, F^{\otimes k})$, we have
		\begin{equation}\label{eq_ind_metr_eq}
			\| s \|_{\ban^{\infty}_k(h^F)} = \exp( k \cdot \sup_{x \in X} (\phi - \psi)),
		\end{equation}
		where $\phi := \frac{1}{k} \log|s|_{(h^F_0)^k}$.
		On the other hand, we have
		\begin{equation}
			\| s \|_{\ban^{\infty}_k(P(h^F))} = \exp( k \cdot \sup_{x \in X} (\phi - P_{\theta}(\psi))).
		\end{equation}
		Clearly, we have $P_{\theta}(\psi) \leq \psi$, giving us the inequality $\ban^{\infty}_k(P(h^F)) \geq \ban^{\infty}_k(h^F)$.
		However, since $\phi \in \psh(X, \theta)$ by Poincaré-Lelong, we deduce 
		\begin{equation}
			\phi - \sup_{x \in X} (\phi - \psi) \leq P_{\theta}(\psi).
		\end{equation}	 
		By rearranging terms and taking the supremum, we obtain the reverse inequality, which finishes the proof of Proposition \ref{prop_pull_back}.
	\end{proof}

	\subsection{Quasi-distances on the space of norms and respective geodesics}\label{sect_quas_metr}
	In this section, we recall the definition of a family of distances on the space of Hermitian norms on finite-dimensional vector spaces, and explain how these distances extend to quasi-distances on the space of all norms. 
	We then discuss the construction of geodesics for these distances.
	\par 
	To begin, we fix two Hermitian norms $N_i$, $i = 0, 1$, associated with the scalar products $\langle \cdot, \cdot \rangle_i$ on a vector space $V$ of dimension $v$.
	Define the transfer map $A \in \enmr{V}$ between $N_0$ and $N_1$ as the Hermitian operator, verifying $\langle A \cdot, \cdot \rangle_0 = \langle \cdot, \cdot \rangle_1$.
	For any $p \in [1, +\infty[$, we define
	\begin{equation}\label{dist_norm_fins_expl}
		d_p(N_0, N_1)
		=
		\frac{1}{2}
		\sqrt[p]{\frac{{\rm{Tr}}[|\log A|^p]}{v}}.
	\end{equation}
	By \cite[Theorem 3.1]{BouckErik21}, the above functions $d_p$, $p \in [1, +\infty[$, satisfy the triangle inequality.
	Hence, they define distances on the space of all Hermitian norms.
	It is then immediate to verify that the Hermitian norms $N_t$, associated with the scalar products $\langle A^t \cdot, \cdot \rangle_0$ form a metric geodesic connecting $N_0$ with $N_1$ for any $d_p$, $p \in [1, +\infty[$.
	\par 
	While there are many motivations for the above definition of distances, we only mention that one can show that the space of Hermitian norms on $V$, endowed with the distance $d_2$, is isometric to the space $SL(V) / SU(V)$, equipped with the distance induced by the standard $SL(V)$-invariant metric, see \cite[Theorem 1.1]{DarvLuRub}.
	\par
	Let $N_0, N_1, N_2$ be Hermitian norms on $V$, ordered as $N_0 \leq N_1 \leq N_2$. 
	Lidskii inequality from \cite[Theorem 5.1]{DarvLuRub} states that for any $p \in [1, +\infty[$, we have
	\begin{equation}\label{eq_lidski_herm}
		d_p(N_1, N_2)^p \leq d_p(N_0, N_2)^p - d_p(N_0, N_1)^p.
	\end{equation}
	\par 
	Next, we extend the construction of distances to non-Hermitian norms.
	Let $N_i = \| \cdot \|_i$, $i = 1, 2$, be two norms on $V$.
	We define the \textit{logarithmic relative spectrum} of $N_1$ with respect to $N_2$ as a non-increasing sequence $\lambda_j := \lambda_j(N_1, N_2)$, $j = 1, \cdots, v$, defined as
	\begin{equation}\label{eq_log_rel_spec}
		\lambda_j
		:=
		\sup_{\substack{W \subset V \\ \dim W = j}} 
		\inf_{w \in W \setminus \{0\}} \log \frac{\| w \|_2}{\| w \|_1}.
	\end{equation}
	Directly from (\ref{eq_log_rel_spec}), for any norms $N_1, N_2, N_3$, $j = 1, \cdots, v$, we have
	\begin{equation}\label{eq_log_rel_spec02}
		\lambda_j(N_1, N_2) + \lambda_{\dim V}(N_2, N_3) \leq \lambda_j(N_1, N_3) \leq \lambda_j(N_1, N_2) + \lambda_1(N_2, N_3),
	\end{equation}
	and whenever $N_1 \leq N_2 \leq N_3$, we have
	\begin{equation}\label{eq_log_rel_spec2}
		\lambda_j(N_1, N_2) \leq \lambda_j(N_1, N_3).
	\end{equation}
	We then define the following quantity
	\begin{equation}\label{dist_norm_fins_expl_gen}
		d_p(N_1, N_2)
		=
		\sqrt[p]{\frac{\sum_{i = 1}^{v} |\lambda_i(N_1, N_2)|^p}{v}}.
	\end{equation}
	It is then an immediate consequence of the min-max characterization of the eigenvalues that the definition (\ref{dist_norm_fins_expl_gen}) extends that of (\ref{dist_norm_fins_expl}).
	\par 
	While the function from (\ref{dist_norm_fins_expl_gen}) separates points, it is not clear that it satisfies the triangle inequality, which is why we refer to them as quasi-distances and not distances. 
	We shall show below that it satisfies a relaxed triangle inequality.
	\par 
	By the John ellipsoid theorem, cf. \cite[p. 27]{PisierBook}, for any normed vector space $(V, N_V)$, there is a Hermitian norm $N_V^H$ on $V$, verifying 
	\begin{equation}\label{eq_john_ellips}
		N_V^H \leq N_V \leq \sqrt{v} \cdot N_V^H.
	\end{equation}
	Note that it is immediate from (\ref{eq_log_rel_spec02}) and  Minkowski inequality that if the norms $N_1$ and $N_2$ are such that for $C > 0$, we have $N_1 \cdot \exp(-C) \leq N_2 \leq N_1 \cdot \exp(C)$, then for any norm $N_0$ on $V$, $p \in [1, +\infty[$, we have
	\begin{equation}\label{eq_dp_dinf_compp}
		\big|
			d_p(N_0, N_1) - d_p(N_0, N_2)
		\big| 
		\leq
		C.
	\end{equation}
	From this and the fact that the triangle inequality holds for metrics, it follows, cf. \cite[proof of Lemma 2.9]{FinNarSim} for details, that for any norms $N_1, N_2, N_3$ on $V$,
	\begin{equation}\label{eq_gen_triangle}
		d_p(N_1, N_3) \leq d_p(N_1, N_2) + d_p(N_2, N_3) + 6 \log v.
	\end{equation}
	Similarly to (\ref{eq_gen_triangle}), Lidskii inequality (\ref{eq_lidski_herm}) holds in the following weaker form: for any norms $N_0, N_1, N_2$ on $V$, ordered as $N_0 \leq N_1 \leq N_2$, we have
	\begin{equation}\label{eq_lidski_gen}
		d_1(N_1, N_2) \leq d_1(N_0, N_2) - d_1(N_0, N_1) + 6 \log v.
	\end{equation}
	\par 
	Let us now describe a relation between the $d_1$-distance and the volumes of the unit balls.
	We fix a Hermitian norm $H$ on $V$, which allows us to calculate the volumes $\vol(\cdot)$ of measurable subsets in $V$. 
	Note that while such volumes depend on the choice of $V$, their ratio does not.
	\begin{lem}\label{lem_vol_balls_d1}
		For any two Hermitian norms $N_0$, $N_1$ on $V$, verifying $N_0 \leq N_1$, we have
		\begin{equation}\label{eq_vol_balls_d1}
			d_1(N_0, N_1)
			=
			\frac{1}{v} \log
			\Big(
			\frac{\vol(B_0)}{\vol(B_1)}
			\Big),
		\end{equation}
		where $B_i \subset V$, $i = 0, 1$, are the unit balls of the norms $N_i$.
		If the norms $N_0$, $N_1$ are not necessarily Hermitian, then we have
		\begin{equation}
			\Big|
			d_1(N_0, N_1)
			-
			\frac{1}{v} \log
			\Big(
			\frac{\vol(B_0)}{\vol(B_1)}
			\Big)
			\Big|
			\leq
			6 \log v.
		\end{equation}
	\end{lem}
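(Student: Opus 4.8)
The plan is to establish the Hermitian case by a direct computation with the transfer map, and then to deduce the general bound by approximating arbitrary norms with Hermitian ones via the John ellipsoid theorem $(\ref{eq_john_ellips})$.

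For the first assertion, since the ratio $\vol(B_0)/\vol(B_1)$ does not depend on the auxiliary Hermitian norm used to define $\vol$, one may take this norm to be $N_0$. Let $A \in \enmr{V}$ be the transfer map between $N_0$ and $N_1$; it is a positive Hermitian operator, and the hypothesis $N_0 \leq N_1$ is equivalent to $A \geq \mathrm{Id}$, so all eigenvalues $a_1, \dots, a_v$ of $A$ satisfy $a_j \geq 1$, and therefore $|\log A| = \log A$. Fixing an $N_0$-orthonormal basis in which $A$ is diagonal, the unit ball $B_1$ becomes the ellipsoid $\{\, w : \sum_j a_j |w_j|^2 \leq 1 \,\}$, whose volume is $\prod_j a_j^{-1} = (\det A)^{-1}$ times $\vol(B_0)$; hence $\vol(B_0)/\vol(B_1) = \det A$. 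A direct comparison with the definition $(\ref{dist_norm_fins_expl})$, using $|\log A| = \log A$, then yields $(\ref{eq_vol_balls_d1})$.

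For the general case, I would use $(\ref{eq_john_ellips})$ to pick Hermitian norms $N_0^H, N_1^H$ with $N_i^H \leq N_i \leq \sqrt{v}\, N_i^H$. The subtlety — and the main point of the argument — is that the formula just proved applies only to \emph{ordered} Hermitian norms, whereas $N_0 \leq N_1$ does not force $N_0^H \leq N_1^H$. To get around this, I would set $\tilde{N}_0 := N_0^H$ and let $\tilde{N}_1$ be the Hermitian norm whose square equals $(N_0^H)^2 + (N_1^H)^2$. Then $\tilde{N}_0 \leq \tilde{N}_1$ by construction; moreover $\tilde{N}_0$ lies within a factor $e^{\pm\frac12\log v}$ of $N_0$, and, using $N_0^H \leq N_0 \leq N_1$ and $N_1^H \leq N_1$, one checks that $\tilde{N}_1$ lies within a factor $e^{\pm\frac12\log v}$ of $N_1$ as well (from the upper bound $\tilde{N}_1 \leq \sqrt{2}\, N_1$ and the lower bound $\tilde{N}_1 \geq N_1^H \geq v^{-1/2} N_1$). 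Applying the Hermitian case to the ordered pair $\tilde{N}_0 \leq \tilde{N}_1$ gives $d_1(\tilde{N}_0, \tilde{N}_1) = \tfrac1v \log(\vol(\tilde{B}_0)/\vol(\tilde{B}_1))$.

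It remains to control the two resulting errors. First, applying $(\ref{eq_dp_dinf_compp})$ twice — once in each argument, using that $d_p$ is symmetric, which follows from the min-max characterization $(\ref{eq_log_rel_spec})$ — gives $|d_1(N_0, N_1) - d_1(\tilde{N}_0, \tilde{N}_1)| \leq \log v$. Second, the factor estimates above yield the inclusions $B_0 \subseteq \tilde{B}_0 \subseteq \sqrt{v}\, B_0$ and $\tfrac{1}{\sqrt{2}} B_1 \subseteq \tilde{B}_1 \subseteq \sqrt{v}\, B_1$, and since $V$ has real dimension $2v$, taking volumes gives $|\tfrac1v \log(\vol(\tilde{B}_0)/\vol(\tilde{B}_1)) - \tfrac1v \log(\vol(B_0)/\vol(B_1))| \leq 2\log v$. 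Adding the three estimates bounds the left-hand side of the second assertion by $3\log v \leq 6\log v$ (the case $v = 1$ being trivial), which completes the proof. The only real obstacle is the order-preservation issue addressed by the construction of $\tilde{N}_1$; everything else is elementary bookkeeping with volumes and the quasi-triangle estimates of Section \ref{sect_quas_metr}.
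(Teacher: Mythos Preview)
Your proof is correct and follows the paper's approach: the Hermitian case by a direct eigenvalue/volume computation, and the general case by John-ellipsoid reduction (the paper simply cites (\ref{eq_john_ellips}), (\ref{eq_gen_triangle}) and (\ref{eq_vol_balls_d1}) without elaboration, whereas your construction of $\tilde N_1$ with $\tilde N_1^2=(N_0^H)^2+(N_1^H)^2$ makes the ordering step explicit). One small remark: the symmetry of $d_p$ for general norms is not immediate from (\ref{eq_log_rel_spec}), but you do not actually need it---the first-argument analogue of (\ref{eq_dp_dinf_compp}) follows directly from the definition, since replacing $N_0$ by a norm within $e^{\pm C}$ shifts each $\log(\|w\|_M/\|w\|_{N_0})$ by at most $C$ pointwise, hence shifts every $\lambda_j$ by at most $C$.
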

	\begin{proof}
		For Hermitian norms, the result is immediate: it simply says that the volume of an ellipse is proportional to the product of its principal axes. 
		For not necessarily Hermitian norms, the result follows from (\ref{eq_john_ellips}), (\ref{eq_gen_triangle}) and (\ref{eq_vol_balls_d1}).
	\end{proof}
	\par 
	When working with general norms rather than Hermitian ones, it becomes essential to replace the geodesic construction with an alternative path between norms. 
	Complex interpolation appears to be the most suitable for this purpose.
	\par 
	Let us recall the definition of complex interpolation.
	We fix two norms $N_0 = \| \cdot \|_0$, $N_1 = \| \cdot \|_1$ on $V$.
	Consider the strip $S = \{ z \in \mathbb{C} : 0 < \Re z < 1 \}$.
	Define $\mathcal{F}(V)$ to be the set of all functions $f : S \to V$ such that: $f$ extend continuously on $\overline{S}$, holomorphic on $S$, and the boundary functions $t \mapsto f(it)$ and $t \mapsto f(1+it)$ are bounded.
	\par 
	For $t \in ]0, 1[$, the \emph{complex interpolation norm} $N_t = \| \cdot \|_t$ on $V$ is defined by
	\begin{equation}\label{eq_defn_cx_inter}
	\| v \|_t
	= \inf_{\substack{f \in \mathcal{F}(V) \\ f(t) = v}} \Big\{ 
	    \max \big( 
	        \sup_{y \in \mathbb{R}} \| f(iy) \|_0, \;
	        \sup_{y \in \mathbb{R}} \| f(1+iy) \|_1
	    \big)
	\Big\}.
	\end{equation}
	\par 
	To make a connection between the construction of geodesics and complex interpolation, we note that by the interpolation theorem of Stein-Weiss, see \cite[Theorem 5.4.1]{InterpSp}, a complex interpolation between two Hermitian norms is given by the geodesic connecting them.
	\par 
	From the above definition, it is immediate to see that complex interpolation is a monotone operation in the sense that if the complex interpolation between $N_0$ and $N_1$ is bigger than the complex interpolation between $N'_0$ and $N'_1$ for any norms verifying $N_0 \geq N'_0$ and $N_1 \geq N'_1$.

	\subsection{Geometric quantization on ample line bundles}\label{sect_ample_case}
	The main goal of this section is to recall some geometric quantization results in the ample setting.
	\par 
	We fix an ample line bundle $A$ over $X$. Let $dV_X$ be a smooth volume form on $X$. The following result was established by Chen-Sun \cite{ChenSunQuant} for $p = 2$ and by Berndtsson \cite{BerndtProb} for $p \in [1, +\infty[$, both under additional regularity assumptions on the metrics. These regularity assumptions were later relaxed by Darvas-Lu-Rubinstein \cite{DarvLuRub}.
	See also Darvas-Xia \cite[Lemma 2.10, Theorem 2.11]{DarvXiaTest} for further developments.
	\begin{thm}\label{thm_isom_ample}
		For any bounded metrics $h^A_0$, $h^A_1$ on $A$ with psh potentials, and any $p \in [1, +\infty[$,
		\begin{equation}\label{eq_thm_isom_ample}
			d_p(h^A_0, h^A_1)
			=
			\lim_{k \to \infty}
			\frac{d_p(\hilb_k(h^A_0, dV_X), \hilb_k(h^A_1, dV_X))}{k}.
		\end{equation}
	\end{thm}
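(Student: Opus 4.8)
The plan is to reduce the asymptotic isometry to the weak convergence of a sequence of measures on $\real$. Since $\hilb_k(h^A_0, dV_X)$ and $\hilb_k(h^A_1, dV_X)$ are Hermitian, the numbers $\lambda_j := \lambda_j(\hilb_k(h^A_0, dV_X), \hilb_k(h^A_1, dV_X))$ of (\ref{eq_log_rel_spec}) are the logarithms of the eigenvalues of the transfer operator between these two norms, so by (\ref{dist_norm_fins_expl_gen}),
\begin{equation}
	\frac{1}{k} d_p\big(\hilb_k(h^A_0, dV_X), \hilb_k(h^A_1, dV_X)\big)
	=
	\Big( \int_{\real} |x|^p \, d\nu_k(x) \Big)^{1/p},
	\qquad
	\nu_k := \frac{1}{n_k} \sum_{j = 1}^{n_k} \delta\Big[ \frac{\lambda_j}{k} \Big].
\end{equation}
As $h^A_0$ and $h^A_1$ are bounded and mutually comparable, $|\lambda_j| \leq C k$ for a fixed $C$, so the $\nu_k$ are all supported in $[-C, C]$; hence it suffices to show that $\nu_k$ converges weakly, as $k \to \infty$, to
\begin{equation}
	\nu := \big( \dot\phi_0 \big)_* \Big( \frac{(\theta + \ddc \phi_0)^n}{{\rm{vol}}(A)} \Big),
\end{equation}
where $\theta := c_1(A, h^A_0)$ and $\phi_t$, $t \in [0,1]$, is the weak Mabuchi geodesic from the potential of $h^A_0$ to that of $h^A_1$ (both taken with respect to $h^A_0$ itself), with initial velocity $\dot\phi_0$. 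Indeed, the compact supports then upgrade weak convergence to convergence of $p$-th moments, and $\int_{\real} |x|^p d\nu = d_p(h^A_0, h^A_1)^p$ by the last part of Theorem \ref{thm_mab_geod}, after the routine approximation replacing the potential of $h^A_0$ by envelopes $P_\theta(f)$, $f \in \mathscr{C}^2(X)$, and invoking continuity of $d_p$.

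For the bound $\limsup_k \int |x|^p d\nu_k \leq \int |x|^p d\nu$ I would invoke the positivity of direct images. Because $\phi_t$ is a geodesic, the function $(x, \tau) \mapsto \phi_{\Re \tau}(x)$ on $X \times S$ ($S$ the strip of Section \ref{sect_quas_metr}, $\theta$ pulled back along the projection) is $\theta$-psh, so by Berndtsson's theorem the Hermitian bundle over $S$ with fibre $H^0(X, A^{\otimes k})$ and $L^2$-metric $\hilb_k(h^A_0 \cdot \exp(-2\phi_{\Re\tau}), dV_X)$ has semipositive curvature for $k$ large. Consequently the quantized Monge-Ampère energy $t \mapsto \tfrac{1}{k n_k}\log\det G_k(t)$, where $G_k(t)$ is the Gram matrix of a fixed basis in $\hilb_k(h^A_0 \cdot \exp(-2\phi_t), dV_X)$, is concave in $t$; since it converges, as $k \to \infty$, to the classical Monge-Ampère energy (Bergman-kernel asymptotics), which is affine along $\phi_t$, one reads off the $p = 1$ case from its endpoint increments together with the volume-of-balls formula of Lemma \ref{lem_vol_balls_d1} (and the results of Berman-Boucksom). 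For general $p$ one runs the same argument on the distribution function of $\nu_k$, the sublevel subspaces of the transfer operator being partial Bergman spaces whose dimensions are governed by an Okounkov-body / equivariant Riemann-Roch asymptotics.

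For the reverse bound $\liminf_k \int |x|^p d\nu_k \geq \int |x|^p d\nu$ I would use the Phong-Sturm Bergman geodesics: for each $k$, let $N_{k,t}$ be the Hermitian-norm geodesic between $\hilb_k(h^A_0, dV_X)$ and $\hilb_k(h^A_1, dV_X)$, and let $\phi^{(k)}_t$ be the potential of $FS(N_{k,t})^{1/k}$. By Phong-Sturm, with the low-regularity refinements of Darvas-Lu-Rubinstein, $\phi^{(k)}_t \to \phi_t$; using Bernstein-Markov (Proposition \ref{prop_bm_volume}) to compare $\hilb_k(h^A_i, dV_X)$ with $\ban^{\infty}_k(h^A_i)$, hence $FS \circ \hilb_k$ with the identity up to $\exp(o(k))$, the convexity in $t$ of $\phi^{(k)}_t$ promotes this to convergence of the initial velocities and of the spectral measures. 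The main obstacle is precisely to make the two one-sided estimates meet: one must prove genuine convergence of the empirical spectral measures $\nu_k$ --- not merely a one-sided moment bound --- which requires simultaneous control of the endpoint Bergman densities and of the $t$-derivative at $0$ of the quantized geodesics $\phi^{(k)}_t$ when $h^A_0, h^A_1$ are only bounded; this derivative/spectral-convergence step is the technical heart supplied by Berndtsson's probabilistic method and the regularity analysis of Darvas-Lu-Rubinstein.
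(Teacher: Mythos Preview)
The paper does not give its own proof of this statement; it is quoted from the literature (Chen--Sun for $p=2$, Berndtsson for general $p$, with the regularity hypotheses relaxed by Darvas--Lu--Rubinstein). Your reduction to weak convergence of the empirical spectral measures $\nu_k$ toward $\nu = (\dot\phi_0)_* \big( (\theta + \ddc\phi_0)^n / {\rm vol}(A) \big)$, together with the uniform compact support and the identification of $\int |x|^p\, d\nu$ with $d_p(h^A_0,h^A_1)^p$ via Theorem~\ref{thm_mab_geod}, is the right framework.

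There is, however, a genuine gap in your passage from $p=1$ to general $p$. Concavity of $t \mapsto \tfrac{1}{k n_k}\log\det G_k(t)$ together with its convergence to the (affine) Monge--Amp\`ere energy yields exactly the $d_1$ statement --- this is Berman--Boucksom. But the sentence ``one runs the same argument on the distribution function of $\nu_k$, the sublevel subspaces of the transfer operator being partial Bergman spaces whose dimensions are governed by an Okounkov-body / equivariant Riemann--Roch asymptotics'' does not work: the spectral subspaces of the transfer operator between $\hilb_k(h^A_0, dV_X)$ and $\hilb_k(h^A_1, dV_X)$ are \emph{not} partial Bergman spaces attached to a filtration or a multiplier ideal --- they depend on both metrics simultaneously and carry no intrinsic algebraic description. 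The Okounkov-body asymptotics you allude to (Boucksom--Chen) apply to \emph{submultiplicative filtrations}, which is the setting of Theorem~\ref{thm_filt}, not to eigenspaces of a transfer map between two arbitrary $L^2$ norms. As written you have established only the $d_1$ case.

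Berndtsson's actual route to general $p$ is closer to your ``reverse bound'' paragraph, but run symmetrically: on one hand the Fubini--Study potentials of the Hermitian-norm geodesic $N_{k,t}$ form a $p^*\theta$-psh family on $X\times S$, hence a subgeodesic on the Mabuchi side; on the other hand, positivity of direct images makes $t \mapsto \hilb_k(h^A_0 e^{-2\phi_t}, dV_X)$ a subgeodesic on the norm side. Playing these two subgeodesic inequalities against each other, together with the endpoint asymptotics $FS(\hilb_k(h^A_i,dV_X))^{1/k} \to h^A_i$, forces $FS(N_{k,t})^{1/k} \to \phi_t$ uniformly in $t$, and the spectral convergence $\nu_k \rightharpoonup \nu$ (hence all moments at once) follows from the convexity in $t$. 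The Darvas--Lu--Rubinstein contribution is to push this through for merely bounded psh $h^A_i$ via $d_p$-approximation and their Lidskii inequality~(\ref{eq_lidski_herm}). Your two one-sided moment bounds could perhaps be made to meet, but the mechanism you invoke for the $p>1$ upper bound is the wrong one.
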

	\par 
	Let us explain that Theorem \ref{thm_isom_ample} implies the following version of Theorem \ref{thm_isom}. 
	\begin{prop}\label{thm_isom_ample_ban}
		For any continuous metrics $h^A_0$, $h^A_1$ on $A$ with psh potentials, and any $p \in [1, +\infty[$, we have
		\begin{equation}\label{eq_thm_isom_ample1}
			d_p(h^A_0, h^A_1)
			=
			\lim_{k \to \infty}
			\frac{d_p(\ban_k(h^A_0), \ban_k(h^A_1))}{k}.
		\end{equation}
	\end{prop}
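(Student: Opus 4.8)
The plan is to deduce the statement from Theorem \ref{thm_isom_ample} by comparing the sup-norms $\ban^{\infty}_k(h^A_i)$ with the Hilbert norms $\hilb_k(h^A_i, dV_X)$ attached to a fixed smooth volume form $dV_X$ on $X$. Since a continuous metric with psh potential is in particular a bounded metric with psh potential, Theorem \ref{thm_isom_ample} applies to $h^A_0, h^A_1$ and gives
\begin{equation}\label{eq_prop_isom_ban_hilb}
	d_p(h^A_0, h^A_1)
	=
	\lim_{k \to \infty}
	\frac{d_p(\hilb_k(h^A_0, dV_X), \hilb_k(h^A_1, dV_X))}{k}.
\end{equation}
It therefore suffices to show that replacing the two Hilbert norms on the right by the corresponding sup-norms does not change the limit.

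First I would record an elementary two-sided comparison between the two families of norms. Bounding the integral in (\ref{eq_defn_l2}) by the supremum of the integrand gives $\hilb_k(h^A_i, dV_X) \leq \sqrt{V} \cdot \ban^{\infty}_k(h^A_i)$, where $V := \int_X dV_X$. In the other direction, the Bernstein-Markov property, Proposition \ref{prop_bm_volume} applied with the density $\rho \equiv 1$, shows that for any $\epsilon > 0$ there is $k_0 \in \nat$ such that $\hilb_k(h^A_i, dV_X) \geq \ban^{\infty}_k(h^A_i) \cdot \exp(-\epsilon k)$ for all $k \geq k_0$ and $i = 0, 1$. Enlarging $k_0$ so that also $\tfrac{1}{2}\log V \leq \epsilon k$ for $k \geq k_0$, we obtain
\begin{equation}\label{eq_prop_isom_ban_comp}
	\ban^{\infty}_k(h^A_i) \cdot \exp(-\epsilon k)
	\leq
	\hilb_k(h^A_i, dV_X)
	\leq
	\ban^{\infty}_k(h^A_i) \cdot \exp(\epsilon k),
	\qquad \text{for } i = 0, 1 \text{ and } k \geq k_0.
\end{equation}

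Now I would apply the stability estimate (\ref{eq_dp_dinf_compp}) for the quasi-distance $d_p$ twice, in each case with constant $C = \epsilon k$: first comparing $\ban^{\infty}_k(h^A_0)$ with $\hilb_k(h^A_0, dV_X)$ relative to the third norm $\ban^{\infty}_k(h^A_1)$, then comparing $\ban^{\infty}_k(h^A_1)$ with $\hilb_k(h^A_1, dV_X)$ relative to $\hilb_k(h^A_0, dV_X)$. Using also the symmetry of $d_p$, this yields, for $k \geq k_0$,
\begin{equation}\label{eq_prop_isom_ban_final}
	\big| d_p(\ban^{\infty}_k(h^A_0), \ban^{\infty}_k(h^A_1)) - d_p(\hilb_k(h^A_0, dV_X), \hilb_k(h^A_1, dV_X)) \big|
	\leq
	2 \epsilon k.
\end{equation}
Dividing (\ref{eq_prop_isom_ban_final}) by $k$ and letting $k \to \infty$, the $\limsup$ of the difference of the two fractions is at most $2\epsilon$. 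As $\epsilon > 0$ is arbitrary and the right-hand fraction converges by (\ref{eq_prop_isom_ban_hilb}), the fraction $d_p(\ban^{\infty}_k(h^A_0), \ban^{\infty}_k(h^A_1))/k$ also converges, to the same limit $d_p(h^A_0, h^A_1)$, which is (\ref{eq_thm_isom_ample1}).

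The argument is a soft reduction and I do not expect a genuine obstacle: the analytic content of comparing sup-norms with $L^2$-norms is entirely contained in the Bernstein-Markov property (Proposition \ref{prop_bm_volume}), and the quantization statement itself is Theorem \ref{thm_isom_ample}. The only mild point of care is to make the comparison in (\ref{eq_prop_isom_ban_comp}) genuinely two-sided with error $o(k)$ in the exponent -- which is why one pairs the trivial bound $\hilb_k \leq \sqrt{V}\cdot\ban^{\infty}_k$ with Bernstein-Markov -- and to use the sharp estimate (\ref{eq_dp_dinf_compp}) rather than the merely quasi-triangle inequality (\ref{eq_gen_triangle}), so that no additive $\log n_k$ errors intervene (although these too would wash out after dividing by $k$).
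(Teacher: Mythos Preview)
Your proof is correct and takes essentially the same approach as the paper's: both reduce to Theorem \ref{thm_isom_ample} by sandwiching $\ban^{\infty}_k$ and $\hilb_k$ via the Bernstein--Markov property (Proposition \ref{prop_bm_volume}) and then invoking the stability estimate (\ref{eq_dp_dinf_compp}). The paper's proof is a one-line citation of these same ingredients (together with (\ref{eq_max_prin}), which is not actually needed here since the $h^A_i$ already have psh potentials), while you have spelled out the two-sided comparison and the double application of (\ref{eq_dp_dinf_compp}) explicitly.
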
 
	\begin{proof}
		By Proposition \ref{prop_bm_volume}, together with (\ref{eq_max_prin}) and (\ref{eq_dp_dinf_compp}), we see that
		\begin{equation}
			\lim_{k \to \infty}
			\frac{d_p(\ban_k(h^A_0), \ban_k(h^A_1))}{k}
			=
			\lim_{k \to \infty}
			\frac{d_p(\hilb_k(h^A_0, dV_X), \hilb_k(h^A_1, dV_X))}{k}.
		\end{equation}
		We conclude by applying Theorem \ref{thm_isom_ample}.
	\end{proof}
	\par 
	Since the envelope $P(h^A)$ is continuous for any continuous metric $h^A$ on $A$ by the discussion before Theorem \ref{thm_conv_fs}, we see that Proposition \ref{thm_isom_ample_ban} and (\ref{eq_max_prin}) imply that Theorem \ref{thm_isom} holds for ample line bundles.
	\par 
	We also note that in \cite{DarvLuRub}, in the setting of Theorem \ref{thm_isom_ample} the authors considered metrics with more singular potentials arising from the corresponding finite energy spaces. 
	It is reasonable to expect that analogous results should hold in the big setting as well.
	\par 
	Now, for any bounded metrics $h^A_0$, $h^A_1$ on $A$ with psh potentials, we denote by $h^A_t$, $t \in [0, 1]$, the Mabuchi geodesic between them.
	For any $k \in \nat$, $t \in [0, 1]$, we consider the geodesic $H_{k, t}$ between $\hilb_k(h^A_0, dV_X)$ and $\hilb_k(h^A_1, dV_X)$, as described after (\ref{dist_norm_fins_expl}).
	For regular metrics $h^A_0$ and $h^A_1$, the following result was established by Berndtsson \cite{BerndtProb}, following the previous work of Phong-Sturm \cite{PhongSturm}, establishing a weaker form of convergence.
	These regularity assumptions were later relaxed in \cite{DarvLuRub}.
	\begin{thm}\label{thm_appr_geod_ample}
		For any $t \in [0, 1]$, the sequence of metrics $FS(H_{k, t})^{\frac{1}{k}}$ on $A$ converges to $h^A_t$ as $k \to \infty$, with respect to any $d_p$-distance, for $p \in [1, +\infty[$.  Moreover, if $h^A_0$ and $h^A_1$ are continuous, then the convergence is uniform.
	\end{thm}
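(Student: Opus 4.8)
The plan is to view the curve $t \mapsto \mu_{k,t} := FS(H_{k,t})^{\frac{1}{k}}$ as a subgeodesic of metrics on $A$, to squeeze it from one side by the Mabuchi geodesic $h^A_t$ using a positivity argument, and from the other side by means of the asymptotic isometry of Theorem~\ref{thm_isom_ample}. Since $A$ is ample and each $H_{k,t}$ is a Hermitian norm, $\mu_{k,t}$ is a metric on $A$ with continuous psh potential. The endpoint cases $t=0,1$ amount to the quantization of a single bounded positive metric, namely $\mu_{k,0} \to h^A_0$ and $\mu_{k,1} \to h^A_1$ in every $d_p$: in the continuous case this follows from Theorem~\ref{thm_conv_fs}, Proposition~\ref{prop_bm_volume} (to replace $\hilb_k(h^A_i, dV_X)$ by $\ban^{\infty}_k(h^A_i)$ up to an $\exp(o(k))$ factor), the identity $P(h^A_i)=h^A_i$ (the potentials of $h^A_i$ being already psh), and (\ref{eq_dp_dinf_compp}). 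We fix $t$ and must show $\mu_{k,t} \to h^A_t$.

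\emph{The positivity step (one inequality).} By the Stein--Weiss theorem (cf. \cite[Theorem 5.4.1]{InterpSp}), $H_{k,t}$ is the complex interpolation family between $\hilb_k(h^A_0, dV_X)$ and $\hilb_k(h^A_1, dV_X)$. It is classical, going back to Phong--Sturm \cite{PhongSturm} and in sharp form to Berndtsson \cite{BerndtProb}, that if we write $FS(H_{k,t}) = (h^A)^k \exp(-2k\phi_{k,t})$ for a fixed smooth reference metric $h^A$ on $A$, then $X \times S \ni (x,z) \mapsto \phi_{k,\Re z}(x)$ is $p^*\theta$-psh; equivalently, $t \mapsto \mu_{k,t}$ is a subgeodesic joining $\mu_{k,0}$ and $\mu_{k,1}$ in the sense of Section~\ref{sect_mab_big}. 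As its endpoint potentials converge uniformly to those of $h^A_0, h^A_1$, comparison with the weak geodesic between these endpoints (perturbed by the vanishing endpoint errors) gives $\liminf_{k\to\infty} \mu_{k,t} \ge h^A_t$, uniformly on $X$ since $h^A_t$ is continuous.

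\emph{The matching inequality (the crux).} The metric-geodesic property of $H_{k,t}$ among Hermitian norms gives the exact identities $d_p(\hilb_k(h^A_0, dV_X), H_{k,t}) = t \cdot d_p(\hilb_k(h^A_0, dV_X), \hilb_k(h^A_1, dV_X))$ and $d_p(H_{k,t}, \hilb_k(h^A_1, dV_X)) = (1-t) \cdot d_p(\hilb_k(h^A_0, dV_X), \hilb_k(h^A_1, dV_X))$; dividing by $k$, letting $k\to\infty$, using Theorem~\ref{thm_isom_ample} and the fact that $h^A_t$ is a $d_p$-geodesic (Theorem~\ref{thm_mab_geod}), we get $\tfrac{1}{k} d_p(\hilb_k(h^A_0, dV_X), H_{k,t}) \to d_p(h^A_0, h^A_t)$ and $\tfrac{1}{k} d_p(H_{k,t}, \hilb_k(h^A_1, dV_X)) \to d_p(h^A_t, h^A_1)$. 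Let $v := (\lim_j \mu_{k_j,t})^*$ be an $L^1$-subsequential limit, which exists because the $\mu_{k,t}$ lie between two fixed subgeodesics; by the previous paragraph $v \ge h^A_t$. The key technical input, which is the heart of the argument and is carried out in \cite{BerndtProb, DarvLuRub}, is that the quantization is compatible with the $d_p$-distances in a lower-semicontinuous fashion: combining $\ban^{\infty}_k(\mu_{k,t}) \le H_{k,t}$ from (\ref{eq_nk_fs_lw_bnd}) with a complementary Bernstein--Markov-type estimate $\ban^{\infty}_k(\mu_{k,t}) \ge \exp(-o(k)) \cdot H_{k,t}$ (available because the potentials $\phi_{k,t}$ are uniformly bounded in $k$) and the asymptotic isometry for sup-norms, one obtains $d_p(h^A_0, v) \le d_p(h^A_0, h^A_t)$ and $d_p(v, h^A_1) \le d_p(h^A_t, h^A_1)$. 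Hence the triangle inequality $d_p(h^A_0, v) + d_p(v, h^A_1) \ge d_p(h^A_0, h^A_1) = d_p(h^A_0, h^A_t) + d_p(h^A_t, h^A_1)$ forces all of these to be equalities; in particular $v$ lies at parameter $t$ on a $d_p$-geodesic from $h^A_0$ to $h^A_1$.

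\emph{Conclusion.} For any $p \in ]1, +\infty[$, this together with the uniqueness of $d_p$-geodesics (Darvas--Lu \cite{DarLuGeod}) forces $v = h^A_t$. Since every $L^1$-subsequential limit of $(\mu_{k,t})_k$ equals $h^A_t$, the full sequence converges to $h^A_t$ in $L^1$; being uniformly bounded, it converges in capacity, hence $I_p(\mu_{k,t}, h^A_t) \to 0$ and therefore $d_p(\mu_{k,t}, h^A_t) \to 0$ for every $p \in [1, +\infty[$, by the equivalence of $d_p$ with the $L^p$-energy $I_p$ recalled in the proof of Lemma~\ref{lem_incr}. When $h^A_0, h^A_1$ are continuous, $h^A_t$ is continuous, and a quantitative refinement of the estimates above (as in \cite{BerndtProb, DarvLuRub}) promotes the convergence to a uniform one on $X$. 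The genuine obstacle is the inequality of the third paragraph: the positivity argument alone only shows $\mu_{k,t} \ge h^A_t$ asymptotically, whereas the reverse inequality — that the quantized geodesic does not remain strictly above the Mabuchi geodesic — requires the full asymptotic isometry, not merely the endpoint convergence of Theorem~\ref{thm_conv_fs}.
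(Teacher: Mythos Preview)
The paper does not itself prove Theorem~\ref{thm_appr_geod_ample}; it is quoted from the literature (Phong--Sturm \cite{PhongSturm}, Berndtsson \cite{BerndtProb}, Darvas--Lu--Rubinstein \cite{DarvLuRub}) and used as an input for the ample case. So there is no in-paper argument to compare against, and your sketch must be judged on its own.

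Your positivity step is correct and matches the first half of the argument in \cite{BerndtProb}: Bergman-kernel convexity makes $t\mapsto\mu_{k,t}$ a subgeodesic, so its potential sits below that of the weak geodesic between $\mu_{k,0}$ and $\mu_{k,1}$, and the endpoints converge uniformly.

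The gap is in the ``matching inequality''. You assert
\[
\ban^{\infty}_k(\mu_{k,t}) \ge \exp(-o(k))\cdot H_{k,t}
\]
on the grounds that the potentials $\phi_{k,t}$ are uniformly bounded. This is not justified. Proposition~\ref{prop_bm_volume} concerns a \emph{fixed} continuous metric, with $k_0$ depending on that metric; here $\mu_{k,t}$ varies with $k$. More to the point, for $0<t<1$ the norm $H_{k,t}$ is not of the form $\hilb_k(\mu_{k,t},dV_X)$ --- the geodesic of Hermitian norms does not stay inside the locus of $L^2$-norms for a fixed volume form --- so there is no Bernstein--Markov mechanism available. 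The subsequent step, feeding $\mu_{k,t}$ into an ``asymptotic isometry for sup-norms'' to obtain $d_p(h^A_0,v)\le d_p(h^A_0,h^A_t)$, is then circular: Theorem~\ref{thm_isom_ample} applies to fixed metrics, and replacing $\mu_{k,t}$ by its subsequential limit $v$ inside $\ban^{\infty}_k(\cdot)$ already presupposes the (uniform) convergence you are proving.

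This reverse bound is precisely the hard direction. In \cite{BerndtProb} it does not come from a Bernstein--Markov estimate but from a finer analysis of the Bergman kernel along the geodesic of norms (controlling the derivative $\dot\phi_{k,0}$ via a curvature computation), and \cite{DarvLuRub} relaxes regularity by additional approximation and energy estimates. Your outline has the right architecture --- subgeodesic bound on one side, metric-geodesic identity plus uniqueness of $d_p$-geodesics on the other --- but the bridge you propose, the subexponential comparison of $H_{k,t}$ with $\ban^{\infty}_k(\mu_{k,t})$, is exactly the content those references supply by other means, and it does not follow from uniform boundedness alone.
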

	\par 
	Let us explain that Theorem \ref{thm_appr_geod_ample} implies the following version of Theorem \ref{cor_appr_geod}. 
	\begin{prop}\label{thm_appr_geod_ample_ban}
		For any continuous metrics $h^A_0$, $h^A_1$ on $A$ with psh potentials, in the notations of Theorem \ref{cor_appr_geod}, the sequence of metrics $FS(N_{k, t})^{\frac{1}{k}}$ converges uniformly towards $h^A_t$, as $k \to \infty$. 
	\end{prop}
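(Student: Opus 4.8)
The plan is to derive this from Theorem \ref{thm_appr_geod_ample} by showing that, up to a multiplicative error which disappears after taking $k$-th roots, replacing the metric geodesic between the Hilbert norms $\hilb_k(h^A_0, dV_X)$ and $\hilb_k(h^A_1, dV_X)$ by the complex interpolation $N_{k, t}$ of the sup-norms $\ban^{\infty}_k(h^A_0)$ and $\ban^{\infty}_k(h^A_1)$ does not affect the limit of the associated Fubini--Study metrics. First I would observe that, since $h^A_0$ and $h^A_1$ have psh potentials, one has $V_{c_1(A, h^A_i)} = 0$, so that $P(h^A_i) = h^A_i$; hence the Mabuchi geodesic $h^A_t$ appearing in Theorem \ref{cor_appr_geod} is exactly the Mabuchi geodesic between $h^A_0$ and $h^A_1$, which is the one treated in Theorem \ref{thm_appr_geod_ample}.

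The key quantitative input is the two-sided estimate
\begin{equation}\label{eq_prop_geod_ban_cmp}
    M^{-1/2} \cdot \hilb_k(h^A_i, dV_X)
    \leq
    \ban^{\infty}_k(h^A_i)
    \leq
    \exp(\epsilon k) \cdot \hilb_k(h^A_i, dV_X),
    \qquad i = 0, 1,
\end{equation}
valid for every $\epsilon > 0$ and all $k$ large enough, where $M := \int_X dV_X$. The left inequality is Cauchy--Schwarz, since $\| s \|_{\hilb_k(h^A_i, dV_X)}^2 \leq M \cdot \| s \|_{\ban^{\infty}_k(h^A_i)}^2$ for all $s \in H^0(X, A^{\otimes k})$; the right inequality is the Bernstein--Markov property, i.e. Proposition \ref{prop_bm_volume} applied with $\rho \equiv 1$.

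Next I would push (\ref{eq_prop_geod_ban_cmp}) through the two constructions involved. Complex interpolation is monotone (Section \ref{sect_quas_metr}), and scaling both endpoint norms by a common positive constant scales the interpolant by that constant; hence (\ref{eq_prop_geod_ban_cmp}) gives $M^{-1/2} H_{k, t} \leq N_{k, t} \leq \exp(\epsilon k) H_{k, t}$, where $H_{k, t}$ denotes the complex interpolation between $\hilb_k(h^A_0, dV_X)$ and $\hilb_k(h^A_1, dV_X)$. By the Stein--Weiss interpolation theorem recalled in Section \ref{sect_quas_metr}, this interpolant is precisely the metric geodesic between the two Hermitian norms, i.e. the norm $H_{k, t}$ of Theorem \ref{thm_appr_geod_ample}. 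Applying the Fubini--Study operator, which is monotone by (\ref{eq_fs_mono}) and satisfies $FS(c \cdot N_k) = c \cdot FS(N_k)$, and then taking $k$-th roots, we obtain
\begin{equation}
    M^{-1/(2k)} \cdot FS(H_{k, t})^{\frac{1}{k}}
    \leq
    FS(N_{k, t})^{\frac{1}{k}}
    \leq
    \exp(\epsilon) \cdot FS(H_{k, t})^{\frac{1}{k}}.
\end{equation}
By Theorem \ref{thm_appr_geod_ample}, $FS(H_{k, t})^{\frac{1}{k}} \to h^A_t$ uniformly as $k \to \infty$ (here the continuity of $h^A_0$ and $h^A_1$ is used), and since $M^{-1/(2k)} \to 1$ and $\epsilon > 0$ is arbitrary, it follows that $FS(N_{k, t})^{\frac{1}{k}} \to h^A_t$ uniformly.

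The argument is mostly bookkeeping, and I do not expect a serious obstacle. The only step that requires genuine care — and the one I would write out in full — is the propagation of (\ref{eq_prop_geod_ban_cmp}) through complex interpolation and $FS$: one must check that monotonicity and the scaling identities hold as stated for these two operations, and invoke Stein--Weiss so that the interpolant of the Hilbert norms is literally the geodesic $H_{k, t}$ used in Theorem \ref{thm_appr_geod_ample}.
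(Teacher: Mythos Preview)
Your proof is correct and follows essentially the same route as the paper: compare the sup-norms and Hilbert norms via Bernstein--Markov (Proposition \ref{prop_bm_volume}), push the comparison through complex interpolation using monotonicity and the Stein--Weiss identification of the Hermitian interpolant with the metric geodesic, and conclude by the monotonicity of $FS$ together with Theorem \ref{thm_appr_geod_ample}. The paper's proof is just a terser version of what you wrote; your extra remarks (e.g.\ $P(h^A_i)=h^A_i$ and the Cauchy--Schwarz side of the sandwich) are correct but not strictly needed.
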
 
	\begin{proof}
		By the monotonicity properties of complex interpolation explained after (\ref{eq_defn_cx_inter}), the fact that the metric geodesic $H_{k, t}$ can be interpreted as complex interpolation between $\hilb_k(h^A_0, dV_X)$, $\hilb_k(h^A_1, dV_X)$, and Proposition \ref{prop_bm_volume}, we see that for any $\epsilon > 0$, there is $k_0 \in \nat$, so that for any $k \geq k_0$, $t \in [0, 1]$, we have
		\begin{equation}
			N_{k, t} \cdot \exp(-\epsilon k) \leq H_{k, t} \leq N_{k, t} \cdot \exp(\epsilon k).
		\end{equation}
		The result now follows from Theorem \ref{thm_appr_geod_ample} and (\ref{eq_fs_mono}).
	\end{proof}
	\par 
	Finally, let us consider a graded submultiplicative norm $N = (N_k)_{k = 0}^{\infty}$, $N_k := \| \cdot \|_k$, on the section ring $R(X, A)$.
	We assume that there is a bounded metric $h^A_0$ on $A$ such that $N_k \geq \ban^{\infty}_k(h^A_0)$ for any $k \in \nat$.
	Then by the ample version of Lemma \ref{lem_fs_sm} from \cite[Lemma 3.3]{FinNarSim}, the sequence of metrics $FS(N_k)^{\frac{1}{k}}$ converges pointwise, as $k \to \infty$, to a bounded metric $FS(N)$ on $A$, and the lower semicontinuous regularization $FS(N)_*$ of $FS(N)$ has a psh potential.
	The following statement is the main result of the previous work of the author \cite{FinNarSim}, which obviously refines Theorem \ref{thm_char} for ample line bundles.
	\begin{thm}\label{thm_char_ample}
		For any $p \in [1, + \infty[$, we have
		\begin{equation}\label{eq_char_ample}
			N \sim_p \ban^{\infty}(FS(N)_*).
		\end{equation}
		If, moreover, $FS(N)$ is continuous, then (\ref{eq_char_ample}) can be strengthened as follows: for any $\epsilon > 0$, there is $k_0 \in \nat$ such that for any $k \geq k_0$, we have
		\begin{equation}\label{eq_char_ample121}
			\ban_k^{\infty}(FS(N)) \leq N_k \leq \ban_k^{\infty}(FS(N)) \cdot \exp(\epsilon k).
		\end{equation}
	\end{thm}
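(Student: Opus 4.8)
The plan is to place $N_k$ between $\ban^\infty_k(FS(N)_*)$ and the sup-norm of a continuous metric whose Mabuchi distance to $FS(N)_*$ is arbitrarily small, and then to invoke the quantization isometry for ample line bundles (Theorem \ref{thm_isom_ample}); the sharper two-sided estimate (\ref{eq_char_ample121}) will drop out of a Dini argument once $FS(N)$ is continuous. For the easy half, by (\ref{eq_nk_fs_lw_bnd}) one has $N_k \geq \ban^\infty_k(FS(N_k)^{1/k})$, and since $FS(N)_* \leq FS(N) = \inf_j FS(N_j)^{1/j} \leq FS(N_k)^{1/k}$, monotonicity of sup-norms under enlarging the metric gives $N_k \geq \ban^\infty_k(FS(N)_*)$ for every $k$. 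Hence the unit ball of $N_k$ lies inside that of $\ban^\infty_k(FS(N)_*)$, and only an asymptotic upper bound on $N_k$ remains; in the continuous case $FS(N)_* = FS(N)$, so this is already the left inequality of (\ref{eq_char_ample121}).

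\textbf{The comparison metric.} Fix $m_0$ with $A^{\otimes m_0}$ very ample; for $m$ in the multiplicative sequence $m_0, 2m_0, 4m_0, \dots$, the metric $FS(N_m)^{1/m}$ is continuous on $A$ with psh potential, and bounded since $A$ is ample. Submultiplicativity makes $k$ times the potential of $FS(N_k)^{1/k}$ superadditive, so by Fekete's lemma $FS(N_m)^{1/m}$ decreases along this sequence to $FS(N) \geq FS(N)_*$; consequently $d_p(FS(N_m)^{1/m}, FS(N)_*) \to 0$ as $m \to \infty$, by continuity of the Mabuchi-Darvas distance under decreasing convergence of uniformly bounded potentials (the relevant $I_p$-comparison being as in the proof of Lemma \ref{lem_incr}). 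Applying Theorem \ref{thm_isom_ample} together with the Bernstein-Markov property (Proposition \ref{prop_bm_volume}, valid here as the metrics involved are bounded), exactly as in the proof of Proposition \ref{thm_isom_ample_ban}, we obtain
\begin{equation*}
	\lim_{k \to \infty} \tfrac{1}{k}\, d_p(\ban^\infty_k(FS(N_m)^{1/m}), \ban^\infty_k(FS(N)_*)) = d_p(FS(N_m)^{1/m}, FS(N)_*).
\end{equation*}

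\textbf{The hard half.} The core estimate is: for every $\epsilon > 0$ and every sufficiently large $m$ there is $k_0$ such that $\tfrac{1}{k}\, d_p(N_k, \ban^\infty_k(FS(N_m)^{1/m})) \leq \epsilon$ for all $k \geq k_0$. Two facts enter. First, $A$ being ample, the ring $R(X,A)$ is finitely generated and the multiplication maps $H^0(X, A^{\otimes m})^{\otimes q} \to H^0(X, A^{\otimes qm})$ are surjective for $q$ large, so each degree-$k$ section is a combination of $\exp(o(k))$ products of $q$ degree-$m$ sections (the case $m \nmid k$ reducing to this at an $O(m) = o(k)$ cost). Second, the boundedness hypothesis $N_m \leq \ban^\infty_m(h^A_1)$ together with Theorem \ref{thm_conv_fs} — applied on the ample $A$, where $\mathbb{B}_+(A) = \varnothing$, so that $FS(\ban^\infty_m(h^A_1))^{1/m} \to P(h^A_1)$ uniformly — forces $N_m \leq \exp(\epsilon m)\, \ban^\infty_m(FS(N_m)^{1/m})$ once $m$ is large. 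Feeding the second into the submultiplicativity inequality (\ref{eq_subm_s_ring}) along the decomposition furnished by the first would yield a pointwise bound were it not for the mismatch between $\prod_j \sup_X |\cdot|$ and $\sup_X \prod_j |\cdot|$ created by factors peaking at different points of $X$; handling this cancellation is the main obstacle, and it is resolved not pointwise but at the level of the logarithmic relative spectrum (\ref{eq_log_rel_spec}): for $p = 1$ through the volume identity of Lemma \ref{lem_vol_balls_d1} and the Berman-Boucksom volume asymptotics recovered in Remark \ref{rem_thm_isom}, and for general $p \in [1, +\infty[$ through a Lidskii-type inequality (\ref{eq_lidski_herm}) upgrading the $d_1$-conclusion.

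\textbf{Conclusion.} By the generalized triangle inequality (\ref{eq_gen_triangle}), whose error $6 \log n_k$ is $o(k)$, combining the hard estimate with the previous step gives
\begin{equation*}
	\limsup_{k \to \infty} \tfrac{1}{k}\, d_p(N_k, \ban^\infty_k(FS(N)_*)) \leq \epsilon + d_p(FS(N_m)^{1/m}, FS(N)_*).
\end{equation*}
Since the hard estimate is available for all large $m$, we may choose $m$ (depending on $\epsilon$) large enough that also $d_p(FS(N_m)^{1/m}, FS(N)_*) \leq \epsilon$; letting then $\epsilon \to 0$ proves (\ref{eq_char_ample}). Finally, if $FS(N)$ is continuous, then $FS(N)_* = FS(N)$ and the decreasing convergence $FS(N_m)^{1/m} \searrow FS(N)$ is uniform by Dini's theorem, so the hard half can be arranged in the pointwise form $N_k \leq \ban^\infty_k(FS(N)) \exp(\epsilon k)$ for $k$ large; together with the easy half this is (\ref{eq_char_ample121}).
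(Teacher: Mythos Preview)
Your easy half and your reduction scheme (sandwich $N_k$ against $\ban^\infty_k(FS(N_m)^{1/m})$, then let $m\to\infty$ using the quantization isometry) are sound, but the ``hard half'' contains a genuine gap. The claimed inequality $N_m \leq \exp(\epsilon m)\,\ban^\infty_m(FS(N_m)^{1/m})$ for large $m$ does \emph{not} follow from boundedness and Theorem~\ref{thm_conv_fs}: from $N_m \leq \ban^\infty_m(h^A_1)$ and the uniform convergence $FS(\ban^\infty_m(h^A_1))^{1/m}\to P(h^A_1)$ you only get that $FS(N_m)^{1/m}$ stays trapped between $P(h^A_0)-\epsilon$ and $P(h^A_1)+\epsilon$, hence $N_m \leq e^{Cm}\,\ban^\infty_m(FS(N_m)^{1/m})$ for a \emph{fixed} $C$ depending on $h^A_0,h^A_1$, not for arbitrarily small $\epsilon$. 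In fact your claimed inequality uses no submultiplicativity at all, so it cannot hold for a single $m$ beyond the John-ellipsoid bound. Likewise, your ``resolution'' of the product-versus-supremum mismatch is only a gesture: the Berman--Boucksom asymptotics of Remark~\ref{rem_thm_isom} concern unit balls of \emph{sup-norms}, and give no control on the volume of the unit ball of a general submultiplicative $N_k$; without that, the Lidskii step (\ref{eq_lidski_gen}) has nothing to feed on.

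The paper does not prove this theorem here (it is quoted from \cite{FinNarSim}), but the argument it gives for the singular generalization, Theorem~\ref{thm_char2}, shows what is actually needed. The crucial input is Theorem~\ref{thm_sym_equiv}: on $\sym^k V$ the projective and injective (and evaluation) tensor norms agree up to $e^{o(k)}$. Combined with the Bost--Randriam\-bololona extension theorem (Theorem~\ref{thm_bost_ext}) applied to the Kodaira embedding, this yields Proposition~\ref{prop_induc}, i.e.\ $[\sym^k_\pi(N_m)] \leq e^{\epsilon k}\,\ban^\infty_k(FS(N_m))$; since submultiplicativity gives $N_{km}\leq [\sym^k_\pi(N_m)]$ via (\ref{eq_reform_subm_cond}), this is exactly the missing upper bound. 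The point is that the ``cancellation'' you correctly identify as the main obstacle is handled not by spectral inequalities but by the non-trivial functional-analytic fact that symmetrization kills the gap between $\otimes_\pi$ and $\otimes_\epsilon$ subexponentially.
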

	\begin{rem}
		See Theorem \ref{thm_char2} for the generalization of (\ref{eq_char_ample121}) to singular spaces.
	\end{rem}
	
	\section{Approximations by section rings of ample line bundles}\label{sect_3}
	This section is devoted to one of the principal tools of this article: an approximation procedure that reduces the study of section rings of big line bundles to those of ample line bundles.
	More precisely, in Section \ref{sect_subr}, we describe several subring constructions in a section ring of a big line bundle.
	In Section \ref{sect_comp_max_norm}, we describe how to relate distances between norms on a vector space and a subspace.
	Finally, in Section \ref{sect_quant_mab}, we apply both of these techniques to prove Theorem \ref{thm_isom}.
	
	\subsection{Subrings of section rings of big line bundles}\label{sect_subr}
	A key technique in this paper is to reduce statements concerning big line bundles to ample line bundles.
	To achieve this, we construct “sufficiently large" subrings inside of the section ring of a big line bundle, which can be realized as section rings of ample line bundles.
	\par 
	In this section we review two different constructions of such subrings. 
	The first construction is purely algebraic, whereas the second is more analytic.
	Throughout this section we fix a complex projective manifold $X$, and a big line bundle $L$ on $X$.
	\par 
	\begin{sloppypar}
	\textbf{Construction 1}: \textit{the subring generated by sections of a fixed degree}.
	\par 
	Fix $m \in \mathbb{N}$ and consider the subring $[\sym H^0(X, L^{\otimes m})] \subset R(X, L^{\otimes m})$ generated by $H^0(X, L^{\otimes m})$. 
	Let $[\sym^k H^0(X, L^{\otimes m})]$ be the $k$-th graded component of $[\sym H^0(X, L^{\otimes m})]$. 
	\begin{thm}[{Lazarsfeld-Mustață \cite[Theorem D]{LazMus} }]\label{thm_sym_subalgebra}
		For any $\epsilon > 0$, there is $m_0 \in \nat$ such that for any $m \geq m_0$, there is $k_0 \in \nat$ such that for $k \geq k_0$, we have
		\begin{equation}
			\frac{\dim [\sym^k H^0(X, L^{\otimes m})]}{\dim H^0(X, L^{\otimes km})}
			\geq 
			1 - \epsilon.
		\end{equation}
	\end{thm}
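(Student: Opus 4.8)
The statement is essentially a repackaging of Fujita's approximation theorem, and the plan is to reduce it to that theorem. For fixed $m$, both $\dim H^0(X, L^{\otimes km})$ and $\dim [\sym^k H^0(X, L^{\otimes m})]$ are equivalent, as $k \to \infty$, to a constant multiple of $(km)^n$: the former by the result of Fujita recalled after (\ref{defn_vol}) that the limit in (\ref{defn_vol}) exists; the latter because for $m \gg 0$ the ring $\oplus_k [\sym^k H^0(X, L^{\otimes m})]$ is the homogeneous coordinate ring of the image $Y_m \subset \mathbb{P}(H^0(X, L^{\otimes m})^*)$ of the rational map attached to $|L^{\otimes m}|$, so that $\dim [\sym^k H^0(X, L^{\otimes m})] = h^0(Y_m, \mathscr{O}_{Y_m}(k))$ for $k \gg 0$, which for $k \gg 0$ agrees with the Hilbert polynomial of $Y_m$ and has degree $n$ (here one uses that $L$ is big, so $Y_m$ has dimension $n$). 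Hence it suffices to show that
\[
	v_m := \lim_{k \to \infty} \frac{n!}{(km)^n} \dim [\sym^k H^0(X, L^{\otimes m})]
\]
tends to ${\rm{vol}}(L)$ as $m \to \infty$. Since $[\sym^k H^0(X, L^{\otimes m})] \subset H^0(X, L^{\otimes km})$, one always has $v_m \leq {\rm{vol}}(L)$, so only the lower bound $\liminf_{m \to \infty} v_m \geq {\rm{vol}}(L)$ requires an argument.

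For the lower bound, fix $\delta > 0$ and invoke Fujita's approximation theorem, cf. \cite{Fujita}, \cite[\S 11.4]{LazarBookI}: there are a positive integer $p$, a birational morphism $\mu : X' \to X$ from a smooth projective variety, an ample line bundle $A$ on $X'$ and an effective divisor $E$ on $X'$ with $\mu^* L^{\otimes p} \cong A \otimes \mathscr{O}(E)$ and ${\rm{vol}}_{X'}(A) \geq p^n ({\rm{vol}}(L) - \delta)$. Choose $p_1 \in \nat^*$ so large that the multiplication maps $\sym^k H^0(X', A^{\otimes p_1}) \to H^0(X', A^{\otimes k p_1})$ are surjective for every $k \geq 1$, and set $m := p p_1$. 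Multiplication by the $p_1$-th power of the canonical section of $\mathscr{O}(E)$ embeds $H^0(X', A^{\otimes p_1})$ into $H^0(X', \mu^* L^{\otimes m}) = H^0(X, L^{\otimes m})$, the last identity holding because $\mu$ is a birational morphism of smooth varieties. The span of all $k$-fold products of the sections of $A^{\otimes p_1}$ placed inside $H^0(X, L^{\otimes m})$ in this way is contained in $[\sym^k H^0(X, L^{\otimes m})]$ and, by the surjectivity above, is isomorphic to $H^0(X', A^{\otimes k p_1})$; therefore
\[
	\dim [\sym^k H^0(X, L^{\otimes m})] \geq \dim H^0(X', A^{\otimes k p_1}) = \frac{{\rm{vol}}_{X'}(A)}{n!} (k p_1)^n + o(k^n),
\]
which yields $v_m \geq {\rm{vol}}_{X'}(A)/p^n \geq {\rm{vol}}(L) - \delta$. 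The same computation with $(A^{\otimes q}, qE)$ in place of $(A, E)$ gives $v_{qm} \geq {\rm{vol}}(L) - \delta$ for all $q \geq 1$.

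This already establishes the statement for every $m$ divisible by the integer $p p_1$ attached to $\delta := \epsilon \cdot {\rm{vol}}(L)$, which is all that is needed for the uses of Theorem \ref{thm_sym_subalgebra} in this paper. To obtain it for \emph{all} $m \geq m_0$, as stated, I would argue via Okounkov bodies exactly as in \cite{LazMus}: fixing an admissible flag on $X$, one has $v_m = n! \cdot {\rm{vol}}(\Delta_m)$, where $\Delta_m$ is the convex hull of $\tfrac{1}{m}$ times the set of valuation vectors of the nonzero sections in $H^0(X, L^{\otimes m})$, the bodies $\Delta_m$ exhaust the Okounkov body $\Delta(L)$ as $m \to \infty$ along all integers, and ${\rm{vol}}(\Delta(L)) = {\rm{vol}}(L)/n!$. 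In either presentation the single substantive input is Fujita's approximation theorem (equivalently, the construction of Okounkov bodies, which rests on the same circle of ideas), and everything else is the bookkeeping with Hilbert polynomials of the images $Y_m$ indicated in the first paragraph. Accordingly, the only genuinely delicate point is the passage from "sufficiently divisible $m$" to "all large $m$", and I would import it from \cite[Theorem D]{LazMus}.
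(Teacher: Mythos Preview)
Your proof is correct, and both your argument and the paper's begin the same way: identify $[\sym^k H^0(X, L^{\otimes m})]$ with $H^0(Y_m, A_m^{\otimes k})$ for $k \gg 0$, where $A_m = \mathscr{O}(1)|_{Y_m}$, and reduce to showing that ${\rm{vol}}_{Y_m}(A_m)/m^n \to {\rm{vol}}(L)$ as $m \to \infty$. The approaches diverge at this point.

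You invoke Fujita's approximation theorem as a black box to bound $v_m$ from below for $m$ divisible by a fixed integer $pp_1$, and then import the Okounkov-body argument from \cite{LazMus} to pass to all large $m$. The paper instead \emph{proves} the volume comparison by analytic means internal to the article: via the identification $\pi_m^* FS(N_m) = f_m^* h^{A_m} \cdot h^{E_m}_{{\rm{sing}}}$ one has ${\rm{vol}}_{Y_m}(A_m) = m^n \int_X c_1(L, FS(N_m)^{1/m})^n$, and the right-hand side converges to ${\rm{vol}}(L)$ over \emph{all} $m$ by combining Berman's convergence $FS(N_m)^{1/m} \to P(h^L)$ (Theorem \ref{thm_conv_fs}), the weak convergence of non-pluripolar products for superadditive families (Proposition \ref{prop_sm_weak_conv}), and Boucksom's volume formula (Theorem \ref{thm_bouck_vol}). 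This pluripotential route yields the full statement for all $m \geq m_0$ without appeal to \cite{LazMus}, and it meshes with the rest of the paper, where exactly this circle of results is already in play. Your route is more direct and algebraic, but it is not self-contained for the full statement; your observation that only the ``sufficiently divisible $m$'' case is needed for the applications in this paper is correct.
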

	\end{sloppypar}
	\par 
	Below we give a proof of Theorem \ref{thm_sym_subalgebra}, distinct from that in \cite{LazMus}. This is not only for completeness, but also because it relies on the observation that the ring $\oplus_{k = 0}^{\infty} [\sym^k H^0(X, L^{\otimes m})]$ coincides in high degrees with the section ring of an \textit{ample} line bundle over a \textit{singular space}. 
	This will play an important role in our proof of Theorem \ref{thm_char}.
	\par 
	\begin{proof}[Proof of Theorem \ref{thm_sym_subalgebra}.]
	\begin{sloppypar}
	We denote by $Y_m$ the Zariski closure of the image of the Kodaira map $X \dashrightarrow \mathbb{P}(H^0(X, L^{\otimes m})^*)$.
	As $L$ is big, for $m \in \nat$ sufficiently large, $Y_m$ is a complex analytic space of the same dimension as $X$, i.e., $n$, cf. \cite[p. 139]{LazarBookI}.
	We consider the resolution of indeterminacies of the Kodaira map 
	\begin{equation}\label{eq_blow_up_kod}
		\begin{tikzcd}
		\hat{X}_m \arrow[d, "\pi_m"] \arrow[dr, "f_m"] & & \\
X \arrow[r, dashed, "{\rm{Kod}}_m"] & Y_m \arrow[r, hook, "\iota_m"] & \mathbb{P}(H^0(X, L^{\otimes m})^*).
		\end{tikzcd}
	\end{equation}
	The map $f_m : \hat{X}_m \to Y_m$ is a birational modification, cf. \cite[p. 139]{LazarBookI}.
\end{sloppypar}
	\par 
	If we denote by $E_i$, $i \in I$, the exceptional divisors on $\hat{X}_m$, then there are $b_i \in \nat$ such that for the divisor $E_m := \sum b_i \cdot E_i$, the evaluation map provides the isomorphism 
	\begin{equation}\label{eq_iso_a_k_l_k}
		\pi_m^* L^{\otimes (-m)} \otimes \mathscr{O}(E_m) \to f_m^* \mathscr{O}(-1).
	\end{equation}
	\par 
	\begin{sloppypar}
	Over $\hat{X}_m$, we define the following line bundle 
	\begin{equation}
		L_m := \pi_m^* L^{\otimes m} \otimes \mathscr{O}(-E_m).
	\end{equation}
	We also denote by $A_m$ the restriction of the hyperplane line bundle $\mathscr{O}(1)$ on $Y_m$.
	The isomorphism (\ref{eq_iso_a_k_l_k}) induces the isomorphism 
	\begin{equation}\label{eq_lk_ak}
		L_m \simeq f_m^* A_m,
	\end{equation}
	and birational invariance of the volume, cf. \cite[Proposition 2.2.43]{LazarBookI}, gives us
	\begin{equation}\label{eq_vol_x_k_y_k}
		{\rm{vol}}_{\hat{X}_m}(L_m) = {\rm{vol}}_{Y_m}(A_m).
	\end{equation}
	Also, for sufficiently large $k \in \nat$, there is an isomorphism 
	\begin{equation}\label{eq_sym_yk_rel}
		[\sym^k H^0(X, L^{\otimes m})]
		\overset{\sim}{\to}
		H^0(Y_k, A_m^{\otimes k}).
	\end{equation}
	To see this, first note that the restriction morphism from $\mathbb{P}(H^0(X, L^{\otimes m})^*)$ to $Y_m$ induces a map $H^0(\mathbb{P}(H^0(X, L^{\otimes m})^*), \mathscr{O}(k)) \to H^0(Y_m, A_m^{\otimes k})$. 
	Using the standard identification  $H^0(\mathbb{P}(H^0(X, L^{\otimes m})^*), \mathscr{O}(k)) \simeq \sym^k H^0(X, L^{\otimes m})$ and the definition of $Y_m$, this map factors through the evaluation map on $\sym^k H^0(X, L^{\otimes m})$, which gives precisely the map (\ref{eq_sym_yk_rel}). 
	That this map is an isomorphism for $k$ sufficiently large follows from the fact that the restriction morphism is surjective for $k$ sufficiently large, cf. the explanation after (\ref{eq_res_y_mor}).
	\end{sloppypar}
	\par 
	By (\ref{eq_vol_x_k_y_k}) and (\ref{eq_sym_yk_rel}), in order to prove (\ref{thm_sym_subalgebra}), it is enough to show that there is $m_0 \in \nat$ such that for any $m \geq m_0$, we have
	\begin{equation}\label{eq_thm_sym_subalgebra_0}
		\frac{{\rm{vol}}_{\hat{X}_m}(L_m)}{{\rm{vol}}_X(L)}
		\geq
		m^n \cdot (1 - \epsilon).
	\end{equation}	 
	We will now show that this is a consequence of Theorems \ref{thm_bouck_vol}, \ref{thm_cont_begz} and \ref{thm_conv_fs}.
	\par 
	Indeed, let us fix an arbitrary continuous metric $h^L$ on $L$.
	We denote by $h^{A_m}$ the metric on $A_m$ induced by the norm $N_m := \ban_m^{\infty}(h^L)$ on $H^0(X, L^{\otimes m})^*$ and the isomorphism (\ref{eq_iso_a_k_l_k}).
	Note that this metric has a continuous psh potential.
	In particular, by Theorem \ref{thm_bouck_vol}, we have
	\begin{equation}\label{eq_thm_sym_subalgebra_1}
		{\rm{vol}}_{\hat{X}_m}(f_m^* A_m)
		=
		\int_{\hat{X}_m} c_1(f_m^* A_m, f_m^* h^{A_m})^n.
	\end{equation}
	We denote by $h^{E_m}_{{\rm{sing}}}$ the canonical singular metric on $\mathscr{O}(E_m)$.
	Analogously to the identity described after (\ref{eq_kod}), we have
	\begin{equation}\label{eq_fs_n_k_a_k_rel}
		\pi_m^* FS(N_m)
		=
		f_m^* h^{A_m}
		\cdot
		h^{E_m}_{{\rm{sing}}}.
	\end{equation}
	By the definition of the non-pluripolar product, the fact that $\pi_m$ is a birational modification and (\ref{eq_fs_n_k_a_k_rel}), we then have
	\begin{equation}\label{eq_thm_sym_subalgebra_2}
		\int_{\hat{X}_m} c_1(f_m^* A_m, \pi_m^* FS(N_m) / h^{E_m}_{{\rm{sing}}})^n
		=
		\int_{X} c_1(L^{\otimes m}, FS(N_m))^n.
	\end{equation}	
	On the other hand, by a combination of Theorems \ref{thm_bouck_vol}, \ref{thm_cont_begz}, \ref{thm_conv_fs} and Proposition \ref{prop_sm_weak_conv}, we obtain
	\begin{equation}\label{eq_thm_sym_subalgebra_3}
		\lim_{m \to \infty}
		\int_X c_1(L, FS(N_m)^{\frac{1}{m}})^n
		=
		{\rm{vol}}_X(L).
	\end{equation}	 
	We see now that (\ref{eq_thm_sym_subalgebra_0}) follows from (\ref{eq_thm_sym_subalgebra_1}), (\ref{eq_thm_sym_subalgebra_2}) and (\ref{eq_thm_sym_subalgebra_3}).
	\end{proof}
	\par 
	\textbf{Construction 2}: \textit{the subring associated with a psh function with algebraic singularities}.
	\par 
	We now fix a smooth $(1,1)$-form $\theta$ in $c_1(L)$ and a $\theta$-psh function $\psi$ with algebraic singularities.
	We use the same notations for the birational model $\pi: \hat{X} \to X$, the divisors $E$, $E_i$, $b_i \in \mathbb{Q}$, $b_i \geq 0$, as in Proposition \ref{prop_b_i_beta} and (\ref{eq_resol_psi_sing}).
	Over $\hat{X}$, consider the $\mathbb{Q}$-line bundle $A_{\epsilon} := \pi^* L \otimes \mathscr{O}(-E - \epsilon \sum b_i E_i)$, where $\epsilon \in \mathbb{Q} \cap ]0, 1]$.
	\begin{prop}
		For any $\epsilon \in \mathbb{Q} \cap ]0, 1]$, the $\mathbb{Q}$-line bundle $A_{\epsilon}$ is ample.
		Moreover, for any $\epsilon' > 0$, there is $\epsilon_0 > 0$ such that for any $\epsilon \in \mathbb{Q} \cap ]0, \epsilon_0]$, we have
		\begin{equation}
			{\rm{vol}}_{\hat{X}}(A_{\epsilon})
			\geq
			\int_X (\theta + \ddc \psi)^n
			-
			\epsilon'.
		\end{equation}
	\end{prop}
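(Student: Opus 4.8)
The plan is to identify $c_1(A_\epsilon)$ cohomologically and then feed this into Proposition~\ref{prop_b_i_beta}. Since $\psi$ has algebraic singularities one may take $c = 1$ in (\ref{eq_resol_psi_sing}), so that $\hat{\theta} = \pi^* \theta - c_1(\mathscr{O}(E), h^E)$ represents the class $\pi^* c_1(L) - [E]$; consequently $c_1(A_\epsilon) = \pi^* c_1(L) - [E] - \epsilon \sum b_i [E_i] = [\hat{\theta}] - \epsilon \sum b_i [E_i]$. The manifold $\hat{X}$ is projective — it is obtained from $X$ by blowing up a coherent ideal sheaf and resolving — so a $\mathbb{Q}$-line bundle on $\hat{X}$ is ample exactly when its first Chern class contains a Kähler form. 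First I would invoke Proposition~\ref{prop_b_i_beta}: in the situation where this construction is used, $\psi$ has neat algebraic singularities and $\theta + \ddc \psi$ is a Kähler current (when $\int (\theta + \ddc \psi)^n = 0$ the second assertion of the proposition is vacuous, the volume being nonnegative), and the proposition then furnishes a closed $(1,1)$-form $\beta$ in the class $-\sum b_i [E_i]$ such that $\hat{\theta} + \ddc g + \epsilon \beta$ is strictly positive for every $\epsilon \in \mathbb{Q} \cap ]0, 1]$. This is a Kähler form representing $[\hat{\theta}] - \epsilon \sum b_i [E_i] = c_1(A_\epsilon)$, so $A_\epsilon$ is ample.

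For the volume estimate, the first step is to show ${\rm{vol}}_{\hat{X}}([\hat{\theta}]) = \int_X (\theta + \ddc \psi)^n$. The current $\hat{\theta} + \ddc g$ is positive with a bounded (for neat singularities, smooth) potential $g$; as $[\hat{\theta}]$ admits a bounded potential, its potentials of minimal singularities are precisely the bounded ones, so $\hat{\theta} + \ddc g$ has minimal singularities and Theorem~\ref{thm_bouck_vol} — in the form (\ref{eq_vol_new_defn}) — gives ${\rm{vol}}_{\hat{X}}([\hat{\theta}]) = \int_{\hat{X}} (\hat{\theta} + \ddc g)^n$. By the Siu-type identity (\ref{eq_siu_rel}) with $c = 1$, the currents $\hat{\theta} + \ddc g$ and $\pi^* \theta + \ddc \pi^* \psi$ agree off the analytic, hence pluripolar, set ${\rm{supp}}(E)$, so their non-pluripolar products coincide; together with the birational invariance (\ref{eq_bir_nnpp}) this yields $\int_{\hat{X}} (\hat{\theta} + \ddc g)^n = \int_{\hat{X}} (\pi^* \theta + \ddc \pi^* \psi)^n = \int_X (\theta + \ddc \psi)^n$.

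It then remains to let $\epsilon \to 0$. Since $A_\epsilon$ is ample, ${\rm{vol}}_{\hat{X}}(A_\epsilon) = \int_{\hat{X}} \omega_\epsilon^n$ for any Kähler form $\omega_\epsilon$ in $c_1(A_\epsilon)$, for instance $\omega_\epsilon := \hat{\theta} + \ddc g + \epsilon \beta$; this integral depends only on the class $[\hat{\theta}] - \epsilon \sum b_i [E_i]$ and is polynomial in $\epsilon$, hence converges as $\epsilon \to 0^+$ to $\int_{\hat{X}} (\hat{\theta} + \ddc g)^n = \int_X (\theta + \ddc \psi)^n$. Choosing $\epsilon_0 > 0$ small enough then gives ${\rm{vol}}_{\hat{X}}(A_\epsilon) \geq \int_X (\theta + \ddc \psi)^n - \epsilon'$ for all $\epsilon \in \mathbb{Q} \cap ]0, \epsilon_0]$. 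Alternatively, one may argue monotonically: for $\epsilon' < \epsilon$ the class $c_1(A_{\epsilon'}) - c_1(A_\epsilon)$ is effective, so ${\rm{vol}}_{\hat{X}}(A_\epsilon)$ is non-decreasing as $\epsilon \searrow 0$ and bounded above by ${\rm{vol}}_{\hat{X}}([\hat{\theta}])$, the matching lower bound for $\lim_{\epsilon \to 0^+} {\rm{vol}}_{\hat{X}}(A_\epsilon)$ following from the continuity of Boucksom's volume function \cite{BouckVol}.

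The main obstacle is the ampleness of $A_\epsilon$: it is precisely the statement that subtracting the small exceptional $\mathbb{Q}$-divisor $\epsilon \sum b_i E_i$ moves the nef and big class $[\hat{\theta}]$ into the ample cone, and this rests on choosing the $b_i$ compatibly with the geometry of the blow-up (the negativity of the exceptional divisors) together with $\theta + \ddc \psi$ being a Kähler current — exactly the information bundled into Proposition~\ref{prop_b_i_beta}. Everything else is routine bookkeeping with the birational invariance and continuity properties of the non-pluripolar product and of the volume.
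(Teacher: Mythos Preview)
Your proof is correct and follows essentially the same route as the paper: ampleness via Proposition~\ref{prop_b_i_beta} and Kodaira, the identification ${\rm{vol}}_{\hat{X}}([\hat{\theta}]) = \int_X (\theta + \ddc \psi)^n$ via birational invariance (\ref{eq_bir_nnpp}), the Siu decomposition (\ref{eq_siu_rel}), and Theorem~\ref{thm_bouck_vol}, and finally the limit $\epsilon \to 0$ via continuity of the volume. You are more explicit than the paper about the hidden hypothesis (neat algebraic singularities and $\theta + \ddc \psi$ a K\"ahler current) needed to invoke the last part of Proposition~\ref{prop_b_i_beta} for ampleness---this is indeed how Construction~2 is used in the proof of Theorem~\ref{thm_isom}. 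Your direct argument for the limit, observing that $\int_{\hat{X}} (\hat{\theta} + \ddc g + \epsilon \beta)^n$ is polynomial in $\epsilon$, is a pleasant alternative to citing the general continuity of the volume functional used in the paper.
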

	\begin{proof}
		The fact that $A_{\epsilon}$ is ample follows immediately by Proposition \ref{prop_b_i_beta} and Kodaira's theorem.
		Now, by the birational invariance of the non-pluripolar product, we have
		\begin{equation}
			\int_X (\theta + \ddc \psi)^n
			=
			\int_{\hat{X}} (\pi^* \theta + \ddc \psi \circ \pi )^n
		\end{equation}
		Immediately from the definition of non-pluripolar product and (\ref{eq_siu_rel}), we have
		\begin{equation}
			\int_{\hat{X}} (\pi^* \theta + \ddc \psi \circ \pi )^n
			=
			\int_{\hat{X}} (\hat{\theta} + \ddc g )^n
		\end{equation}
		Note that as $g$ is bounded, the current $\hat{\theta} + \ddc g$ has minimal singularities.
		It has also been explained after (\ref{eq_siu_rel}) that it is positive.
		Hence, by Theorem \ref{thm_bouck_vol}, we have
		\begin{equation}
			\int_{\hat{X}} (\hat{\theta} + \ddc g )^n
			=
			{\rm{vol}}_{\hat{X}}(\pi^* L \otimes \mathscr{O}(-E)).
		\end{equation}
		The statement now follows from the continuity of the volume functional, cf. \cite[Theorem 2.2.44]{LazarBookI}, which implies that for any $\epsilon' > 0$, there is $\epsilon_0 > 0$ such that for any $\epsilon \in \mathbb{Q} \cap ]0, \epsilon_0]$, we have ${\rm{vol}}_{\hat{X}}(A_{\epsilon}) \geq {\rm{vol}}_{\hat{X}}(\pi^* L \otimes \mathscr{O}(-E)) - \epsilon'$.
	\end{proof}
	Now, for any $\epsilon \in \mathbb{Q} \cap ]0, 1]$, we fix $N \in \nat^*$, so that $A_{\epsilon}^{\otimes N}$ is a line bundle (and not only a $\mathbb{Q}$-line bundle). 
	We shall now explain that for any $k \in \nat$, there is a natural monomorphism
	\begin{equation}\label{eq_incl_a_eps_coh}
		H^0(\hat{X}, A_{\epsilon}^{\otimes kN}) \hookrightarrow H^0(X, L^{\otimes kN}).
	\end{equation}
	To see this, remark first that since $X$ is smooth and $\pi: \hat{X} \to X$ is a birational modification, for any $k \in \nat$, we have the isomorphism $H^0(X, L^{\otimes k}) \to H^0(\hat{X}, \pi^* L^{\otimes k})$.
	The monomorphism (\ref{eq_incl_a_eps_coh}) is then simply a composition of the inverse of this isomorphism and the monomorphism $H^0(\hat{X}, A_{\epsilon}^{\otimes kN}) \hookrightarrow H^0(\hat{X}, \pi^* L^{\otimes kN})$, given by a multiplication by canonical holomorphic sections of $\mathscr{O}(kN E + \epsilon kN \sum b_i E_i)$.
	
	\subsection{Relating distances on a vector space and a subspace}\label{sect_comp_max_norm}
	In several parts of this article, we will need to compare the distances between two norms on a vector space and the distances between their restrictions to a subspace. 
	The following result, which constitutes the main result of this section, provides precisely this comparison.
	\begin{lem}\label{lem_comp_rest}
		On a finitely dimensional vector space $V$, for any $p \in [1, +\infty[$, the $d_p$-distances between two norms $N_0$, $N_1$, are related with the respective $d_p$-distances between their restrictions $N_0|_E$, $N_1|_E$ to a subspace $E \subset V$ as
		\begin{equation}\label{eq_lem_comp_rest111}
			\Big| v \cdot d_p(N_0, N_1)^p -  e \cdot d_p(N_0|_E, N_1|_E)^p \Big|
			\leq
			2 ( v - e) \cdot C^p + 20(1 + \log v) v \cdot p \cdot C^{p - 1}.
		\end{equation}
		where $v := \dim V$, $e := \dim E$, and $C \geq 0$ is such that $N_0 \cdot \exp(-C) \leq N_1 \leq N_0 \cdot \exp(C)$.
		In particular, if for some $\epsilon > 0$, we have $e/v \geq 1 - \epsilon$, then
		\begin{equation}\label{eq_comp_dist_two}
			\Big| d_p(N_0, N_1)^p -  d_p(N_0|_E, N_1|_E)^p \Big|
			\leq
			3 \epsilon \cdot C^p + 20 (1 + \log v) p \cdot C^{p - 1}.
		\end{equation}
	\end{lem}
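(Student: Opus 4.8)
The plan is to reduce to the Hermitian case, where everything is governed by the relative spectrum and classical eigenvalue interlacing, and then absorb the non-Hermitian error via the John ellipsoid. First I would invoke the John ellipsoid theorem (\ref{eq_john_ellips}) to replace $N_0$ and $N_1$ by Hermitian norms $\tilde N_0$, $\tilde N_1$ with $\tilde N_i \leq N_i \leq \sqrt{v}\,\tilde N_i$; by (\ref{eq_dp_dinf_compp}) this changes each of $d_p(N_0,N_1)$ and $d_p(N_0|_E,N_1|_E)$ by at most $\tfrac12\log v$, which, after the elementary inequality $|a^p - b^p| \leq p\,|a-b|\,\max(a,b)^{p-1}$ and the crude bound $\max(d_p(N_0,N_1),\tfrac12\log v) \leq C + \tfrac12\log v$, contributes a term of the shape $\mathrm{const}\cdot(1+\log v)\,p\,C^{p-1}$ to both sides — comfortably inside the stated error. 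Note also that $\tilde N_0,\tilde N_1$ still satisfy an $\exp(\pm C')$-comparison with $C' \leq C + \tfrac12\log v$, so I keep track of this slightly enlarged constant throughout.

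Next, for the Hermitian norms, let $\lambda_1 \geq \cdots \geq \lambda_v$ be the logarithmic relative spectrum of $\tilde N_0,\tilde N_1$ on $V$, and $\mu_1 \geq \cdots \geq \mu_e$ that of $\tilde N_0|_E,\tilde N_1|_E$ on $E$. By (\ref{dist_norm_fins_expl}) and the min-max description (\ref{eq_log_rel_spec}), these are exactly (up to the factor $\tfrac12$) the logarithms of the eigenvalues of the transfer operators, and Cauchy interlacing for Hermitian operators restricted to a subspace gives $\lambda_{j} \geq \mu_j \geq \lambda_{j + (v-e)}$ for $1 \leq j \leq e$. All $|\lambda_i|, |\mu_j| \leq C'$ by the comparison hypothesis. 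Then
\begin{equation}\label{eq_prop_interlace_bound}
	\Big| \sum_{i=1}^v |\lambda_i|^p - \sum_{j=1}^e |\mu_j|^p \Big|
	\leq
	\sum_{j=1}^e \big| |\lambda_j|^p - |\mu_j|^p \big|
	+ \sum_{i = e+1}^v |\lambda_i|^p.
\end{equation}
The second sum on the right is at most $(v-e)\,(C')^p$. For the first sum, interlacing gives $|\mu_j|^p$ lies between $|\lambda_j|^p$ and $|\lambda_{j+(v-e)}|^p$ up to the subtlety that these need not be monotone in $j$ once absolute values are taken; to handle this cleanly I would split the index set according to the sign of $\mu_j$ (equivalently of the corresponding eigenvalue being $\geq 1$ or $< 1$), on each piece the relevant $\lambda$'s are monotone, and a telescoping argument bounds $\sum_j \big||\lambda_j|^p - |\mu_j|^p\big|$ by $p\,(C')^{p-1}$ times a quantity controlled by $(v-e)$ plus boundary terms — concretely of order $(v-e)\,p\,(C')^{p-1} + p\,(C')^{p-1}\cdot O(1)$. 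Multiplying (\ref{eq_prop_interlace_bound}) through by nothing (the normalization $1/v$ and $1/e$ in (\ref{dist_norm_fins_expl_gen}) is precisely why I phrased the claim with $v\cdot d_p^p$ and $e\cdot d_p^p$), and re-expanding $C' = C + \tfrac12\log v$ using again $|(C')^p - C^p| \leq p\,(C')^{p-1}\cdot\tfrac12\log v$, yields (\ref{eq_lem_comp_rest111}) after collecting constants; the $2(v-e)C^p$ term comes from the $(v-e)(C')^p$ contribution together with one copy of the telescoped bound, and everything else folds into the $20(1+\log v)\,v\,p\,C^{p-1}$ term.

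Finally, for the "in particular" clause: divide (\ref{eq_lem_comp_rest111}) by $v$. The left side becomes $|d_p(N_0,N_1)^p - (e/v)\,d_p(N_0|_E,N_1|_E)^p|$; since $e/v \geq 1-\epsilon$ and $d_p(N_0|_E,N_1|_E)^p \leq C^p$, replacing $e/v$ by $1$ costs an extra $\epsilon\,C^p$, giving the $3\epsilon\,C^p$ bound (one $\epsilon C^p$ from the $2(v-e)/v \leq 2\epsilon$ term, one from this replacement, with room to spare). The right side's second term divided by $v$ is exactly $20(1+\log v)\,p\,C^{p-1}$. I expect the main obstacle to be the bookkeeping in the telescoping step — making sure the absolute values in (\ref{dist_norm_fins_expl_gen}) do not destroy the monotonicity that interlacing provides, which is why splitting by sign of the eigenvalue-minus-one is the cleanest route; none of this is deep, but it is where a careless argument would lose the precise form of the constants.
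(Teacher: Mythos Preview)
Your route differs from the paper's, and with one simplification it is actually sharper. The paper does not pass to Hermitian norms; instead it first handles the ordered case $N_0 \leq N_1$ (all $\lambda_i \in [0,C]$) directly from interlacing (\ref{eq_bnd_eig}), obtaining the exact bound $(v-e)C^p$, and then reduces the general case to two ordered comparisons against $M := \max\{N_0,N_1\}$ --- which restricts perfectly, $M|_E = \max\{N_0|_E,N_1|_E\}$ --- via the weak Pythagorean identity (\ref{eq_weak_vee_nh}). The $\log v$ term in (\ref{eq_lem_comp_rest111}) enters only through that identity.

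Two inefficiencies in your sketch. First, the John ellipsoid step is unnecessary: the interlacing $\lambda_j \geq \mu_j \geq \lambda_{j+(v-e)}$ holds for \emph{arbitrary} norms straight from the min-max formula (\ref{eq_log_rel_spec}) --- nothing Hermitian is used. (As written, your John step also inflates $C$ to $C' = C + \tfrac12\log v$, and your error then carries $(C')^{p-1}$ rather than $C^{p-1}$, so the constants do not actually close up as stated.) Second, the ``mean value then telescope'' bound $\sum_j\big||\lambda_j|^p-|\mu_j|^p\big|\lesssim pC^{p-1}\sum_j(\lambda_j-\lambda_{j+(v-e)}) \leq 2(v-e)\,p\,C^p$ picks up an unwanted factor $p$. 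The clean version of your own idea is: write $|\lambda_i|^p = (\lambda_i^+)^p + (\lambda_i^-)^p$ with $\lambda_i^{\pm}=\max(\pm\lambda_i,0)$; each of the sequences $(\lambda_i^+)$ and (after reversing the index) $(\lambda_i^-)$ is nonnegative, decreasing, bounded by $C$, and still satisfies the same interlacing with the corresponding $(\mu_j^{\pm})$; apply the ordered-case bound $(v-e)C^p$ to each piece. This gives
\[
\big|\,v\,d_p(N_0,N_1)^p - e\,d_p(N_0|_E,N_1|_E)^p\,\big| \;\leq\; 2(v-e)\,C^p
\]
for arbitrary norms, with no $\log v$ term at all --- strictly stronger than (\ref{eq_lem_comp_rest111}) and immediately yielding (\ref{eq_comp_dist_two}).
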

	\par 
	In order to prove Lemma \ref{lem_comp_rest}, we develop some notions from finite-dimensional functional analysis.
	For two Hermitian norms $N_0$, $N_1$ on $V$, we define the “rooftop" Hermitian norm $N_0 \vee N_1$ as follows: we choose a basis $e_1, \ldots, e_v$ of $V$, $v := \dim V$, which is orthogonal for both $N_0$ and $N_1$, and define $N_0 \vee N_1$ such that $e_1, \ldots, e_v$ is orthogonal for $N_0 \vee N_1$, and normalized such that $\| e_i \|_{N_0 \vee N_1} := \max \{ \| e_i \|_{N_0}, \| e_i \|_{N_1} \}$.
	\par 
	We stress out while we obviously have $N_0 \leq N_0 \vee N_1$ and  $N_1 \leq N_0 \vee N_1$, the space of Hermitian norms on $V$ is not a lattice (unless $\dim V = 1$), i.e., there is in general no minimal Hermitian norm which majorizes both $N_0$ and $N_1$.
	The reason for considering the norm $N_0 \vee N_1$ comes from the following Pythagorean formula
	\begin{equation}\label{eq_wedge}
		d_p(N_0, N_1)^p = d_p(N_0, N_0 \vee N_1)^p + d_p(N_0 \vee N_1, N_1)^p.
	\end{equation}
	\par 
	Unlike the space of Hermitian norms, the space of all norms does form a lattice.
	Indeed, for any two norms $N_0$, $N_1$, the maximum $\max\{N_0, N_1\}$ gives a norm, which is also the least upper bound.
	We claim that for Hermitian $N_0$, $N_1$, we have 
	\begin{equation}\label{eq_max_vee_comp}
		\frac{1}{\sqrt{2}} N_0 \vee N_1 \leq \max\{N_0, N_1\} \leq N_0 \vee N_1.
	\end{equation}
	The second inequality in (\ref{eq_max_vee_comp}) is immediate, since $\max\{N_0, N_1\}$ is the smallest majorant of $N_0$ and $N_1$ among all possible norms, and $N_0 \vee N_1$ is a majorant.
	The first inequality in (\ref{eq_max_vee_comp}) is a reformulation of the following elementary estimate: for any $a_i, b_i > 0$, $x_i \in \comp$, $i = 1, \ldots, v$,
	\begin{equation}
		\frac{1}{2}
		\sum \max\{ a_i, b_i \} \cdot |x_i|^2
		\leq
		\max\Big\{
			\sum a_i \cdot |x_i|^2, \sum b_i \cdot |x_i|^2, 
		\Big\}.
	\end{equation}
	\par 
	Note that by the mean value theorem, for any $d > 0$, $x, y \in \real$, $|x - y| \leq d$ such that for some $C > 0$, $|x|, |y| \leq C$, we have 
	\begin{equation}\label{eq_mv_easy}
		||x|^p - |y|^p| \leq d p \cdot C^{p-1}.
	\end{equation}
	By (\ref{eq_log_rel_spec02}), (\ref{eq_wedge}), (\ref{eq_max_vee_comp}) and (\ref{eq_mv_easy}), we deduce that for arbitrary Hermitian norms $N_0, N_1$ on $V$, the following weak form of Pythagorean identity holds
	\begin{equation}\label{eq_weak_vee}
		\Big| d_p(N_0, N_1)^p - d_p(N_0, \max\{N_0, N_1\})^p - d_p(\max\{N_0, N_1\}, N_1)^p \Big|
		\leq
		10 p \cdot C^{p - 1}.
	\end{equation}
	By (\ref{eq_john_ellips}), (\ref{eq_mv_easy}) and (\ref{eq_weak_vee}), we further see that for not necessarily Hermitian norms $N_0, N_1$,
	\begin{equation}\label{eq_weak_vee_nh}
		\Big| d_p(N_0, N_1)^p - d_p(N_0, \max\{N_0, N_1\})^p - d_p(\max\{N_0, N_1\}, N_1)^p \Big|
		\leq
		10 (1 + \log v) p \cdot C^{p - 1}.
	\end{equation}
	\begin{proof}[Proof of Lemma \ref{lem_comp_rest}]
		Remark first that if we additionally assume that $N_0 \leq N_1$, then we even have
		\begin{equation}\label{eq_lem_comp_rest_or}
			\Big| v \cdot d_p(N_0, N_1)^p -  e \cdot d_p(N_0|_E, N_1|_E)^p \Big|
			\leq
			 ( v - e) \cdot C^p,
		\end{equation}
		Indeed, from the definition of the logarithmic relative spectrum in (\ref{eq_log_rel_spec}), we see that for any $i = 1, \cdots, e$, we have
		\begin{equation}\label{eq_bnd_eig}
			\lambda_i(N_0, N_1) \geq \lambda_i(N_0|_E, N_1|_E) \geq \lambda_{i + v - e}(N_0, N_1).
		\end{equation}
		Since under the assumption $N_0 \leq N_1 \leq N_0 \cdot \exp(C)$, we have $0 \leq \lambda_{i}(N_0, N_1) \leq C$, for any $i = 1, \cdots, v$, (\ref{eq_lem_comp_rest_or}) follows immediately from (\ref{dist_norm_fins_expl_gen}) and (\ref{eq_bnd_eig}).
		\par 
		By (\ref{eq_lem_comp_rest_or}), using the fact that $\max\{N_0, N_1\}|_E = \max\{N_0|_E, N_1|_E\}$, for $i = 0, 1$, we obtain
		\begin{equation}
			\Big| v \cdot d_p(N_i, \max\{N_0, N_1\})^p -  e \cdot d_p(N_i|_E, \max\{N_0|_E, N_1|_E\})^p \Big|
			\leq
			(v - e) \cdot C^p.
		\end{equation}
		We obtain (\ref{eq_lem_comp_rest111}) by summing up the above equation for $i = 0, 1$, using the triangle inequality and (\ref{eq_weak_vee_nh}) twice: for the pair $(N_0, N_1)$ and for $(N_0|_E, N_1|_E)$. 
	\end{proof}	
	
	\subsection{Quantization of distances}\label{sect_quant_mab}
	The main goal of this section is to prove Theorem \ref{thm_isom}.
	Briefly, our strategy will be to reduce Theorem \ref{thm_isom} to the ample case, which is known by Proposition \ref{thm_isom_ample_ban}.
	For this, we shall apply the results from the two previous sections.
	\begin{proof}[Proof of Theorem \ref{thm_isom}]
		By approximation, it is immediate that it suffices to establish the result for smooth $h^L_0$, $h^L_1$.
		Henceforth, we assume that $h^L_0$ and $h^L_1$ are smooth.
		We fix a smooth reference metric $h^L$ on $L$ and set  $\theta = c_1(L, h^L)$.
		We write $h^L_i := h^L \cdot \exp(- 2\phi_i)$, $i = 0, 1$.
		Then in the notations of (\ref{eq_p_theta_env}), we have $P(h^L_i) = h^L \cdot \exp(- 2P_\theta(\phi_i))$, $i = 0, 1$.
		\par 
		For a given $\epsilon > 0$ small enough, by Lemma \ref{lem_choice_psi_cl}, there is a strictly $\theta$-psh potential $\psi$ with neat algebraic singularities, verifying (\ref{eq_choice_psi_cl2}) and 
		\begin{equation}\label{eq_vol_k_ass00}
			\Big| 
			d_p(P_{\theta}(\phi_0)^*, P_{\theta}(\phi_1)^*)^p
			-
			d_p(P_{\theta}[\psi](\phi_0)^*, P_{\theta}[\psi](\phi_1)^*)^p
			\Big|
			\leq
			\epsilon.
		\end{equation}
		\par 
		As in (\ref{eq_siu_rel}), we consider a complex manifold $\hat{X}$, a birational model $\pi : \hat{X} \to X$, a divisor $E$, a function $g: X \to \real$, and a closed semi-positive smooth $(1, 1)$-form $\hat{\theta}(g)$ as in (\ref{eq_siu_rel}) and (\ref{eq_isom_hat2}).
		As explained in (\ref{eq_isom_hat2}), the class $[\hat{\theta}(g)]$ of $\hat{\theta}(g)$ is nef and -- due to our assumption (\ref{eq_choice_psi_cl2}) -- big. 
		By definition of the Mabuchi-Darvas distance on $\psh(X, \theta, \psi)$, (\ref{eq_env_corresp_one_an}) and (\ref{eq_transl}), for $\hat{\phi_i} := (\phi_i - \psi) \circ \pi$, $i = 0, 1$, we have
		\begin{equation}\label{eq_d_p_reduc_pf}
			d_p(P_{\theta}[\psi](\phi_0)^*, P_{\theta}[\psi](\phi_1)^*)
			=
			d_p(P_{\hat{\theta}(g)}(\hat{\phi_0})^*, P_{\hat{\theta}(g)}(\hat{\phi_1})^*).
		\end{equation}
		As described in the proof of Lemma \ref{lem_cont_sing_m}, $P_{\hat{\theta}(g)}(\hat{\phi_i})$ can be interpreted as the envelope of a continuous potential $\min\{ \hat{\phi_i}, M \}$, $i = 0, 1$, for a sufficiently large $M > 0$, and so on the right hand side of (\ref{eq_d_p_reduc_pf}), we have a well-defined Mabuchi-Darvas distance associated with a big and nef class.
		From now on, we shall make this interpretation implicit, to lighten the notations.
		\par 
		Now, instead of working with big and nef classes, we shall reduce directly to an ample class.
		For this, since $\hat{\theta}(g)$ is semipositive, we have $V_{\hat{\theta}(g)} = 0$, where we used the notations analogous to (\ref{eq_v_theta}).
		Now, let $b_i \in \mathbb{Q}$, $b_i > 0$, and a closed $(1, 1)$-form $\beta$ be as in Proposition \ref{prop_b_i_beta}.
		We define the effective $\mathbb{Q}$-divisor $E(b): = \sum b_i \cdot E_i$.
		By the $\partial \dbar$-lemma, we can construct a metric $h^{E(b)}$ on the $\mathbb{Q}$-line bundle $\mathscr{O}_{\hat{X}}(E(b))$, so that $c_1(\mathscr{O}_{\hat{X}}(E(b)), h^{E(b)}) = \beta$.
		We consider $\psi_{E(b)}(x) := \sum \log |s|_{h^{E(b)}}$, where $s$ is the canonical holomorphic section of $\mathscr{O}_{\hat{X}}(E(b))$.
		\par 
		Then, by the Poincaré-Lelong formula, $\psi_{E(b)} \in \psh(\hat{X}, \beta)$.
		By multiplying $h^{E(b)}$ by a constant, we may further arrange that $\psi_E \leq 0$.
		Let $b_0 := \epsilon > 0$ be as in Proposition \ref{prop_b_i_beta}. 
		Then for any $b$ such that $b_0 > b > 0$, we have $\hat{\theta}(g) + \ddc (b \cdot \psi_E) \geq \hat{\theta}(g) - b \beta$.
		Note that the form $\hat{\theta}(g) - b \beta$ is Kähler by Proposition \ref{prop_b_i_beta}.
		Hence, we have $\psi + b \psi_E \in \psh(\hat{X}, \pi^* \theta)$.
		\par 
		Moreover, $\psi_E$ has analytic singularities, and for any sequence $b_k \in \mathbb{Q}$, $b_k > 0$, $k \in \nat^*$, decreasing to $0$, and such that $b_1 < b_0$, the sequence of potentials $b_k \cdot \psi_E$ increases, as $k \to \infty$, towards $V_{\hat{\theta}(g)} = 0$ outside of the pluripolar subset.
		Since the construction of the Mabuchi-Darvas distance on big classes is compatible with the construction on big and nef classes, by the discussion before Lemma \ref{lem_incr}, we deduce that 
		\begin{equation}
			d_p(P_{\hat{\theta}(g)}(\hat{\phi_0})^*, P_{\hat{\theta}(g)}(\hat{\phi_1})^*)
			=
			\lim_{k \to \infty}
			d_p(P_{\hat{\theta}(g)}[b_k \psi_E](\hat{\phi_0})^*, P_{\hat{\theta}(g)}[b_k \psi_E](\hat{\phi_1})^*).
		\end{equation}
		We now fix a sufficiently large $k_0 \in \nat$, so that for any $k \geq k_0$, we have
		\begin{equation}\label{eq_mab_isom_2111}
			\Big| 
			d_p(P_{\hat{\theta}(g)}(\hat{\phi_0})^*, P_{\hat{\theta}(g)}(\hat{\phi_1})^*)^p
			-
			d_p(P_{\hat{\theta}(g)}[b_k \psi_E](\hat{\phi_0})^*, P_{\hat{\theta}(g)}[b_k \psi_E](\hat{\phi_1})^*)^p
			\Big|
			\leq
			\epsilon.
		\end{equation}
		A combination of (\ref{eq_comp_law_env}), (\ref{eq_env_corresp_one_an}), (\ref{eq_vol_k_ass00}) and (\ref{eq_mab_isom_2111}), implies that 
		\begin{equation}\label{eq_mab_isom_2121}
			\Big| 
			d_p(P_{\theta}(\phi_0)^*, P_{\theta}(\phi_1)^*)^p
			-
			d_p(P_{\theta}[\psi + b \psi_E](\phi_0)^*, P_{\theta}[\psi + b \psi_E](\phi_1)^*)^p
			\Big|
			\leq
			2 \epsilon.
		\end{equation}
		\par 
		By Theorem \ref{thm_cont_begz}, as $b_k \psi_E$ converge, as $k \to \infty$, to $V_{\hat{\theta}(g)}$ almost everywhere, upon increasing $k_0 \in \nat$, we may further assume that for any $k \geq k_0$, for the Kähler form $\hat{\theta}(b_k) := \hat{\theta}(g) - b_k \cdot \beta$,
		\begin{equation}\label{eq_vol_k_ass2}
			\int \hat{\theta}(b_k)^n 
			\geq
			\int (\theta + \ddc \psi)^n 
			-
			\epsilon.
		\end{equation}
		We fix $k_0$ as above, and by an abuse of notation, we note $b := b_{k_0}$.		
		By the definition of the Mabuchi-Darvas distance on $\psh(X, \theta, \psi + b \psi_E)$, for $\phi_0(b) := \hat{\phi_0} - b \psi_E$, $\phi_1(b) := \hat{\phi_1} - b \psi_E$,
		\begin{equation}\label{eq_mab_isom_2}
			d_p(P_{\theta}[\psi + b \psi_E](\phi_0)^*, P_{\theta}[\psi + b \psi_E](\phi_1)^*)
			=
			d_p(P_{\hat{\theta}(b)}(\phi_0(b))^*, P_{\hat{\theta}(b)}(\phi_1(b))^*).
		\end{equation}
		This greatly simplifies the situation as the form $\hat{\theta}(b)$ is Kähler by what we described before, and so the right-hand side of (\ref{eq_mab_isom_2}) corresponds to the Mabuchi-Darvas distance in a Kähler class.
		Moreover, by Lemma \ref{lem_cont_sing_m}, the envelopes $P_{\hat{\theta}(b)}(\phi_0(b))^*$, $P_{\hat{\theta}(b)}(\phi_1(b))^*$ can be replaced with the envelopes of continuous metrics.
		\par 
		Our proof of Theorem \ref{thm_isom} will now proceed in two steps.
		First, we show that the convergence (\ref{eq_thm_isom}) holds if one considers $k \in \nat$ running over certain multiplicative subsequences in $\nat$.
		Then we will explain how to upgrade the argument to cover the general limits.
		\par 
		Following Construction 2 from Section \ref{sect_subr}, applied for $\psi := \psi + b \psi_E$, consider the $\mathbb{Q}$-line bundle $A := \pi^* L \otimes \mathscr{O}_X(- E(b) - E)$.
		Let $N \in \nat^*$ be such that $A^{\otimes N}$ becomes a genuine line bundle.
		As described in Section \ref{sect_subr}, for any $k \in \nat$, $N | k$, for $V_{k} := H^0(\hat{X}, A^{\otimes k})$, we then have a natural inclusion $\iota_{k} : V_{k} \to H^0(X, L^{\otimes k})$.
		\par 
		From Theorem \ref{thm_bouck_vol}, (\ref{eq_choice_psi_cl2}) and (\ref{eq_vol_k_ass2}), there is $k_0 \in \nat$ such that for any $k \geq k_0$, $N | k$, we have
		\begin{equation}\label{eq_vol_k_ass3}
			\frac{\dim V_{k}}{H^0(X, L^{\otimes k})} \geq 1 - \epsilon.
		\end{equation}
		In particular, for $C > 0$, so that we have $h^L_1 \cdot \exp(-C) \leq h^L_0 \leq h^L_1 \cdot \exp(C)$, by (\ref{eq_comp_dist_two}), for any $k \geq k_0$, $N | k$, we obtain the following bound
		\begin{multline}\label{eq_vol_k_ass4}
			\Big| d_p(\ban^{\infty}_{k}(h^L_0), \ban^{\infty}_{k}(h^L_1))^p -  d_p(\iota_{k}^* \ban^{\infty}_k(h^L_0), \iota_{k}^* \ban^{\infty}_k(h^L_1))^p \Big|
			\\
			\leq
			30 \epsilon C^p \cdot k^p + 20 (1 + \log n_k) p \cdot C^{p - 1} \cdot k^{p - 1}.
		\end{multline}
		By Proposition \ref{prop_pull_back}, if we denote by $h_{\rm{sing}}$ the canonical singular metric on the $\mathbb{Q}$-line bundle $\otimes \mathscr{O}_X(E(b) + E)$, for any $k \in \nat$, $N | k$, we have 
		\begin{equation}
			\iota_{k}^* \ban^{\infty}_k(h^L_i) := \ban^{\infty}_k(P(h^L_i / h_{\rm{sing}})).
		\end{equation}
		In particular, using the ampleness of $A$, by Lemma \ref{lem_cont_sing_m} and Proposition \ref{thm_isom_ample_ban}, we have
		\begin{equation}\label{eq_vol_k_ass5}
			\lim_{k \to \infty, N | k}
			\frac{d_p(\iota_{k}^* \ban^{\infty}_k(h^L_0), \iota_{k}^* \ban^{\infty}_k(h^L_1))}{k}
			=
			d_p(P(h^L_0 / h_{\rm{sing}})^*, P(h^L_1 / h_{\rm{sing}})^*).
		\end{equation}
		Note, however, that by (\ref{eq_metr_pot_dist_corresp}), we have
		\begin{equation}\label{eq_vol_k_ass500}
			d_p(P(h^L_0 / h_{\rm{sing}})^*, P(h^L_1 / h_{\rm{sing}})^*)
			=
			d_p(P_{\theta}[\psi + b \psi_E](\phi_0)^*, P_{\theta}[\psi + b \psi_E](\phi_1)^*).
		\end{equation}
		All in all, the estimates (\ref{eq_mab_isom_2121}), (\ref{eq_vol_k_ass4}), (\ref{eq_vol_k_ass5}) and (\ref{eq_vol_k_ass500}) imply that for any $k \geq k_0$, $N | k$, we have
		\begin{equation}\label{eq_vol_k_ass6}
			\Big|
			d_p(P(h^L_0), P(h^L_1))^p
			-
			\frac{d_p(\ban^{\infty}_{k}(h^L_0), \ban^{\infty}_{k}(h^L_1))^p}{k^p}
			\Big|
			\leq
			\epsilon (3 + 30 C^p).
		\end{equation}
		From (\ref{eq_vol_k_ass6}), it is immediate to see that (\ref{eq_thm_isom}) holds if instead of the limits over all natural numbers, one considers limits over certain multiplicative subsequences.
		\par 
		We now proceed to show that Theorem \ref{thm_isom} remains valid in full generality.
		For this, we assume now that $r \in \nat$ is sufficiently large so that there is a nonzero $s_r \in H^0(X, L^{\otimes r})$.
		For any $k \in \nat$, consider the monomorphism
		\begin{equation}\label{eq_jk_morph_mult_triv}
			j_k : H^0(X, L^{\otimes k}) \mapsto H^0(X, L^{\otimes k + r}), \qquad s \mapsto s \cdot s_{r}.
		\end{equation}
		Note that the norm $j_k^* \ban^{\infty}_{k + r}(h^L)$ can be described as the sup-norm $\ban^{\infty}_1((h^L)^k \cdot \rho)$, where $\rho(x) := |s_r(x)|_{(h^L)^r}$, $x \in X$.
		Immediately from Proposition \ref{prop_bm_volume}, we deduce that for any $\epsilon > 0$ and any continuous metric $h^L$ on $L$, there is $k_0 > 0$ such that for any $k \geq k_0$, we have
		\begin{equation}
			\ban^{\infty}_k(h^L) \cdot \exp(- \epsilon k) \leq j_k^* \ban^{\infty}_{k + r}(h^L) \leq \ban^{\infty}_k(h^L).
		\end{equation}
		We conclude that for any continuous metrics $h^L_0$, $h^L_1$ on $L$ and any $\epsilon > 0$, there is $k_0 > 0$ such that for any $k \geq k_0$, we have
		\begin{equation}\label{eq_jk_comp}
			\Big|
				d_p(j_k^* \ban^{\infty}_{k + r}(h^L_0), j_k^* \ban^{\infty}_{k + r}(h^L_1))
				-
				d_p(\ban^{\infty}_k(h^L_0), \ban^{\infty}_k(h^L_1))
			\Big|
			\leq
			\epsilon k.
		\end{equation}
		Note, however, that by the fact the $\limsup$ in the definition of the volume of a line bundle is actually a limit, we see that for any $\epsilon > 0$, there is $k_0 \in \nat$ such that for any $k \geq k_0$, we have
		\begin{equation}\label{eq_dim_jk000}
			\frac{\dim \Im j_k}{\dim H^0(X, L^{\otimes (k + r)})} \geq 1 - \epsilon.
		\end{equation}
		From this, (\ref{eq_comp_dist_two}), (\ref{eq_jk_comp}) and (\ref{eq_dim_jk000}), we deduce that for any $\epsilon > 0$, there is $k_0 \in \nat$ such that for any $k \geq k_0$, we have
		\begin{equation}\label{eq_jk_comp2}
			\Big|
				d_p(\ban^{\infty}_{k + r}(h^L_0), \ban^{\infty}_{k + r}(h^L_1))
				-
				d_p(\ban^{\infty}_k(h^L_0), \ban^{\infty}_k(h^L_1))
			\Big|
			\leq
			\epsilon k.
		\end{equation}
		\par 
		Now, we fix a sufficiently large $k_0 \in \nat$, so that for any $i = 0, \ldots, N - 1$, there is a non-zero $s_i \in H^0(X, L^{\otimes (k_0 N + i)})$.
		Then the analogue of the estimate (\ref{eq_jk_comp2}) can be made for $r$ giving all the residues modulo $k$.
		By this and (\ref{eq_vol_k_ass6}), we deduce that for any $\epsilon > 0$, there is $k_1 \in \nat$ such that (\ref{eq_vol_k_ass6}) holds for any $k \geq k_1$.
		This obviously concludes the proof of Theorem \ref{thm_isom}.
	\end{proof}
	\par 
	Let us now explain the relation between Theorem \ref{thm_isom} and the calculation of the volumes of unit balls of the sup-norms, highlighted in Remark \ref{rem_thm_isom}.
	In the following lemma, which justifies (\ref{eq_rem_thm_isom}), we borrow the notations from Lemma \ref{lem_vol_balls_d1}.
	\begin{lem}
		For any two Hermitian norms $N_0$, $N_1$ on $V$, we have
		\begin{equation}\label{eq_vol_balls_d_nord}
			d_1(N_0, N_1)
			=
			\frac{1}{v} \log
			\Big(
			\frac{\vol(B_0) \cdot \vol(B_1)}{\vol(B_{0 \vee 1})^2}
			\Big),
		\end{equation}
		where $B_i \subset V$, $i = 0, 1$, are the unit balls of the norms $N_i$, and $B_{0 \vee 1}$ is the unit ball of the norm $N_0 \vee N_1$.
		If the norms $N_0$, $N_1$ are not necessarily Hermitian, then we have
		\begin{equation}\label{eq_vol_balls_d_nord2}
			\Big|
			d_1(N_0, N_1)
			-
			\frac{1}{v} \log
			\Big(
			\frac{\vol(B_0) \cdot \vol(B_1)}{\vol(B_0 \cap B_1)^2}
			\Big)
			\Big|
			\leq
			8 \log v.
		\end{equation}
	\end{lem}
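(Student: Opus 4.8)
The plan is to deduce the Hermitian identity (\ref{eq_vol_balls_d_nord}) directly from the Pythagorean formula (\ref{eq_wedge}) together with the volume formula (\ref{eq_vol_balls_d1}) of Lemma \ref{lem_vol_balls_d1}, and then to pass to the non-Hermitian estimate (\ref{eq_vol_balls_d_nord2}) by a John-ellipsoid comparison.

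First, for Hermitian $N_0$, $N_1$, I would take $p = 1$ in (\ref{eq_wedge}), which gives $d_1(N_0, N_1) = d_1(N_0, N_0 \vee N_1) + d_1(N_0 \vee N_1, N_1)$. Since $N_0 \leq N_0 \vee N_1$ and $N_1 \leq N_0 \vee N_1$, both summands are $d_1$-distances between ordered pairs of Hermitian norms, so (\ref{eq_vol_balls_d1}) applies and yields $d_1(N_0, N_0 \vee N_1) = \tfrac{1}{v}\log(\vol(B_0)/\vol(B_{0\vee 1}))$ and, using the symmetry of $d_1$, $d_1(N_0 \vee N_1, N_1) = \tfrac{1}{v}\log(\vol(B_1)/\vol(B_{0\vee 1}))$. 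Adding these two identities gives exactly (\ref{eq_vol_balls_d_nord}).

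For general norms $N_0$, $N_1$, I would choose, via the John ellipsoid theorem (\ref{eq_john_ellips}), Hermitian norms $N_i^H$ with $N_i^H \leq N_i \leq \sqrt{v}\, N_i^H$, apply the Hermitian identity just proved to the pair $(N_0^H, N_1^H)$, and then estimate the resulting discrepancies. On the metric side, applying (\ref{eq_dp_dinf_compp}) twice (varying one argument at a time, each time with $C = \tfrac12\log v$) bounds $\bigl|d_1(N_0, N_1) - d_1(N_0^H, N_1^H)\bigr|$ by $\log v$. On the volume side, the inclusions $\tfrac{1}{\sqrt v} B_i^H \subseteq B_i \subseteq B_i^H$ control $\tfrac1v\bigl|\log(\vol(B_i)/\vol(B_i^H))\bigr|$; the inclusions $\tfrac{1}{\sqrt v}(B_0^H \cap B_1^H) \subseteq B_0 \cap B_1 \subseteq B_0^H \cap B_1^H$, which follow from $\max\{N_0^H, N_1^H\} \leq \max\{N_0, N_1\} \leq \sqrt v\,\max\{N_0^H, N_1^H\}$, control $\tfrac1v\bigl|\log(\vol(B_0\cap B_1)/\vol(B_0^H\cap B_1^H))\bigr|$; and the comparison (\ref{eq_max_vee_comp}), namely $\tfrac{1}{\sqrt 2}(N_0^H \vee N_1^H) \leq \max\{N_0^H, N_1^H\} \leq N_0^H \vee N_1^H$, identifies $\vol(B_0^H \cap B_1^H)$ with $\vol(B_{0\vee 1}^H)$ up to a bounded factor. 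Substituting the Hermitian identity for $(N_0^H, N_1^H)$ into $\tfrac1v\log\bigl(\vol(B_0)\vol(B_1)/\vol(B_0\cap B_1)^2\bigr)$ and collecting these estimates shows that the left-hand side of (\ref{eq_vol_balls_d_nord2}) is at most $8\log v$; the case $v = 1$ is trivial, since there $N_0 \vee N_1 = \max\{N_0, N_1\}$, so (\ref{eq_vol_balls_d_nord}) already gives (\ref{eq_vol_balls_d_nord2}) with vanishing right-hand side.

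The argument is essentially bookkeeping, and there is no serious obstacle; the only point requiring care — and the step I would be most careful about — is tracking the direction of each inclusion (a larger norm has a smaller unit ball) and checking that a factor $\sqrt v$ (respectively $\sqrt 2$) between two norms contributes only $O(\log v)$ (respectively $O(1)$) to $\tfrac1v$ times the logarithm of the corresponding ratio of volumes, so that all the discrepancies together stay within the stated bound $8\log v$.
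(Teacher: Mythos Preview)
Your proposal is correct and follows essentially the same approach as the paper: the Hermitian identity via the Pythagorean formula (\ref{eq_wedge}) and the volume formula (\ref{eq_vol_balls_d1}), then the reduction of the general case via John ellipsoids (\ref{eq_john_ellips}) and the comparison (\ref{eq_max_vee_comp}), noting that $B_0 \cap B_1$ is the unit ball of $\max\{N_0,N_1\}$. The paper leaves the bookkeeping of constants to the reader, and your tracking of the $\log v$ contributions (including the separate handling of $v=1$) fills in exactly those details.
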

	\begin{proof}
		To prove (\ref{eq_vol_balls_d_nord}), we take a sum of the two equations corresponding to (\ref{eq_vol_balls_d1}), applied for $N_0, N_0 \vee N_1$ and $N_1, N_0 \vee N_1$, and use (\ref{eq_wedge}).
		The proof of (\ref{eq_vol_balls_d_nord2}) reduces to (\ref{eq_vol_balls_d_nord}) through the use of (\ref{eq_john_ellips}) and (\ref{eq_max_vee_comp}); one should only note that $B_0 \cap B_1$ corresponds to the unit ball of the norm $\max\{N_0, N_1 \}$.
		We leave the details to the reader.
	\end{proof}
	To conclude this section, we formulate a version of Theorem \ref{thm_isom_ample} for big line bundles.
	For this, let us fix a smooth volume form $dV_X$ on $X$.
	\begin{cor}\label{thm_isom_big_l2}
		For any continuous metrics $h^L_0$, $h^L_1$ on $L$, and any $p \in [1, +\infty[$,
		\begin{equation}\label{eq_thm_bigl2}
			d_p(P(h^L_0), P(h^L_1))
			=
			\lim_{k \to \infty}
			\frac{d_p(\hilb_k(h^L_0, dV_X), \hilb_k(h^L_1, dV_X))}{k}.
		\end{equation}
	\end{cor}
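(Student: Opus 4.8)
The plan is to deduce Corollary \ref{thm_isom_big_l2} directly from Theorem \ref{thm_isom}: the only difference between the two statements is that $\hilb_k(h^L_i, dV_X)$ replaces $\ban^{\infty}_k(h^L_i)$ on the right-hand side, so it suffices to show that these two families of norms are multiplicatively asymptotically equivalent and that the corresponding $d_p$-distances therefore differ by $o(k)$. This is exactly the passage already carried out in the ample case in the proof of Proposition \ref{thm_isom_ample_ban}, with Theorem \ref{thm_isom} now playing the role of Theorem \ref{thm_isom_ample}.

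First I would record a two-sided comparison between the norms. From $\|s\|^2_{\hilb_k(h^L, dV_X)} = \int_X |s|^2_{(h^L)^k}\, dV_X \leq \big(\int_X dV_X\big) \cdot \|s\|^2_{\ban^{\infty}_k(h^L)}$ one gets $\hilb_k(h^L, dV_X) \leq \exp(\epsilon k)\cdot \ban^{\infty}_k(h^L)$ for $k$ large, since the logarithm of the constant $\big(\int_X dV_X\big)^{1/2}$ is $O(1)$. For the reverse bound, I would invoke Proposition \ref{prop_bm_volume} with $\rho \equiv 1$, which gives $\hilb_k(h^L, dV_X) \geq \exp(-\epsilon k)\cdot \ban^{\infty}_k(h^L)$ for $k$ large. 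Hence, for any $\epsilon > 0$ and all sufficiently large $k$, we have $\ban^{\infty}_k(h^L_i)\cdot \exp(-\epsilon k) \leq \hilb_k(h^L_i, dV_X) \leq \ban^{\infty}_k(h^L_i)\cdot \exp(\epsilon k)$ for $i = 0, 1$.

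Then I would apply the stability estimate (\ref{eq_dp_dinf_compp}) twice, with $C = \epsilon k$ each time — first to replace $\ban^{\infty}_k(h^L_0)$ by $\hilb_k(h^L_0, dV_X)$, then to replace $\ban^{\infty}_k(h^L_1)$ by $\hilb_k(h^L_1, dV_X)$ — obtaining $\big| d_p(\ban^{\infty}_k(h^L_0), \ban^{\infty}_k(h^L_1)) - d_p(\hilb_k(h^L_0, dV_X), \hilb_k(h^L_1, dV_X)) \big| \leq 2\epsilon k$ for $k$ large. Dividing by $k$, letting $k \to \infty$ and then $\epsilon \to 0$ shows that the right-hand side of (\ref{eq_thm_bigl2}) equals the right-hand side of (\ref{eq_thm_isom}), which by Theorem \ref{thm_isom} equals $d_p(P(h^L_0), P(h^L_1))$, as required.

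There is no genuine obstacle here: all the substance lies in Theorem \ref{thm_isom}, and the comparison of sup-norms with $L^2$-norms is the soft Bernstein–Markov argument. The only point worth flagging is that the reverse inequality relies on $dV_X$ being a Bernstein–Markov measure, which is precisely the content of Proposition \ref{prop_bm_volume}.
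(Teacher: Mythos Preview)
Your proposal is correct and follows essentially the same approach as the paper: the paper's proof simply says ``Analogous to the proof of Proposition \ref{thm_isom_ample_ban}'', which is precisely the Bernstein--Markov comparison (Proposition \ref{prop_bm_volume}) combined with the stability estimate (\ref{eq_dp_dinf_compp}), then an application of Theorem \ref{thm_isom} in place of Theorem \ref{thm_isom_ample}. Your write-up spells out the details of that argument accurately.
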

	\begin{proof}
		Analogous to the proof of Proposition \ref{thm_isom_ample_ban}.
	\end{proof}

	\section{Characterization of submultiplicative norms}\label{sect_4}
	The main goal of this section is to characterize the submultiplicative norms as sup-norms, i.e., to show Theorem \ref{thm_char}.
	More specifically, in Section \ref{sect_sm_sing}, we establish a version of Theorem \ref{thm_char_ample} valid for singular varieties.
	Then in Section \ref{sect_reg_metr}, we study the sup-norms associated with metrics whose potentials have minimal singularities.
	Finally, in Section \ref{sec_sm_norm_big_pf}, we establish Theorem \ref{thm_char}.
	
	\subsection{Submultiplicative norms on section rings of singular spaces}\label{sect_sm_sing}
	The main purpose of this section is to investigate submultiplicative norms on section rings of ample line bundles over singular spaces, thereby extending our previous work \cite{FinNarSim} to the singular setting. 
	This will be essential for the proof of Theorem \ref{thm_char}, where singular varieties naturally arise. 
	\par 
	Our argument will largely parallel the one in \cite{FinNarSim}, with one notable difference: in \cite[\S 3.1]{FinNarSim} we made use of the results from \cite{FinSecRing}, which concern $L^2$-norms rather than sup-norms. 
	In the singular case, however, $L^2$-norms are significantly harder to control due to the absence of the appropriate version of the Ohsawa-Takegoshi extension theorem, and it is not clear whether the results of \cite{FinSecRing} extend fully to singular varieties. 
	The novelty of our approach lies in avoiding the use of \cite{FinSecRing} altogether, and working instead directly with sup-norms.
	This is feasible because, for sup-norms, Bost \cite{BostDwork} and Randriambololona \cite{RandriamCrelle} established a version of the Ohsawa-Takegoshi extension theorem that remains valid in the singular setting; see Theorem~\ref{thm_bost_ext}.
	\par 
	We fix a reduced compact complex space $(Y, \mathscr{O}_Y)$ endowed with an ample line bundle $A$, see \cite[\S II.5]{DemCompl} for the basic introduction to analysis on complex spaces.
	Below, whenever we refer to smooth functions on $Y$ or smooth metrics on $A$, this is always understood as the restriction of a smooth function or metric defined in a local analytic chart.
	The same applies for the positivity notion of a metric on a line bundle, and it can be seen that the resulting definition doesn't depend on the choice of the analytic chart, see \cite[Lemma 4]{NarasimhLevi}.
	\par 
	By Fekete's lemma, cf. Lemma \ref{lem_fs_sm}, for any submultiplicative norm $N$ on $R(Y, A)$, the sequence of metrics $FS(N_k)^{\frac{1}{k}}$ on $A$ converges, as $k \to \infty$, to a (possibly only bounded from above and even null) metric on $A$, which we denote by $FS(N)$. 
	Throughout this section we assume that $FS(N)$ is continuous.
	In particular, it is a nowhere zero metric.
	Immediately from (\ref{eq_fek_sup}) and (\ref{eq_nk_fs_lw_bnd}), we see that this implies that $\ban^{\infty}(FS(N)) \leq N$, i.e., one side of the inequality required in the definition of a bounded norm is satisfied.
	Another side of the inequality follows from the finite generation of the section ring; moreover, the following stronger statement holds.
	\begin{thm}\label{thm_char2}
		Assume that a graded norm $N = (N_k)_{k = 0}^{\infty}$ over the section ring $R(Y, A)$ of an ample line bundle $A$ is submultiplicative and the metric $FS(N)$ on $A$ is continuous.
		Then for any $\epsilon > 0$, there is $k_0 \in \nat^*$ such that for any $k \geq k_0$, we have
		\begin{equation}\label{eq_char1}
			\ban^{\infty}_k(FS(N)) \leq N_k \leq \exp(\epsilon k) \cdot  \ban^{\infty}_k(FS(N)).
		\end{equation}
	\end{thm}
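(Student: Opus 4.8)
The lower bound $\ban^{\infty}_k(FS(N)) \leq N_k$ has already been observed above (it follows from $(\ref{eq_nk_fs_lw_bnd})$ together with $FS(N) \leq FS(N_k)^{1/k}$), so the substance of the statement is the reverse estimate $N_k \leq \exp(\epsilon k)\cdot \ban^{\infty}_k(FS(N))$, which is the singular analogue of the main estimate of \cite{FinNarSim}. The plan is to follow the strategy of \cite{FinNarSim}; the one structural change is that the $L^2$-input from \cite{FinSecRing}, which there controls norms via an Ohsawa--Takegoshi-type extension, must be avoided — it is unclear whether it survives on a singular $Y$ — and replaced throughout by the extension theorem for \emph{sup}-norms of Bost and Randriambololona, recalled in Theorem \ref{thm_bost_ext}, which holds with no smoothness hypothesis.

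Fix $\epsilon > 0$. Since $A$ is ample, $Y$ is projective, and for $m$ large enough $A^{\otimes m}$ is very ample and globally generated — so that $FS(N_j)$ is a continuous metric for all $j \geq m$ — and the $m$-th Veronese embedding $Y \hookrightarrow \mathbb{P} := \mathbb{P}(H^0(Y, A^{\otimes m})^*)$ is projectively normal, i.e. $\sym^k H^0(Y, A^{\otimes m}) \to H^0(Y, A^{\otimes mk})$ is surjective for all $k \geq 1$. Moreover, by Lemma \ref{lem_fs_sm} the potentials of $FS(N_j)^{1/j}$, $j \geq m$, form a supermultiplicative family converging pointwise to the potential of $FS(N)$, which is continuous by hypothesis; hence the supermultiplicative version of Dini's lemma (cf. \cite[\S A.2]{FinSecRing}) yields uniform convergence, and after enlarging $m$ we may further assume $FS(N) \leq FS(N_m)^{1/m} \leq \exp(\epsilon/4)\cdot FS(N)$. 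Fix such an $m$ once and for all; from now on $k \to \infty$.

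The heart of the argument is the estimate of $N_{mk}$. Equip $\sym^k H^0(Y, A^{\otimes m})$ with the symmetric projective tensor norm $N_m^{\odot k}$ induced by $N_m$, and let $Q_{mk}$ be the resulting quotient norm on $H^0(Y, A^{\otimes mk})$; iterated submultiplicativity of $N$ gives $N_{mk} \leq Q_{mk}$ immediately. Identifying $\sym^k H^0(Y, A^{\otimes m}) = H^0(\mathbb{P}, \mathscr{O}(k))$, a purely finite-dimensional comparison on $H^0(\mathbb{P}, \mathscr{O}(k))$ bounds $N_m^{\odot k}$ above, up to a factor polynomial in $k$ (for fixed $m$), by the sup-norm of the metric $h^{FS}(N_m)^{\otimes k}$ on $\mathscr{O}(k)$, a metric restricting to $FS(N_m)^{\otimes k}$ on $A^{\otimes mk}$. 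Taking quotients, $Q_{mk}(s)$ is bounded, up to the same sub-exponential factor, by $\inf\{\sup_{\mathbb{P}}|\tilde{s}|_{h^{FS}(N_m)^{\otimes k}} : \tilde{s} \in H^0(\mathbb{P},\mathscr{O}(k)),\ \tilde{s}|_Y = s\}$. The decisive point is to choose $\tilde{s}$ with $\sup_{\mathbb{P}}|\tilde{s}| \leq C_m\cdot \sup_{Y}|s|$ for a constant $C_m$ independent of $k$ (or at worst with a loss sub-exponential in $k$): this is exactly Theorem \ref{thm_bost_ext}, and it is precisely here that the singularities of $Y$ cause no trouble, which is the whole reason for using the sup-norm extension theorem rather than the $L^2$ one. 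Combining with the metric comparison $FS(N_m)^{1/m} \leq \exp(\epsilon/4)\cdot FS(N)$ fixed above, we obtain, for $k$ large, $N_{mk}(s) \leq \exp(\tfrac{\epsilon}{2}mk)\cdot \ban^{\infty}_{mk}(FS(N_m)^{1/m})(s) \leq \exp(\tfrac{3\epsilon}{4}mk)\cdot \ban^{\infty}_{mk}(FS(N))(s)$, which is the desired bound in degrees divisible by $m$.

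Finally, the passage from degrees divisible by $m$ to all sufficiently large degrees is carried out exactly as in the proof of Theorem \ref{thm_isom}: using the module-finiteness of $R(Y, A)$ over its $m$-th Veronese subring one writes a section in any degree as a sum of products with sections of bounded degree, controls this decomposition by one more application of Theorem \ref{thm_bost_ext}, and absorbs the bounded-degree factors through the Bernstein--Markov property (Proposition \ref{prop_bm_volume}); this costs only a further $\exp(O(\epsilon)k)$, and after a final adjustment of the auxiliary parameters one gets $(\ref{eq_char1})$. I expect the main obstacle to be the middle step — producing the extension to $\mathbb{P}$ with sup-norm loss independent of (or sub-exponential in) $k$ — for which Theorem \ref{thm_bost_ext} is tailor-made; the remaining ingredients are the standard reduction to a projectively normal Veronese re-embedding and the bookkeeping of sub-exponential error terms, both of which are already present in \cite{FinNarSim} in the smooth case.
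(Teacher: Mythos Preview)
Your argument for degrees divisible by $m$ is essentially the paper's Proposition \ref{prop_induc}: bound $N_{mk}$ by the quotient $[\sym^k_\pi(N_m)]$, compare the latter with the sup-norm on $\mathbb{P}$ via Theorem \ref{thm_sym_equiv} (the loss there is sub-exponential rather than polynomial, but that is harmless), and then invoke Theorem \ref{thm_bost_ext} for $Y \hookrightarrow \mathbb{P}$ to pass from the quotient sup-norm to the sup-norm on $Y$.

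The gap is in your final paragraph. The residue-class argument you cite from the proof of Theorem \ref{thm_isom} proceeds via the injection $j_k : s \mapsto s \cdot s_r$ together with the fact that $\Im j_k$ has asymptotically full dimension; combined with Lemma \ref{lem_comp_rest} this suffices to compare \emph{distances} $d_p$ between two norms, but it says nothing about sections lying outside $\Im j_k$ and therefore cannot yield the pointwise inequality $N_n \leq \exp(\epsilon n)\,\ban^{\infty}_n(FS(N))$ required on all of $H^0(Y, A^{\otimes n})$. Your alternative recipe, module-finiteness plus ``one more application of Theorem \ref{thm_bost_ext}'', does not go through as stated either: to apply the extension theorem the twist $A^{\otimes r}$ must be the restriction of a vector bundle on the ambient space, and $A^{\otimes r}$ has no reason to extend from $Y$ to $\mathbb{P}(H^0(Y,A^{\otimes m})^*)$. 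The appeal to Proposition \ref{prop_bm_volume} plays no role in this step.

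The paper supplies the missing idea by reinterpreting multiplication as restriction to the diagonal $\Delta \hookrightarrow Y \times Y$. It first obtains the bound in degrees $ql$ for \emph{two coprime} values $q = k, k+1$, then takes $A^k \boxtimes A^{k+1}$ as the ample bundle on $Y \times Y$ and $E_{i,j} = A^{ki} \boxtimes A^{(k+1)j}$ (finitely many $(i,j)$) as the auxiliary bundle, and applies Theorem \ref{thm_bost_ext} to $\Delta \subset Y \times Y$. Via the commutative square (\ref{eq_comm_diag}) this turns the quotient norm under multiplication into a quotient sup-norm under restriction, yielding the estimate in every degree $k(l+i) + (k+1)(l+j)$; since $\gcd(k, k+1) = 1$, these exhaust all sufficiently large integers.
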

	In the smooth case, Theorem \ref{thm_char2} recovers one of the main results of \cite{FinNarSim}, see Theorem \ref{thm_char_ample}.
	Similarly to \cite{FinNarSim}, projective and injective tensor norms play a central role in our proof of Theorem \ref{thm_char2}.
	We therefore begin by recalling the definitions.
	\par 
	Let $V_1, V_2$ be two finite dimensional vector spaces endowed with norms $N_i = \norm{\cdot}_i$, $i = 1, 2$.
	The \textit{projective tensor norm} $N_1 \otimes_{\pi} N_2 = \norm{\cdot}_{\otimes_{\pi}}$ on $V_1 \otimes V_2$ is defined for $f \in V_1 \otimes V_2$ as 
	\begin{equation}\label{eq_defn_proj_norm}
		\norm{f}_{ \otimes_{\pi} }
		=
		\inf
		\Big\{
			\sum \| x_i \|_1 \cdot  \| y_i \|_2
			:
			\quad
			f = \sum x_i \otimes y_i
		\Big\},
	\end{equation}
	where the infimum is taken over different ways of partitioning $f$ into a sum of decomposable terms.
	The \textit{injective tensor norm} $N_1 \otimes_{\epsilon} N_2 = \norm{\cdot}_{ \otimes_{\epsilon} }$ on $V_1 \otimes V_2$ is defined as 
	\begin{equation}\label{eq_defn_inf_norm}
		\norm{f}_{ \otimes_{\epsilon} }
		=
		\sup
		\Big\{
			\big|
				(\phi \otimes \psi)(f)
			\big|
			:
			\quad
			\phi \in V_1^*, \psi \in V_2^*, \| \phi \|_{1}^* = \| \psi \|_{2}^* = 1
		\Big\}
	\end{equation}
	where $\| \cdot \|_{i}^*$, $i = 1, 2$, are the dual norms associated with $\| \cdot \|_{i}$.
	Lemma below compares injective and projective tensor norms, see \cite[Proposition 6.1]{RyanTensProd}, \cite[Theorem 21]{SzarekComp} for a proof.
	\begin{lem}\label{lem_inj_proj_bnd_nntr}
		The following inequality between the norms on $V_1 \otimes V_2$ holds
		\begin{equation}\label{eq_bnd_proj_inf}
			 N_1 \otimes_{\epsilon} N_2
			 \leq
			 N_1 \otimes_{\pi} N_2
			 \leq
			 N_1 \otimes_{\epsilon} N_2
			 \cdot
			 \min \{ \dim V_1, \dim V_2 \}.
		\end{equation}
	\end{lem}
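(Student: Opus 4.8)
The plan is to prove the two inequalities of (\ref{eq_bnd_proj_inf}) separately; both are classical, cf. \cite[Proposition 6.1]{RyanTensProd} and \cite[Theorem 21]{SzarekComp}. For the left inequality $N_1 \otimes_{\epsilon} N_2 \leq N_1 \otimes_{\pi} N_2$, I would argue directly from the definitions (\ref{eq_defn_proj_norm}) and (\ref{eq_defn_inf_norm}). Fix $f \in V_1 \otimes V_2$, an arbitrary partition $f = \sum_i x_i \otimes y_i$ into decomposable tensors, and $\phi \in V_1^*$, $\psi \in V_2^*$ with $\| \phi \|_1^* = \| \psi \|_2^* = 1$. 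Then $\bigl| (\phi \otimes \psi)(f) \bigr| = \bigl| \sum_i \phi(x_i) \psi(y_i) \bigr| \leq \sum_i \| x_i \|_1 \| y_i \|_2$; taking the infimum over all such partitions and then the supremum over $\phi, \psi$ gives $\| f \|_{\otimes_{\epsilon}} \leq \| f \|_{\otimes_{\pi}}$.

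For the right inequality, since the two factors play symmetric roles, it suffices to show $N_1 \otimes_{\pi} N_2 \leq v_1 \cdot N_1 \otimes_{\epsilon} N_2$ with $v_1 := \dim V_1$. The key input is Auerbach's lemma (obtained by maximizing $\bigl| \det(w_1, \dots, w_{v_1}) \bigr|$ over tuples of vectors of $N_1$-norm $1$, in the spirit of the John ellipsoid argument in \cite[p. 27]{PisierBook}): there is a basis $e_1, \dots, e_{v_1}$ of $V_1$ with $\| e_j \|_1 = 1$ whose dual basis $e_1^*, \dots, e_{v_1}^* \in V_1^*$ satisfies $\| e_j^* \|_1^* = 1$. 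Given $f \in V_1 \otimes V_2$, expand $f = \sum_{j = 1}^{v_1} e_j \otimes y_j$, where $y_j \in V_2$ is determined by $\psi(y_j) = (e_j^* \otimes \psi)(f)$ for all $\psi \in V_2^*$. By (\ref{eq_defn_inf_norm}), $\| y_j \|_2 = \sup_{\| \psi \|_2^* \leq 1} \bigl| (e_j^* \otimes \psi)(f) \bigr| \leq \| e_j^* \|_1^* \cdot \| f \|_{\otimes_{\epsilon}} = \| f \|_{\otimes_{\epsilon}}$, and hence, plugging this decomposition into (\ref{eq_defn_proj_norm}),
\[
	\| f \|_{\otimes_{\pi}} \leq \sum_{j = 1}^{v_1} \| e_j \|_1 \, \| y_j \|_2 \leq v_1 \cdot \| f \|_{\otimes_{\epsilon}}.
\]
Running the same argument with the roles of $V_1$ and $V_2$ exchanged gives the bound with $v_2 := \dim V_2$ in place of $v_1$, hence with $\min\{v_1, v_2\}$, as claimed.

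I do not expect any genuine obstacle here: the statement is a textbook fact, and the only ingredient beyond elementary bookkeeping is Auerbach's lemma, which could equally well simply be quoted. One could try to replace the Auerbach basis by an orthonormal basis for the John norm $N_1^H$ of $(V_1, N_1)$ provided by (\ref{eq_john_ellips}); this still works but only yields the weaker constant $v_1^{3/2}$ instead of $v_1$, and so is insufficient for the sharp bound in (\ref{eq_bnd_proj_inf}). For this reason I would either invoke Auerbach's lemma directly or simply cite \cite{RyanTensProd} and \cite{SzarekComp}.
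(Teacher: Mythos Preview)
Your proof is correct; the paper itself does not prove this lemma but simply cites \cite[Proposition 6.1]{RyanTensProd} and \cite[Theorem 21]{SzarekComp}, exactly as you do at the end. Your Auerbach-basis argument is the standard proof behind those references, so there is nothing to compare.
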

	\par 
	One of the key technical insights in \cite{FinNarSim} is that, although the injective and projective tensor norms on the full tensor algebra of a fixed finitely dimensional vector space may differ substantially, their restrictions to the symmetric subalgebra are in fact equivalent, up to a subexponential factor. 
	We now recall this statement.
	\par 
	We first recall some constructions.
 	A norm $N_V = \| \cdot \|_V$ on a finite-dimensional vector space $V$ naturally induces the norm $\| \cdot \|_Q$ on any quotient $Q$, $\pi : V \to Q$ of $V$ through
	\begin{equation}\label{eq_defn_quot_norm}
		\| f \|_Q
		:=
		\inf \big \{
		 \| g \|_V
		 ;
		 \quad
		 g \in V, 
		 \pi(g) = f
		\},
		\qquad f \in Q.
	\end{equation}
	At times we denote the quotient norm by $[ N_V ]$, leaving the underlying quotient space implicit.
	\par 
	Recall that for any $k \in \nat^*$, we have the \textit{symmetrization} map $\sym : V^{\otimes k} \to \sym^k(V)$.
	Consider two norms $\sym^k_{\epsilon}(N_V) := \| \cdot \|^{\sym, \epsilon}_{N_V, k}$ and $\sym^k_{\pi}(N_V) := \| \cdot \|^{\sym, \pi}_{N_V, k}$ on symmetric tensors $\sym^k(V)$, induced by the symmetrization map by the quotient construction, and the norms $N_V \otimes_{\epsilon} \cdots \otimes_{\epsilon} N_V$, $N_V \otimes_{\pi} \cdots \otimes_{\pi} N_V$ on $V^{\otimes k}$.
	Define the norm $\sym^k_{{\rm{ev}}}(N_V) := \| \cdot \|^{{\rm{ev}}}_{N_V, k}$ on $\sym^k(V)$ as
	\begin{equation}\label{eq_sup_norm_polll}
		\| P \|^{{\rm{ev}}}_{N_V, k}
		:=
		\sup_{\substack{v \in V^* \\ \| v \|_V^* \leq 1}} |P(v)|, 
		\qquad
		P \in \sym^k(V).
	\end{equation}
	\begin{thm}[{ \cite[Theorem 3.13]{FinNarSim} }]\label{thm_sym_equiv}
		For any $\epsilon > 0$, there is $k_0 \in \nat^*$ such that for any $k \geq k_0$, the following chain of inequalities is satisfied
		\begin{equation}
			\exp(- \epsilon k) \cdot \sym_{\pi}^k(N_V) \leq \sym_{{\rm{ev}}}^k(N_V) \leq \sym_{\epsilon}^k(N_V) \leq \sym_{\pi}^k(N_V).
		\end{equation}
	\end{thm}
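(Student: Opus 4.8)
The plan is to establish the two rightmost inequalities directly and to reduce the remaining one to a Waring‑type decomposition statement. For $\sym_{\epsilon}^k(N_V)\le\sym_{\pi}^k(N_V)$ I would combine the inequality $N_V^{\otimes_{\epsilon}k}\le N_V^{\otimes_{\pi}k}$ on $V^{\otimes k}$, which follows by iterating Lemma \ref{lem_inj_proj_bnd_nntr} together with the monotonicity of the injective norm in its inputs, with the obvious fact that the quotient‑norm construction under the symmetrization $\sym:V^{\otimes k}\to\sym^k(V)$ preserves the order of norms. For $\sym_{{\rm{ev}}}^k(N_V)\le\sym_{\epsilon}^k(N_V)$: given $P\in\sym^k(V)$, any $v\in V^*$ with $\|v\|_V^*\le 1$, and any $T\in V^{\otimes k}$ with $\sym(T)=P$, the symmetric functional $v^{\otimes k}$ satisfies $P(v)=(v^{\otimes k})(T)$ and occurs among the test functionals in the definition (\ref{eq_defn_inf_norm}) of the injective tensor norm, whence $|P(v)|\le\|T\|_{N_V^{\otimes_{\epsilon}k}}$; taking the infimum over $T$ and then the supremum over $v$ gives the claim.

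The substance of the theorem is the inequality $\sym_{\pi}^k(N_V)\le\exp(\epsilon k)\cdot\sym_{{\rm{ev}}}^k(N_V)$ for $k\ge k_0(\epsilon)$. Write $K:=\{u\in V^*:\|u\|_V^*\le 1\}$, a balanced convex body with non‑empty interior in $V^*$, and regard $P\in\sym^k(V)$ as a degree‑$k$ homogeneous polynomial on $V^*$, so that $\sym_{{\rm{ev}}}^k(N_V)(P)=\|P\|_{L^{\infty}(K)}$. Since (\ref{eq_defn_proj_norm}) yields $\sym_{\pi}^k(N_V)(P)\le\sum_i|\lambda_i|\cdot\|w_i\|_V^k$ for any representation $P=\sum_i\lambda_i\,w_i^{\otimes k}$ with $w_i\in V$, it suffices to exhibit, for each such $P$, a ``Waring decomposition'' of cost $\sum_i|\lambda_i|\,\|w_i\|_V^k\le\exp(\epsilon k)\,\|P\|_{L^{\infty}(K)}$.

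I would build this decomposition in two steps. First, trade the sup‑norm for an $L^2$‑norm: a convex body is non‑pluripolar, regular, and Bernstein--Markov with respect to its normalized Lebesgue measure $\mu$ (classical; see \cite{BerBoucNys} for the relevant notions), so that $\|Q\|_{L^{\infty}(K)}\le\exp(o(k))\,\|Q\|_{L^2(K,\mu)}$ for all $Q\in\sym^k(V)$; together with the trivial reverse estimate, the norm $\sym_{{\rm{ev}}}^k(N_V)$ is $\exp(o(k))$‑equivalent to the Hermitian norm $H_k:=\|\cdot\|_{L^2(K,\mu)}$ on $\sym^k(V)$. Second — the \emph{technical heart} — produce a power‑sum decomposition of $P$ whose cost is controlled by $H_k(P)$ up to a subexponential factor, by expressing $P$ through a reproducing‑kernel (cubature) identity as a controlled superposition of $k$‑th powers $u\mapsto(u(w))^k$, $w\in V$, adapted to $K$. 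The reason every accumulated loss stays subexponential is that $\dim\sym^k(V)=\binom{k+\dim V-1}{\dim V-1}$ is only \emph{polynomial} in $k$ for fixed $\dim V$; this is precisely why one must not first replace $N_V$ by a Euclidean (John) norm — that reduction alone would cost the inadmissible factor $(\dim V)^{k/2}$ per tensor slot — and must instead carry $K$ itself through the whole argument. Assembling the two steps and absorbing the $\exp(o(k))$ factors into $\exp(\epsilon k)$ for $k$ large yields $\sym_{\pi}^k(N_V)\le\exp(\epsilon k)\,\sym_{{\rm{ev}}}^k(N_V)$, which with the easy inequalities proves the theorem.

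I expect the main obstacle to be exactly this subexponential bookkeeping in the second step: the naive polarization over a fixed basis of $V$ only produces a decomposition of cost $C^k\,\|P\|_{L^{\infty}(K)}$ with $C$ depending on the geometry of $N_V$, which is useless here, and upgrading $C^k$ to $\exp(o(k))$ is where the fixed‑dimensionality of $\sym^k(V)$ must be exploited with care. The complete argument is carried out in \cite[Theorem 3.13]{FinNarSim}.
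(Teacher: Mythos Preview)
The paper does not contain a proof of this theorem: it is stated with attribution to \cite[Theorem 3.13]{FinNarSim} and used as a black box in the proof of Proposition~\ref{prop_induc}. Your proposal correctly acknowledges this in its final sentence.

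Your arguments for the two easy inequalities are fine. For $\sym_{\epsilon}^k(N_V)\le\sym_{\pi}^k(N_V)$, the inequality on $V^{\otimes k}$ follows from Lemma~\ref{lem_inj_proj_bnd_nntr} and passes to the quotient. For $\sym_{{\rm{ev}}}^k(N_V)\le\sym_{\epsilon}^k(N_V)$, your observation that $v^{\otimes k}$ is an admissible test functional in~(\ref{eq_defn_inf_norm}) and that $P(v)=(v^{\otimes k})(T)$ for any lift $T$ is correct.

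For the nontrivial inequality $\sym_{\pi}^k(N_V)\le\exp(\epsilon k)\cdot\sym_{{\rm{ev}}}^k(N_V)$, your two-step outline (Bernstein--Markov comparison with an $L^2$-norm on the unit ball of $N_V^*$, followed by a controlled Waring-type decomposition via a reproducing kernel identity) is a reasonable strategy, and you correctly identify the key structural point: the polynomial growth of $\dim\sym^k(V)$ in $k$ is what keeps all constants subexponential, while the naive John-ellipsoid reduction would lose an exponential factor. Since the present paper does not reproduce the argument from \cite{FinNarSim}, there is nothing here to compare your sketch against; whether your outline matches the actual proof in that reference cannot be judged from this paper alone.
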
 
 	\par 
 	We shall also extensively rely on the following result due to Bost \cite[Theorem A.1]{BostDwork} and Randriambololona \cite[Théorème B]{RandriamCrelle}, which can be seen as an asymptotic version of the sup-version of Ohsawa-Takegoshi extension theorem, cf. \cite{OhsTak1}, \cite{DemBookAnMet}.
 	See also \cite[Theorem 1.10]{FinOTAs} for a more precise result in the smooth setting.
 	\par 
 	\begin{thm}\label{thm_bost_ext}
 		Let $(Y,\mathscr{O}_Y)$ be a reduced compact complex space equipped with an ample line bundle $A$ carrying a smooth positive metric $h^A$, and let $E$ be a vector bundle on $Y$ endowed with a smooth metric $h^E$. 
 		For any reduced compact complex analytic subspace $\iota\colon (Z,\mathscr{O}_Z)\hookrightarrow (Y,\mathscr{O}_Y)$, the following statement holds.
 		For any $\epsilon > 0$, there is $k_0 \in \nat^*$ such that for any $k \geq k_0$, $s \in H^0(Z, \iota^*A^{\otimes k} \otimes \iota^* E)$, there is $\tilde{s} \in H^0(Y, A^{\otimes k} \otimes E)$ such that $\iota^* \tilde{s} = s$ and
 		\begin{equation}
 			\sup_{y \in Y} |\tilde{s}(y)|_{(h^A)^{k} \cdot h^E}
 			\leq
 			\sup_{z \in Z} |s(z)|_{(h^A)^{k} \cdot h^E}
 			\cdot
 			\exp(\epsilon k).
 		\end{equation}
 	\end{thm}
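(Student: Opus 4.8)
The statement is due to Bost \cite{BostDwork} and Randriambololona \cite{RandriamCrelle}; here is the line of argument I would follow to establish it. First I would perform several harmless reductions. Since $A$ is ample and $Y$ is compact, $Y$ embeds as a reduced closed analytic subspace of some $\mathbb{P}^M$ with $A^{\otimes N} = \mathscr{O}_Y(1)$ for a suitable $N \in \nat^*$; replacing $A$ by $A^{\otimes N}$ only rescales $k$ by a fixed factor, which is invisible to an estimate with a multiplicative error $\exp(\epsilon k)$. Likewise, the metric $h^E$ enters all the estimates below only through a fixed constant, so $E$ may be carried along passively. The qualitative half of the statement — surjectivity of the restriction map $\rho_k \colon H^0(Y, A^{\otimes k} \otimes E) \to H^0(Z, \iota^* A^{\otimes k} \otimes \iota^* E)$ for $k$ large — is then immediate: the obstruction to lifting a section lies in $H^1(Y, \mathscr{I}_Z \otimes A^{\otimes k} \otimes E)$, which vanishes for $k$ large by the ampleness of $A$ (Serre vanishing). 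The content of the theorem is therefore the quantitative bound on the norm of a linear right inverse of $\rho_k$.

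For the quantitative part I would run the usual $L^2$-to-$L^\infty$ comparison. Fix smooth Hermitian metrics and a smooth positive volume form on the smooth ambient $\mathbb{P}^M$, and a suitable fixed measure $d\mu_Z$ on $Z$ (obtained, when $Z$ is singular, by pushing forward a smooth volume form under a resolution $\nu \colon \tilde Z \to Z$). Given $s \in H^0(Z, \iota^* A^{\otimes k} \otimes \iota^* E)$, one has the trivial bound
\begin{equation*}
	\Big( \int_Z |s|_{(h^A)^k h^E}^2 \, d\mu_Z \Big)^{1/2} \leq \mu_Z(Z)^{1/2} \cdot \sup_{z \in Z} |s(z)|_{(h^A)^k h^E}.
\end{equation*}
Applying the Ohsawa--Takegoshi extension theorem for the twisted bundle $A^{\otimes k} \otimes E$ on $\mathbb{P}^M$ (whose positivity hypotheses hold with room to spare once $k$ is large, so that the extension constant may be taken independent of $k$) produces $\tilde s \in H^0(\mathbb{P}^M, \mathscr{O}(k) \otimes E)$ restricting to $s$ on $Z$, with $L^2$-norm bounded by a constant — depending on $k$ at most polynomially, through the comparison of the various reference metrics and measures — times the left-hand side above. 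Finally, the sub-mean-value inequality for the plurisubharmonic weight $|\tilde s|_{(h^A)^k h^E}^2$ on $\mathbb{P}^M$, i.e. the Bernstein--Markov property (compare Proposition \ref{prop_bm_volume}), gives $\sup_{\mathbb{P}^M} |\tilde s|_{(h^A)^k h^E} \leq C k^{c} \cdot \| \tilde s \|_{L^2(\mathbb{P}^M)}$. Chaining the three inequalities and using $\sup_Y |\tilde s| \leq \sup_{\mathbb{P}^M} |\tilde s|$ yields $\sup_Y |\tilde s| \leq C' k^{c'} \sup_Z |s|$, and since $C' k^{c'} \leq \exp(\epsilon k)$ for all $k \geq k_0(\epsilon)$, the claim follows. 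Note that the $\exp(\epsilon k)$ error is precisely what makes this chain of polynomial-in-$k$ losses acceptable.

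The main difficulty, and the reason I would in the end quote Bost and Randriambololona rather than reproduce the argument, is the singular setting. The $L^2$ extension step above must be carried out \emph{from} the reduced, possibly non-normal analytic subspace $Z$, where the correct $L^2$ structure and the Ohsawa--Takegoshi estimate are delicate; moreover, when $Y$ itself is singular (as is the case in the application to $Y = Y_m$ in Theorem \ref{thm_char2}) one must either work on the smooth ambient $\mathbb{P}^M$ throughout, as above, or else establish the sub-mean-value inequality on $Y$ directly, and one must check that passing through a resolution $\nu \colon \tilde Z \to Z$ loses no control near the singular locus and does not alter the space of sections being extended. These are exactly the points handled in \cite{BostDwork, RandriamCrelle}; granting their result, the reductions and the elementary $L^2$-to-$L^\infty$ comparison above are all that remains.
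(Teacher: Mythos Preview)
The paper does not prove this theorem at all: it is stated as a result of Bost \cite[Theorem A.1]{BostDwork} and Randriambololona \cite[Th\'eor\`eme B]{RandriamCrelle} and simply cited. Your proposal therefore goes beyond what the paper does, and your closing paragraph --- that in the end one quotes the references --- is exactly the paper's approach.

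That said, your sketch of the underlying argument is sound and is indeed the standard route: embed in projective space, use Serre vanishing for surjectivity, then pass sup $\to$ $L^2$ on $Z$, apply Ohsawa--Takegoshi with a $k$-independent constant to extend in $L^2$, and return to sup via a sub-mean-value / Bernstein--Markov inequality, with all polynomial-in-$k$ losses absorbed by the $\exp(\epsilon k)$ slack. You are also right that the genuine subtlety lies in making the $L^2$ extension step work from a possibly singular, non-normal $Z$ and in controlling the $L^2$ structure near the singular locus; this is precisely what Bost and Randriambololona handle, and your instinct to defer to them there is correct.
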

 	The preceding result admits a reformulation in the framework of quotient norms, a perspective that will be important for the arguments below.
	To formulate it, for any $k \in \nat$, consider the restriction map
	\begin{equation}\label{eq_res_y_mor}
	 	\res_Z : H^0(Y, A^{\otimes k} \otimes E) \to H^0(Z, \iota^*A^{\otimes k} \otimes \iota^* E).
	 \end{equation}
	One easily sees that for sufficiently large $k$, the map (\ref{eq_res_y_mor}) is surjective.
	Indeed it follows immediately by Serre's vanishing theorem and the long exact sequence in cohomology 
	\begin{equation}
		\cdots
		\to
		H^0(Y, A^{\otimes k} \otimes E) 
		\to 
		H^0(Z, \iota^*A^{\otimes k} \otimes \iota^* E)
		\to
		H^1(Y, A^{\otimes k} \otimes E \otimes \mathcal{J}_Z) 
		\to 
		\cdots,
	\end{equation}
	associated with the short exact sequence of sheaves 
	\begin{equation}
		0 \to \mathcal{J}_Z \to \mathscr{O}_Y \to \iota_* \mathscr{O}_Z \to 0,
	\end{equation}
	where $\mathcal{J}_Z$ denotes the sheaf of holomorphic functions on $Y$ vanishing along $Z$.
	Hence, a norm on $H^0(Y, A^{\otimes k} \otimes E)$ induces by (\ref{eq_res_y_mor}) a norm on $H^0(Z, \iota^*A^{\otimes k} \otimes \iota^* E)$.
	We denote by $[\ban^{\infty}_1(Y, (h^A)^k \cdot h^E)]$ the quotient norm on $H^0(Z, A^{\otimes k} \otimes E)$ associated with the norm $\ban^{\infty}_1(Y, (h^A)^k \cdot h^E)$ on $H^0(Y, A^{\otimes k} \otimes E)$.
	Theorem \ref{thm_bost_ext} then reformulates as follows: for any $\epsilon > 0$, there is $k_0 \in \nat^*$ such that for any $k \geq k_0$, we have
 	\begin{equation}\label{eq_bost_ref_quot}
 		[\ban^{\infty}_1(Y, (h^A)^k \cdot h^E)]
 		\leq
 		\ban^{\infty}_1(Z, (h^A)^k \cdot h^E)
 		\cdot
 		\exp(\epsilon k).
 	\end{equation}
 	Note that the following inverse inequality is immediate
 	\begin{equation}\label{eq_quot_trivial}
 		\ban^{\infty}_1(Z, (h^A)^k \cdot h^E)
 		\leq
 		[\ban^{\infty}_1(Y, (h^A)^k \cdot h^E)],
 	\end{equation}
 	as it simply says that the sup-norm of the restriction of a section is no bigger than the sup-norm of the section.
 	\par 
 	As a first application of the result above, we extend Theorem \ref{thm_bost_ext} from the case of the symmetric algebra to the more general setting of the section ring associated with an arbitrary ample line bundle.
 	For any $r \in \nat^*$, $k; k_1, \ldots, k_r \in \nat$, $k_1 + \cdots + k_r = k$, we define the multiplication map
	\begin{equation}\label{eq_mult_map}
		{\rm{Mult}}_{k_1, \cdots, k_r} : H^0(Y, A^{k_1}) \otimes \cdots \otimes H^0(Y, A^{k_r}) 
		\to
		H^0(Y, A^{\otimes k}),
	\end{equation}	
	as $f_1 \otimes \cdots \otimes f_r \mapsto f_1 \cdots f_r$.
	As $A$ is assumed to be ample, it is standard that there is $k_0 \in \nat^*$ such that for any $k_1, \cdots, k_r \geq k_0$, the map ${\rm{Mult}}_{k_1, \cdots, k_r}$ is surjective, cf. \cite[Example 1.2.22]{LazarBookI}.
	\par 
	Assume now that $k, l \in \mathbb{N}^*$ are sufficiently large so that the multiplication map ${\rm Mult}_{k,l}$ is surjective. 
	A straightforward verification shows that, using the notations of (\ref{eq_defn_proj_norm}) and (\ref{eq_defn_quot_norm}), the submultiplicativity condition can be reformulated in terms of inequalities between the norms on $H^0(Y, A^{\otimes (k+l)})$ as follows:
	\begin{equation}\label{eq_reform_subm_cond}
		N_{k + l} \leq [N_k \otimes_{\pi} N_l],
	\end{equation}
	where the bracket $[\cdot]$ above is interpreted with respect to ${\rm Mult}_{k,l}$.
	\par 
 	For simplicity, we further assume that $A$ is very ample and all the multiplication maps are surjective.
 	By the finite generation of the ring $R(Y, A)$, the above can always be satisfied upon considering $R(Y, A^N)$ a sufficiently large $N \in \nat^*$.
 	\par 
 	Let $N_1$ be a norm on $H^0(Y, A)$.
 	By the surjectivity of the multiplication maps, we endow $H^0(Y, A^{\otimes k})$ with the norms $N_k^{\pi} = [N_1 \otimes_{\pi} \cdots \otimes_{\pi} N_1]$ and $N_k^{\epsilon} = [N_1 \otimes_{\epsilon} \cdots \otimes_{\epsilon} N_1]$, where the tensor powers are repeated $k$ times.
 	According to (\ref{eq_reform_subm_cond}), the norm $N^{\pi} = (N_k^{\pi})_{k = 0}^{\infty}$ is the biggest submultiplicative norm on $R(Y, A)$, extending $N_1$.
 	We now prove the following result, which may be regarded as an instance of Theorem \ref{thm_char2}, specialized to $N^{\pi}$.
 	\begin{prop}\label{prop_induc}
 		For any $\epsilon > 0$, there is $k_0 \in \nat^*$ such that for any $k \geq k_0$, the following chain of inequalities is satisfied
		\begin{equation}\label{eq_prop_induc}
			\exp(- \epsilon k) \cdot N_k^{\pi} \leq \ban^{\infty}_k(FS(N_1)) \leq N_k^{\epsilon} \leq N_k^{\pi}.
		\end{equation}
 	\end{prop}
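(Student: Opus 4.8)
The plan is to deduce Proposition \ref{prop_induc} from the two structural inputs already at hand: the subexponential equivalence of the symmetric projective, injective and evaluation norms (Theorem \ref{thm_sym_equiv}) and the Bost--Randriambololona extension theorem (Theorem \ref{thm_bost_ext}). Write $V := H^0(Y, A)$, and let $\iota : Y \hookrightarrow \mathbb{P}(V^*)$ be the Kodaira embedding, which is a closed embedding since $A$ is very ample. Under the identification $\sym^k V \cong H^0(\mathbb{P}(V^*), \mathscr{O}(k))$, the multiplication map ${\rm{Mult}}_{1, \cdots, 1} : V^{\otimes k} \to H^0(Y, A^{\otimes k})$ factors as $\res_Y \circ \sym$, where $\sym : V^{\otimes k} \to \sym^k V$ is the symmetrization and $\res_Y : \sym^k V \to H^0(Y, A^{\otimes k})$ is the restriction of sections along $\iota$; by the standing assumption all these maps are surjective. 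Since, for the quotient-norm construction of (\ref{eq_defn_quot_norm}), a quotient of a quotient is the quotient with respect to the composition, this identifies $N_k^{\pi}$ with $[\sym^k_{\pi}(N_1)]$ and $N_k^{\epsilon}$ with $[\sym^k_{\epsilon}(N_1)]$, the brackets now taken with respect to $\res_Y$. Finally, directly from (\ref{eq_sup_norm_polll}) and the construction of $h^{FS}(N_1)$ recalled after (\ref{eq_kod}), one checks that $\sym^k_{{\rm{ev}}}(N_1)$ equals the sup-norm $\ban^{\infty}_k(\mathbb{P}(V^*), h^{FS}(N_1)^*)$ on $H^0(\mathbb{P}(V^*), \mathscr{O}(k))$, where $h^{FS}(N_1)^*$ is the (continuous, positively curved) metric on $\mathscr{O}(1)$ dual to the metric induced by $N_1^*$, and that $\iota^*(h^{FS}(N_1)^*) = FS(N_1)$.

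Granting this, the two rightmost inequalities in (\ref{eq_prop_induc}) are elementary: $N_k^{\epsilon} \leq N_k^{\pi}$ follows from Lemma \ref{lem_inj_proj_bnd_nntr} (so that $N_1^{\otimes_{\epsilon} k} \leq N_1^{\otimes_{\pi} k}$) together with the fact that passing to quotient norms preserves the order of norms; and $\ban^{\infty}_k(FS(N_1)) \leq N_k^{\epsilon}$ follows by combining the trivial bound $\ban^{\infty}_k(FS(N_1)) \leq [\sym^k_{{\rm{ev}}}(N_1)]$ — which merely says that the restriction to $Y$ of a section of $\mathscr{O}(k)$ has sup-norm at most that of the section, optimized over extensions — with the inequality $\sym^k_{{\rm{ev}}}(N_1) \leq \sym^k_{\epsilon}(N_1)$ of Theorem \ref{thm_sym_equiv}, pushed through $\res_Y$. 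The first inequality in (\ref{eq_prop_induc}) is the substantive one. By Theorem \ref{thm_sym_equiv}, for $k$ large $\sym^k_{\pi}(N_1) \leq \exp(\epsilon k)\, \sym^k_{{\rm{ev}}}(N_1)$, hence, pushing through $\res_Y$, $N_k^{\pi} \leq \exp(\epsilon k)\, [\sym^k_{{\rm{ev}}}(N_1)]$; so it remains to prove the near-converse of the trivial bound,
\begin{equation*}
	[\sym^k_{{\rm{ev}}}(N_1)] \leq \exp(\epsilon k) \cdot \ban^{\infty}_k(FS(N_1)),
\end{equation*}
which, by the description above, asserts exactly that every $s \in H^0(Y, A^{\otimes k})$ extends to a section of $\mathscr{O}(k)$ over $\mathbb{P}(V^*)$ whose sup-norm for $(h^{FS}(N_1)^*)^k$ exceeds $\sup_Y |s|_{FS(N_1)^k}$ by a factor at most $\exp(\epsilon k)$.

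This is precisely Theorem \ref{thm_bost_ext} for the embedding $\iota : Y \hookrightarrow \mathbb{P}(V^*)$ and the line bundle $\mathscr{O}(1)$, except that $h^{FS}(N_1)^*$ need not be smooth. I would remedy this by fixing, for a small $\delta > 0$, a smooth positive metric $h_0$ on $\mathscr{O}(1)$ with $e^{-\delta} |\cdot|_{h_0} \leq |\cdot|_{h^{FS}(N_1)^*} \leq e^{\delta} |\cdot|_{h_0}$ on $\mathbb{P}(V^*)$ — such $h_0$ exists because a continuous, positively curved metric on an ample line bundle over a projective manifold is a uniform limit of smooth positive metrics (e.g. by Demailly's regularization theorem combined with Dini's lemma) — then restricting to $Y$, applying Theorem \ref{thm_bost_ext} to $h_0$, and absorbing the three resulting factors $e^{\delta k}$ (comparing $h^{FS}(N_1)^*$ and $h_0$ on $\mathbb{P}(V^*)$, the extension loss, and comparing $\iota^* h_0$ and $FS(N_1)$ on $Y$) by taking all auxiliary small parameters to be a suitable fraction of $\epsilon$. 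The main obstacle is exactly this last step: since the metric enters the sup-norms only up to a factor $\exp(o(k))$, one cannot replace $FS(N_1)$ by a boundedly-equivalent smooth metric (this would cost a fixed exponential rate of order $\log(\dim V)$, which is precisely the gap between the injective and projective tensor norms that Theorem \ref{thm_sym_equiv} was designed to close), so one genuinely needs the uniform — not merely bounded — approximation of $h^{FS}(N_1)^*$ by smooth positive metrics; everything else is bookkeeping with quotient norms and the two cited theorems.
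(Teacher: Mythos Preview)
Your proof is correct and follows essentially the same route as the paper: factor the multiplication map through the Kodaira embedding as $\res_Y \circ \sym$, identify $N_k^{\pi}$, $N_k^{\epsilon}$ with the quotients $[\sym^k_{\pi}(N_1)]$, $[\sym^k_{\epsilon}(N_1)]$ under $\res_Y$, and then combine Theorem~\ref{thm_sym_equiv} with Theorem~\ref{thm_bost_ext}. You are in fact slightly more careful than the paper on one point: Theorem~\ref{thm_bost_ext} is stated for a \emph{smooth} positive metric, whereas $h^{FS}(N_1)^*$ is only continuous when $N_1$ is non-Hermitian; the paper invokes (\ref{eq_bost_ref_quot}) without comment here, while you spell out the (standard and correct) fix via uniform approximation by smooth positive metrics.
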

 	\begin{proof}
 		The proof follows that of \cite[Theorem 3.1]{FinNarSim}, which is done in the non-singular setting with essentially one modification: instead of the semiclassical Ohsawa-Takegoshi extension theorem from \cite{FinOTAs}, we apply Theorem \ref{thm_bost_ext}.
 		For the convenience of the reader, we reproduce the argument below with minor simplifications.
 		First of all, the inequality $N^{\epsilon} \leq N^{\pi}$ follows from the respective inequality on the level of tensor norms from Lemma \ref{lem_inj_proj_bnd_nntr}.
 		The inequality $\ban^{\infty}(FS(N_1)) \leq N^{\epsilon}$ is also rather immediate.
 		Indeed, by (\ref{eq_nk_fs_lw_bnd}), we have $\ban^{\infty}_1(FS(N_1)) \leq N_1$.
 		Hence, by the order-preserving nature of the tensor norms, we deduce 
 		\begin{equation}\label{eq_sym_ev_vs_ban00}
 			[\ban^{\infty}_1(FS(N_1)) \otimes_{\epsilon} \cdots \otimes_{\epsilon} \ban^{\infty}_1(FS(N_1))] \leq N_k^{\epsilon}.
 		\end{equation}
 		Note that in the notations of Theorem \ref{thm_sym_equiv}, over $\sym^k H^0(Y, A)$, we have 
 		\begin{equation}\label{eq_sym_ev_vs_ban0}
 			\ban^{\infty}_1(FS(N_1)) \otimes_{\epsilon} \cdots \otimes_{\epsilon} \ban^{\infty}_1(FS(N_1)) = \sym_{\epsilon}^k(\ban^{\infty}_1(FS(N_1))).
 		\end{equation}
 		Immediately from the definitions, we see that for any $k \in \nat$, we have 
 		\begin{equation}\label{eq_sym_ev_vs_ban}
 			\sym_{{\rm{ev}}}^k(\ban^{\infty}_1(FS(N_1))) = \ban^{\infty}_k(\mathbb{P}(H^0(Y, A)^*), FS(N_1))),
 		\end{equation}
 		where above by an abuse of notation, we denoted the metric on $\mathscr{O}(1)$ induced by $N_1$ as $FS(N_1)$.
 		\par 
 		Let us consider the Kodaira embedding ${\rm{Kod}}_1: Y \hookrightarrow \mathbb{P}(H^0(Y, A)^*)$.
		We denote by $\res_{{\rm{Kod}}} : R(\mathbb{P}(H^0(Y, A)^*), \mathscr{O}(1)) \to R(Y, A)$ the associated restriction operator, and by $\res_{{\rm{Kod}}, k}$, $k \in \nat^*$, the restriction operators on the associated graded pieces.
		The multiplication operator ${\rm{Mult}}_{1, \cdots, 1}$ from (\ref{eq_mult_map}) factorizes through symmetrization and restriction as
		\begin{equation}\label{eq_kod_map_comm_d}
		\begin{tikzcd}
			H^0(Y, A)^{\otimes k} \arrow[swap, rrdd, "{\rm{Mult}}_{1, \cdots, 1}"] \arrow[r, "\sym"] & {\rm{Sym}}^k(H^0(Y, A)) \arrow[rd, equal] &  \\
			&  & H^0(\mathbb{P}(H^0(Y, A)^*), \mathscr{O}(k)) \arrow[d, "\res_{{\rm{Kod}}, k}"]  \\
 	 		&  & H^0(Y, A^{\otimes k}).
    	\end{tikzcd}
		\end{equation}
		In particular, the quotient norm $[\sym_{{\rm{ev}}}^k(\ban^{\infty}_1(FS(N_1)))]$ can be seen as the quotient norm calculated not with respect to the multiplication operator, but with respect to the restriction operator.
		Note, however, that it is standard that the construction of the Fubini-Study metric $FS(N_1)$ can alternatively be described as the pull-back of the metric on $\mathscr{O}(1)$ associated with the Kodaira embedding and the isomorphism between ${\rm{Kod}}_1^* \mathscr{O}(1)$ and $A$, see (\ref{eq_kod}).
		The inequality 
		\begin{equation}\label{eq_sym_ev_vs_ban1}
			[\ban^{\infty}(\mathbb{P}(H^0(Y, A)^*), FS(N_1)))] \geq \ban^{\infty}(FS(N_1))
		\end{equation}
		is then simply a version of (\ref{eq_quot_trivial}).
 		The inequality $\ban^{\infty}(FS(N_1)) \leq N^{\epsilon}$ then follows immediately from the second inequality from Theorem \ref{thm_sym_equiv}, (\ref{eq_sym_ev_vs_ban00}), (\ref{eq_sym_ev_vs_ban0}), (\ref{eq_sym_ev_vs_ban}) and (\ref{eq_sym_ev_vs_ban1}).
 		\par 
 		Let us now establish the only non-trivial part of Proposition \ref{prop_induc} which is the inequality $\exp(- \epsilon k) \cdot N_k^{\pi} \leq \ban^{\infty}_k(FS(N_1))$. 
 		Immediately from Theorem \ref{thm_sym_equiv}, we deduce that in its notations, there is $k_0 \in \nat^*$ such that for any $k \geq k_0$, we have $\exp(- \epsilon k / 2) \cdot \sym_{\pi}^k(N_1) \leq \sym_{{\rm{ev}}}^k(N_1)$.
		Since by definition, we have $[\sym_{\pi}^k(N_1)] = N_k^{\pi}$, we see that it suffices to show that there is $k_0 \in \nat^*$ such that for any $k \geq k_0$, we have
		\begin{equation}
			\exp(- \epsilon k / 2) \cdot [\sym_{{\rm{ev}}}^k(N_1)] \leq \ban^{\infty}_k(FS(N_1)).
		\end{equation}
		But the above inequality is just a restatement of (\ref{eq_bost_ref_quot}) taking into account the identifications explained after (\ref{eq_kod_map_comm_d}).
 	\end{proof}
 	\begin{proof}[Proof of Theorem \ref{thm_char2}.]
		Let us fix $\epsilon > 0$.
		By our assumption on the continuity of $FS(N)$ and a version of Dini's theorem for submultiplicative functions, cf. \cite[\S A.2]{FinSecRing}, there is $k_0 \in \nat$ such that for any $k \geq k_0$, we have
		\begin{equation}\label{eq_fs_unif}
			FS(N_k)^{\frac{1}{k}}
			\leq
			\exp(\epsilon) \cdot FS(N).
		\end{equation}
		By the submultiplicativity of $N$ and (\ref{eq_reform_subm_cond}), in the notations of Theorem \ref{thm_sym_equiv}, for any $q = k, k+1$, $l \in \nat$, we have $N_{q l} \leq [\sym_{\pi}^l(N_q)]$.
		Directly from Proposition \ref{prop_induc}, we then obtain that for any $q = k, k+1$, there is $l_0 \in \nat$ such that for $l \geq l_0$, we have
		\begin{equation}\label{eq_pf_thmch21}
			N_{q l} \leq \ban^{\infty}_{l}(FS(N_q)) \cdot \exp(\epsilon ql).
		\end{equation}
		\par 
		Let us now establish that (\ref{eq_pf_thmch21}) implies Theorem \ref{thm_char2}.
		For this, we closely follow the idea of \cite[\S 3.1]{FinSecRing}, interpreting the multiplication as a restriction map from the product manifold to the diagonal.
		Let us consider the product manifold $Y \times  Y$ and the diagonal submanifold in it, given by $\{(y, y) : y \in Y \} =: \Delta \hookrightarrow Y \times Y$.
		Clearly, we have a natural isomorphism $\Delta \to y$, $(y, y) \mapsto y$ and for any $k, l \in \nat$, we have an isomorphism $A^k \boxtimes A^l|_{\Delta} \to A^{k + l}$.
		\par 
		For any $i, j \in \nat$, we denote $E_{i, j} := A^{k i} \boxtimes A^{(k + 1) j}$.
		Künneth theorem allows us to interpret the tensor product of cohomologies as the cohomology of the product manifold.
		In particular, for any $k, l, i, j \in \nat$, we have the following commutative diagram 
		\begin{equation}\label{eq_comm_diag}
			\begin{CD}
				H^0(Y \times  Y, (A^k \boxtimes A^{k + 1})^{\otimes l} \otimes E_{i, j})  @> {\rm{Res}}_{\Delta} >> H^0(\Delta, (A^k \boxtimes A^{k + 1})^{\otimes l} \otimes E_{i, j})|_{\Delta}) 
				\\
				@VV {} V @VV {} V
				\\
				H^0(Y, A^{k(l+i)}) \otimes H^0(Y, A^{(k+1)(l + j)})  @> {\rm{Mult}} >> H^0(Y, A^{k(l+i) + (k+1)(l + j)}).
			\end{CD}
		\end{equation}
		By applying \eqref{eq_bost_ref_quot} to the line bundle $A^k \boxtimes A^{k+1}$ equipped with the metric $FS(N_k) \boxtimes FS(N_{k+1})$ and to the vector bundle $E_{i,j}$ equipped with the metric $FS(N_k)^i \boxtimes FS(N_{k+1})^j$, and combining this with the commutative diagram \eqref{eq_comm_diag}, we deduce that there is $l_0 \in \nat$ such that for any $i = 0, \ldots, k$, $j = 0, \ldots, k - 1$, for $l \geq l_0$, we have
		\begin{multline}\label{eq_comm111}
			[\ban^{\infty}_1(Y \times Y, FS(N_k)^{l+i} \boxtimes FS(N_{k+1})^{l + j})] 
			\\
			\leq
			\ban^{\infty}_1(FS(N_k)^{l+i} \cdot FS(N_{k + 1})^{l+j})
			\cdot
			\exp(\epsilon (k(l+i) + (k+1)(l + j))).
		\end{multline}
		Note, however, that as explained in \cite[Theorem 3.6]{FinSecRing}, we have
		\begin{equation}\label{eq_comm112}
			\ban^{\infty}_1(Y \times Y, FS(N_k)^{l+i} \boxtimes FS(N_{k + 1})^{l + j})
			=
			\ban^{\infty}_{l+i}(FS(N_k)) \otimes_{\epsilon} \ban^{\infty}_{l + j}(FS(N_{k+1})).
		\end{equation}
		\par 
		By submultiplicativity and (\ref{eq_reform_subm_cond}), in the notations of Theorem \ref{thm_sym_equiv}, for any $q = k, k+1$, $l \in \nat$, we have $N_{k(l+i) + (k+1)(l + j)} \leq [\sym_{\pi}^{l+i}(N_k) \otimes_{\pi} \sym_{\pi}^{l + j}(N_{k + 1})]$.
		Now, we apply (\ref{eq_pf_thmch21}) to obtain
		\begin{multline}\label{eq_comm113}
			N_{k(l+i) + (k+1)(l + j)} 
			\leq [\ban^{\infty}_{l+i}(FS(N_k)) \otimes_{\pi} \ban^{\infty}_{l + j}(FS(N_{k+1}))] \cdot 
			\\
			\cdot \exp(\epsilon (k(l+i) + (k+1)(l + j))).
		\end{multline}
		Then Lemma \ref{lem_inj_proj_bnd_nntr} and (\ref{eq_comm111}), (\ref{eq_comm112}), (\ref{eq_comm113}), imply that 
		\begin{equation}\label{eq_comm114}
			N_{k(l+i) + (k+1)(l + j)} 
			\leq \ban^{\infty}_1(FS(N_k)^{l+i} \cdot FS(N_{k+1})^{l + j}) 
			\cdot \exp(3 \epsilon (k(l+i) + (k+1)(l + j))).
		\end{equation}
		From (\ref{eq_fs_unif}) and (\ref{eq_comm114}), we conclude that the upper bound from (\ref{eq_char1}) holds for $k := k(l+i) + (k+1)(l + j)$ for any $l \geq l_0$ and $i = 0, \ldots, k$, $j = 0, \ldots, k - 1$, with $4 \epsilon$ in place of $\epsilon$.
		Note, however, that since $k$ and $k + 1$ are relatively prime, the set of such numbers fully covers the set of sufficiently large natural numbers.
		This finishes the proof.
	\end{proof}
	As a final observation, we emphasize that the derivation of (\ref{eq_comm114}) does not rely on the continuity of $FS(N)$. 
	In particular, the upper bound in (\ref{eq_comm114}) implies that every submultiplicative norm on $R(Y,A)$ is bounded from above in the sense of (\ref{eq_bound_metr}). 
	This phenomenon is closely tied to the fact that \(R(Y,A)\) is finitely generated.
	In contrast, if one considers a section ring that is not finitely generated, such an upper bound may fail to hold, as illustrated by the following example.
	\par 
	Consider a compact complex manifold $X$ and a line bundle $L$ over it such that the section ring $R(X, L)$ is not finitely generated. 
	We will show that, in this case, the upper bound in (\ref{eq_bound_metr}) need not hold.
	To see this, consider a sequence of elements $s_i \in H^0(X, L^{\otimes k_i})$ for some increasing $k_i \in \nat$, so that each $s_i$ doesn't lie in the algebra induced by $\oplus_{k = 0}^{k_i - 1} H^0(X, L^{\otimes k})$. 
	We fix a metric $h^L$ on $L$ and consider a norm $N_k$ on $H^0(X, L^{\otimes k})$ which coincides $\ban^{\infty}_k(h^L)$ if there is no $i \in \nat$ such that $k = k_i$, and otherwise, if $k = k_i$ for some $i \in \nat$ such that it is no smaller than $\ban^{\infty}_k(h^L)$, coincides with it over the intersection of algebra induced by $\oplus_{k = 0}^{k_i - 1} H^0(X, L^{\otimes k})$ and $H^0(X, L^{\otimes k})$, and such that $\| s_i \|_k := \exp(k_i^2) \cdot \| s_i \|_{\ban^{\infty}_k(h^L)}$.
	Clearly, the resulting norm is not bounded, but an easy verification shows that it is submultiplicative: for any $s \in H^0(X, L^{\otimes k}), s' \in H^0(X, L^{\otimes l})$, we have $\| s \cdot s' \|_{k + l} = \| s \cdot s' \|_{\ban^{\infty}_{k + l}(h^L)} = \| s \|_{\ban^{\infty}_k(h^L)} \cdot \| s' \|_{\ban^{\infty}_l(h^L)} \leq \| s \|_k \cdot \| s' \|_l$.
	
	\subsection{Regularizable from above psh metrics and sup-norms}\label{sect_reg_metr}
	The primary aim of this section is to investigate the sup-norms associated with general metrics having potentials with minimal singularities instead of those given by the envelopes of continuous metrics. 
	The reason for presenting this study here is that the Fubini-Study metric corresponding to an arbitrary bounded norm is not in general given by an envelope of a continuous metric, cf. \cite[Proposition 3.8]{FinNarSim} for an example. 
	Consequently, the methods from Section \ref{sect_sm_sing} cannot be used as-is to prove Theorem \ref{thm_char}. 
	The main goal of the section is to show that the asymptotic study of sup-norms associated with general metrics having potentials with minimal singularities reduces to the study of the sup-norms associated with the so-called \textit{regularizable from above psh metrics}.
	\par 
	For ample line bundles, this phenomenon was already observed by Boucksom-Eriksson \cite{BouckErik21} and later developed by the author \cite{FinNarSim}. 
	Accordingly, the present section can be viewed as a generalization of those results to the big case. 
	\par
	The main technical challenge in extending the aforementioned works to the big setting arises from the need to identify a suitable substitute for the class of continuous metrics with plurisubharmonic potentials that is stable under taking minima. 
	This is a subtle issue, as due to singularities it requires working on the birational models of a given manifold rather than on the manifold itself -- a difficulty that does not occur in the ample case.
	\par 	
	Throughout this section, we fix a complex projective manifold $X$ and a big line bundle $L$ over $X$.
	We say that a function $\psi: X \to [-\infty, +\infty[$ has \textit{weakly neat analytic singularities} if for a certain birational model $\pi: \hat{X} \to X$, locally, $\psi \circ \pi$ can be written as in (\ref{eq_phi_alg_sing}), where $g$ is continuous.
	We say that a function $\psi: X \to [-\infty, +\infty[$ has \textit{weakly neat algebraic singularities} if in addition to the above condition, $c$ is rational.
	\begin{prop}\label{prop_max_alg_sing_weak_neat}
		Assume that $\psi_i : X \to [-\infty, +\infty[$, $i = 0, 1$, have weakly neat algebraic singularities. 
		Then $\max\{\psi_0, \psi_1\}$ also has weakly neat algebraic singularities.
	\end{prop}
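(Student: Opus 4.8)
The plan is to follow the proof of Proposition~\ref{prop_max_alg_sing}, i.e.\ the "add the squares and expand" trick that produced (\ref{eq_max_g3}), and to upgrade the merely bounded remainder there to a \emph{continuous} one; the point is that this forces one to work on a well-chosen birational model, which is exactly the flexibility built into the definition of weakly neat singularities. \textbf{Step 1: reduction to comparable monomials on a common model.} By definition, for $i=0,1$ there is a birational model $\pi_i\colon \hat X_i\to X$ on which $\psi_i\circ\pi_i$ is locally of the form $\frac{c_i}{2}\log(\sum_j |f_{i,j}|^2)+g_i$, with $c_i\in\rat_{>0}$, the $f_{i,j}$ local holomorphic, and $g_i$ continuous. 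First I would pass to a birational model $\pi\colon\hat X\to X$ dominating both $\hat X_0$ and $\hat X_1$ (cf.\ Remark~\ref{rem_dem_appr}); since pullbacks of holomorphic (resp.\ continuous) functions are holomorphic (resp.\ continuous) and $\max\{\psi_0,\psi_1\}\circ\pi=\max\{\psi_0\circ\pi,\psi_1\circ\pi\}$, this lets me assume both $\psi_i$ have the above local form directly on $X$. Next, blowing up the coherent ideals locally generated by the $f_{i,j}$ and resolving, as in the discussion around (\ref{eq_resol_psi_sing}), I arrange that $\sum_j|f_{i,j}|^2$ is locally $|m_i|^2$ times a positive real-analytic function, with $m_i$ a monomial; after clearing the common denominator $q$ of $c_0,c_1$ and absorbing the positive factor, this gives $q\,\psi_i=\log|M_i|+h_i$ locally, with $M_i:=m_i^{qc_i}$ a monomial and $h_i$ continuous. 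Finally, a toric modification subdividing the fan along the hyperplane dual to the difference of the exponent vectors of $M_0$ and $M_1$ makes, locally, one of $M_0$, $M_1$ divide the other, say $M_1=M_0\cdot M$ with $M$ a monomial, while keeping $\log|M_i|$ monomial and the $h_i$ continuous.

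\textbf{Step 2: the continuity computation.} In the local model just produced, $q\,\psi_0=\log|M_0|+h_0$ and $q\,\psi_1=\log|M_0|+\log|M|+h_1$, so
\[
q\max\{\psi_0,\psi_1\}=\log|M_0|+\max\{h_0,\ \log|M|+h_1\},
\]
while $\tfrac12\log(|M_0|^2+|M_1|^2)=\log|M_0|+\tfrac12\log(1+|M|^2)$; subtracting,
\[
q\max\{\psi_0,\psi_1\}-\tfrac12\log\bigl(|M_0|^2+|M_1|^2\bigr)=\max\{h_0,\ \log|M|+h_1\}-\tfrac12\log\bigl(1+|M|^2\bigr).
\]
The right-hand side is continuous: it is the difference of $\tfrac12\log(1+|M|^2)$, which is continuous, and $\max\{h_0,\log|M|+h_1\}$, which is the maximum of a real continuous function and a $[-\infty,+\infty[$-valued continuous one, hence real and continuous (where $M=0$ it equals $h_0$, and it is bounded on compacts because $M$ is a monomial). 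Therefore $\max\{\psi_0,\psi_1\}=\tfrac{1}{2q}\log(|M_0|^2+|M_1|^2)+g$ with $g$ continuous and $c:=1/q\in\rat_{>0}$, which is the required form ($M_0,M_1$ already play the role of the $f_j$; if one insists on a single term, one expands after raising to a power exactly as in (\ref{eq_max_g3})). Since this holds on the final birational model, $\max\{\psi_0,\psi_1\}$ has weakly neat algebraic singularities.

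\textbf{Main obstacle.} The algebraic-singularities statement of Proposition~\ref{prop_max_alg_sing} is immediate from (\ref{eq_max_g3}), but there the remainder is only bounded, and it genuinely cannot be made continuous on $X$ itself: already $\max\{\log|z_1|,\log|z_2|\}-\tfrac12\log(|z_1|^2+|z_2|^2)$ fails to be continuous at the origin. The heart of the argument is thus Step~1 — passing to a model on which the two monomial singularities become \emph{comparable} — since only then does the cancellation of $|M_0|^2$ in Step~2 convert the bounded remainder into a continuous one. The bookkeeping for this (log resolution of the two ideals, then a toric subdivision separating the exponents) is routine but slightly delicate, and is the step I expect to require the most care.
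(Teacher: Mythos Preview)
Your argument is correct and takes a genuinely different route from the paper's. You front-load the birational geometry: principalize the ideals $(f_{i,j})_j$ so that each $\psi_i$ becomes $\log|M_i|+h_i$ with $M_i$ monomial and $h_i$ continuous, then perform a toric subdivision making one monomial divide the other, after which the continuity of the remainder is a two-line check. The paper instead keeps the raw local data $(f_{i,j},g_i)$ and embeds into an auxiliary $\comp^N\times\comp^M\times\comp^R$ via the graph map $z\mapsto((f_{0,i}(z)),(f_{1,j}(z)),z)$, extends the $g_i$ to continuous $G_i$ on the ambient space by Tietze--Urysohn--Brouwer, and verifies that the universal function extends continuously to the single blow-up of $\{0\}\times\{0\}\times\comp^R$; a Veronese-type substitution reduces general exponents $p_0,p_1$ to the case $p_0=p_1=1$, and resolving indeterminacies of the graph map transports the result back to $X$. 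Your approach is more geometric and makes the toric picture explicit, at the cost of the principalization and fan-subdivision bookkeeping you rightly flag as delicate; the paper's approach trades that for one explicit blow-up in a universal model plus a Tietze step, and in particular never needs to reduce $\sum|f_{i,j}|^2$ to a single monomial. Both proofs are equally casual about patching local birational models into a global one, and both end with essentially the same elementary continuity computation --- yours written as $\max\{h_0,\log|M|+h_1\}-\tfrac12\log(1+|M|^2)$, the paper's in homogeneous coordinates on the exceptional $\mathbb{P}^{N+M-1}$.
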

	\begin{rem}\label{rem_prop_max_alg_sing_weak_neat}
		a) Since the maximum of two smooth functions is, in general, not smooth, it follows that the maximum of two potentials with neat algebraic singularities will not, in general, have neat algebraic singularities. 
		More strikingly, while the maximum of two continuous functions of course remains continuous, if, instead of weakly neat analytic singularities, one considers the smaller class of potentials for which the function $g$ in (\ref{eq_phi_alg_sing}) is continuous on $X$ itself (rather than on a birational model), then this class is also not closed under taking maxima.
		Indeed, as we know from analysis on Section \ref{sect_min_sing}, on $\comp^2$, the function $\max\{ \log |z_1|^2, \log |z_2|^2 \}$ writes as $\log (|z_1|^2 + |z_2|^2) + g$ for some bounded function $g$.
		But the reader will easily check that such $g$ is not continuous on $\comp^2$.
		Proposition \ref{prop_max_alg_sing_weak_neat} says, however, that it becomes continuous on some birational model of $\comp^2$, and this can be easily verified directly by considering the blow-up of $\comp^2$ at the origin, see (\ref{eq_cont_ext}).
		\par 
		b) Since any two birational models can be dominated by a third one, it is immediate that a positive rational linear combination of a finite number of potentials with weakly neat algebraic singularities has weakly neat algebraic singularities.
	\end{rem}
	\begin{proof}
		Following the same line of thought as in the proof of Proposition \ref{prop_max_alg_sing}, we see that it is enough to prove that for any $p_0, p_1 \in \nat^*$, neighborhood $U$ of $0 \in \comp^n$, holomorphic $f_{0, i}, f_{1, j} : U \to \comp$, $i = 1, \ldots, N$, $j = 1, \ldots, M$,  and continuous $g_0, g_1 : U \to \real$, the function $U \ni z \mapsto h_0(z)$, defined as
		\begin{multline}\label{eq_max_g3a}
		h_0 := \max \Big\{ p_0 \cdot \log \Big( \sum |f_{0, i}|^2 \Big) + g_0, p_1 \cdot \log \Big( \sum |f_{1, j}|^2 \Big) + g_1 \Big\}
		\\
		-
		\log \Big(
			\big( \sum |f_{0, i}|^2 \big)^{p_0} + \big( \sum |f_{1, j}|^2 \big)^{p_1}
		\Big)
		\end{multline}
		is continuous on the neighborhood of the preimage of $U$ on some birational model of $\comp^n$.
		\par 
		To prove this, we proceed in several steps.
		First, let us check that for any $N, M, R \in \nat$, and any continuous functions $G_0, G_1 : \comp^N \times \comp^M \times \comp^R \to \real$, the function that sends $X := (x_i)_{i = 1}^N$, $Y := (y_j)_{j = 1}^M$, $Z := (z_r)_{r = 1}^R$ $x_i, y_j, z_r \in \comp$, to 
		\begin{multline}\label{eq_max_g3a1}
		h_1(X, Y, Z)
		=
		\max \Big\{ p_0 \cdot \log \Big( \sum |x_i|^2 \Big) + G_0(X, Y, Z), p_1 \cdot \log \Big( \sum |y_j|^2 \Big) + G_1(X, Y, Z) \Big\}
		\\
		-
		\log \Big(
			\big( \sum |x_i|^2 \big)^{p_1} + \big( \sum |y_j|^2 \big)^{p_2}
		\Big)
		\end{multline}
		extends continuously to some birational model over $\comp^N \times \comp^M \times \comp^R$.
		\par 
		When $p_0 = p_1 = 1$, the result is immediate; indeed, the function above extends continuously to the blow-up of  $\comp^N \times \comp^M \times \comp^R$ along $0 \times 0 \times \comp^R$, by taking the value 
		\begin{multline}\label{eq_cont_ext}
		h_1([X:Y], Z) :=
		\max \Big\{ \log \Big( \sum |x_i|^2 \Big) + G_0(0, 0, Z), \log \Big( \sum |y_j|^2 \Big) + G_1(0, 0, Z) \Big\}
		\\
		-
		\log \Big(
			\sum |x_i|^2 + \sum |y_j|^2
		\Big)
		\end{multline}
		at the point of the exceptional divisor with homogeneous coordinates $([X:Y], Z) \in \mathbb{P}^{N + M - 1} \times \comp^R$. 
		We claim that the statement for arbitrary $p_0, p_1 \in \nat^*$ reduces to this special one.
		\par 
		To see this reduction, we introduce additional variables, $\overline{X}_k$ (resp. $\overline{Y}_l$), where $k$ (resp. $l$) runs through the index set $I$ (resp. $J$) enumerating all possible monomials of degree $p_1$ in $N$ (resp. $p_2$ in $M$) variables.
		For arbitrary continuous functions $H_0, H_1 : \comp^{|I|} \times \comp^{|J|} \times \comp^R \to \real$, we then consider the function that sends $\overline{X} := (X_k)_{k = 1}^{|I|}$, $\overline{Y} := (Y_k)_{k = 1}^{|J|}$, $Z := (z_r)_{r = 1}^R$, $X_k, Y_l, z_r \in \comp$, to 
		\begin{multline}\label{eq_max_g3a2}
			h_2(\overline{X}, \overline{Y}, Z):
			=
			\max \Big\{ \log \Big( \sum \binom{p_0}{k} \cdot |X_k|^2 \Big) + H_0(\overline{X}, \overline{Y}, Z),
			\\
			\log \Big( \sum \binom{p_1}{l} \cdot  |Y_l|^2 \Big) + H_1(\overline{X}, \overline{Y}, Z) \Big\}
			\\
			-
			\log \Big(
				\sum \binom{p_0}{k} \cdot |X_k|^2  + \sum \binom{p_1}{l} \cdot |Y_l|^2
			\Big),
		\end{multline}
		where above $\binom{p_0}{k}$ and $\binom{p_1}{l}$ represent the coefficients in the binomial theorem corresponding to the monomials associated with the indices $k$ and $l$.
		For simplicity, we denote $C_{N, M, R} := \comp^N \times \comp^M \times \comp^R$ and $B := \comp^{|I|} \times \comp^{|J|}  \times \comp^R$.
		A trivial modification of the above argument for $p_0, p_1 = 1$ shows that $h_2$ extends continuously to the blow-up at $0 \times 0 \times \comp^R$ of $\comp^{|I|} \times \comp^{|J|} \times \comp^R$, which we denote by $\hat{B}$.
		\par 
		\begin{sloppypar}
		Consider the natural map $b : C_{N, M, R} \to B$, which sends $((x_i)_{i = 1}^N, (y_j)_{j = 1}^M, (z_r)_{r = 1}^R)$ to $((x^k)_{k \in I}, (y^l)_{l \in J}, (z_r)_{r = 1}^R)$, where $x^k$ and $y^l$ represent the monomials associated with $k$ and $l$ respectively.
		As $b$ is a closed embedding, by Tietze-Urysohn-Brouwer extension theorem one can find continuous functions $H_0, H_1$ on $B$ which are related to $G_0, G_1$ as $G_0 = b^* H_0$ and $G_1 = b^* H_1$.
		Then immediate application of the binomial theorem says that $h_1 = b^* h_2$.
		\end{sloppypar}
		We resolve the indeterminacies of the rational map from $C_{N, M, R}$ to $\hat{B}$ as in the diagram
		\begin{equation}
			\begin{tikzcd}
			\hat{C}_{N, M, R} \arrow[r, "\hat{b}"] \arrow[d, "\pi_1"] & \hat{B} \arrow[d, "\pi_2"]  \\
			C_{N, M, R} \arrow[r,"b"] & B.
		\end{tikzcd}
		\end{equation}
		Since $h_1 = b^* h_2$, we have $\pi_1^* h_1 = \hat{b}^* \pi_2^* h_2$. Moreover, as $\pi_2^* h_2$ is continuous, $\pi_1^* h_1$ is continuous as well, finishing the proof of the fact that $h_1$ is continuous on a birational model.
		\par 
		Let us finally treat the general case. 
		We denote $R := n + m$.
		The functions $f_{0, i}$, $i = 1, \ldots, N$ and $f_{1, j}$, $j = 1, \ldots, M$, form a holomorphic map $a: \comp^n \times \comp^m \to C_{N, M, R}$, which sends $(X, Y) \in \comp^n \times \comp^m$ to $((f_{0, i}(X))_{i = 1}^{N}, (f_{1, j}(Y))_{j = 1}^{N}, X, Y)$.
		We resolve the indeterminacies of the induced rational map from $\comp^n \times \comp^n$ to $\hat{C}_{N, M, R}$ as in the following diagram
		\begin{equation}
			\begin{tikzcd}
			D \arrow[r, "\hat{a}"] \arrow[d, "\pi_0"] & \hat{C}_{N, M, R}  \arrow[d, "\pi_1"]  \\
			\comp^n \times \comp^m \arrow[r,"a"] & C_{N, M, R}.
		\end{tikzcd}
		\end{equation}
		Note that the map $a$ is a closed embedding, and so again by Tietze-Urysohn-Brouwer extension theorem, we can find continuous functions $H_0, H_1 : C_{N, M, R} \to \real$ such that $g_0 = a^* H_0$, $g_1 = a^* H_1$.
		Then immediately from the definitions, we have $h_0 = a^* h_1$, which implies $\pi_0^* h_0 = \hat{a}^* \pi_1^* h_1$.
		This finishes the proof, because we already established that $\pi_1^* h_1$ is continuous.
	\end{proof}	
	\par
	We can now proceed to the main definition of this section.
	\begin{defn}\label{defn_regul}
		We say that a metric $h^L$ on $L$ with a potential of minimal singularities is regularizable from above if there is a decreasing sequence of metrics $h^L_k$ on $L$, the potentials of which are psh and have weakly neat algebraic singularities, and which converge, as $k \to \infty$, towards a metric $h^{L, 0}$, so that $h^{L, 0} \geq h^L \geq h^{L, 0}_*$. 
		If $h^L$ moreover has a psh potential, we say that $h^L$ is a regularizable from above psh metric, and the above condition is equivalent to requiring that $h^L_k$ converge, as $k \to \infty$, towards $h^L$ almost everywhere.
	\end{defn}
	\par 
	We note that in the above definition, no upper semicontinuity assumption is imposed on the potential of $h^L$.
	Nevertheless, from \cite[(I.4.23)]{DemCompl}, it follows that whenever $h^L$ is regularizable from above, its lower semicontinuous regularization $h^L_*$ is a regularizable from above psh metric.
	Moreover, by \cite{BedfordTaylor}, $h^L = h^L_*$ outside of a pluripolar subset.
	\par 
	From Proposition \ref{prop_max_alg_sing_weak_neat} and Remark \ref{rem_prop_max_alg_sing_weak_neat}b), it is immediate that a class of regularizable from above metrics is closed under the formation of the minima and products.
	We also point out that while every metric with a psh potential can be approximated from below by metrics with neat algebraic singularities by Theorem \ref{thm_dem_appr}, not every metric with a psh potential is regularizable from above.
	Indeed, the discontinuity set of a regularizable from above metric is necessarily pluripolar by the Bedford-Taylor characterization of negligible sets \cite[\S 5]{BedfordTaylor}; see also \cite{BedfordRegulBelow} for a converse statement.
	\par
	While the above definition is, a priori, more general than the one used in the ample case, see \cite[Definition 1.5]{FinNarSim}, the following proposition shows that they are equivalent.
	\begin{prop}
		A metric $h^L$ with a bounded potential on an ample line bundle $L$ is regularizable from above if and only if there is a decreasing sequence of metrics $h^L_k$, $k \in \nat$, on $L$, which have continuous psh potentials and which converge towards $h^L$ almost everywhere. 
	\end{prop}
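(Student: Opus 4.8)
The plan is to exploit a simple rigidity phenomenon: on an ample line bundle, a \emph{bounded} psh potential that has weakly neat algebraic singularities is automatically continuous. Granting this, both implications become short. Let me isolate the observation. Write $h^L = h^L_0 \cdot \exp(-2\psi)$ with $h^L_0$ a smooth positive reference metric, so $\theta := c_1(L, h^L_0)$ is Kähler, and suppose $\psi$ is bounded, $\theta$-psh, with weakly neat algebraic singularities. Passing to a suitable birational model $\pi : \hat X \to X$ (blow up the relevant ideal sheaf and resolve, exactly as around (\ref{eq_resol_psi_sing}), using that the exponent $c$ in (\ref{eq_phi_alg_sing}) is rational and can be rescaled to $1$) we may write $\psi \circ \pi = \log |\sigma|_{h^D} + g$, where $D = \sum a_i E_i$ is an effective $\rat$-divisor with simple normal crossings support, $\sigma$ its canonical section, $h^D$ a smooth metric, and $g$ \emph{continuous} on $\hat X$. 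Since $\psi$ is bounded and $\pi$ is surjective, $\psi \circ \pi$ is bounded; but $g$ is continuous on the compact space $\hat X$, whereas $\log|\sigma|_{h^D} \to -\infty$ along any $E_i$ with $a_i > 0$. Hence $D = 0$ and $\psi \circ \pi = g$ is continuous. Finally $\psi$ is continuous on $X$ itself: $\pi$ is proper and surjective, $\psi\circ\pi$ is constant on each (compact, connected) fibre $\pi^{-1}(x)$ with value $\psi(x)$, and given $x_n \to x$ one lifts along a subsequence to $y_{n_j} \to y \in \pi^{-1}(x)$, so that $\psi(x_{n_j}) = (\psi\circ\pi)(y_{n_j}) \to (\psi\circ\pi)(y) = \psi(x)$.

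The implication ``$\Leftarrow$'' is then immediate. A continuous psh potential trivially has weakly neat algebraic singularities (take $\pi = \mathrm{id}$, $c = 1$, a single defining function $f_1 \equiv 1$, and $g = \psi$). Therefore a decreasing sequence of metrics with continuous psh potentials converging to $h^L$ almost everywhere is, word for word, a sequence as in Definition \ref{defn_regul}; here one uses that almost-everywhere convergence of such a decreasing sequence to $h^L$ is equivalent to the sandwiching condition $h^{L,0} \ge h^L \ge h^{L,0}_*$ for the pointwise limit $h^{L,0}$, by the Bedford--Taylor characterization of negligible sets, cf. \cite{BedfordTaylor}, \cite{BedfordRegulBelow}, as already invoked after Definition \ref{defn_regul}.

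For ``$\Rightarrow$'', start from a decreasing sequence $h^L_k = h^L_0 \cdot \exp(-2\phi_k)$ as in Definition \ref{defn_regul}: the $\phi_k$ are $\theta$-psh with weakly neat algebraic singularities and increasing, and $\phi^0 := \sup_k \phi_k$ satisfies $\phi^0 \le \psi \le (\phi^0)^*$, where $\psi$ is the bounded potential of $h^L$. Put $m := \inf_X \psi > -\infty$ and replace $\phi_k$ by $\hat\phi_k := \max\{\phi_k, m\}$. By Proposition \ref{prop_max_alg_sing_weak_neat} together with Remark \ref{rem_prop_max_alg_sing_weak_neat} each $\hat\phi_k$ still has weakly neat algebraic singularities; it is $\theta$-psh, still increasing in $k$, and now bounded, since $m \le \hat\phi_k \le \max\{\phi^0, m\} \le \max\{\psi, m\} = \sup\{\psi, m\}$ is bounded above by $\sup_X \psi$. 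By the rigidity observation of the first paragraph, each $\hat\phi_k$ is therefore continuous, so $\hat h_k := h^L_0 \cdot \exp(-2\hat\phi_k)$ is a decreasing sequence of metrics with continuous psh potentials. Its pointwise limit has potential $\hat\phi^0 = \max\{\phi^0, m\}$, and the sandwiching survives: $\hat\phi^0 \le \psi$ because $\phi^0 \le \psi$ and $m \le \psi$, while $\psi \le (\phi^0)^* \le (\hat\phi^0)^*$ since $(\hat\phi^0)^* \ge (\phi^0)^*$. Thus $(\hat h_k)$ witnesses regularizability from above in the old sense, which proves the proposition.

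The only step that is not purely formal is the rigidity observation, and within it the descent of continuity from $\hat X$ to $X$ across the exceptional locus of $\pi$; everything else is bookkeeping with maxima, suprema, and upper semicontinuous regularizations. I expect the single point requiring care to be that the exponent $c$ in (\ref{eq_phi_alg_sing}) is a priori only locally defined, so one must check that on an appropriate model it globalizes to a single effective $\rat$-divisor $D$ with the displayed global form --- which is precisely what (\ref{eq_resol_psi_sing}) provides in the present line-bundle setting, by the Siu decomposition applied to $\theta + \ddc \psi$.
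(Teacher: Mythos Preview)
Your proof is correct. Both you and the paper use the same essential trick --- take the maximum of the given potentials $\phi_k$ with something bounded and $\theta$-psh so that the result becomes continuous --- but you package it differently. The paper maxes with the potential of an auxiliary continuous psh metric $h^L_{-1} \ge h^L$ (available since $L$ is ample) and observes that a potential with weakly neat algebraic singularities is continuous outside its polar set; near the polar set the continuous bounded term dominates, so the max is continuous everywhere. You instead max with the constant $m = \inf_X \psi$, obtain a bounded potential, and then appeal to your rigidity observation (bounded $+$ weakly neat algebraic singularities $\Rightarrow$ continuous, via descent from the birational model). Your rigidity lemma is a pleasant standalone statement; the paper's route is marginally more direct since it only needs continuity \emph{off} the polar set rather than emptiness of the polar set. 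One simplification: your rigidity argument does not actually require the global form (\ref{eq_resol_psi_sing}) or the Siu decomposition you flag in the last paragraph. Working chart by chart in the local definition of weakly neat algebraic singularities, boundedness of $\psi\circ\pi$ forces $\sum|f_i|^2 > 0$ on each chart and hence continuity of $\psi\circ\pi$ on all of $\hat X$; your descent step then finishes without any global divisor bookkeeping.
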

	\begin{proof}
		One direction is immediate, and we only have to establish that if there is a decreasing sequence of metrics $h^L_k$ on $L$, which have weakly neat algebraic singularities, then one can construct $h^L_{k, 0}$ which will be in addition continuous.
		To see this, fix an arbitrary metric $h^L_{-1}$ on $L$ which has a continuous psh potential, and which verifies $h^L_{-1} \geq h^L$.
		We claim that the sequence $h^L_{k, 0} := \min\{ h^L_{-1}, h^L_k \}$ verifies the above claim.
		Indeed, $h^L_{k, 0}$ has a psh potential, as the maximum of two psh functions is psh.
		The sequence $h^L_{k, 0}$, $k \in \nat$, also converges towards $h^L$ almost everywhere, as $h^L_k$ does so. 
		Finally, $h^L_{k, 0}$ is continuous.
		Indeed, the potential of $h^L_{-1}$ is continuous, and by assumption the potential of $h^L_k$ is continuous outside its polar set. The maximum of two such potentials is obviously continuous.
	\end{proof}
	\par 
	In what follows, for any metric $h^L$ with a potential of minimal singularities, we shall associate a certain regularizable from above metric.
	This will be done through an envelope construction, replicating what was done in the ample case in \cite[\S 7.5]{BouckErik21}.
	For this, we define
	\begin{multline}
		Q(h^L)
		:=
		\inf \Big\{
			h^L_0 \text{ with a psh potential}
		\\
			\text{of weakly neat algebraic singularities verifying } h^L \leq h^L_0
		\Big\}.
	\end{multline}
	Note that by Theorem \ref{thm_dem_appr}, there are metrics with neat algebraic singularities on $L$, and since $h^L$ has a potential with minimal singularities, the above infimum is taken over a nonempty set.
	\par 
	\begin{prop}\label{prop_regul_env}
		For any metric $h^L$ with a potential of minimal singularities, the metric $Q(h^L)$ is regularizable from above. 
	\end{prop}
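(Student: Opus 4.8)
The plan is to realize $Q(h^L)$ as a decreasing limit of a \emph{countable} family of metrics with psh potentials having weakly neat algebraic singularities, extracted from the uncountable defining family by a Choquet-type selection; this mimics the ample-case construction in \cite[\S 7.5]{BouckErik21}. Fix a smooth reference metric $h^L_{\mathrm{ref}}$ on $L$, set $\theta := c_1(L, h^L_{\mathrm{ref}})$, and write $h^L = h^L_{\mathrm{ref}} \exp(-2u)$, so that $u$ has minimal singularities, i.e. $|u - V_\theta|$ is bounded, where $V_\theta$ is as in \eqref{eq_v_theta}. Let $\mathcal{C}$ be the set of $\psi \in \psh(X, \theta)$ with weakly neat algebraic singularities satisfying $\psi \le u$. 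Unwinding the definition of $Q(h^L)$ in terms of potentials, the potential $\phi_Q$ of $Q(h^L)$ is the upper envelope $\sup_{\psi \in \mathcal{C}} \psi$; in particular $\phi_Q \le u$, whence $Q(h^L) \ge h^L$.

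First I would record two structural facts about $\mathcal{C}$. It is non-empty: by Theorem \ref{thm_dem_appr} there is a strictly $\theta$-psh function with neat algebraic singularities on a birational model (hence with weakly neat algebraic singularities), which may be normalized to be $\le 0$ and therefore $\le V_\theta$; subtracting a sufficiently large constant $C_0$ with $u \ge V_\theta - C_0$ produces an element of $\mathcal{C}$. Next, $\mathcal{C}$ is stable under finite maxima: the maximum of two $\theta$-psh functions is $\theta$-psh, it remains $\le u$, and by Proposition \ref{prop_max_alg_sing_weak_neat} together with Remark \ref{rem_prop_max_alg_sing_weak_neat}b) (which also absorbs the fact that the two potentials may a priori have weakly neat algebraic singularities on different birational models) it again has weakly neat algebraic singularities. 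Thus $\mathcal{C}$ is an upward-directed family of $\theta$-psh functions, uniformly bounded above by a constant since $u$ has minimal singularities and $V_\theta \le 0$.

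Now apply the Choquet lemma, cf. \cite{DemCompl}: there is a countable subfamily $\{g_j\}_{j \ge 1} \subset \mathcal{C}$ with $(\sup_j g_j)^* = \phi_Q^*$. Put $\psi_k := \max\{g_1, \dots, g_k\} \in \mathcal{C}$; this is an increasing sequence, and $\psi^0 := \sup_k \psi_k = \sup_j g_j$ satisfies $\psi^0 \le \phi_Q$ (every element of $\mathcal{C}$ is $\le \phi_Q$) and $(\psi^0)^* = \phi_Q^*$. Setting $h^L_k := h^L_{\mathrm{ref}} \exp(-2\psi_k)$ gives a decreasing sequence of metrics whose potentials are psh with weakly neat algebraic singularities, converging to $h^{L, 0} := h^L_{\mathrm{ref}} \exp(-2\psi^0)$. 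From $\psi^0 \le \phi_Q \le \phi_Q^* = (\psi^0)^*$ one reads off $h^{L, 0} \ge Q(h^L) \ge h^{L, 0}_*$, which is precisely the condition in Definition \ref{defn_regul}. To legitimize that definition one must also check that $Q(h^L)$ has a potential with minimal singularities: $\phi_Q \le u \le V_\theta + \mathrm{const}$ is immediate, and for the reverse bound I would feed the approximating sequence $\psi_k$ of \eqref{eq_gupt_approx_const} from Section \ref{sect_mab_big} — increasing, bounded above by $V_\theta$, converging to $V_\theta$ outside a pluripolar set, and (after an extra application of Theorem \ref{thm_dem_appr} if needed) with weakly neat algebraic singularities — into $\mathcal{C}$ after shifting down by $C_0$; this yields $\phi_Q \ge V_\theta - C_0$ outside a pluripolar set, hence $\phi_Q^* \ge V_\theta - C_0$ everywhere, and $\phi_Q$ agrees with $\phi_Q^*$ off a pluripolar set by the Bedford–Taylor negligible-sets theorem \cite{BedfordTaylor}.

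The main obstacle is the bookkeeping with birational models: weak neatness of algebraic singularities must survive every operation used — finite maxima (where Proposition \ref{prop_max_alg_sing_weak_neat} is exactly the tool), and, in the verification of minimal singularities, the envelope/regularization steps producing the sequence that exhausts $V_\theta$. This is the precise reason for isolating the class of weakly neat algebraic singularities and for proving Proposition \ref{prop_max_alg_sing_weak_neat}; once this is in place, the Choquet selection and the resulting chain of inequalities are routine.
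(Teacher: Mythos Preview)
Your proof is correct and takes essentially the same approach as the paper: apply Choquet's lemma to the family defining $Q(h^L)$, use Proposition \ref{prop_max_alg_sing_weak_neat} to take finite maxima within the class of weakly neat algebraic singularities, and conclude the sandwich $h^{L,0} \ge Q(h^L) \ge h^{L,0}_*$ from $\psi^0 \le \phi_Q \le \phi_Q^* = (\psi^0)^*$. Your extra verification that $Q(h^L)$ has a potential of minimal singularities (a prerequisite for Definition \ref{defn_regul} that the paper's proof leaves implicit) is a reasonable addition, though the step via the Gupta sequence is a bit informal---those $\psi_k$ have algebraic but not obviously \emph{weakly neat} algebraic singularities, and the lower bound $\phi_Q \ge V_\theta - C_0$ is only secured off a pluripolar set.
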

	\begin{proof}
		Choquet's lemma, cf. \cite[(I.4.23)]{DemCompl}, implies that there is a sequence $h^L_{i, 0}$, $i \in \nat$, of metrics on $L$ with psh potentials of weakly neat algebraic singularities such that $h^L_{i, 0} \geq h^L$ and
		\begin{equation}\label{eq_0prop_regul_env}
			Q(h^L)_*
			:=
			\big\{ \inf_{i \in \nat} h^L_{i, 0} \big\}_*.
		\end{equation}
		By Proposition \ref{prop_max_alg_sing_weak_neat}, the sequence 
		\begin{equation}
			h^L_i := \min \{ h^L_{0, 0}, h^L_{1, 0}, \ldots, h^L_{i, 0} \},
		\end{equation}				
		provides a decreasing sequence of metrics on $L$ with psh potentials of weakly neat algebraic singularities.
		We denote by $h^{L, 0}$ the pointwise limit of $h^L_i$, as $i \to \infty$.
		Note that since $h^L_i \geq h^L$, we have $h^L_i \geq Q(h^L)$, and as a consequence $h^{L, 0} \geq Q(h^L)$.
		However, by (\ref{eq_0prop_regul_env}), we have $h^{L, 0}_* = Q(h^L)_*$, and so we have $h^{L, 0} \geq Q(h^L) \geq h^{L, 0}_*$, which finishes the proof.
	\end{proof}
	
	\begin{prop}
		For any metric $h^L$ with a potential of minimal singularities, we have $Q(Q(h^L)) = Q(P(h^L)) = Q(h^L)$.
	\end{prop}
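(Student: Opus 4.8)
The plan is to show that all three quantities are infima of the \emph{same} family of metrics, after which the two equalities are formal. Let $\mathcal{C}$ denote the set of metrics on $L$ whose potentials are psh with weakly neat algebraic singularities, so that for a metric $m$ with a potential of minimal singularities one has $Q(m) = \inf\{h \in \mathcal{C} : h \geq m\}$ (pointwise infimum). By Theorem~\ref{thm_dem_appr} and the hypothesis that $h^L$ has a potential of minimal singularities, this infimum is over a nonempty set; moreover $Q(h^L) \geq h^L$ (a pointwise infimum of metrics each $\geq h^L$) and $P(h^L) \geq h^L$ by construction. Since $P(h^L)$ has a potential with minimal singularities, and $Q(h^L)$ does too by Proposition~\ref{prop_regul_env}, the expressions $Q(P(h^L))$ and $Q(Q(h^L))$ are likewise infima over nonempty subfamilies of $\mathcal{C}$, so the statement makes sense.

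First I would prove $Q(Q(h^L)) = Q(h^L)$. Because $Q(h^L) \geq h^L$, any $h \in \mathcal{C}$ with $h \geq Q(h^L)$ satisfies $h \geq h^L$; conversely, any $h \in \mathcal{C}$ with $h \geq h^L$ is one of the metrics over which the infimum defining $Q(h^L)$ is taken, hence $h \geq Q(h^L)$. Thus $\{h \in \mathcal{C} : h \geq Q(h^L)\} = \{h \in \mathcal{C} : h \geq h^L\}$, and taking infima gives the claim. For $Q(P(h^L)) = Q(h^L)$ I would argue similarly: from $P(h^L) \geq h^L$ one gets $\{h \in \mathcal{C} : h \geq P(h^L)\} \subseteq \{h \in \mathcal{C} : h \geq h^L\}$; for the reverse inclusion, if $h \in \mathcal{C}$ and $h \geq h^L$, write $h = h^L_{\mathrm{ref}} \cdot \exp(-2\psi)$ and $h^L = h^L_{\mathrm{ref}} \cdot \exp(-2\phi)$ for a smooth reference $h^L_{\mathrm{ref}}$ with $\theta := c_1(L, h^L_{\mathrm{ref}})$, so that $\psi \in \psh(X,\theta)$ and $\psi \leq \phi$. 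By the envelope characterization of $P(h^L)$ recalled in Sections~\ref{sect_rel_pp} and~\ref{sect_fs_oper}, the potential of $P(h^L)$ is at least the supremum of all $\theta$-psh functions dominated by $\phi$, hence at least $\psi$; equivalently $h \geq P(h^L)$. So the two families again coincide and $Q(h^L) = Q(P(h^L))$.

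I do not expect a real obstacle here: $Q$ behaves like a closure operator whose ``closed sets'' are the metrics of $\mathcal{C}$ lying above it, and the content of the statement is just that $P$ and $Q$ do not change which metrics of $\mathcal{C}$ dominate $h^L$. The only points requiring care are bookkeeping ones — verifying that $P(h^L)$ and $Q(h^L)$ indeed have potentials with minimal singularities so that all three expressions are well defined — and correctly invoking the fact that $P(h^L)$ is the least metric with a psh potential lying above $h^L$, which is what makes the reverse inclusion in the $Q(P(h^L)) = Q(h^L)$ step work.
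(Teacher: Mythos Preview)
Your proof is correct and follows essentially the same approach as the paper's: both hinge on the observation that any $h \in \mathcal{C}$ with $h \geq h^L$ already satisfies $h \geq Q(h^L)$ (and, since $h$ has a psh potential, also $h \geq P(h^L)$). The paper packages this as a sandwich $Q(Q(h^L)) \geq Q(P(h^L)) \geq Q(h^L)$ (from $Q(h^L)\geq P(h^L)\geq h^L$ and monotonicity) together with the single inequality $Q(Q(h^L)) \leq Q(h^L)$, whereas you establish the two set equalities separately; the content is the same.
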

	\begin{proof}
		Note that we have the following inequalities 
		\begin{equation}\label{eq_q_env_ord}
			Q(h^L) \geq P(h^L) \geq h^L.
		\end{equation}
		Since forming an envelope is a monotone procedure, it is only left to establish the inequality $Q(Q(h^L)) \leq Q(h^L)$.
		For this, we consider a metric $h^L_0$ with a psh potential of weakly neat algebraic singularities verifying $h^L_0 \geq h^L$.
		By the definition of $Q(h^L)$, we then have $h^L_0 \geq Q(h^L)$.
		Then we also have $h^L_0 \geq Q(Q(h^L))$.
		The inequality $Q(h^L) \geq Q(Q(h^L))$ follows by taking the infimum over all possible $h^L_0$.
	\end{proof}
	
	The following result provides a version of \cite[Corollary 7.27]{BouckErik21} for big line bundles and, in view of (\ref{eq_q_env_ord}), yields a refinement of (\ref{eq_max_prin}).
	\begin{prop}\label{thm_regul_blw}
		For any metric $h^L$ with a potential of minimal singularities and any $p \in [1, +\infty[$, 
		\begin{equation}
			\ban^{\infty}(h^L) = \ban^{\infty}(Q(h^L)).
		\end{equation}
	\end{prop}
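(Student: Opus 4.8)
The plan is to deduce the identity from the sup-norm identity (\ref{eq_max_prin}) for envelopes together with the elementary but crucial observation that the potential of a holomorphic section already has neat algebraic singularities, and is therefore an admissible competitor in the envelope defining $Q(h^L)$.

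First I would fix a smooth reference metric $h^L_0$ on $L$, set $\theta := c_1(L, h^L_0)$, and write $h^L = h^L_0 \cdot \exp(-2 \psi)$ and $Q(h^L) = h^L_0 \cdot \exp(-2 \psi_Q)$. Unwinding the definition of $Q(h^L)$ through the metric--potential dictionary (an infimum of metrics corresponds to a supremum of potentials, and $h^L \leq h^L_1$ corresponds to the reverse inequality between the potentials), one sees that
\begin{equation*}
	\psi_Q = \sup \big\{ \varphi \in \psh(X, \theta) : \varphi \text{ has weakly neat algebraic singularities and } \varphi \leq \psi \big\},
\end{equation*}
and in particular $\psi_Q \leq \psi$. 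By Theorem \ref{thm_dem_appr} and the minimal-singularities hypothesis on $h^L$ this supremum is over a non-empty set, so $\psi_Q$ is well-defined.

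The heart of the argument is the reverse inequality $\ban^{\infty}_k(Q(h^L)) \leq \ban^{\infty}_k(h^L)$, which I would establish sectionwise. Fix $k \in \nat$ and a nonzero $s \in H^0(X, L^{\otimes k})$, and set $\phi_s := \frac{1}{k} \log |s|_{(h^L_0)^k}$. By the Poincaré--Lelong formula (\ref{eq_poinc_lll}), $\phi_s \in \psh(X, \theta)$; since $s$ is holomorphic and $h^L_0$ is smooth, $\phi_s$ has neat algebraic singularities, hence weakly neat algebraic singularities. As $\psi$ has minimal singularities, we have $\phi_s \preceq \psi$, so $M_s := \sup_X (\phi_s - \psi)$ is finite, and the shifted potential $\phi_s - M_s$ is $\theta$-psh, has weakly neat algebraic singularities, and satisfies $\phi_s - M_s \leq \psi$. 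It is therefore one of the competitors in the supremum above, so $\phi_s - M_s \leq \psi_Q$, which gives $\sup_X(\phi_s - \psi_Q) \leq M_s$; this in particular shows the right-hand side of (\ref{eq_sup_potent}) is finite for $Q(h^L)$, so $\ban^{\infty}_k(Q(h^L))$ is indeed well-defined. Combining with $\psi_Q \leq \psi$, which yields $\sup_X(\phi_s - \psi_Q) \geq M_s$, we obtain $\sup_X(\phi_s - \psi_Q) = M_s$, hence $\|s\|_{\ban^{\infty}_k(Q(h^L))} = \exp(k M_s) = \|s\|_{\ban^{\infty}_k(h^L)}$. Since $k$ and $s$ are arbitrary, $\ban^{\infty}(Q(h^L)) = \ban^{\infty}(h^L)$, and the statement is then trivially independent of $p$. (One may alternatively isolate the inequality $\geq$ as monotonicity of sup-norms applied to $Q(h^L) \geq h^L$, which is part of (\ref{eq_q_env_ord}).)

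I do not expect a serious obstacle: the only points requiring care are the finiteness of $M_s$, which rests on the minimal-singularities hypothesis on $h^L$ and the fact that minimal singularities dominate every $\theta$-psh function, and the verification that $\phi_s$ has the correct (weakly neat algebraic) singularity type so as to be admissible in the definition of $Q(h^L)$. The possible failure of upper semicontinuity of $\psi$ or $\psi_Q$ is harmless, since the whole argument only compares suprema of $\theta$-psh functions over the compact manifold $X$.
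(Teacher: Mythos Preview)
Your proof is correct and follows essentially the same approach as the paper: the paper's proof simply refers back to the argument of Proposition~\ref{prop_pull_back} (the maximum-principle argument behind (\ref{eq_max_prin})) and says to replace $P_{\theta}(\psi)$ by the potential of $Q(h^L)$, which is exactly what you have spelled out in detail. The key observation in both cases is that $\phi_s - M_s$ is an admissible competitor in the envelope defining $\psi_Q$, since $\phi_s$ is $\theta$-psh with neat algebraic singularities.
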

	\begin{proof}
		The result follows immediately from the maximum principle in the spirit of (\ref{eq_max_prin}), and the proof presented in the course of the proof of Proposition \ref{prop_pull_back} repeats verbatim; one only has to change the envelope $P_{\theta}(\psi)$ in the proof by the potential associated with $Q(h^L)$.
	\end{proof}
	
	The following result gives a version of Theorem \ref{thm_isom}.
	Although it is not immediately clear that the theorem below implies Theorem \ref{thm_isom}, it will follow from Proposition \ref{prop_env_coinc}.
	
	\begin{thm}\label{thm_isom_reg}
		For any regularizable from above metrics $h^L_0$, $h^L_1$ on $L$, and any $p \in [1, +\infty[$,
		\begin{equation}
			d_p(h^L_{0 *}, h^L_{1 *})
			=
			\lim_{k \to \infty}
			\frac{d_p(\ban^{\infty}_k(h^L_0), \ban^{\infty}_k(h^L_1))}{k}.
		\end{equation}
	\end{thm}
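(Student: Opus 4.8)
The plan is to deduce this from Theorem \ref{thm_isom} via an approximation argument, reducing the case of regularizable-from-above metrics to the case of (envelopes of) continuous metrics that the earlier theorem already handles. First I would record the basic reductions: by (\ref{eq_max_prin}) and Proposition \ref{thm_regul_blw}, the sup-norms $\ban^{\infty}_k(h^L_i)$ depend only on the singularity data of $h^L_i$ up to envelopes, so replacing $h^L_i$ by $h^L_{i*}$ changes nothing on the right-hand side (the two differ on a pluripolar set, which does not affect sup-norms of holomorphic sections). Likewise, both sides of the claimed identity are unchanged if we replace $h^L_i$ by $P(h^L_i)$; indeed on the left $d_p$ is a function of the associated currents, which are the same for $h^L_{i*}$ and for the psh envelope $P(h^L_i)_*$ after applying the max-principle argument behind (\ref{eq_max_prin}). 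So it suffices to treat regularizable-from-above \emph{psh} metrics $h^L_i$, and to show the limit on the right equals $d_p(h^L_0, h^L_1)$.

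The heart of the argument is a squeeze using the defining decreasing sequences. By Definition \ref{defn_regul}, for each $i = 0, 1$ fix a decreasing sequence $h^{L}_{i, m}$ of metrics with psh potentials of weakly neat algebraic singularities with $h^L_{i, m} \searrow h^L_i$ almost everywhere as $m \to \infty$. The point of weakly neat algebraic singularities is that, by the Construction-2 machinery of Section \ref{sect_subr} applied on a common birational model dominating all the finitely many resolutions involved, each such metric is controlled by the section ring of an \emph{ample} $\mathbb{Q}$-line bundle $A_\epsilon$ on that model whose volume exhausts $\mathrm{vol}(L)$ as $\epsilon \to 0$; this is precisely the setup already exploited in the proof of Theorem \ref{thm_isom}. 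Thus, for the fixed pair $(h^L_{0, m}, h^L_{1, m})$ I would run the same chain of reductions as in the proof of Theorem \ref{thm_isom} — Lemma \ref{lem_choice_psi_cl}, the birational passage (\ref{eq_isom_hat2}), the reduction to a Kähler class, and finally Lemma \ref{lem_comp_rest} together with Proposition \ref{thm_isom_ample_ban} — to conclude that
\begin{equation}
	\lim_{k \to \infty}
	\frac{d_p(\ban^{\infty}_k(h^L_{0, m}), \ban^{\infty}_k(h^L_{1, m}))}{k}
	=
	d_p(P(h^L_{0, m})_*, P(h^L_{1, m})_*).
\end{equation}
Since $h^L_{i, m} \geq h^L_i$, monotonicity of $\ban^{\infty}_k$ and of the Fubini--Study/envelope operations, together with (\ref{eq_log_rel_spec2}) and (\ref{dist_norm_fins_expl_gen}), let me compare the $d_p$-distances at finite level $k$ for $(h^L_{0, m}, h^L_{1, m})$ with those for $(h^L_0, h^L_1)$, up to errors governed by the sup-norm gap $\log(h^L_{i, m}/h^L_i)$; these gaps are bounded uniformly in $k$ (the metrics are comparable) and one feeds them through (\ref{eq_dp_dinf_compp}).

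Finally I would let $m \to \infty$. On the geometric side, $P(h^L_{i, m})_* \searrow P(h^L_i)_* = h^L_i$ (the envelopes decrease since the $h^L_{i, m}$ do, and their limit is squeezed between $h^L_i$ and $P(h^L_i)$), so by continuity of $d_p$ under decreasing sequences to potentials with minimal singularities — which is built into the very definition (\ref{eq_mab_dl_co}), (\ref{eq_ext_dp_gennn}) of the Mabuchi--Darvas distance and re-proved via Lemma \ref{lem_incr} — we get $d_p(P(h^L_{0, m})_*, P(h^L_{1, m})_*) \to d_p(h^L_0, h^L_1)$. On the quantization side, the uniform-in-$k$ error bounds from the previous paragraph force
\begin{equation}
	\limsup_{k \to \infty}\Big| \frac{d_p(\ban^{\infty}_k(h^L_0), \ban^{\infty}_k(h^L_1))}{k} - d_p(h^L_0, h^L_1) \Big|
\end{equation}
to be bounded by a quantity tending to $0$ as $m \to \infty$, and since the left side does not depend on $m$, it vanishes. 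The main obstacle I anticipate is making the $m \to \infty$ interchange rigorous: one must ensure the finite-level error terms are controlled \emph{uniformly in $k$} by the metric gaps $\sup_X \log(h^L_{i, m}/h^L_i)$, which requires care because the auxiliary ample models $A_\epsilon$ and the integer $N$ with $A_\epsilon^{\otimes N}$ genuine depend on $m$ through the singularity type of $h^L_{i, m}$; the clean way around this is to fix $\epsilon$ first (hence fix the target accuracy in $\mathrm{vol}$ and in Lemma \ref{lem_comp_rest}), absorb it into an $o(1)$ as in (\ref{eq_vol_k_ass6}), and only then send $m \to \infty$ with $\epsilon$ fixed, finally letting $\epsilon \to 0$. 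The a.e.\ (rather than uniform) convergence $h^L_{i, m} \to h^L_i$ is harmless because everything downstream — sup-norms, envelopes, non-pluripolar masses, $d_p$ — is insensitive to pluripolar sets, by Remark \ref{rem_conv_out_pp} and (\ref{eq_bir_nnpp}).
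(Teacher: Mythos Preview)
There is a genuine gap in your squeeze argument. The approximants $h^L_{i,m}$ from Definition \ref{defn_regul} have psh potentials with algebraic singularities, which are \emph{more} singular than the minimal-singularity potentials of $h^L_i$: since $h^L_{i,m} \geq h^L_i$, the potential of $h^L_{i,m}$ lies below that of $h^L_i$, and along its singular locus the gap $\log(h^L_{i,m}/h^L_i)$ is $+\infty$. So the metrics are not comparable in sup-norm and your appeal to (\ref{eq_dp_dinf_compp}) fails. Worse, $\ban^{\infty}_k(h^L_{i,m})$ is not a genuine norm on $H^0(X, L^{\otimes k})$ at all: it takes the value $+\infty$ on any section that does not vanish to sufficient order along the singular divisor (the paper notes this explicitly in the proof of Lemma \ref{lem_isom_reg}). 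Hence the displayed limit you write for $d_p(\ban^{\infty}_k(h^L_{0,m}), \ban^{\infty}_k(h^L_{1,m}))/k$ is not well-posed, and Theorem \ref{thm_isom}, which is stated for continuous metrics, cannot be invoked for the pair $(h^L_{0,m}, h^L_{1,m})$.

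The paper separates the two directions. In Lemma \ref{lem_isom_reg} the decreasing sequence $h^L_m$ is used only to manufacture, via its resolution, a big line bundle $F_m$ on a model $\hat{X}_m$ together with an embedding $\iota_k \colon H^0(\hat{X}_m, F_m^{\otimes k}) \hookrightarrow H^0(X, L^{\otimes k})$ whose image has asymptotically full dimension. On this subspace the restricted norms $\iota_k^* \ban^{\infty}_k(h^L_l)$ \emph{are} finite and, after a further birational pass, become sup-norms of continuous metrics on $F_m$, to which Theorem \ref{thm_isom} applies; Lemma \ref{lem_comp_rest} and Lemma \ref{lem_incr} then yield only the one-sided bound $\limsup_k d_p(\ban^{\infty}_k(h^L), \ban^{\infty}_k(h^L_0))/k \leq d_p(h^L_*, P(h^L_0))$ for continuous $h^L_0 \leq h^L$. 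The matching direction comes from approximating each $h^L_{j*}$ from \emph{below} by continuous metrics $h^L_{j,i} \nearrow h^L_{j*}$ (upper semicontinuity of the potential), applying Theorem \ref{thm_isom} to those, and closing via the quasi-triangle inequality (\ref{eq_gen_triangle}) and the definition (\ref{eq_ext_dp_gennn}). The moral is that approximation from above supplies the geometry (the large subspace), while approximation from below supplies comparable norms; your plan conflates the two.
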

	The proof of Theorem \ref{thm_isom_reg} relies on the following result.
	\begin{lem}\label{lem_isom_reg}
		For any regularizable from above metric $h^L$ on $L$, and any continuous metric $h^L_0$ on $L$, verifying $h^L_0 \leq h^L$, we have
		\begin{equation}\label{eqlem_isom_reg}
			\limsup_{k \to \infty}
			\frac{d_p(\ban^{\infty}_k(h^L), \ban^{\infty}_k(h^L_0))}{k}
			\leq 
			d_p(h^L_*, P(h^L_0)).
		\end{equation}
	\end{lem}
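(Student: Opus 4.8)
The plan is to reduce the statement to the case of continuous metrics on a big line bundle, i.e.\ to Theorem~\ref{thm_isom}, by working on birational models adapted to a regularizing sequence of $h^L$. First I would make some reductions. By (\ref{eq_max_prin}) we have $\ban^{\infty}_k(h^L) = \ban^{\infty}_k(h^L_*)$ and $\ban^{\infty}_k(h^L_0) = \ban^{\infty}_k(P(h^L_0))$, so it is enough to bound $\limsup_{k\to\infty} \tfrac1k d_p(\ban^{\infty}_k(P(h^L_0)), \ban^{\infty}_k(h^L_*))$. Fix a smooth reference $\theta \in c_1(L)$ and let $\chi, \chi_0 \in \psh(X,\theta)$ be the potentials of $h^L_*$ and $P(h^L_0)$; these have minimal singularities and $h^L_0 \le h^L$ forces $\chi \le \chi_0$. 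By Definition~\ref{defn_regul} fix a decreasing sequence $h^L_j$ with psh potentials $\psi_j$ of weakly neat algebraic singularities converging to some $h^{L,0}$ with $h^{L,0}_* = h^L_*$; then $\psi_j \nearrow \psi^0$ off a pluripolar set with $(\psi^0)^* = \chi$, and $\psi_j \le \chi_0$. Crucially, since $h^L_j \ge h^L$ and $h^L$ has minimal singularities, \emph{each $\psi_j$ also has minimal singularities}. As metrics $P(h^L_0) \le h^L_* \le h^L_j$, so the three sup-norms on $H^0(X,L^{\otimes k})$ are ordered, and (\ref{eq_log_rel_spec2}) gives $d_p(\ban^{\infty}_k(P(h^L_0)), \ban^{\infty}_k(h^L_*)) \le d_p(\ban^{\infty}_k(P(h^L_0)), \ban^{\infty}_k(h^L_j))$ for every $j$. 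Hence it suffices to prove $\inf_j \limsup_{k\to\infty} \tfrac1k d_p(\ban^{\infty}_k(P(h^L_0)), \ban^{\infty}_k(h^L_j)) \le d_p(h^L_*, P(h^L_0))$.

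Now fix $j$. Resolving the singularities of $\psi_j$ as in (\ref{eq_resol_psi_sing}) yields a projective birational model $\pi_j : \hat X_j \to X$ with $\pi_j^* L = F_j \otimes \mathscr{O}(E_j)$, $E_j$ effective, such that $h^{F_j} := \pi_j^* h^L_j / h^{E_j}_{{\rm{sing}}}$ is a \emph{continuous} metric on the big line bundle $F_j$ (continuity uses that $\psi_j$ has weakly neat algebraic singularities), while $\pi_j^* P(h^L_0)/h^{E_j}_{{\rm{sing}}}$ has, by Proposition~\ref{prop_pull_back}, (\ref{eq_max_prin}) and Lemma~\ref{lem_cont_sing_m} applied to the continuous metric $h^L_0$, the same envelope as a continuous metric $h^{F_j}_{0,1}$ on $F_j$. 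Because $\psi_j$ has minimal singularities, ${\rm vol}_{\hat X_j}(F_j) = \int(\hat\theta_j + \ddc g_j)^n = \int(\theta + \ddc\psi_j)^n = {\rm vol}(L)$ by Theorem~\ref{thm_bouck_vol} and (\ref{eq_bir_nnpp}); consequently the embedding $\iota_{k,j} : H^0(\hat X_j, F_j^{\otimes k}) \hookrightarrow H^0(X,L^{\otimes k})$ (multiplication by $s_{E_j}^k$, followed by the birational identification) has image of codimension $o(k^n)$. Under $\iota_{k,j}$ the norm $\ban^{\infty}_k(h^L_j)$ restricts to $\ban^{\infty}_k(h^{F_j})$ by the isometry (\ref{eq_can_emb}), and $\ban^{\infty}_k(P(h^L_0))$ restricts to $\ban^{\infty}_k(P(h^{F_j}_{0,1}))$ by Proposition~\ref{prop_pull_back}. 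Applying Lemma~\ref{lem_comp_rest}, whose error term tends to $0$ after division by $k^p$ since the codimension ratio tends to $0$, and then Theorem~\ref{thm_isom} on $\hat X_j$, we get
\begin{equation*}
\lim_{k\to\infty} \frac{d_p(\ban^{\infty}_k(P(h^L_0)), \ban^{\infty}_k(h^L_j))}{k} = d_p\big(P(h^{F_j}_{0,1}), P(h^{F_j})\big) =: D_j .
\end{equation*}

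It remains to show $\lim_{j\to\infty} D_j \le d_p(h^L_*, P(h^L_0))$. By (\ref{eq_metr_pot_dist_corresp}), applied with the algebraic-singularity potential $\psi_j$ and the metrics $P(h^L_0), h^L_j$, we have $D_j = d_p(P_\theta[\psi_j](\chi_0)^*, P_\theta[\psi_j](\psi_j)^*)$; since $\psi_j$ and $\chi_0$ have minimal singularities, $P_\theta[\psi_j](\psi_j)^* = \psi_j$ and $P_\theta[\psi_j](\chi_0)^* = \chi_0$, so $D_j$ is the relative Mabuchi distance in $\psh(X,\theta,\psi_j)$ between $\psi_j$ and $\chi_0$. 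Fixing a large $j_0$ and using the composition law (\ref{eq_comp_law_env}) together with the compatibility of Gupta's relative construction across minimal-singularity base potentials, one rewrites $D_j = d_p(P_\theta[\psi_{j_0}](\psi_j)^*, P_\theta[\psi_{j_0}](\chi_0)^*)$ for $j \ge j_0$. Lemma~\ref{lem_incr}, applied with the fixed analytic-singularity potential $\psi_{j_0}$, the increasing sequence $\psi_j$ (converging to $\chi$ off a pluripolar set) and $v := \chi_0$, then gives $\lim_{j\to\infty} D_j = d_p(P_\theta[\psi_{j_0}](\chi)^*, P_\theta[\psi_{j_0}](\chi_0)^*)$, which by the contraction property (\ref{eq_contraction_prop}) is $\le d_p(\chi,\chi_0) = d_p(h^L_*, P(h^L_0))$. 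Combining with the previous step, $\limsup_{k\to\infty} \tfrac1k d_p(\ban^{\infty}_k(P(h^L_0)), \ban^{\infty}_k(h^L_*)) \le \inf_j D_j \le \lim_j D_j \le d_p(h^L_*, P(h^L_0))$, as desired. I expect the third step to be the main obstacle: it requires navigating Gupta's relative pluripotential theory carefully — in particular, justifying that the relative Mabuchi distance between two minimal-singularity potentials is independent of the chosen minimal-singularity base, so that $D_j$ may be recomputed with the fixed base $\psi_{j_0}$, and then passing to the limit $j \to \infty$ via Lemma~\ref{lem_incr}.
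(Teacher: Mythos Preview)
Your overall strategy matches the paper's: pass to a birational model where the approximating metric becomes continuous, apply Theorem~\ref{thm_isom} there, restrict via Lemma~\ref{lem_comp_rest}, and then pass to the limit using Lemma~\ref{lem_incr} and the contraction property~(\ref{eq_contraction_prop}). However, there is a genuine error in the sentence ``since $h^L_j \ge h^L$ and $h^L$ has minimal singularities, each $\psi_j$ also has minimal singularities.'' The inequality $h^L_j \ge h^L$ translates on potentials to $\psi_j \le \chi$, i.e.\ an \emph{upper} bound on $\psi_j$, whereas minimal singularities is a \emph{lower} bound of the form $\psi_j \ge V_\theta - C$. For a big but non-ample $L$, a $\theta$-psh potential with (weakly neat) algebraic singularities typically does \emph{not} have minimal singularities; indeed the paper recalls that $V_\theta$ itself need not have analytic singularities, so there may be no analytic-singularity potential in the minimal class at all.

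This error cascades through the rest of your argument. First, $\ban^{\infty}_k(h^L_j)$ is not a genuine norm on $H^0(X,L^{\otimes k})$: as the paper explicitly notes, it may take the value $+\infty$ on sections whose vanishing along the singular locus of $\psi_j$ is insufficient, so the monotonicity step $d_p(\ban^{\infty}_k(P(h^L_0)),\ban^{\infty}_k(h^L_*)) \le d_p(\ban^{\infty}_k(P(h^L_0)),\ban^{\infty}_k(h^L_j))$ is not even well-posed on the full space. Second, ${\rm vol}_{\hat X_j}(F_j) = \int(\theta+\ddc\psi_j)^n$ is in general \emph{strictly less} than ${\rm vol}(L)$, so the codimension of the image of $\iota_{k,j}$ is not $o(k^n)$ and the error in Lemma~\ref{lem_comp_rest} does not vanish. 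Third, $P_\theta[\psi_j](\chi_0)^*$ has singularity type $[\psi_j]$ by Proposition~\ref{prop_model_pot}, hence is not equal to $\chi_0$, and the subsequent reinterpretation of $D_j$ collapses. The paper's fix is precisely to accept a volume loss: one picks $m$ so that $\int(\theta+\ddc\psi_m)^n \ge {\rm vol}(L)-\epsilon$ (via Theorems~\ref{thm_bouck_vol} and~\ref{thm_cont_begz}), works from the outset on the subspace $V_k = H^0(\hat X_m, F_m^{\otimes k})$ where the restricted sup-norms \emph{are} finite, applies Lemma~\ref{lem_comp_rest} with an error of size $O(\epsilon)$, and lets $\epsilon\to 0$ at the very end. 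Your third step, handled as in the paper with the fixed analytic-singularity base $\psi_m$ (rather than a moving minimal-singularity base $\psi_j$), then goes through via Lemma~\ref{lem_incr} and~(\ref{eq_contraction_prop}) exactly as you outlined.
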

	\begin{proof}
		Let us consider a decreasing sequence of metrics $h^L_k$, $k \in \nat^*$ on $L$, which have weakly neat algebraic singularities and which converge towards a metric $h^{L, 0}$, so that $h^{L, 0} \geq h^L \geq h^{L, 0}_*$.
		Then if one replaces $h^L$ by $h^{L, 0}$ the right-hand side of (\ref{eqlem_isom_reg}) remains intact, but since we have $\ban^{\infty}_k(h^{L, 0}) \geq \ban^{\infty}_k(h^L) \geq \ban^{\infty}_k(h^L_0)$, the left-hand side of (\ref{eqlem_isom_reg}) increases by (\ref{eq_log_rel_spec2}). 
		In particular, it suffices to establish Lemma \ref{lem_isom_reg} for $h^L := h^{L, 0}$, which we assume from now on.
		We assume hence that the sequence $h^L_k$ decreases towards $h^L$.
		\par 
		For convenience, we fix a smooth metric $h^{L, 1}$ and denote by $\psi_k$ (resp. $\psi$) the potentials of $h^L_k$ (resp. $h^L$) with respect to $h^{L, 1}$.
		We will now prove that for any $\epsilon > 0$, there are $m, N \in \nat^*$ such that for any $k \geq m$, $N | k$, there is a vector subspace $\iota_k : V_k \to H^0(X, L^{\otimes k})$, verifying
		\begin{equation}\label{eq_mab_appl_isom_ress00}
			\frac{\dim V_k}{H^0(X, L^{\otimes k})} \geq 1 - \epsilon,
		\end{equation}
		and such that we have
		\begin{equation}\label{eq_mab_appl_isom_ress01}
			\limsup_{k \to \infty, N | k}
			\frac{d_p(\iota_k^*  \ban^{\infty}_k(h^L), \iota_k^* \ban^{\infty}_k(h^L_0))}{k}
			\leq 
			d_p(P[\psi_m](\psi^*)^*, P[\psi_m](\psi_0)).
		\end{equation}
		\par
		To do so, by Theorems \ref{thm_bouck_vol} and \ref{thm_cont_begz}, we pick $m \in \nat$, so that we have
		\begin{equation}\label{eq_psi_k_approx}
			\int (\theta +  \mathrm{d} \mathrm{d}^c \psi_m)^n \geq {\rm{vol}}(L) - \epsilon.
		\end{equation}
		We, moreover, assume that $\epsilon > 0$ is small enough so that ${\rm{vol}}(L) - \epsilon > 0$.
		We will now construct the subspaces $V_k$ using a version of the second construction in Section \ref{sect_subr}.
		We consider the birational model $\pi: \hat{X}_m \to X$, and the $\mathbb{Q}$-divisor $E_m$ as in (\ref{eq_resol_psi_sing}), i.e., such that
		\begin{equation}\label{eq_resol_psi_sing_k}
			\psi_m \circ \pi(x)
			=
			\log |s(x)|_{h^{E_m}} + g_m, 
		\end{equation}	 
		where $s_m$ is canonical holomorphic sections of $\mathscr{O}(E_m)$, $h^{E_m}$ is a smooth metric on the $\mathbb{Q}$-line bundle $\mathscr{O}(E_m)$, and $g_m$ is a bounded function.
		Note that after performing additional blow-ups of $\hat{X}_m$, we may further assume that $g_m$ is continuous, as we have assumed that $\psi_m$ has weakly neat algebraic singularities.
		We make this additional assumption from now on.
		\par 
		Over $\hat{X}_m$, consider the $\mathbb{Q}$-line bundle $F_m := \pi^* L \otimes \mathscr{O}(-E_m)$.
		Then by Theorem \ref{thm_bouck_vol}, (\ref{eq_resol_psi_sing_k}) and the boundedness of $g_m$, we have
		\begin{equation}\label{eq_mab_appl_isom_ress2}
			{\rm{vol}}_{\hat{X}_m}(F_m)
			=
			\int (\theta +  \mathrm{d} \mathrm{d}^c \psi_m)^n.
		\end{equation}
		By (\ref{eq_psi_k_approx}), we see that $F_m$ is big.
		Choose $N \in \nat^*$ such that $N \cdot E_m$ defines a $\mathbb{Z}$-divisor.
		For any $k \in \nat$, $N | k$, consider the vector space $V_k := H^0(\hat{X}_m, F_m^{\otimes k})$.
		As described in the end of Construction 2 from Section \ref{sect_subr}, we have a natural injection $\iota_k : V_k \to H^0(X, L^{\otimes k})$.
		Clearly, it verifies (\ref{eq_mab_appl_isom_ress00}) by (\ref{eq_psi_k_approx}).
		Let us verify that (\ref{eq_mab_appl_isom_ress01}) holds as well.
		For this, we shall later prove that for any $l \geq m$, we have
		\begin{equation}\label{eq_mab_appl_isom_ress01aa}
			\limsup_{k \to \infty, N | k}
			\frac{d_p(\iota_k^*  \ban^{\infty}_k(h^L), \iota_k^* \ban^{\infty}_k(h^L_0))}{k}
			\leq 
			d_p(P[\psi_m](\psi_l)^*, P[\psi_m](\psi_0)^*).
		\end{equation}
		Note that since $\psi_m$ has analytic singularities, the right-hand side of (\ref{eq_mab_appl_isom_ress01aa}) is well-defined by Proposition \ref{prop_model_pot}.
		From Lemma \ref{lem_incr}, we see that (\ref{eq_mab_appl_isom_ress01aa}) implies (\ref{eq_mab_appl_isom_ress01}).
		\par 
		Let us now establish (\ref{eq_mab_appl_isom_ress01aa}).
		Note first that while $\ban^{\infty}_k(h^L_l)$ is not a well-defined norm on $H^0(X, L^{\otimes k})$ (it might take $+\infty$ values), for any $l \geq m$, $N | k$, the norm $\iota_k^*  \ban^{\infty}_k(h^L_l)$ is well-defined on $V_k$.
		By our monotonicity assumption, we have $\iota_k^* \ban^{\infty}_k(h^L_0) \leq \iota_k^* \ban^{\infty}_k(h^L) \leq \iota_k^* \ban^{\infty}_k(h^L_l)$.
		Hence, by (\ref{eq_log_rel_spec2}), we obtain
		\begin{equation}
			 d_p(\iota_k^*  \ban^{\infty}_k(h^L), \iota_k^* \ban^{\infty}_k(h^L_0))
			 \leq
			 d_p(\iota_k^*  \ban^{\infty}_k(h^L_l), \iota_k^* \ban^{\infty}_k(h^L_0)).
		\end{equation}
		We denote by $h^{E_m}_{{\rm{sing}}}$ the canonical singular metric on the line bundle $\mathscr{O}(E_m)$.
		By Proposition \ref{prop_pull_back} and the fact that the sup-norms behave predictably under the pull-back, we deduce that 
		\begin{equation}
			\iota_k^*  \ban^{\infty}_k(h^L_l)
			=
			\ban^{\infty}_k(P(\pi^* h^L_l/h^{E_m}_{{\rm{sing}}})),
			\qquad
			\iota_k^* \ban^{\infty}_k(h^L_0)
			=
			\ban^{\infty}_k(P(\pi^* h^L_0/h^{E_m}_{{\rm{sing}}})).
		\end{equation}
		According to Lemma \ref{lem_cont_sing_m} (which applies to the line bundle $F_m$ as it is big), there is a continuous metric $h^{L, 0}_0$ on $F_k$ such that 
		\begin{equation}
			P(\pi^* h^L_0/h^{E_m}_{{\rm{sing}}})
			=
			P(h^{L, 0}_0)
		\end{equation}
		Let us establish that the same holds for $P(\pi^* h^L_l/h^{E_m}_{{\rm{sing}}})$ modulo a birational modification.
		To see this, note that since $h^L_l$ has weakly neat algebraic singularities and $h^L_l \leq h^L_m$, there is a birational model $\pi_l : \hat{X}'_m \to \hat{X}_m$ such that the potential $\psi_l^0$ of the metric $\pi_l^*(\pi^* h^L_l/h^{E_m}_{{\rm{sing}}})$ is continuous outside of its $+\infty$-set.
		As in the proof of Lemma \ref{lem_cont_sing_m}, we then see that there is a continuous metric $h^{L, 0}_l$ on $\pi_l^* F_k$ such that 
		\begin{equation}
			P(\pi_l^*(\pi^* h^L_l/h^{E_m}_{{\rm{sing}}}))
			=
			P(h^{L, 0}_l)
		\end{equation}
		Now, as $F_m$ is big and $\pi_l$ is a birational modification, $\pi_l^* F_m$ is big as well.
		Moreover, under the isomorphism $H^0(\hat{X}_m, F_m^{\otimes k}) \simeq H^0(\hat{X}'_m, \pi_m^* F_m^{\otimes k})$, the sup-norms transform into the sup-norms.
		Hence, by Theorem \ref{thm_isom}, applied to continuous metrics $\pi_l^* h^{L, 0}_0$ and $h^{L, 0}_l$ on $\pi_l^* F_m$, using (\ref{eq_bir_eq_mab}), we see that for any $l \geq m$, we have
		\begin{equation}\label{eq_mab_appl_isom_ress01aaeee}
			\lim_{k \to \infty, N | k}
			\frac{d_p(\iota_k^*  \ban^{\infty}_k(h^L_l), \iota_k^* \ban^{\infty}_k(h^L_0))}{k}
			=
			d_p(P(h^L_l/h^{E_m}_{{\rm{sing}}})_*, P(h^L_0/h^{E_m}_{{\rm{sing}}})_*).
		\end{equation}
		By (\ref{eq_metr_pot_dist_corresp}) and (\ref{eq_mab_appl_isom_ress01aaeee}), we deduce (\ref{eq_mab_appl_isom_ress01aa}).
		\par 
		Now, by (\ref{eq_contraction_prop}) and (\ref{eq_metr_pot_dist_corresp}), we obtain
		\begin{equation}\label{eq_reg_final_qqq}
			d_p(P[\psi_m](\psi^*)^*, P[\psi_m](\psi_0)^*)
			\leq
			d_p(h^L_*, P(h^L_0))
		\end{equation}
		A combination of (\ref{eq_comp_dist_two}), (\ref{eq_mab_appl_isom_ress00}), (\ref{eq_mab_appl_isom_ress01}) and (\ref{eq_reg_final_qqq}) shows that
		\begin{equation}\label{eq_reg_final_qqqq}
			\limsup_{k \to \infty, N | k} \frac{d_p(\ban^{\infty}_k(h^L), \ban^{\infty}_k(h^L_0))^p}{k^p}
			\leq 
			d_p(h^L_*, P(h^L_0))^p 
			+
			3 \epsilon \cdot \sup |\log(h^L / P(h^L_0))|^p.
		\end{equation}
		\par 
		By repeating the same argument as in (\ref{eq_jk_morph_mult_triv}), we deduce that the above bound continues to hold even if $\limsup_{k \to \infty, N | k}$ is replaced by $\limsup_{k \to \infty}$ in (\ref{eq_reg_final_qqqq}).
		As $\epsilon > 0$ can be chosen arbitrary small, this finishes the proof.
	\end{proof}
	Let us show that Lemma \ref{lem_isom_reg} implies Theorem \ref{thm_isom_reg}.
	\begin{proof}[Proof of Theorem \ref{thm_isom_reg}.]
		Since the potentials of $h^L_j$, $j = 0, 1$, are upper semicontinuous, we can find increasing sequences $h^L_{j, i}$, $i \in \nat$, of continuous metrics converging pointwise towards $h^L_{j *}$.
		Note that we then have $h^L_j \geq h^L_{j *} \geq P(h^L_{j, i}) \geq h^L_{j, i}$ for any $j = 0, 1$, $i \in \nat$, and so $P(h^L_{j, i})$ converge towards $h^L_{j *}$, as $i \to \infty$.
		By the definition of the Mabuchi-Darvas distance, (\ref{eq_ext_dp_gennn}),
		\begin{equation}\label{eq_mab_appl_isom_ress}
			d_p(h^L_{0 *}, h^L_{1 *})
			=
			\lim_{i \to \infty}
			d_p(P(h^L_{0, i}), P(h^L_{1, i})),
			\qquad
			\lim_{i \to \infty}
			d_p(h^L_{j *}, P(h^L_{j, i}))
			=
			0.
		\end{equation}
		By the second limit from (\ref{eq_mab_appl_isom_ress}), Lemma \ref{lem_isom_reg} and the quasi-metric properties of the distance, see (\ref{eq_gen_triangle}), we conclude that for any $\epsilon > 0$, there is $i \in \nat$ such that 
		\begin{equation}\label{eq_mab_appl_isom_ress1}
			\limsup_{k \to \infty}
			\Big|
			\frac{d_p(\ban^{\infty}_k(h^L_0), \ban^{\infty}_k(h^L_1)) - d_p(\ban^{\infty}_k(P(h^L_{0, i})), \ban^{\infty}_k(P(h^L_{1, i})))}{k}
			\Big|
			\leq
			\epsilon.
		\end{equation}
		However, by Theorem \ref{thm_isom}, we have
		\begin{equation}\label{eq_mab_appl_isom_ress2}
			\lim_{k \to \infty}
			\frac{d_p(\ban^{\infty}_k(P(h^L_{0, i})), \ban^{\infty}_k(P(h^L_{1, i})))}{k}
			=
			d_p(P(h^L_{0, i}), P(h^L_{1, i})).
		\end{equation}
		From (\ref{eq_mab_appl_isom_ress1}), (\ref{eq_mab_appl_isom_ress2}) and the first limit from (\ref{eq_mab_appl_isom_ress}), we finish the proof.
	\end{proof}
	As an application of the above analysis, we establish the following result. 
	It essentially shows that studying sup-norms associated with metrics having potentials with minimal singularities can be reduced to studying sup-norms associated with regularizable from above psh metrics.
	\begin{prop}\label{prop_eq_ban_q_weak}
		For any metric $h^L$ with a potential of minimal singularities, and any $p \in [1, + \infty[$, we have
		\begin{equation}\label{eqprop_eq_ban_q_weak}
			\ban^{\infty}(h^L) \sim_p \ban^{\infty}(Q(h^L)_*).
		\end{equation}
	\end{prop}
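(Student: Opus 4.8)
The plan is to deduce the statement directly from Proposition \ref{thm_regul_blw} and Theorem \ref{thm_isom_reg}. By Proposition \ref{thm_regul_blw}, we have the identity $\ban^{\infty}_k(h^L) = \ban^{\infty}_k(Q(h^L))$ for every $k \in \nat$, so it suffices to prove $\ban^{\infty}(Q(h^L)) \sim_p \ban^{\infty}(Q(h^L)_*)$. Note that all the sup-norms in play are well defined: $h^L$, $Q(h^L)$ and $Q(h^L)_*$ all have potentials bounded from below by a potential with minimal singularities (use (\ref{eq_q_env_ord}) for $Q(h^L)$), so the definition recalled after (\ref{eq_sup_potent}) applies.

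Next, I would observe that both $Q(h^L)$ and $Q(h^L)_*$ are regularizable from above: the former by Proposition \ref{prop_regul_env}, and the latter because it is the lower semicontinuous regularization of a regularizable from above metric, hence a regularizable from above psh metric by the discussion following Definition \ref{defn_regul}. Therefore Theorem \ref{thm_isom_reg} applies to the pair $h^L_0 := Q(h^L)$, $h^L_1 := Q(h^L)_*$ and yields
\[
	d_p\big( (Q(h^L))_*, (Q(h^L)_*)_* \big)
	=
	\lim_{k \to \infty}
	\frac{d_p(\ban^{\infty}_k(Q(h^L)), \ban^{\infty}_k(Q(h^L)_*))}{k}.
\]
Now $(Q(h^L))_* = Q(h^L)_*$ by the very definition of the lower semicontinuous regularization, while $(Q(h^L)_*)_* = Q(h^L)_*$ since the potential of $Q(h^L)_*$ is psh, hence already upper semicontinuous and equal to its own upper semicontinuous regularization. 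Thus the left-hand side equals $d_p(Q(h^L)_*, Q(h^L)_*) = 0$, so the limit on the right vanishes. Combining this with $\ban^{\infty}_k(h^L) = \ban^{\infty}_k(Q(h^L))$ gives precisely (\ref{eqprop_eq_ban_q_weak}).

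Once Proposition \ref{thm_regul_blw} and Theorem \ref{thm_isom_reg} are in hand, there is no substantial obstacle here; the argument is essentially an assembly of earlier results. The only point deserving a moment of care is the identity $(Q(h^L)_*)_* = Q(h^L)_*$ — i.e., that iterating the lower semicontinuous regularization on a metric whose potential is already psh does nothing — which is immediate from the upper semicontinuity of psh functions.
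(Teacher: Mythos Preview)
Your proof is correct and follows essentially the same approach as the paper: reduce via Proposition \ref{thm_regul_blw} to showing $\ban^{\infty}(Q(h^L)) \sim_p \ban^{\infty}(Q(h^L)_*)$, then apply Theorem \ref{thm_isom_reg} to the pair $Q(h^L)$, $Q(h^L)_*$ (both regularizable from above by Proposition \ref{prop_regul_env} and the remark after Definition \ref{defn_regul}), noting that their lower semicontinuous regularizations coincide. The paper additionally remarks that when $h^L$ has an upper semicontinuous potential one even gets equality of norms, but this is only an aside and not part of the main argument.
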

	\begin{proof}
		Remark first that if $h^L$ has an upper semicontinuous potential, then by taking the lower semicontinuous regularization of the inequality $Q(h^L) \geq h^L$ from (\ref{eq_q_env_ord}), we obtain  $Q(h^L) \geq Q(h^L)_* \geq h^L_* = h^L$, showing in light of Proposition \ref{thm_regul_blw} that we even have $\ban^{\infty}(h^L) = \ban^{\infty}(Q(h^L)_*)$.
		In general, however, such a strong statement doesn't hold, as the reader can easily verify by considering an arbitrary smooth metric and increasing its value at one given point. 
		Let us hence proceed to the proof of (\ref{eqprop_eq_ban_q_weak}).
		\par 
		By Proposition \ref{thm_regul_blw}, it suffices to establish that 
		\begin{equation}
			\lim_{k \to \infty} \frac{d_p(\ban^{\infty}_k(Q(h^L)), \ban^{\infty}_k(Q(h^L)_*))}{k}
			=
			0.
		\end{equation}
		This, however, follows immediately from Theorem \ref{thm_isom_reg}, as both $Q(h^L)$ and $Q(h^L)_*$ are regularizable from above by Proposition \ref{prop_regul_env} and the discussion after Definition \ref{defn_regul}.
	\end{proof}
	Let us now state a result which refines Lemma \ref{lem_fs_sm}.
	\begin{prop}\label{prop_fs_regul_blw}
		For any submultiplicative bounded graded norm $N = (N_k)_{k = 0}^{\infty}$, the metric $FS(N)$ is regularizable from above.
	\end{prop}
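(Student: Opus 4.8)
The goal is to show that for any bounded submultiplicative graded norm $N = (N_k)_{k=0}^\infty$ on $R(X,L)$, the metric $FS(N)$ is regularizable from above. Recall from Lemma \ref{lem_fs_sm} that $FS(N) = \inf_{k \in \nat^*} FS(N_k)^{1/k}$ and that $FS(N)_*$ has a psh potential with minimal singularities. The plan is to produce a decreasing sequence of metrics with psh potentials of weakly neat algebraic singularities converging towards a metric $h^{L,0}$ sandwiching $FS(N)$ between $h^{L,0}$ and $h^{L,0}_*$, as required by Definition \ref{defn_regul}.

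\textbf{Step 1: the building blocks have weakly neat algebraic singularities.} First I would show that for each fixed $k \in \nat^*$, the metric $FS(N_k)^{1/k}$ on $L$ (equivalently its potential $\phi_k$ from \eqref{eq_phi_k_fs_k}) has weakly neat algebraic singularities. As observed after \eqref{eq_phi_k_fs_k_herm}, for a Hermitian norm the potential has the explicit form \eqref{eq_phi_k_fs_k_herm}, hence neat algebraic singularities, and for a general norm one sandwiches $N_k$ between Hermitian norms $H_k^- \leq N_k \leq \sqrt{n_k}\, H_k^-$ via the John ellipsoid \eqref{eq_john_ellips}. The point here is that $\phi_k = \max$-type expression over the unit ball; one must argue that the sup over a (bounded, non-Hermitian) unit ball of $\frac{1}{k}\log|s|$ gives a function with \emph{weakly} neat algebraic singularities. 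Concretely, after resolving the base ideal of $H^0(X,L^{\otimes k})$ on a birational model $\pi\colon \hat X \to X$, the pullback $\pi^*\phi_k$ equals $\frac{1}{k}\log|s|_{h^E}$ plus a bounded term that is the sup of a family of continuous functions parametrized by the (compact) unit ball of $N_k$ — and such a sup, being the log of a norm on the fibre of a bundle, is continuous in the base. So $\phi_k$ has weakly neat algebraic singularities, and hence so does $FS(N_k)^{1/k}$.

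\textbf{Step 2: pass to a decreasing sequence.} The sequence $FS(N_k)^{1/k}$ itself need not be monotone, but by submultiplicativity of $FS(N_k)$ (Lemma \ref{lem_fs_sm} and the Remark after it) it fails to increase along multiplicative subsequences. Following the standard device (as in the proof of Proposition \ref{prop_regul_env}), set
\begin{equation}
	h^L_i := \min\{ FS(N_1)^{1}, FS(N_2)^{1/2}, \ldots, FS(N_i)^{1/i} \}.
\end{equation}
By Proposition \ref{prop_max_alg_sing_weak_neat} and Remark \ref{rem_prop_max_alg_sing_weak_neat}b), each $h^L_i$ has a psh potential with weakly neat algebraic singularities (psh because the max of psh potentials is psh; weakly neat algebraic because finitely many such classes are dominated by a common birational model). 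The sequence $h^L_i$ is decreasing by construction, and converges pointwise as $i \to \infty$ to some metric $h^{L,0} := \inf_i FS(N_i)^{1/i} = FS(N)$ by Lemma \ref{lem_fs_sm}. Thus $h^{L,0} = FS(N)$, and trivially $h^{L,0} \geq FS(N) \geq h^{L,0}_* = FS(N)_*$, so the sandwich condition of Definition \ref{defn_regul} holds. Since by Lemma \ref{lem_fs_sm} the metric $FS(N)_*$ has a psh potential with minimal singularities, and $FS(N) \geq FS(N)_*$ with $FS(N)$ bounded above by a continuous metric (boundedness of $N$), the potential of $FS(N)$ is bounded from below by one with minimal singularities, so $FS(N)$ is indeed a metric with a potential of minimal singularities — all the hypotheses of Definition \ref{defn_regul} are met.

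\textbf{Main obstacle.} The delicate point is Step 1: verifying that the Fubini–Study potential $\phi_k$ of a \emph{non-Hermitian} bounded norm $N_k$ has \emph{weakly neat} algebraic singularities — i.e. that the bounded remainder term $g$ in the local expression \eqref{eq_phi_alg_sing} can be taken \emph{continuous} on a birational model, rather than merely bounded. For Hermitian $N_k$ this is \eqref{eq_phi_k_fs_k_herm}; for general $N_k$ one squeezes between Hermitian norms, but taking $\min$/$\max$ of the resulting potentials reintroduces a potential discontinuity that, as Remark \ref{rem_prop_max_alg_sing_weak_neat}a) explains, is resolved only after a blow-up — precisely the content of Proposition \ref{prop_max_alg_sing_weak_neat}. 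So the whole argument rests on invoking Proposition \ref{prop_max_alg_sing_weak_neat} at both Step 1 (to handle general norms) and Step 2 (to handle the finite minima), and on the elementary fact that the log of a continuously-varying norm on fibres of a line bundle over a base is continuous in the base.
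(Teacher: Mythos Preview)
Your proposal is correct. Step~1 is essentially the paper's argument: the paper phrases it via the identification \eqref{eq_fs_n_k_a_k_rel}, namely $\pi_m^* FS(N_m) = f_m^* h^{A_m} \cdot h^{E_m}_{\rm sing}$, where $h^{A_m}$ is the metric on $\mathscr{O}(1)$ over $\mathbb{P}(H^0(X,L^{\otimes m})^*)$ induced by the norm $N_m$ --- and this metric is continuous for \emph{any} norm, exactly your observation that ``the log of a norm on the fibre of a bundle is continuous in the base''. So you do not in fact need Proposition~\ref{prop_max_alg_sing_weak_neat} at Step~1; the John-ellipsoid squeeze is a red herring there, and your direct argument suffices. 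The genuine difference is Step~2: the paper simply passes to the dyadic subsequence $FS(N_{2^k})^{1/2^k}$, which is automatically decreasing by submultiplicativity and converges to $FS(N)$ by Fekete's lemma, thereby avoiding any appeal to Proposition~\ref{prop_max_alg_sing_weak_neat} altogether. Your route via finite minima $h^L_i = \min_{j \le i} FS(N_j)^{1/j}$ works just as well but costs one invocation of that proposition; the paper's dyadic trick is a shortcut you might prefer.
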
	
	\begin{proof}
		Note that by Lemma \ref{lem_fs_sm}, we have $FS(N) = \lim_{k \to \infty} FS(N_{2^k})^{\frac{1}{2^k}}$, and the sequence of metrics $FS(N_{2^k})^{\frac{1}{2^k}}$ is decreasing.
		Hence, it suffices to show that for an arbitrary norm $N_k$ on $H^0(X, L^{\otimes k})$, the metric $FS(N_k)$ has a potential with weakly neat algebraic singularities as long as $H^0(X, L^{\otimes k}) \neq \{0\}$.
		For Hermitian norm $N_k$, $FS(N_k)$ even has a potential with neat algebraic singularities as we described in Section \ref{sect_fs_oper}.
		The general case follows immediately from the alternative description of $FS(N_k)$ from (\ref{eq_fs_n_k_a_k_rel}), as the metric on the hyperplane bundle of $\mathbb{P}(H^0(X, L^{\otimes k})^*)$ induced by an arbitrary norm $N_k$ is continuous.
	\end{proof}
	As an application of the above results, we deduce the following statement, which in the ample case follows from \cite[Theorem 7.26]{BouckErik21}.
	\begin{cor}\label{cor_env_fs_eq}
		For any metric $h^L$ with a potential of minimal singularities, we have 
		\begin{equation}\label{eq_cor_env_fs_eq}
			FS(\ban^{\infty}(h^L))_* = Q(h^L)_*.
		\end{equation}
	\end{cor}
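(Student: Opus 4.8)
The plan is to establish two inequalities between $FS(\ban^{\infty}(h^L))_*$ and $Q(h^L)_*$ and then invoke the results already developed in this section. First I would observe that by Proposition \ref{prop_fs_regul_blw} (applied to the submultiplicative bounded norm $N := \ban^{\infty}(h^L)$, which is bounded by the discussion around (\ref{eq_bound_norm_min_sing})), the metric $FS(\ban^{\infty}(h^L))$ is regularizable from above; in particular $FS(\ban^{\infty}(h^L))_*$ is a regularizable from above psh metric, and it has a potential with minimal singularities by Lemma \ref{lem_fs_sm}. For the inequality $FS(\ban^{\infty}(h^L))_* \leq Q(h^L)_*$, I would argue that each $FS(\ban^{\infty}_k(h^L))^{\frac{1}{k}}$ is $\leq Q(h^L)$: indeed, if $h^L_0$ is any metric with a psh potential of weakly neat algebraic singularities satisfying $h^L_0 \geq h^L$, then by monotonicity of the Fubini--Study operator (\ref{eq_fs_mono}) and Theorem \ref{thm_conv_fs} (applied on a birational model resolving the singularities of $h^L_0$, using Proposition \ref{prop_pull_back} to track sup-norms under pullback), one gets $FS(\ban^{\infty}_k(h^L))^{\frac{1}{k}} \leq FS(\ban^{\infty}_k(h^L_0))^{\frac{1}{k}} \to P(h^L_0) = h^L_0$ outside a pluripolar set; taking the infimum over all such $h^L_0$ and then the lower semicontinuous regularization yields $FS(\ban^{\infty}(h^L))_* \leq Q(h^L)_*$.

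For the reverse inequality $Q(h^L)_* \leq FS(\ban^{\infty}(h^L))_*$, the key point is that $FS(\ban^{\infty}(h^L))$ is \emph{itself} a competitor (up to regularization) in the infimum defining $Q(h^L)$. More precisely, since $FS(\ban^{\infty}(h^L))$ is regularizable from above by Proposition \ref{prop_fs_regul_blw}, there is a decreasing sequence of metrics $h^L_k$ with psh potentials of weakly neat algebraic singularities converging to a metric $\geq FS(\ban^{\infty}(h^L))$; moreover, by (\ref{eq_nk_fs_lw_bnd}) and submultiplicativity we have $\ban^{\infty}(h^L) \geq \ban^{\infty}(FS(\ban^{\infty}(h^L)))$, hence $\ban^{\infty}(h^L) = \ban^{\infty}(FS(\ban^{\infty}(h^L)))$ by the envelope/maximum-principle identities (\ref{eq_max_prin}), and therefore also $h^L \leq FS(\ban^{\infty}(h^L))_* \leq h^L_k$ outside a pluripolar set. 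Thus each $h^L_k$ lies in the defining family for $Q(h^L)$ up to the pluripolar ambiguity, which gives $Q(h^L) \leq h^L_k$ and, passing to the limit and regularizing, $Q(h^L)_* \leq FS(\ban^{\infty}(h^L))_*$.

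The main obstacle I anticipate is the bookkeeping around pluripolar sets and lower semicontinuous regularizations: the inequalities $FS(\ban^{\infty}_k(h^L))^{\frac{1}{k}} \leq h^L_0$ and $h^L \leq h^L_k$ hold only outside pluripolar sets, so one must be careful that taking infima and then $(\,\cdot\,)_*$ is compatible with these a.e.\ inequalities. This is handled by the Bedford--Taylor characterization of negligible sets (invoked already after Definition \ref{defn_regul} and Proposition \ref{prop_regul_env}), which guarantees that a countable union of pluripolar sets is pluripolar and that $h = h_*$ off a pluripolar set for regularizable-from-above metrics; combined with Choquet's lemma to reduce the infimum defining $Q$ to a countable one (as in the proof of Proposition \ref{prop_regul_env}), this removes the difficulty. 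An alternative, cleaner route to the reverse inequality would be to apply Proposition \ref{prop_eq_ban_q_weak}: since $\ban^{\infty}(h^L) = \ban^{\infty}(FS(\ban^{\infty}(h^L))_*)$ and $FS(\ban^{\infty}(h^L))_*$ is regularizable from above psh, one has $Q(FS(\ban^{\infty}(h^L))_*) = FS(\ban^{\infty}(h^L))_*$ up to regularization, while $Q$ depends only on the sup-norm through (\ref{eqprop_eq_ban_q_weak}) and Proposition \ref{thm_regul_blw}; chasing these identities gives (\ref{eq_cor_env_fs_eq}) directly.
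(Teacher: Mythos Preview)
Your approach is genuinely different from the paper's. The paper proves the corollary indirectly: it applies Theorem \ref{thm_char} to $N:=\ban^{\infty}(h^L)$ to get $N\sim_p\ban^{\infty}(FS(N)_*)$, combines this with Proposition \ref{prop_eq_ban_q_weak} to obtain $\ban^{\infty}(FS(N)_*)\sim_p\ban^{\infty}(Q(h^L)_*)$, and then invokes Theorem \ref{thm_isom_reg} (both sides being regularizable from above psh by Propositions \ref{prop_regul_env} and \ref{prop_fs_regul_blw}) together with the fact that $d_p$ separates points. Your inequality $Q(h^L)_*\leq FS(\ban^{\infty}(h^L))_*$ is correct and in fact has a cleaner proof than the one you sketch: each $FS(\ban^{\infty}_k(h^L))^{1/k}$ is \emph{already} a competitor in the infimum defining $Q(h^L)$, since it has a psh potential with weakly neat algebraic singularities (proof of Proposition \ref{prop_fs_regul_blw}) and the pointwise inequality $FS(\ban^{\infty}_k(h^L))^{1/k}\geq h^L$ holds everywhere (on $\{V_\theta=-\infty\}$ both sides are $+\infty$, since that set lies in $\mathbb B(L)$); hence $Q(h^L)\leq FS(\ban^{\infty}(h^L))$ directly, without the detour through $\ban^{\infty}(h^L)=\ban^{\infty}(FS(\ban^{\infty}(h^L)))$ and ``outside a pluripolar set''.

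Your first inequality, however, has a genuine gap. For a competitor $h^L_0\geq h^L$ with psh potential $\psi_0$ of weakly neat algebraic singularities you want $FS(\ban^{\infty}_k(h^L_0))^{1/k}\to h^L_0$ off a pluripolar set, which you propose to obtain from Theorem \ref{thm_conv_fs} on a birational model $\pi:\hat X\to X$ applied to the induced continuous metric on $F=\pi^*L\otimes\mathscr O(-E)$. But Theorem \ref{thm_conv_fs} is non-vacuous only when $F$ is big, i.e.\ $\int(\theta+\ddc\psi_0)^n>0$, and nothing prevents the infimum defining $Q(h^L)$ from being approached through competitors with vanishing non-pluripolar mass. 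This can be repaired: fix once a strictly $\theta$-psh $\chi$ with neat algebraic singularities and $\chi\leq\psi$ (available by Theorem \ref{thm_dem_appr} and minimal singularities of $h^L$), and replace each competitor $h^L_0$ by $\min\{h^L_0,\,h^L_{\mathrm{ref}}\exp(-2\chi)\}$; by Proposition \ref{prop_max_alg_sing_weak_neat} this is still a competitor, it is $\leq h^L_0$, and by Theorem \ref{thm_wn_monot} its non-pluripolar mass is $\geq\int(\theta+\ddc\chi)^n>0$. With this modification (and Choquet's lemma to handle the pluripolar bookkeeping you flag) your direct route goes through, and it notably avoids invoking Theorem \ref{thm_char}, which the paper explicitly admits. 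Your ``alternative cleaner route'' at the end, by contrast, is not self-contained: the identity $\ban^{\infty}(h^L)=\ban^{\infty}(FS(\ban^{\infty}(h^L))_*)$ with the $_*$ present is not justified by (\ref{eq_nk_fs_lw_bnd}) and (\ref{eq_max_prin}) alone, and the assertion that ``$Q$ depends only on the sup-norm'' is essentially what the corollary is asserting.
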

	\begin{proof}[Proof of Corollary \ref{cor_env_fs_eq} admitting Theorem \ref{thm_char}]
		We denote $N := \ban^{\infty}(h^L)$.
		Of course, $N$ is submultiplicative. 
		Note that since $h^L$ has a potential with minimal singularities, by (\ref{eq_bound_norm_min_sing}), the norm $N$ is bounded.
		Hence, we can apply Theorem \ref{thm_char} to get $N \sim_p \ban^{\infty}(FS(N)_*)$ for any $p \in [1, +\infty[$.
		By this, Theorem \ref{thm_isom_reg} and Propositions \ref{prop_regul_env}, \ref{prop_eq_ban_q_weak}, \ref{prop_fs_regul_blw}, we deduce that $d_p(FS(N)_*, Q(h^L)_*) = 0$.
		However, Gupta in \cite{GuptaPrakhar} established that $d_p$ is a distance, and in particular it separates the points, which implies (\ref{eq_cor_env_fs_eq}).
	\end{proof}
	Theorem \ref{thm_isom_reg} refines Theorem \ref{thm_isom}, as the following result suggests.
	\begin{prop}\label{prop_env_coinc}
		For any continuous metric $h^L$ on $L$, we have $P(h^L) = Q(h^L)_*$.
		In particular, $P(h^L)$ is regularizable from above.
		Moreover, we even have $P(h^L) = Q(h^L)$ outside of $\mathbb{B}_+(L)$, defined in (\ref{eq_aug_bl}).
	\end{prop}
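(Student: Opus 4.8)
The claim is that for a continuous metric $h^L$ on $L$, the envelope $P(h^L)$ (which is already lower semicontinuous, so $P(h^L) = P(h^L)_*$) coincides with $Q(h^L)_*$, that $P(h^L)$ is regularizable from above, and that in fact $P(h^L) = Q(h^L)$ away from $\mathbb{B}_+(L)$. The strategy is to play the two envelopes against one another using the chain of inequalities already at hand and the available regularization results.

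First I would record the easy direction. By definition $Q(h^L)$ is the infimum of metrics with psh potentials of weakly neat algebraic singularities dominating $h^L$; since $P(h^L)$ has a psh potential with minimal singularities dominating $h^L$, and since by Theorem~\ref{thm_dem_appr} every metric with a psh potential can be approximated \emph{from below} by metrics with neat algebraic singularities, one does \emph{not} directly get $Q(h^L) \leq P(h^L)$ that way. Instead, the inclusion to exploit is (\ref{eq_q_env_ord}): $Q(h^L) \geq P(h^L) \geq h^L$, hence $Q(h^L)_* \geq P(h^L)_* = P(h^L)$. So the whole problem reduces to the reverse inequality $P(h^L) \geq Q(h^L)_*$, equivalently $P(h^L) \geq Q(h^L)$ outside a pluripolar (in fact, I expect, outside $\mathbb{B}_+(L)$) set.

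For the reverse inequality, the plan is to approximate $h^L$ from below by a sequence of metrics whose envelopes are easier to compare with $Q$. Concretely: since $h^L$ is continuous, Demailly's regularization (Theorem~\ref{thm_dem_appr}, in the line-bundle form of Remark~\ref{rem_dem_appr}) produces, for each $k$, a metric $h^L_k$ with neat algebraic singularities on a birational model with $c_1(L, h^L_k) \geq -\epsilon_k \omega$ and $h^L_k \nearrow$ something $\geq h^L$ outside a pluripolar set — but we want metrics \emph{below} $h^L$. So instead I would approximate the \emph{continuous} potential of $h^L$ by a decreasing sequence of smooth metrics $h^L_j \searrow h^L$; then $P(h^L_j) \searrow P(h^L)$ by monotonicity of $P$ and the continuity results before Theorem~\ref{thm_conv_fs}. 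Now by Berman's theorem (cited before Theorem~\ref{thm_conv_fs}) $P(h^L_j)$ is $\mathscr{C}^{1,1}$ on $X \setminus \mathbb{B}_+(L)$, in particular it has, locally on $X \setminus \mathbb{B}_+(L)$, a bounded (indeed $\mathscr{C}^{1,1}$) local potential, so it trivially has weakly neat algebraic singularities there — but globally on $X$ it has minimal singularities, not algebraic ones, so $P(h^L_j)$ itself is not an admissible competitor for $Q$. This is the crux of the difficulty: the envelope of an ample-type class is genuinely bounded, but $L$ is only big, so $\mathbb{B}_+(L)$ is a nontrivial obstacle. I expect one resolves it by the same mechanism as in Lemma~\ref{lem_isom_reg}: pass to Construction~2 of Section~\ref{sect_subr}, i.e. for a strictly $\theta$-psh $\psi$ with neat algebraic singularities and with $\int(\theta+\ddc\psi)^n$ close to $\mathrm{vol}(L)$, work on the birational model $\hat X$ where $L$ pulls back to (the sum of) an ample $\mathbb{Q}$-bundle and an effective exceptional divisor; on the ample part $P$ of a continuous metric \emph{is} bounded, hence a legitimate $Q$-competitor after twisting back by the canonical singular metric on the divisor, and $P(h^L) = P[\psi](\cdot)$ there by (\ref{eq_metr_pot_dist_corresp}); letting $\psi$ exhaust $V_\theta$ via the sequence $\psi_k$ from after (\ref{eq_def_mab_psi_k}) and using Proposition~\ref{prop_model_pot} and the contraction property (\ref{eq_contraction_prop}) recovers $P(h^L)$ on all of $X \setminus \mathbb{B}_+(L)$ as a pointwise increasing limit of $Q$-competitors, giving $P(h^L) \geq Q(h^L)$ there.

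Finally I would assemble the pieces: combining $Q(h^L)_* \geq P(h^L)$ with the reverse inequality just established on $X \setminus \mathbb{B}_+(L)$ gives $P(h^L) = Q(h^L)$ on $X \setminus \mathbb{B}_+(L)$, hence $P(h^L) = Q(h^L)_*$ everywhere since $\mathbb{B}_+(L)$ has empty interior and both sides are lower semicontinuous $\theta$-psh metrics agreeing off a set whose complement is dense (more precisely, two $\theta$-psh potentials, one the l.s.c.\ regularization of a decreasing limit of the other, agreeing outside $\mathbb{B}_+(L)$, must coincide). The regularizability of $P(h^L)$ from above is then automatic: $P(h^L) = Q(h^L)_*$ and $Q(h^L)$ is regularizable from above by Proposition~\ref{prop_regul_env}, and by the discussion after Definition~\ref{defn_regul} its lower semicontinuous regularization is a regularizable from above psh metric. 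The main obstacle, as indicated, is the presence of $\mathbb{B}_+(L)$: off it the argument is the ample-case argument of Berman--Boucksom--Eriksson, but handling it forces the detour through birational models and Construction~2, exactly as in the proof of Lemma~\ref{lem_isom_reg}.
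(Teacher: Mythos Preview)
Your easy direction ($Q(h^L)_* \geq P(h^L)$ from (\ref{eq_q_env_ord})) and your final assembly are fine. The issue is with the reverse inequality.

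The paper's argument is far simpler than your Construction-2 detour: the metrics $h^L_k := FS(\ban^{\infty}_k(h^L))^{1/k}$ are themselves $Q$-competitors. They have psh potentials with weakly neat algebraic singularities (this is exactly the content of the proof of Proposition~\ref{prop_fs_regul_blw}), and they satisfy $h^L_k \geq h^L$ directly from the definition of the Fubini--Study metric (any $s$ in the unit ball of $\ban^{\infty}_k(h^L)$ has $|s(x)|_{(h^L)^k} \leq 1$ everywhere, so the potential $\phi_k$ in (\ref{eq_phi_k_fs_k}) relative to $h^L$ is $\leq 0$). By Theorem~\ref{thm_conv_fs} they converge to $P(h^L)$ uniformly on compact subsets of $X \setminus \mathbb{B}_+(L)$, whence $Q(h^L) \leq h^L_k \to P(h^L)$ there. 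The paper also records that the weaker first assertion $P(h^L) = Q(h^L)_*$ follows independently from Theorem~\ref{thm_conv_fs} combined with Corollary~\ref{cor_env_fs_eq}.

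Your proposed route through relative envelopes $P_\theta[\psi_k + b_k\psi_E](\phi)^*$ is not wrong in spirit, but has two genuine gaps as written. First, you assert that these competitors increase pointwise to $P_\theta(\phi)$ on $X \setminus \mathbb{B}_+(L)$, but this is not proved: you would need an envelope-continuity statement along the increasing sequence $\psi_k + b_k\psi_E \nearrow V_\theta$, and the references you invoke (Proposition~\ref{prop_model_pot}, the contraction property (\ref{eq_contraction_prop})) concern singularity types and distances, not pointwise convergence of envelopes. Second, even granting such a result (which would plausibly follow from the envelope-continuity input \cite[Proposition 2.2]{GuedjLuZeriahEnv} used in Lemma~\ref{lem_incr}), the convergence would hold only outside a pluripolar set, not specifically on $X \setminus \mathbb{B}_+(L)$; so you would recover $P(h^L) = Q(h^L)_*$ but not the sharper ``moreover'' clause identifying the exceptional set as $\mathbb{B}_+(L)$. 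The Fubini--Study sequence bypasses both issues at once: Berman's theorem supplies the convergence, and supplies it precisely on $X \setminus \mathbb{B}_+(L)$.
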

	\begin{proof}
		The proof of the first part is a direct combination of Theorem \ref{thm_conv_fs} and Corollary \ref{cor_env_fs_eq}.
		Let us now provide an independent proof of the second part, which of course refines the first part.
		First, it is immediate that $P(h^L) \leq Q(h^L)$.
		Hence, it suffices to establish that $P(h^L) \geq Q(h^L)$ over $X \setminus \mathbb{B}_+(L)$.
		For this, it suffices to construct a sequence of metrics $h^L_k$ with potentials of weakly neat algebraic singularities, so that they converge towards $P(h^L)$ pointwise on $X \setminus \mathbb{B}_+(L)$. 
		We claim $h^L_k := FS(\ban^{\infty}_k(h^L))^{\frac{1}{k}}$ provides such an example.
		Indeed, by Theorem \ref{thm_conv_fs}, $h^L_k$ converges towards $P(h^L)$ pointwise on $X \setminus \mathbb{B}_+(L)$.
		Moreover, by the proof of Proposition \ref{prop_fs_regul_blw}, $h^L_k$ has a potential with weakly neat algebraic singularities.
	\end{proof}
	\par 
	In conclusion of this section, let us establish an analogue of Proposition \ref{prop_bm_volume} for the regularizable from above metrics.
	We now fix an arbitrary smooth volume form $dV_X$ on $X$.
	\begin{prop}\label{prop_bm_volume_reg}
		For any $\rho \in L^{\infty}(X)$, $\rho \geq 0$ such that ${\rm{ess\,supp}}(\rho) = X$, the following holds.
		For any $p \in [1, +\infty[$ and any regularizable from above metric $h^L$ on $L$, we have
		\begin{equation}\label{eq_bm_volume_reg}
			\ban^{\infty}(h^L) \sim_p \big(\ban^{\infty}((h^L)^k \cdot \rho ) \big)_{k = 0}^{\infty} \sim_p \hilb(h^L, \rho \cdot d V_X).
		\end{equation}
	\end{prop}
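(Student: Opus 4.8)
The plan is to reduce the two equivalences to a single Bernstein--Markov estimate, and then to establish that estimate by the birational reduction used in the proof of Lemma~\ref{lem_isom_reg}.

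First I would isolate the core statement: for any $\rho\in L^{\infty}(X)$, $\rho\geq 0$, with ${\rm{ess\,supp}}(\rho)=X$, and any regularizable from above metric $h^L$ on $L$,
\begin{equation}\label{eq_bm_plan}
	\ban^{\infty}_k(h^L)\leq\exp(\epsilon k)\cdot\hilb_k(h^L,\rho\cdot dV_X)\qquad\text{for all }k\geq k_0(\epsilon).
\end{equation}
Granting (\ref{eq_bm_plan}), the proposition follows by elementary manipulations. One always has the trivial reverse bound $\hilb_k(h^L,\rho\cdot dV_X)\leq(\int_X\rho\,dV_X)^{1/2}\cdot\ban^{\infty}_k(h^L)$, so (\ref{eq_dp_dinf_compp}) gives $\frac1k d_p(\hilb_k(h^L,\rho\,dV_X),\ban^{\infty}_k(h^L))\to 0$, which is the equivalence $\ban^{\infty}(h^L)\sim_p\hilb(h^L,\rho\,dV_X)$. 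Applying (\ref{eq_bm_plan}) with $\rho^2$ in place of $\rho$ (which again has full essential support), together with the evident sandwich
\begin{equation}\label{eq_sandw_plan}
	\Big(\int_X dV_X\Big)^{-1/2}\hilb_k(h^L,\rho^2 dV_X)\leq\ban^{\infty}_1((h^L)^k\cdot\rho)\leq\|\rho\|_{\infty}\cdot\ban^{\infty}_k(h^L),
\end{equation}
yields $\ban^{\infty}(h^L)\sim_p(\ban^{\infty}_1((h^L)^k\cdot\rho))_{k=0}^{\infty}$; the remaining equivalence then follows from (\ref{eq_gen_triangle}) and $\log n_k=o(k)$.

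To prove (\ref{eq_bm_plan}) I would follow the scheme of Lemma~\ref{lem_isom_reg}. Both $\ban^{\infty}_k$ and $\hilb_k(\cdot,\rho\,dV_X)$ are insensitive to modifications of $h^L$ on pluripolar sets --- by (\ref{eq_max_prin}) for the former, and because such sets are $dV_X$-null for the latter --- so I may assume that $h^L$ is the decreasing limit $h^{L,0}$ of a sequence $h^L_m$ of metrics with psh potentials $\psi_m$ having weakly neat algebraic singularities. Fix $\epsilon>0$; by Theorems~\ref{thm_bouck_vol} and \ref{thm_cont_begz} choose $m$ with $\int(\theta+\ddc\psi_m)^n\geq{\rm{vol}}(L)-\epsilon$, where $\theta$ is a smooth representative of $c_1(L)$, pass to the birational model $\pi\colon\hat{X}_m\to X$ resolving $\psi_m$, so that $\psi_m\circ\pi=\log|s_m|_{h^{E_m}}+g_m$ with $g_m$ continuous (after additional blow-ups), and set $F_m:=\pi^*L\otimes\mathscr{O}(-E_m)$, a big line bundle with ${\rm{vol}}_{\hat{X}_m}(F_m)\geq{\rm{vol}}(L)-\epsilon$. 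As in the proof of Lemma~\ref{lem_isom_reg}, via Construction~2 of Section~\ref{sect_subr}, one obtains, for a suitable $N\in\nat^*$, inclusions $\iota_k\colon V_k:=H^0(\hat{X}_m,F_m^{\otimes k})\hookrightarrow H^0(X,L^{\otimes k})$, $N\mid k$, with $\dim V_k/n_k\geq 1-\epsilon$ for $k$ large. A direct computation using Proposition~\ref{prop_pull_back} and the birational invariance of integrals then gives, as norms on $V_k$,
\begin{equation}\label{eq_restr_plan}
	\iota_k^*\ban^{\infty}_k(h^L)=\ban^{\infty}_k\big(P(\pi^*h^L/h^{E_m}_{{\rm{sing}}})\big),\qquad
	\iota_k^*\hilb_k(h^L,\rho\,dV_X)=\hilb_k\big(\pi^*h^L/h^{E_m}_{{\rm{sing}}},\,\pi^*(\rho\,dV_X)\big),
\end{equation}
where $\pi^*(\rho\,dV_X)$ still has full essential support on $\hat{X}_m$. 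Now, just as in Lemma~\ref{lem_isom_reg}: for every $l\geq m$ the metric $\pi^*h^L_l/h^{E_m}_{{\rm{sing}}}$ is continuous on a further blow-up of $\hat{X}_m$, one has $h^L\leq h^L_l$, and using the already-established quantization for continuous metrics on the big line bundle $F_m$ --- Theorem~\ref{thm_isom}, Corollary~\ref{thm_isom_big_l2}, and Proposition~\ref{prop_bm_volume} to pass from $dV_{\hat{X}_m}$ to $\pi^*(\rho\,dV_X)$ --- together with the monotonicity (\ref{eq_log_rel_spec2}), Lemma~\ref{lem_incr}, and the contraction property (\ref{eq_contraction_prop}), one gets $\frac1k d_p(\iota_k^*\hilb_k(h^L,\rho\,dV_X),\iota_k^*\ban^{\infty}_k(h^L))\to 0$. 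Transferring back to $H^0(X,L^{\otimes k})$ via Lemma~\ref{lem_comp_rest} costs only $O(\epsilon)+o(1)$ since $\dim V_k/n_k\geq 1-\epsilon$, so letting $\epsilon\to 0$ proves (\ref{eq_bm_plan}) along $k$ with $N\mid k$; the divisibility restriction is then removed by the multiplication-by-a-fixed-section argument of (\ref{eq_jk_morph_mult_triv})--(\ref{eq_jk_comp2}).

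The hard part is the $L^2$ bookkeeping in this last step. Sup-norms factor through psh envelopes, so in Lemma~\ref{lem_isom_reg} one could replace the regularizable metric by an envelope of a continuous metric; $\hilb$-norms do not have this feature, and under $\iota_k$ the divisor $E_m$ can be absorbed neither into a continuous metric (which would then acquire a zero along $E_m$) nor into a $k$-independent measure. The way around this, exactly as in Lemma~\ref{lem_isom_reg}, is to keep the continuous approximants $h^L_l$ in play for all $l\geq m$, to invoke the continuous-metric $\hilb$-quantization on the big line bundle $F_m$ (Corollary~\ref{thm_isom_big_l2}), and to pass to the limit $l\to\infty$ through the contraction property, which is precisely sharp enough to absorb the constant-order losses that any crude sandwiching would incur. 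Verifying (\ref{eq_restr_plan}) and the full support of $\pi^*(\rho\,dV_X)$ are routine but essential points.
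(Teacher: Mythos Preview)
There is a genuine gap in the central step. You aim to show $\frac{1}{k}d_p(\iota_k^*\hilb_k(h^L,\rho\,dV_X),\iota_k^*\ban^{\infty}_k(h^L))\to 0$ on $V_k$ using only the approximants $h^L_l\geq h^L$ from above, but these cannot supply the needed \emph{lower} bound on $\hilb_k(h^L,\cdot)$. From $h^L\leq h^L_l$ one gets $\hilb_k(h^L,\cdot)\leq\hilb_k(h^L_l,\cdot)$ and $\ban^\infty_k(h^L)\leq\ban^\infty_k(h^L_l)$, and Proposition~\ref{prop_bm_volume} applied to the continuous $\hat h_l$ gives $\ban^\infty_k(h^L_l)\leq\exp(\epsilon k)\hilb_k(h^L_l,\cdot)$. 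Chaining yields only $\ban^\infty_k(h^L)\leq\exp(\epsilon k)\hilb_k(h^L_l,\cdot)$, not $\ban^\infty_k(h^L)\leq\exp(\epsilon k)\hilb_k(h^L,\cdot)$; closing the gap would require $\hilb_k(h^L_l,\cdot)\leq\hilb_k(h^L,\cdot)$, which is false. In Lemma~\ref{lem_isom_reg} this problem does not arise because a continuous barrier $h^L_0\leq h^L$ is part of the hypothesis; here you never introduce one. (A secondary point: your argument, even if it worked, would give a $d_p$-estimate rather than the $d_\infty$-type inequality you label as the ``core statement''; this is harmless since $d_p$ is all that is needed, but it means the reduction paragraph is not doing what you claim.)

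The paper's proof sidesteps the birational machinery entirely and supplies exactly the missing ingredient: pick any continuous $h^L_0\leq h^L$; then $\hilb_k(h^L,\rho\,dV_X)\geq\hilb_k(h^L_0,\rho\,dV_X)\geq\exp(-\epsilon k)\ban^\infty_k(h^L_0)$ by Proposition~\ref{prop_bm_volume}, which together with the trivial upper bound sandwiches $\hilb_k(h^L,\rho\,dV_X)$ between $\ban^\infty_k(h^L_0)$ and $\ban^\infty_k(h^L)$. The Lidskii inequality (\ref{eq_lidski_gen}) then gives
\[
d_1\big(\ban^\infty_k(h^L),\hilb_k(h^L,\rho\,dV_X)\big)\leq d_1\big(\ban^\infty_k(h^L),\ban^\infty_k(h^L_0)\big)+\epsilon k+O(\log n_k),
\]
and Theorem~\ref{thm_isom_reg} --- already proved, with all birational work encapsulated inside --- bounds the right-hand side by $k\cdot d_1(h^L_*,P(h^L_0))+o(k)$. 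Letting $h^L_0\nearrow h^L_*$ via (\ref{eq_ext_dp_gennn}) finishes $p=1$; the general $p$ is reduced to $p=1$ using the uniform sandwich (\ref{eq_bm_volume_reg112312312}) together with \cite[Proposition 5.4]{FinEigToepl}.
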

	\begin{proof}
		For simplicity, without loosing the generality, we may assume $\sup_{x \in X} \rho(x) = 1$.
		Remark first that for an arbitrary continuous metric $h^L_0$ verifying $h^L \geq h^L_0$, we have $\hilb_k(h^L, \rho \cdot d V_X) \geq \hilb_k(h^L_0, \rho \cdot d V_X)$.
		Hence, by applying Proposition \ref{prop_bm_volume} for $h^L := h^L_0$, we see that for any $\epsilon > 0$, there is $k_0 \geq 0$ such that for any $k \geq k_0$, we have 
		\begin{equation}\label{eq_bm_volume_reg1}
			\hilb_k(h^L, \rho \cdot d V_X)
			\geq
			\ban^{\infty}_k(h^L_0)
			\cdot
			\exp(-\epsilon k).
		\end{equation}
		From this and (\ref{eq_bound_norm_min_sing}), we conclude that there is $C > 0$ such that for any $k \in \nat$, we have
		\begin{equation}\label{eq_bm_volume_reg112312312}
			\ban^{\infty}_k(h^L)
			\geq
			\ban^{\infty}((h^L)^k \cdot \rho )
			\geq
			\hilb_k(h^L, \rho \cdot d V_X)
			\geq
			\ban^{\infty}_k(h^L)
			\cdot
			\exp(- C k).
		\end{equation}
		By this and \cite[Proposition 5.4]{FinEigToepl}, we deduce that it suffices to establish the equivalence between the first norm from (\ref{eq_bm_volume_reg}) and the last one for $p = 1$.
		However, by (\ref{eq_lidski_gen}) and (\ref{eq_bm_volume_reg1}), for any $\epsilon > 0$, there is $k_0 > 0$ such that for any $k \geq k_0$, we have 
		\begin{equation}
			d_1\Big(\ban^{\infty}_k(h^L), \hilb_k(h^L, \rho \cdot d V_X) \Big)
			\leq
			d_1\Big(\ban^{\infty}_k(h^L), \ban^{\infty}_k(h^L_0) \Big)
			+
			\epsilon k
			+
			6 \log n_k.
		\end{equation}
		By this and Theorem \ref{thm_isom_reg}, we deduce that
		\begin{equation}\label{eq_bm_volume_reg1121}
			\limsup_{k \to \infty}
			\frac{1}{k} d_1\Big(\ban^{\infty}_k(h^L), \hilb_k(h^L, \rho \cdot d V_X) \Big)
			\leq
			d_1 (h^L_*, P(h^L_0))
			+
			\epsilon.
		\end{equation}
		Now, we consider a sequence of continuous metrics $h^L_i$, $i \in \nat$, increasing pointwise towards $h^L$ (since the potential of $h^L$ is upper semicontinuous, such a sequence always exists).
		Then by applying (\ref{eq_bm_volume_reg1121}) for $h^L_0 := h^L_i$, and taking the limit $i \to \infty$, since $\epsilon > 0$ can be chosen to be arbitrarily small, we deduce that 
		\begin{equation}\label{eq_bm_volume_reg11121}
			\limsup_{k \to \infty}
			\frac{1}{k} d_1\Big(\ban^{\infty}_k(h^L), \hilb_k(h^L, \rho \cdot d V_X) \Big)
			\leq
			\lim_{i \to \infty}
			d_1 (h^L_*, P(h^L_i)).
		\end{equation}
		However, by (\ref{eq_ext_dp_gennn}), we have $\lim_{i \to \infty} d_1 (h^L_*, P(h^L_i)) = 0$, and (\ref{eq_bm_volume_reg11121}) finishes the proof.
	\end{proof}
	
	\subsection{Submultiplicative norms on section rings of big line bundles}\label{sec_sm_norm_big_pf}
	The main goal of this section is to prove Theorem \ref{thm_char}.
	For this, we shall rely on the results of both Sections \ref{sect_sm_sing} and \ref{sect_reg_metr}.
	\par 
	\begin{proof}[Proof of Theorem \ref{thm_char}]
		\begin{sloppypar}
		For the moment, let us admit that for any $\epsilon > 0$, there are $N \in \nat^*$, $k_0 \in \nat$ such that for any $k \geq k_0$, $N | k_0$, we have
		\begin{equation}\label{eq_v_k_ban_d1}
			\limsup_{k \to \infty, N | k}\frac{1}{k} d_p\big( N_k, \ban^{\infty}_k(FS(N)_*) \big) \leq \epsilon.
		\end{equation}
		Let us now show that (\ref{eq_v_k_ban_d1}) continues to hold even when $\limsup_{k \to \infty, N | k}$ is replaced by $\limsup_{k \to \infty}$.
		Once this is established, the proof of Theorem \ref{thm_char} is complete, as $\epsilon > 0$ can be chosen arbitrarily small.
		\par 
		Our argument will be similar to the one described after (\ref{eq_jk_morph_mult_triv}).
		We assume now that $r \in \nat$ is sufficiently large so that there is a nonzero $s_r \in H^0(X, L^{\otimes r})$.
		For any $k \in \nat$, consider the monomorphism $j_k$ from (\ref{eq_jk_morph_mult_triv}).
		As in (\ref{eq_jk_comp2}), by the boundedness of $N$, we then deduce
		\begin{equation}\label{eq_char_pf1}
			\limsup_{k \to \infty}
			\frac{1}{k}
			\Big|
				d_p(N_{k + r}, \ban^{\infty}_{k + r}(FS(N)_*))
				-
				d_p(j_k^* N_{k + r}, j_k^* \ban^{\infty}_{k + r}(FS(N)_*))
			\Big|
			=
			0.
		\end{equation}
		Note, however, that the norm $j_k^* \ban^{\infty}_{k + r}(FS(N)_*)$ can be described as the sup-norm $\ban^{\infty}_1(FS(N)_*^k \cdot \rho)$, where $\rho(x) := |s_r(x)|_{FS(N)_*^r}$, $x \in X$.
		On the other hand, we have $\| s \cdot s_{r} \|_{N_{k + r}} \leq C \cdot \| s \|_{N_{k}}$, where $C := \| s_{r} \|_{N_r}$.
		We thus have
		\begin{equation}\label{eq_char_pf2}
			\ban^{\infty}_1((FS(N)_*)^k \cdot \rho)
			=
			j_k^* \ban^{\infty}_{k + r}(FS(N)_*)
			\leq
			j_k^* N_{k + r}
			\leq
			C N_k.
		\end{equation}
		Together with Proposition \ref{prop_bm_volume_reg}, (\ref{eq_log_rel_spec2}), (\ref{eq_gen_triangle}) and (\ref{eq_char_pf2}), we obtain
		\begin{equation}\label{eq_char_pf3}
			\limsup_{k \to \infty}
			\frac{1}{k}
			d_p(j_k^* N_{k + r}, j_k^* \ban^{\infty}_{k + r}(FS(N)_*))
			\leq
			\limsup_{k \to \infty}
			\frac{1}{k}
			d_p(N_k, \ban^{\infty}_k(FS(N)_*)).
		\end{equation}
		A combination of (\ref{eq_char_pf1}) and (\ref{eq_char_pf3}) implies
		\begin{equation}
			\limsup_{k \to \infty}
			\frac{1}{k}
			d_p(N_{k + r}, \ban^{\infty}_{k + r}(FS(N)_*))
			\leq 
			\limsup_{k \to \infty}
			\frac{1}{k}
			d_p(N_k, \ban^{\infty}_k(FS(N)_*)).
		\end{equation}
		By choosing $s_i \in H^0(X, L^{\otimes i})$ for a set of indices $i \in I$ which covers all the residues modulo $N$ and using the same argument as after (\ref{eq_jk_comp2}), we now see that if (\ref{eq_v_k_ban_d1}) holds, then it  continues to hold even if $\limsup_{k \to \infty, N | k}$ is replaced by $\limsup_{k \to \infty}$. 
		\end{sloppypar}
		\par 
		Now, it is only left to establish (\ref{eq_v_k_ban_d1}).
		Our first claim is that for a given $\epsilon > 0$, $m \in \nat$, there are $k_0 \in \nat$, $N \in \nat^*$ such that for any $k \geq k_0$, $N | k$, we have
		\begin{equation}\label{eq_restr_sym_bnd}
			N_{km}|_{[\sym^k H^0(X, L^{\otimes m})]}
			\leq
			\ban^{\infty}_k(FS(N_m))|_{[\sym^k H^0(X, L^{\otimes m})]}
			\cdot
			\exp(\epsilon k).
		\end{equation}
		Our proof will rely on the Construction 1 from Section \ref{sect_subr}, from which we borrow the notations for $Y_m$, $A_m$, $E_m$, $X_m$.
		Then as described in Construction 1 from Section \ref{sect_subr}, there are $k_0 \in \nat$, $N \in \nat^*$ such that for any $k \geq k_0$, $N | k$, there is a natural inclusion
		\begin{equation}
			\iota_k : H^0(Y_m, A_m^{\otimes k}) \to H^0(X, L^{\otimes km}),
		\end{equation}
		with the identification
		\begin{equation}\label{eq_ident_am_sym}
			 \iota_k (H^0(Y_m, A_m^{\otimes k})) = [\sym^k H^0(X, L^{\otimes m})].
		\end{equation}
		\par 
		Clearly, the norms $(\iota_k^* N_{km})_{k = 0}^{+\infty}$ form a submultiplicative norm on $R(Y_m, A_m)$.
		Immediately from Proposition \ref{prop_induc} and (\ref{eq_reform_subm_cond}), we deduce that for any $\epsilon > 0$, there is $k_0 \in \nat$ such that for any $k \geq k_0$, $N | k$, we have 
		\begin{equation}\label{eq_ident_am_sy123m}
			\iota_k^* N_{km}
			\leq
			\ban^{\infty}_k(Y_m, FS(\iota_1^* N_m))
			\cdot
			\exp(\epsilon k).
		\end{equation}
		Furthermore, by using the identification as in (\ref{eq_fs_n_k_a_k_rel}) and considering the straightforward behavior of the sup-norms under pull-backs, we observe that
		\begin{equation}
			\ban^{\infty}_k(Y_m, FS(\iota_1^* N_m))
			=
			\iota_k^*
			\ban^{\infty}_k(X, FS(N_m)).
		\end{equation}
		This shows that (\ref{eq_ident_am_sy123m}) is just a reformulation of (\ref{eq_restr_sym_bnd}) by (\ref{eq_ident_am_sym}).
		\par 
		\begin{sloppypar}
		Applying (\ref{eq_restr_sym_bnd}) with $m := m^l$ and using the obvious inclusion $[\sym^{k  m^{l - 1}} H^0(X, L^{\otimes m})] \subset [\sym^{k} H^0(X, L^{\otimes m^l})]$, $k, l \in \nat$, we deduce that for $h^L_i := FS(N_{m^i})^{\frac{1}{m^i}}$, the following holds: for any $\epsilon > 0$, $i \in \nat$, there are $k_0 \in \nat$, $N \in \nat^*$ such that for any $k \geq k_0$, $N | k$, we have
		\begin{equation}\label{eq_restr_sym_bnd1}
			N_{km}|_{[\sym^k H^0(X, L^{\otimes m})]}
			\leq
			\ban^{\infty}_k(h^L_i)|_{[\sym^k H^0(X, L^{\otimes m})]}
			\cdot
			\exp(\epsilon k/2)
		\end{equation}
		In particular, we have
		\begin{multline}\label{eq_restr_sym_bnd101}
			\limsup_{k \to \infty, mN | k}\frac{1}{k} d_p\big( N_k, \ban^{\infty}_k(FS(N)_*) \big) 
			\\
			\leq 
			\epsilon/2 
			+ 
			\limsup_{k \to \infty, mN | k}\frac{1}{k} d_p\big( \iota_k^* \ban^{\infty}_k(h^L_i), \iota_k^* \ban^{\infty}_k(FS(N)_*) \big).
		\end{multline}
		Note, however, that the sequence of metrics $h^L_i$ are decreasing by Lemma \ref{lem_fs_sm}.
		Moreover, by the proof of Proposition \ref{prop_fs_regul_blw}, $h^L_i$ have potential with weakly neat algebraic singularities.
		From this and the fact that $h^L_i$ converge towards $FS(N)_*$ almost everywhere, by Lemma \ref{lem_incr} and (\ref{eq_mab_appl_isom_ress01aaeee}), we see that there are $i \in \nat$, $N \in \nat^*$ such that 
		\begin{equation}\label{eq_restr_sym_bnd102}
			\limsup_{k \to \infty, mN | k}\frac{1}{k} d_p\big( \iota_k^* \ban^{\infty}_k(h^L_i), \iota_k^* \ban^{\infty}_k(FS(N)_*) \big)
			\leq
			\epsilon/2.
		\end{equation}
		A combination of (\ref{eq_restr_sym_bnd101}) and (\ref{eq_restr_sym_bnd102}) implies (\ref{eq_v_k_ban_d1}), finishing the proof.
		\end{sloppypar}
	\end{proof}

	\section{Applications}\label{sect_5}
	The main goal of this section is to present several applications of our results and techniques. 
	More specifically, in Section \ref{sect_quant_geod}, we study the quantization of geodesics and rooftop norms; in particular, we show Theorem \ref{cor_appr_geod}. 
	In Section \ref{sect_geod_ray}, we associate a geodesic ray of singular metrics to any bounded submultiplicative filtration and establish Theorem \ref{thm_filt}.
	
	\subsection{Quantization of geodesics and rooftop norms}\label{sect_quant_geod}
	The main goal of this section is to establish Theorem \ref{cor_appr_geod} and discuss the rooftop norms, thereby refining and partially generalizing a result of Darvas-Lu-Rubinstein \cite[Theorem 1.3]{DarvLuRub}
	\par 
	In the proof of Theorem \ref{cor_appr_geod}, the following result will play a crucial role.
	For the definition of complex interpolation, we refer the reader to (\ref{eq_defn_cx_inter}).
	\begin{lem}\label{lem_cx_inter_sm}
		Assume that $R = \bigoplus R_k$ is a graded algebra such that $\dim R_k$ is finite.
		Let $N_0 = (N_{k, 0})_{k = 0}^{\infty}$ and $N_1 = (N_{k, 1})_{k = 0}^{\infty}$ be two submultiplicative norms on $R$.
		Then, for any $t \in [0, 1]$, the graded norm $N_t := (N_{k, t})_{k = 0}^{\infty}$, composed from complex interpolations $N_{k, t}$ between $N_{k, 0}$ and $N_{k, 1}$ is submultiplicative.
	\end{lem}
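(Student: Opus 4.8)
The plan is to take near‑optimal competitors realizing the complex interpolation norms of the two factors and multiply them inside the algebra to produce a competitor for the interpolation norm of their product. Concretely, fix $k, l \in \nat$, $f \in R_k$, $g \in R_l$, and $\epsilon > 0$, and write $\| \cdot \|_{k, i}$ for the norm $N_{k, i}$, $i = 0, 1$, and $\| \cdot \|_{k, t}$ for the interpolated norm $N_{k, t}$. By the definition of complex interpolation (\ref{eq_defn_cx_inter}) and since $\mathcal{F}(R_k) \neq \emptyset$, choose $F \in \mathcal{F}(R_k)$ with $F(t) = f$ and
\[
	\max\Big( \sup_{y \in \real} \| F(iy) \|_{k, 0}, \ \sup_{y \in \real} \| F(1 + iy) \|_{k, 1} \Big) \leq \| f \|_{k, t} + \epsilon,
\]
and, similarly, $G \in \mathcal{F}(R_l)$ with $G(t) = g$ and the analogous bound with $\| g \|_{l, t}$ in place of $\| f \|_{k, t}$. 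Then set $H(z) := F(z) \cdot G(z) \in R_{k + l}$, the product being taken via the multiplication map $R_k \otimes R_l \to R_{k + l}$ of the graded algebra; note $H(t) = f \cdot g$.

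The first step I would carry out with care is to check that $H \in \mathcal{F}(R_{k + l})$. Since all graded pieces are finite‑dimensional, the multiplication map is continuous and bilinear, hence holomorphic, so composing it with the pair $z \mapsto (F(z), G(z))$ — which is continuous on $\overline{S}$ and holomorphic on $S$ — shows that $H$ is continuous on $\overline{S}$ and holomorphic on $S$ (in coordinates, $H$ is a fixed bilinear expression in the holomorphic coordinate functions of $F$ and $G$). Boundedness of the boundary traces $y \mapsto H(iy)$ and $y \mapsto H(1 + iy)$ follows from continuity of the multiplication and boundedness of the boundary traces of $F$ and $G$, or, more quantitatively, from submultiplicativity of $N_0$ and $N_1$. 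This verification, although routine in the finite‑dimensional setting, is the only genuinely non‑formal point and is the step I expect to demand the most attention; everything else is direct manipulation of the definitions.

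Finally I would estimate. For every $y \in \real$, submultiplicativity of $N_0$ gives $\| H(iy) \|_{k + l, 0} \leq \| F(iy) \|_{k, 0} \cdot \| G(iy) \|_{l, 0}$, hence $\sup_y \| H(iy) \|_{k + l, 0} \leq \big( \sup_y \| F(iy) \|_{k, 0} \big)\big( \sup_y \| G(iy) \|_{l, 0} \big)$, and likewise on $\Re z = 1$ using $N_1$. Feeding $H$ into (\ref{eq_defn_cx_inter}) and using the elementary inequality $\max\{ ac, bd \} \leq \max\{ a, b \} \cdot \max\{ c, d \}$ for nonnegative reals, one obtains $\| f \cdot g \|_{k + l, t} \leq ( \| f \|_{k, t} + \epsilon )( \| g \|_{l, t} + \epsilon )$. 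Letting $\epsilon \to 0$ yields the submultiplicativity of $N_t$, proving the lemma. (No balancing of the two boundary suprema by an exponential factor $e^{\lambda z}$ is required, though it could be used to streamline the bookkeeping slightly.)
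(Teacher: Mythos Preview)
Your proof is correct and follows essentially the same approach as the paper's: both pick competitors $F \in \mathcal{F}(R_k)$, $G \in \mathcal{F}(R_l)$ for the interpolation norms of the factors, multiply them to obtain a competitor $H = F \cdot G \in \mathcal{F}(R_{k+l})$ for the product (this is the paper's observation $\mathcal{F}(R_k) \cdot \mathcal{F}(R_l) \subset \mathcal{F}(R_{k+l})$), and then combine submultiplicativity of $N_0, N_1$ on the boundary with the elementary inequality $\max\{ac, bd\} \leq \max\{a,b\} \cdot \max\{c,d\}$. The only cosmetic differences are that you use an explicit $\epsilon$-argument while the paper manipulates the infima directly, and you spell out the verification that $H \in \mathcal{F}(R_{k+l})$ in more detail than the paper does.
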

	\begin{proof}
		The result follows immediately from the definition (\ref{eq_defn_cx_inter}).
		For completeness, let us provide an argument.
		For given $k, l \in \nat$, we fix $s_1 \in R_k$, $s_2 \in R_l$.
		Then in the notations of (\ref{eq_defn_cx_inter}), for any $t \in ]0, 1[$, we have
		\begin{equation}\label{eq_sm_intrp}
		\| s_1 \cdot s_2 \|_{N_{k + l, t}}
		= \inf_{\substack{f \in \mathcal{F}(R_{k + l}) \\ f(t) = s_1 \cdot s_2}} \Big\{ 
		    \max \big( 
	   	     \sup_{y \in \mathbb{R}} \| f(iy) \|_{N_{k + l, 0}}, \;
	   	     \sup_{y \in \mathbb{R}} \| f(1+iy) \|_{N_{k + l, 1}}
	   	 \big)
		\Big\}.
		\end{equation}
		Since we have $\mathcal{F}(R_k) \cdot \mathcal{F}(R_l) \subset \mathcal{F}(R_{k + l})$, the right-hand side of (\ref{eq_sm_intrp}) increases if we replace the infimum over $f \in \mathcal{F}(R_{k + l})$ verifying $f(t) = s_1 \cdot s_2$ by the infimum over $f_1 \in \mathcal{F}(R_{k})$, $f_2 \in \mathcal{F}(R_{l})$ verifying $f_1(t) = s_1$, $f_2(t) = s_2$.
		Moreover, by the submultiplicativity of $N_0$, $N_1$, for any $f_1 \in \mathcal{F}(R_{k})$ and $f_2 \in \mathcal{F}(R_{l})$, the following bound holds
		\begin{multline}
			 \sup_{y \in \mathbb{R}} \max \big( \| f_1 \cdot f_2 (iy) \|_{N_{k + l, 0}}, \; \| f_1 \cdot f_2 (1 + iy) \|_{N_{k + l, 1}} \big)
			 \\
			\leq
			 \sup_{y \in \mathbb{R}}  \max \big( \| f_1(iy) \|_{N_{k, 0}}, \; \| f_1(1 + iy) \|_{N_{k, 1}} \big)
			 \\
			\cdot
			\sup_{y \in \mathbb{R}} \max \big( \| f_2(iy) \|_{N_{l, 0}}, \; \| f_2(1 + iy) \|_{N_{l, 1}} \big).
		\end{multline}
		Combining all of the above bounds yields the bound $\| s_1 \cdot s_2 \|_{N_{k + l, t}} \leq \| s_1 \|_{N_{k, t}} \cdot \| s_2 \|_{N_{l, t}}$.
	\end{proof}
	\par 
	\begin{proof}[Proof of Theorem \ref{cor_appr_geod}]
		We conserve the notation from Theorem \ref{cor_appr_geod}.
		For any $t \in [0, 1]$, we introduce the graded norm $N_t := (N_{k, t})_{k = 0}^{\infty}$.
		By Lemma \ref{lem_cx_inter_sm}, $N_t$ is submultiplicative for any $t \in [0, 1]$, as it is submultiplicative for $t = 0, 1$.
		Moreover, we claim that the norm $N_t$ is bounded for any $t \in [0, 1]$.
		Indeed, we fix a continuous metric $h^L$ on $L$, and let $C > 0$ be such that for any $k \in \nat^*$, $t = 0, 1$, we have $\ban^{\infty}_k(h^L) \cdot \exp(-C k) \leq N_{k, t} \leq \ban^{\infty}_k(h^L) \cdot \exp(C k)$.
		Then by the monotonicity properties of complex interpolation explained after (\ref{eq_defn_cx_inter}), the above inequality continues to hold for any $t \in [0, 1]$.
		\par 
		\begin{sloppypar}
		Let us now establish that it suffices to show that for any $p \in [1, +\infty[$, $s, t \in [0, 1]$, we have 
		\begin{equation}\label{eq_mab_geod_metr_fs}
			d_p(FS(N_t)_*, FS(N_s)_*) = |t - s| \cdot d_p(FS(N_0)_*, FS(N_1)_*).
		\end{equation}
		Indeed, since Mabuchi geodesics are unique metric geodesics with respect to the Mabuchi-Darvas distances, $d_p$, $p \in ]1, +\infty[$, see Theorem \ref{thm_mab_geod}, and by Theorem \ref{thm_conv_fs}, $FS(N_0)_* = P(h^L_0)$, $FS(N_1)_* = P(h^L_1)$, we see that (\ref{eq_mab_geod_metr_fs}) implies that $FS(N_t)_*$, $t \in [0, 1]$, is the Mabuchi geodesic between $P(h^L_0)$ and $P(h^L_0)$.
		This clearly finishes the proof as by Lemma \ref{lem_fs_sm} and Remark \ref{rem_conv_out_pp}, $FS(N_{k, t})^{\frac{1}{k}}$ converge towards $FS(N_t)_*$, as $k \to \infty$, outside of a pluripolar subset.
		\end{sloppypar}
		\par 
		Now, let us establish (\ref{eq_mab_geod_metr_fs}).
		First of all, by the aforementioned submultiplicativity and boundedness, we can apply Theorem \ref{thm_char}, which gives $N_t \sim_p \ban^{\infty}(FS(N_t)_*)$ for any $t \in [0, 1]$, $p \in [1, +\infty[$.
		Then by Theorem \ref{thm_isom_reg} and Proposition \ref{prop_fs_regul_blw}, we deduce that 
		\begin{equation}\label{eq_dp_fs_geod_isom}
			\lim_{k \to \infty} \frac{d_p(N_{k, t}, N_{k, s})}{k}
			=
			d_p(FS(N_t)_*, FS(N_s)_*).
		\end{equation}
		Let us establish that for any $k \in \nat$, the path $[0, 1] \ni t \mapsto N_{k, t}$ is not very far from the metric geodesic in the sense that the following estimate holds
		\begin{equation}\label{eq_dp_fs_geod_isom2}
			\Big| 
				d_p(N_{k, t}, N_{k, s})
				-
				|t - s|
				\cdot
				d_p(N_{k, 0}, N_{k, 1})
			\Big|
			\leq
			30 \log n_k.
		\end{equation}
		Once (\ref{eq_dp_fs_geod_isom2}) is established, it would finish the proof of (\ref{eq_mab_geod_metr_fs}) by (\ref{eq_dp_fs_geod_isom}).
		\par 
		To establish (\ref{eq_dp_fs_geod_isom2}), we replace complex interpolation by the geodesics between Hermitian norms. 
		For this, let $H_{k, 0}$, $H_{k, 1}$ be Hermitian norms on $H^0(X, L^{\otimes k})$, associated by (\ref{eq_john_ellips}) to $N_{k, 0}$, $N_{k, 1}$, i.e., such that for $t = 0, 1$, we have
		\begin{equation}\label{eq_nkt_hkt_compl}
			H_{k, t} \leq N_{k, t} \leq H_{k, t} \cdot \sqrt{n_k}.
		\end{equation}
		We denote by $H_{k, t}$, $t \in [0, 1]$, the complex interpolation between $H_{k, 0}$ and $H_{k, 1}$.
		Then by (\ref{eq_nkt_hkt_compl}) and the monotonicity properties of complex interpolation explained after (\ref{eq_defn_cx_inter}), we deduce that (\ref{eq_nkt_hkt_compl}) continues to hold for any $t \in [0, 1]$.
		However, as recalled in Section \ref{sect_quas_metr}, for Hermitian norms, complex interpolation produces metric geodesics for $d_p$.
		Hence, we have
		\begin{equation}\label{eq_dp_fs_geod_isom3}
			d_p(H_{k, t}, H_{k, s})
			=
			|t - s|
			\cdot
			d_p(H_{k, 0}, H_{k, 1}).
		\end{equation}
		The estimate (\ref{eq_dp_fs_geod_isom2}) now follows from (\ref{eq_gen_triangle}), (\ref{eq_nkt_hkt_compl}) and (\ref{eq_dp_fs_geod_isom3}).
	\end{proof}
	\par 
	Let us now formulate an $L^2$-version of Theorem \ref{cor_appr_geod} in the spirit of Theorem \ref{thm_appr_geod_ample}.
	We still follow the notation from Theorem \ref{cor_appr_geod}.
	For any $k \in \nat$, $t \in [0, 1]$, we consider the geodesic $H_{k, t}$ between $\hilb_k(h^L_0, dV_X)$ and $\hilb_k(h^L_1, dV_X)$, as described after (\ref{dist_norm_fins_expl}).
	\begin{prop}\label{prop_appr_geod_big_l2}
		For any $t \in [0, 1]$, the sequence of metrics $FS(H_{k, t})^{\frac{1}{k}}$ on $L$ converges outside of a pluripolar subset towards $h^L_t$, as $k \to \infty$.
	\end{prop}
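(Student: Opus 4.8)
The plan is to deduce Proposition~\ref{prop_appr_geod_big_l2} from Theorem~\ref{cor_appr_geod} by comparing the geodesic $H_{k,t}$ between the Hilbert norms with the complex interpolation $N_{k,t}$ between the sup-norms. The starting point is the observation, recalled in Section~\ref{sect_quas_metr} via the Stein--Weiss interpolation theorem, that the metric geodesic $H_{k,t}$ joining the two Hermitian norms $\hilb_k(h^L_0, dV_X)$ and $\hilb_k(h^L_1, dV_X)$ is precisely their complex interpolation.

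First I would invoke Proposition~\ref{prop_bm_volume} (the Bernstein--Markov property of $dV_X$), together with the elementary bound $\hilb_k(h^L, dV_X) \leq \sqrt{\int_X dV_X}\cdot \ban^{\infty}_k(h^L)$, to obtain: for every $\epsilon > 0$ there is $k_0 \in \nat$ such that for $i = 0,1$ and $k \geq k_0$ one has $\exp(-\epsilon k)\cdot \ban^{\infty}_k(h^L_i) \leq \hilb_k(h^L_i, dV_X) \leq \exp(\epsilon k)\cdot \ban^{\infty}_k(h^L_i)$. Applying the monotonicity and scaling homogeneity of complex interpolation explained after~(\ref{eq_defn_cx_inter}) to these endpoint comparisons, the two-sided bounds propagate to all interpolation times, giving $\exp(-\epsilon k)\cdot N_{k,t} \leq H_{k,t} \leq \exp(\epsilon k)\cdot N_{k,t}$ for all $t \in [0,1]$ and $k \geq k_0$.

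Next I would transfer these estimates to the level of metrics using the monotonicity~(\ref{eq_fs_mono}) of the Fubini--Study operator and its obvious homogeneity $FS(c\, N_k) = c\cdot FS(N_k)$, which yields $\exp(-\epsilon)\cdot FS(N_{k,t})^{\frac{1}{k}} \leq FS(H_{k,t})^{\frac{1}{k}} \leq \exp(\epsilon)\cdot FS(N_{k,t})^{\frac{1}{k}}$ for $k \geq k_0$. By Theorem~\ref{cor_appr_geod}, $FS(N_{k,t})^{\frac{1}{k}}$ converges to $h^L_t$ outside a pluripolar set $S_t$; hence on $X\setminus S_t$ both the limit superior and limit inferior of $FS(H_{k,t})^{\frac{1}{k}}$ lie within a factor $\exp(\pm\epsilon)$ of $h^L_t$, and since $S_t$ does not depend on $\epsilon$, letting $\epsilon \to 0$ gives the claimed convergence on $X\setminus S_t$.

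The argument is soft, and I do not anticipate a genuine obstacle: the only points demanding attention are the identification of $H_{k,t}$ with a complex interpolation and the fact that a two-sided comparison of the endpoints survives the interpolation, both of which are already part of the formalism of Section~\ref{sect_quas_metr}. In fact this proposition is the big-line-bundle counterpart of Proposition~\ref{thm_appr_geod_ample_ban}, and its proof follows the same outline.
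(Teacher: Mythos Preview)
Your proposal is correct and follows essentially the same route as the paper: Bernstein--Markov at the endpoints, monotonicity (and scaling) of complex interpolation to propagate the two-sided bound to all $t$, then monotonicity of the Fubini--Study map together with Theorem~\ref{cor_appr_geod} to conclude. The paper's proof is terser but identical in substance, and your remark that this is the big analogue of Proposition~\ref{thm_appr_geod_ample_ban} is exactly the right framing.
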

	\begin{proof}
		By Proposition \ref{prop_bm_volume} and the reasoning as in (\ref{eq_nkt_hkt_compl}), we obtain that for any $\epsilon > 0$, there is $k_0 \in \nat$, so that for any $t \in [0, 1]$, we have
		\begin{equation}
			H_{k, t} \cdot \exp(-\epsilon k) \leq N_{k, t} \leq H_{k, t} \cdot \exp(\epsilon k).
		\end{equation}
		The result now follows from the above estimate, Theorem \ref{cor_appr_geod} and (\ref{eq_fs_mono}).
	\end{proof}
	\par
	We conclude this section by discussing certain consequences concerning the quatization of the rooftop norms introduced in Section \ref{sect_comp_max_norm}. 
	We borrow the notations from Section \ref{sect_comp_max_norm} below.
	\begin{prop}\label{prop_rftop_quant}
		For any continuous metrics $h^L_0$, $h^L_1$ on $L$, and any smooth volume form $dV_X$ on $X$, the following holds. For any $\epsilon > 0$, there is $k_0 \in \nat$ such that for any $k \geq k_0$, we have
		\begin{multline}\label{eq_prop_rftop_quant1}
			\hilb_k(\max\{ h^L_0, h^L_1 \}, dV_X) 
			\cdot
			\exp(-\epsilon k)
			\\
			\leq
			\hilb_k(h^L_0, dV_X) \vee \hilb_k(h^L_1, dV_X)
			\\
			\leq
			\hilb_k(\max\{ h^L_0, h^L_1 \}, dV_X) 
			\cdot
			\exp(\epsilon k).
		\end{multline}
		In particular, uniformly over compact subsets of $X \setminus \mathbb{B}_+(L)$, we have
		\begin{equation}\label{eq_prop_rftop_quant2}
			\lim_{k \to \infty} FS(\hilb_k(h^L_0, dV_X) \vee \hilb_k(h^L_1, dV_X))^{\frac{1}{k}}
			=
			P(\max\{ h^L_0, h^L_1 \}).
		\end{equation}
	\end{prop}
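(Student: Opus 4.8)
The plan is to first establish the norm comparison (\ref{eq_prop_rftop_quant1}) at the level of norms on $H^0(X, L^{\otimes k})$, and then to deduce the convergence (\ref{eq_prop_rftop_quant2}) by feeding it into the already-established quantization results. For the first step, I would interpose the non-Hermitian norm $\max\{\hilb_k(h^L_0, dV_X), \hilb_k(h^L_1, dV_X)\}$. On the one hand, since both $\hilb_k(h^L_i, dV_X)$ are Hermitian, (\ref{eq_max_vee_comp}) applies and gives $\max\{\hilb_k(h^L_0, dV_X), \hilb_k(h^L_1, dV_X)\} \leq \hilb_k(h^L_0, dV_X) \vee \hilb_k(h^L_1, dV_X) \leq \sqrt{2}\cdot\max\{\hilb_k(h^L_0, dV_X), \hilb_k(h^L_1, dV_X)\}$. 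On the other hand, the metric $\max\{h^L_0, h^L_1\}$ satisfies $|s(x)|^2_{(\max\{h^L_0,h^L_1\})^k} = \max\{|s(x)|^2_{(h^L_0)^k}, |s(x)|^2_{(h^L_1)^k}\}$, so the elementary inequality $\max\{a,b\} \leq a+b \leq 2\max\{a,b\}$ applied pointwise under the integral in (\ref{eq_defn_l2}) yields $\max\{\hilb_k(h^L_0, dV_X), \hilb_k(h^L_1, dV_X)\} \leq \hilb_k(\max\{h^L_0, h^L_1\}, dV_X) \leq \sqrt{2}\cdot\max\{\hilb_k(h^L_0, dV_X), \hilb_k(h^L_1, dV_X)\}$. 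Chaining these two comparisons gives $\frac{1}{\sqrt{2}}\hilb_k(\max\{h^L_0, h^L_1\}, dV_X) \leq \hilb_k(h^L_0, dV_X)\vee\hilb_k(h^L_1, dV_X) \leq \sqrt{2}\,\hilb_k(\max\{h^L_0, h^L_1\}, dV_X)$, which in particular implies (\ref{eq_prop_rftop_quant1}) once $k$ is large enough that $\exp(\epsilon k) \geq \sqrt{2}$.

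For the second step, I would apply the monotonicity of the Fubini--Study operator (\ref{eq_fs_mono}) together with the homogeneity $FS(c\cdot N_k)^{1/k} = c^{1/k}FS(N_k)^{1/k}$, immediate from (\ref{eq_fs_norm}), to the bounds just obtained: this shows that for any $\epsilon>0$ and all large $k$ the metric $FS(\hilb_k(h^L_0, dV_X)\vee\hilb_k(h^L_1, dV_X))^{1/k}$ lies between $\exp(-\epsilon)$ and $\exp(\epsilon)$ times $FS(\hilb_k(\max\{h^L_0, h^L_1\}, dV_X))^{1/k}$. Since $\max\{h^L_0, h^L_1\}$ is a continuous metric on $L$, Proposition \ref{prop_bm_volume} (for the lower bound) together with the trivial estimate $\hilb_k(\max\{h^L_0, h^L_1\}, dV_X) \leq (\int_X dV_X)^{1/2}\cdot\ban^{\infty}_k(\max\{h^L_0, h^L_1\})$ (for the upper bound) shows that $\hilb_k(\max\{h^L_0, h^L_1\}, dV_X)$ and $\ban^{\infty}_k(\max\{h^L_0, h^L_1\})$ differ by a factor $\exp(\pm\epsilon k)$ for $k$ large, and hence, by (\ref{eq_fs_mono}) and the homogeneity again, $FS(\hilb_k(\max\{h^L_0, h^L_1\}, dV_X))^{1/k}$ differs from $FS(\ban^{\infty}_k(\max\{h^L_0, h^L_1\}))^{1/k}$ by a factor $\exp(\pm\epsilon)$. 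Finally, Theorem \ref{thm_conv_fs} gives that $FS(\ban^{\infty}_k(\max\{h^L_0, h^L_1\}))^{1/k}$ converges to $P(\max\{h^L_0, h^L_1\})$ uniformly on compact subsets of $X\setminus\mathbb{B}_+(L)$. Combining these three comparisons and letting $\epsilon\to 0$ yields (\ref{eq_prop_rftop_quant2}).

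As this outline makes clear, once one has the elementary comparison (\ref{eq_max_vee_comp}) between the rooftop and the maximum of two Hermitian norms, and the quantization inputs of Proposition \ref{prop_bm_volume} and Theorem \ref{thm_conv_fs}, the statement is essentially formal, so I do not anticipate a genuine obstacle. The only points requiring a little care are bookkeeping: keeping track of the absolute, $k$-independent multiplicative constants in the norm comparison (so that they are absorbed by $\exp(\pm\epsilon k)$), and ensuring that $\max\{h^L_0, h^L_1\}$ is treated as a continuous metric so that Theorem \ref{thm_conv_fs} and Proposition \ref{prop_bm_volume} apply verbatim, with its envelope $P(\max\{h^L_0, h^L_1\})$ being precisely the metric appearing on the right-hand side of (\ref{eq_prop_rftop_quant2}).
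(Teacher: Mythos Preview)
Your proof is correct and, for the first assertion (\ref{eq_prop_rftop_quant1}), actually more direct than the paper's. The paper routes the comparison through the sup-norms: it uses the exact identity $\max\{\ban^{\infty}_k(h^L_0),\ban^{\infty}_k(h^L_1)\}=\ban^{\infty}_k(\max\{h^L_0,h^L_1\})$, then applies the Bernstein--Markov property (Proposition \ref{prop_bm_volume}) separately for $h^L_0$, $h^L_1$ to pass from $\hilb_k$ to $\ban^{\infty}_k$, combines this with (\ref{eq_max_vee_comp}), and finally applies Proposition \ref{prop_bm_volume} once more for $\max\{h^L_0,h^L_1\}$ to return to $\hilb_k$. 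Your argument stays entirely at the $L^2$ level, using the pointwise inequality $\max\{a,b\}\le a+b\le 2\max\{a,b\}$ under the integral to compare $\hilb_k(\max\{h^L_0,h^L_1\},dV_X)$ directly with $\max\{\hilb_k(h^L_0,dV_X),\hilb_k(h^L_1,dV_X)\}$. This yields the sharper uniform bound $\frac{1}{\sqrt{2}}\hilb_k(\max\{h^L_0,h^L_1\},dV_X)\le\hilb_k(h^L_0,dV_X)\vee\hilb_k(h^L_1,dV_X)\le\sqrt{2}\,\hilb_k(\max\{h^L_0,h^L_1\},dV_X)$ with $k$-independent constants, and in particular does not invoke Bernstein--Markov at all for (\ref{eq_prop_rftop_quant1}). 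For the second assertion (\ref{eq_prop_rftop_quant2}) the two arguments coincide: both pass to $FS$, use monotonicity, compare $\hilb_k$ to $\ban^{\infty}_k$ via Proposition \ref{prop_bm_volume}, and conclude with Theorem \ref{thm_conv_fs}.
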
	 
	\begin{rem}
		For more singular $h^L_0, h^L_1$ with psh potentials on ample $L$, a weaker form of the above result was established by Darvas-Lu-Rubinstein \cite[Theorem 1.3]{DarvLuRub} using different techniques.
	\end{rem}
	\begin{proof}
		First of all, we note that for any $k \in \nat$, we have the following obvious identity
		\begin{equation}
			\max \Big\{ 
				\ban^{\infty}_k(h^L_0), \ban^{\infty}_k(h^L_1)
			\Big\}
			=
			\ban^{\infty}_k(\max\{ h^L_0, h^L_1 \}).
		\end{equation}
		By this, Proposition \ref{prop_bm_volume} applied for $h^L_0$, $h^L_1$, and (\ref{eq_max_vee_comp}), we then establish
		\begin{multline}
			\ban^{\infty}_k(\max\{ h^L_0, h^L_1 \}) 
			\cdot
			\exp(-\epsilon k)
			\\
			\leq
			\hilb_k(h^L_0, dV_X) \vee \hilb_k(h^L_1, dV_X)
			\\
			\leq
			\ban^{\infty}_k(\max\{ h^L_0, h^L_1 \}) 
			\cdot
			\sqrt{2}.
		\end{multline}
		Then (\ref{eq_prop_rftop_quant1}) follows immediately from an additional application of Proposition \ref{prop_bm_volume}, but now for the metric $\max\{ h^L_0, h^L_1 \}$.
		The second result of Proposition \ref{prop_rftop_quant} is an immediate consequence of the first one and the result of Berman \cite[Theorem 1.4]{BermanEnvProj}, cf. the proof of Theorem \ref{thm_conv_fs}.
	\end{proof}
	
	\subsection{Geodesic rays and submultiplicative filtrations}\label{sect_geod_ray}
	The main goal of this section is twofold: first, to extend the construction of the geodesic ray associated with a filtration from the ample case to the big case; and second, to show that the speed of this geodesic ray captures the statistical invariants of the filtration.
	\par 
	We begin with some necessary preliminaries from linear algebra: we discuss two different constructions of rays of norms associated with a filtration.	
	\par 
	We fix a finitely-dimensional vector space $V$, $\dim V = v$, and a filtration $\mathcal{F}$ of $V$.
	We also fix a \textit{Hermitian} norm $H_0$ on $V$. 
	Consider an orthonormal basis $s_1, \ldots, s_v$, of $V$, adapted to the filtration $\mathcal{F}$, i.e., verifying $s_i \in \mathcal{F}^{e_{\mathcal{F}}(i)} V$, where $e_{\mathcal{F}}(i)$ are defined as in (\ref{eq_defn_jump_numb}).
	We define the ray of Hermitian norms $H_t$, $t \in [0, +\infty[$, on $V$ by declaring the basis 
	\begin{equation}\label{eq_bas_st}
		(s_1^t, \ldots, s_v^t) := \big( e^{t e_{\mathcal{F}}(1)} s_1, \ldots, e^{t e_{\mathcal{F}}(v)} s_v \big),
	\end{equation}
	to be orthonormal with respect to $H_t$.
	\par 
	It is immediate to verify that the resulting ray is geodesic with respect to the distances $d_p$, $p \in [1, +\infty[$, defined in Section \ref{sect_quas_metr}.
	More specifically, for any $t, s \in [0, +\infty[$, we have
	\begin{equation}\label{eq_dp_gr_e_jumppp}
		d_p (H_t, H_s)
		=
		|t - s| \cdot
		\sqrt[p]{ \frac{\sum_{i = 1}^{v} |e_{\mathcal{F}}(i)|^p}{v}}.
	\end{equation}
	\par
	We now introduce an alternative construction of the ray of norms, originating in \cite{FinNarSim}. 
	This construction works for an arbitrary (not necessarily Hermitian) norm $N_0$ on $V$. 
	We define the ray of norms $N_t$, $t \in [0, +\infty[$, emanating from $N_0$, as follows:
	\begin{equation}\label{eq_ray_norm_defn0}
		\| f \|_{N_t} :=
		\inf 
		\Big\{
			\sum
			e^{- t \mu_i} \cdot 
			\| f_i \|_{N_0}
			\,
			:
			\,
			f = \sum f_i, \, f_i \in \mathcal{F}^{\mu_i} V
		\Big\}.
	\end{equation}
	The following result compares the two constructions.
	\begin{lem}[{ \cite[Lemma 2.11]{FinNarSim} }]\label{lem_two_norms_comp0}
		For any (resp. Hermitian) norm $N_0$ (resp. $H_0$) on $V$ and any $t \in [0, +\infty[$, we have
		\begin{equation}
			d_{+ \infty}(H_t, N_t)
			\leq
			d_{+ \infty}(H_0, N_0)
			+
			\log v,
		\end{equation}
		where $d_{+ \infty}(N_0, N_1)$ for two norms $N_0$, $N_1$ on $V$ is defined as the minimal $C > 0$ such that $N_0 \cdot \exp(-C) \leq N_1 \leq N_0 \cdot \exp(C)$, and $H_t$ (resp. $N_t$) is the ray of norms emanating from $H_0$ (resp. $N_0$) and associated with $\mathcal{F}$ as in the construction (\ref{eq_bas_st}) (resp. (\ref{eq_ray_norm_defn0})).
	\end{lem}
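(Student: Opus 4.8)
The plan is to pass through an auxiliary object: the ray $\tilde H_t$ obtained by feeding the \emph{Hermitian} norm $H_0$ itself into the construction \eqref{eq_ray_norm_defn0}. First I would record two elementary facts. On the one hand, $d_{+\infty}$ is a genuine metric on the space of all norms on $V$: it equals $\sup_{w \in V \setminus \{0\}} \big| \log (\|w\|_{N_0}/\|w\|_{N_1}) \big|$, so the triangle inequality is immediate. On the other hand, the construction \eqref{eq_ray_norm_defn0} is order-preserving in its input norm and positively homogeneous under rescaling it, both being visible directly from the infimum defining $\|\cdot\|_{N_t}$. Consequently, setting $C := d_{+\infty}(H_0, N_0)$, the inequalities $e^{-C} N_0 \le H_0 \le e^{C} N_0$ transport through \eqref{eq_ray_norm_defn0} to give $e^{-C} N_t \le \tilde H_t \le e^{C} N_t$, i.e. $d_{+\infty}(N_t, \tilde H_t) \le d_{+\infty}(N_0, H_0)$. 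By the triangle inequality for $d_{+\infty}$ it then suffices to establish $d_{+\infty}(H_t, \tilde H_t) \le \log v$, which reduces the whole statement to the purely Hermitian situation.

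For the Hermitian comparison I would use the $H_0$-orthogonal decomposition $V = \bigoplus_{i=1}^{r} W_{\lambda_i}$, where $\lambda_1 > \cdots > \lambda_r$ are the distinct jumping numbers and $W_{\lambda_i}$ is the $H_0$-orthogonal complement of $\mathcal{F}^{>\lambda_i} V$ inside $\mathcal{F}^{\lambda_i} V$; here $r \le v$ and $\mathcal{F}^{\mu} V = \bigoplus_{\lambda_i \ge \mu} W_{\lambda_i}$ for every $\mu \in \real$. By the very definition \eqref{eq_bas_st}, the $W_{\lambda_i}$ are mutually $H_t$-orthogonal and $H_t$ restricts to $e^{-t\lambda_i} H_0$ on $W_{\lambda_i}$, so $\|f\|_{H_t}^2 = \sum_i e^{-2t\lambda_i}\|P_i f\|_{H_0}^2$, where $P_i$ is the $H_0$-orthogonal projection onto $W_{\lambda_i}$. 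For the bound $\tilde H_t \le \sqrt{r}\, H_t$, I feed the decomposition $f = \sum_i P_i f$ (with $P_i f \in \mathcal{F}^{\lambda_i} V$) into \eqref{eq_ray_norm_defn0} and apply Cauchy--Schwarz. For the reverse bound, I take an arbitrary admissible decomposition $f = \sum_j g_j$, $g_j \in \mathcal{F}^{\mu_j} V$; since $P_i g_j = 0$ unless $\lambda_i \ge \mu_j$, and since the Hermitian projection $P_i$ does not increase the $H_0$-norm, one gets $\|P_i f\|_{H_t} = e^{-t\lambda_i}\big\|\sum_j P_i g_j\big\|_{H_0} \le \sum_j e^{-t\mu_j}\|g_j\|_{H_0}$ for every $i$, whence $\|f\|_{H_t} \le \sqrt{r}\,\sum_j e^{-t\mu_j}\|g_j\|_{H_0}$; taking the infimum over decompositions yields $H_t \le \sqrt{r}\, \tilde H_t$. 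Together these give $\tfrac{1}{\sqrt r} H_t \le \tilde H_t \le \sqrt r\, H_t$, i.e. $d_{+\infty}(H_t, \tilde H_t) \le \tfrac12 \log r \le \log v$.

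Combining the two estimates through the triangle inequality gives $d_{+\infty}(H_t, N_t) \le d_{+\infty}(H_t, \tilde H_t) + d_{+\infty}(\tilde H_t, N_t) \le \log v + d_{+\infty}(H_0, N_0)$, which is the claim. The only genuinely delicate point is the lower Hermitian bound: one must notice that an element of $\mathcal{F}^{\mu_j} V$ lies in the span of the jump-spaces $W_{\lambda_i}$ with $\lambda_i \ge \mu_j$, so that projecting it onto $W_{\lambda_i}$ can occur only when $e^{-t\lambda_i} \le e^{-t\mu_j}$ (this is precisely where $t \ge 0$ is used), together with the norm non-increasing property of the $H_0$-projections; everything else is bookkeeping with Cauchy--Schwarz and the monotonicity and homogeneity of \eqref{eq_ray_norm_defn0}.
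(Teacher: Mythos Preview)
The paper does not give its own proof of this lemma; it is simply cited from \cite[Lemma 2.11]{FinNarSim}. So there is nothing in the present paper to compare your argument against.

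Your proof is correct. The reduction via the auxiliary ray $\tilde H_t$ (applying \eqref{eq_ray_norm_defn0} to $H_0$) is the natural move, and the monotonicity/homogeneity of \eqref{eq_ray_norm_defn0} in its input norm indeed gives $d_{+\infty}(\tilde H_t, N_t) \le d_{+\infty}(H_0, N_0)$ immediately. In the Hermitian comparison, both directions are clean: the upper bound $\tilde H_t \le \sqrt{r}\, H_t$ by Cauchy--Schwarz on the block decomposition $f = \sum_i P_i f$, and the lower bound $H_t \le \sqrt{r}\, \tilde H_t$ via the observation that $P_i g_j = 0$ unless $\lambda_i \ge \mu_j$, so that $e^{-t\lambda_i} \le e^{-t\mu_j}$ whenever the projection survives (and here you correctly flag that $t \ge 0$ is used). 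You in fact obtain the slightly sharper $d_{+\infty}(H_t, \tilde H_t) \le \tfrac12 \log r$, which you then relax to $\log v$ to match the stated bound.
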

	We have the following property, initially observed in \cite[\S5]{FinNarSim}.
	\begin{lem}\label{lem_sm_ray}
		Assume that $R = \bigoplus R_k$ is a graded algebra such that $\dim R_k$ is finite.
		Let $N = (N_k)_{k = 0}^{\infty}$ be a submultiplicative norm on $R$, and let $\mathcal{F} = (\mathcal{F}_k)_{k = 0}^{\infty}$ be a submultiplicative filtration on $R$.
		Then, for any $t \in [0, +\infty[$, the norm $N_{t} := (N_{k, t})_{k = 0}^{\infty}$, constructed from the rays $N_{k, t}$ emanating from $N_k$ and associated with the filtration $\mathcal{F}_k$ as in (\ref{eq_ray_norm_defn0}), is submultiplicative.
	\end{lem}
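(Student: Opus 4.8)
The plan is to unwind both constructions directly from their defining formulas and combine the two submultiplicativity hypotheses; no analytic input is needed. Fix $k, l \in \nat$, elements $f \in R_k$, $g \in R_l$, and a parameter $t \in [0, +\infty[$. By the definition (\ref{eq_ray_norm_defn0}) of $N_{k+l, t}$, it suffices to show that for any admissible decompositions $f = \sum_i f_i$ with $f_i \in \mathcal{F}_k^{\mu_i} R_k$ and $g = \sum_j g_j$ with $g_j \in \mathcal{F}_l^{\nu_j} R_l$, the quantity $\| fg \|_{N_{k+l, t}}$ is bounded by $\big( \sum_i e^{-t \mu_i} \| f_i \|_{N_k} \big) \cdot \big( \sum_j e^{-t \nu_j} \| g_j \|_{N_l} \big)$; taking infima over the two decompositions then gives (\ref{eq_subm_s_ring}) for $N_t$.

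To this end I would use the obvious product decomposition $fg = \sum_{i, j} f_i g_j$. The submultiplicativity of the filtration $\mathcal{F}$ (the evident graded-algebra analogue of (\ref{eq_sumb_filt})) guarantees $f_i g_j \in \mathcal{F}_{k+l}^{\mu_i + \nu_j} R_{k+l}$, so this is an admissible decomposition in (\ref{eq_ray_norm_defn0}), and hence
\[
\| fg \|_{N_{k+l, t}} \leq \sum_{i, j} e^{- t(\mu_i + \nu_j)} \| f_i g_j \|_{N_{k+l}}.
\]
Now I would invoke the submultiplicativity of the norm $N$ (as in (\ref{eq_subm_s_ring})) to estimate $\| f_i g_j \|_{N_{k+l}} \leq \| f_i \|_{N_k} \cdot \| g_j \|_{N_l}$, after which the double sum factors as a product of two single sums and collapses to exactly the desired bound. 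This is literally the same mechanism as in the proof of Lemma \ref{lem_cx_inter_sm}, with the product decomposition of elements playing the role that the inclusion $\mathcal{F}(R_k) \cdot \mathcal{F}(R_l) \subset \mathcal{F}(R_{k+l})$ plays there.

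I do not expect any serious obstacle: the proof is a direct chain of inequalities. The only points deserving care are the application of the filtration axiom with the correct graded index $\mu_i + \nu_j$ on $R_{k+l}$, and the harmless observation—guaranteed by $\dim R_k < \infty$, which forces each $\mathcal{F}_k$ to be bounded—that each $N_{k, t}$ is genuinely a norm, so that the infima in (\ref{eq_ray_norm_defn0}) are finite and positive. I would also remark in passing that the same computation shows that $N \mapsto N_t$ is order-preserving and, via Lemma \ref{lem_two_norms_comp0}, that the resulting ray remains within bounded $d_{+\infty}$-distance of the Hermitian ray (\ref{eq_bas_st}); neither fact is needed for the statement itself, but both are convenient for the later applications.
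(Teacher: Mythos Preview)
Your argument is correct and is precisely the natural direct verification; the paper itself gives no proof here but merely records the statement as ``initially observed in \cite[\S5]{FinNarSim}'', so your write-up supplies exactly what the paper defers to a citation.
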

	\par 
	We fix a bounded submultiplicative filtration $\mathcal{F} = (\mathcal{F}_k)_{k = 0}^{\infty}$ and a continuous metric $h^L$.
	We denote $N_0 := (N_{k, 0})_{k = 0}^{\infty} = \ban^{\infty}(h^L)$, and for $t \in [0, +\infty[$, define $N_t := (N_{k, t})_{k = 0}^{\infty}$ such that $N_{k, t}$ is the ray of norms emanating from $N_{k, 0}$ and associated with $\mathcal{F}_k$ as in the construction (\ref{eq_ray_norm_defn0}).
	It is immediate to see that $N_t$ is a bounded metric.
	Indeed, for $C > 0$ as in (\ref{eq_bnd_filt}), immediately from (\ref{eq_ray_norm_defn0}), we have
	\begin{equation}\label{eq_bound_ray_ccc}
		\ban^{\infty}_k(h^L) \cdot \exp(-C k)
		\leq
		N_{k, t} 
		\leq 
		\ban^{\infty}_k(h^L) \cdot \exp(C k).	
	\end{equation}
	In particular, by Lemmas \ref{lem_fs_sm} and \ref{lem_sm_ray}, the metric $FS(N_{t})$ is well-defined.
	\par 
	\begin{thm}\label{thm_const_geod_ray_sm}
		The ray of metrics $h^L_t := FS(N_t)_*$, $t \in [0, +\infty[$, consists of regularizable from above psh metrics, it emanates from $P(h^L)$, and it constitutes a geodesic ray with respect to any of the $d_p$-distances, $p \in [1, +\infty[$.
	\end{thm}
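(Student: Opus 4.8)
The plan is to follow the same strategy that powered the proof of Theorem \ref{cor_appr_geod}, replacing the role of complex interpolation by the ray construction \eqref{eq_ray_norm_defn0}. First I would record the structural facts already available: by Lemma \ref{lem_sm_ray} each $N_t$ is a submultiplicative graded norm, by \eqref{eq_bound_ray_ccc} it is bounded, and hence by Lemma \ref{lem_fs_sm} and Proposition \ref{prop_fs_regul_blw} the metric $FS(N_t)$ is well-defined, and $h^L_t := FS(N_t)_*$ is regularizable from above with a psh potential of minimal singularities. That the ray emanates from $P(h^L)$ at $t=0$ is immediate from $N_0 = \ban^{\infty}(h^L)$ together with Theorem \ref{thm_conv_fs}, which gives $FS(N_0)_* = FS(\ban^{\infty}(h^L))_* = P(h^L)$.

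The substance is the geodesic property, i.e.\ $d_p(h^L_t, h^L_s) = |t-s| \cdot d_p(h^L_{t_0}, h^L_{t_1})$ for suitable normalization, or more precisely $d_p(h^L_t, h^L_s)$ grows linearly in $|t-s|$ for every $p \in [1,+\infty[$. By Theorem \ref{thm_char} (applicable since $N_t$ is submultiplicative and bounded) one has $N_t \sim_p \ban^{\infty}(FS(N_t)_*) = \ban^{\infty}(h^L_t)$, so Theorem \ref{thm_isom_reg} together with Proposition \ref{prop_fs_regul_blw} yields
\begin{equation}\label{eq_plan_quant}
	d_p(h^L_t, h^L_s) = \lim_{k \to \infty} \frac{d_p(N_{k,t}, N_{k,s})}{k}.
\end{equation}
It therefore suffices to show that $d_p(N_{k,t}, N_{k,s})$ is, up to an error of order $\log n_k$, equal to $|t-s|$ times $d_p(N_{k,t_0}, N_{k,t_1})$. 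For this I would invoke the comparison Lemma \ref{lem_two_norms_comp0}: pick a Hermitian norm $H_{k,0}$ adapted by the John ellipsoid \eqref{eq_john_ellips} to $N_{k,0}$, let $H_{k,t}$ be the Hermitian ray \eqref{eq_bas_st} associated with $\mathcal{F}_k$ and $H_{k,0}$, and note that Lemma \ref{lem_two_norms_comp0} bounds $d_{+\infty}(H_{k,t}, N_{k,t})$ by $d_{+\infty}(H_{k,0}, N_{k,0}) + \log n_k \leq \tfrac12 \log n_k + \log n_k$. By the exact formula \eqref{eq_dp_gr_e_jumppp} the Hermitian ray satisfies $d_p(H_{k,t}, H_{k,s}) = |t-s| \cdot d_p(H_{k,0}, H_{k,1})$ exactly. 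Combining this with \eqref{eq_gen_triangle} and \eqref{eq_dp_dinf_compp} gives
\begin{equation}\label{eq_plan_lin}
	\big| d_p(N_{k,t}, N_{k,s}) - |t-s| \cdot d_p(N_{k,0}, N_{k,1}) \big| \leq C \log n_k
\end{equation}
for an absolute constant $C$, after which dividing by $k$, letting $k \to \infty$ and using \eqref{eq_plan_quant} produces the linear growth of $d_p(h^L_t, h^L_s)$ in $|t-s|$; that a path realizing equality in the triangle inequality is a metric geodesic is then formal.

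The main obstacle I anticipate is verifying that the ray $N_{k,t}$ built from \eqref{eq_ray_norm_defn0} is genuinely compatible with the Hermitian ray $H_{k,t}$ of \eqref{eq_bas_st} in the way Lemma \ref{lem_two_norms_comp0} asserts — in particular that the comparison constant does not degrade as $t \to \infty$, which is exactly the content of that lemma and is why the bound in \eqref{eq_plan_lin} is uniform in $t,s$. A secondary point requiring care is that $d_p$ is only a quasi-distance on non-Hermitian norms, so all triangle-inequality manipulations must be routed through \eqref{eq_gen_triangle} and the $\log n_k$ error must be tracked; since $\log n_k = o(k)$ this is harmless in the limit \eqref{eq_plan_quant}, but it is the reason the statement is phrased asymptotically rather than exactly at finite level. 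Finally, one should double-check that the endpoint value $\lim_t$ and the regularizability claim survive the lower semicontinuous regularization $FS(N_t) \mapsto FS(N_t)_*$, which follows from Proposition \ref{prop_fs_regul_blw} and the discussion after Definition \ref{defn_regul}.
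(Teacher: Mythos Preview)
Your proposal is correct and follows essentially the same approach as the paper's proof: both establish the regularizability and emanation claims via Theorem \ref{thm_conv_fs} and Proposition \ref{prop_fs_regul_blw}, then reduce the geodesic property to the limit \eqref{eq_plan_quant} via Theorem \ref{thm_char} and Theorem \ref{thm_isom_reg}, and finally control the finite-level quasi-distances by comparing $N_{k,t}$ to the Hermitian ray $H_{k,t}$ through Lemma \ref{lem_two_norms_comp0} and the John ellipsoid, exactly as you outline. The paper's argument is organized identically, with the same logarithmic error term absorbed in the $k\to\infty$ limit.
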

	\begin{proof}
		The results follows from Theorem \ref{thm_conv_fs} and Proposition \ref{prop_fs_regul_blw}, only the part concerning the geodesic ray needs clarification.
		The proof is somewhat similar to the proof of Theorem \ref{cor_appr_geod}.
		\par 
		We need to show that for any $p \in [1, +\infty[$, $s, t \in [0, +\infty[$, under our new notations, we have (\ref{eq_mab_geod_metr_fs}).
		First of all, by Theorem \ref{thm_char}, Lemma \ref{lem_sm_ray} and (\ref{eq_bound_ray_ccc}), we have $N_t \sim \ban^{\infty}(FS(N_t)_*)$.
		Then by Theorem \ref{thm_isom_reg} and Proposition \ref{prop_fs_regul_blw}, we deduce that, under our new notations, (\ref{eq_dp_fs_geod_isom}) holds for any $s, t \in [0, +\infty[$.
		Let us establish that for any $k \in \nat$, the path $[0, +\infty[ \ni t \mapsto N_{k, t}$ resembles a metric geodesic ray in the sense that the analogue of (\ref{eq_dp_fs_geod_isom2}) holds under our new notations.
		Once this is established, it would finish the proof exactly as in the proof of Theorem \ref{cor_appr_geod}.
		\par 
		For this, let $H_{k, 0}$, be a Hermitian norm on $H^0(X, L^{\otimes k})$, associated by (\ref{eq_john_ellips}) to $N_{k, 0}$, i.e., such that for $t = 0$, we have (\ref{eq_nkt_hkt_compl}) under our new notations.
		We denote by $H_{k, t}$, $t \in [0, 1]$, the ray of norms emanating from $H_{k, 0}$ and associated with $\mathcal{F}_k$ as in (\ref{eq_bas_st}).
		Then by Lemma \ref{lem_two_norms_comp0}, we deduce 
		\begin{equation}\label{eq_nkt_hkt_comp_rays}
			H_{k, t}
			\cdot
			n_k^{-1}
			\leq
			N_{k, t}
			\leq
			H_{k, t}
			\cdot
			n_k^{7}.
		\end{equation}
		However, immediately from the definitions, we see that $H_{k, t}$, $t \in [0, +\infty[$, is a metric geodesic for $d_p$, i.e., (\ref{eq_dp_fs_geod_isom3}) holds under our new notations.
		Using the weak triangle inequality (\ref{eq_gen_triangle}) and the above estimate, we deduce 
		\begin{equation}\label{eq_dp_nkt_hkt}
			\Big| 
				d_p(N_{k, t}, N_{k, s})
				-
				d_p(H_{k, t}, H_{k, s})
			\Big|
			\leq
			50 \log n_k.
		\end{equation}
		The proof is now finished in exactly the same manner as in Theorem \ref{cor_appr_geod}.
	\end{proof}	 
	\par 
	Recall that in the ample setting, for finitely generated filtrations, Phong-Sturm \cite{PhongSturmTestGeodK} proposed an alternative construction of geodesic rays using the geometric quantization of $L^2$-norms.
	Later, still within the context of finitely generated filtrations, they \cite{PhongSturmRegul} presented a construction based on solutions to homogeneous Monge-Ampère equations on the resolution of a test configuration, and demonstrated that this approach agrees with their earlier construction.
	\par 
	For arbitrary bounded submultiplicative filtrations in the ample setting, Ross-Witt Nyström \cite{RossNystAnalTConf} constructed geodesic rays using the language of test curves, and verified that for finitely generated filtrations, it recovers the construction from \cite{PhongSturmTestGeodK}. 
	Boucksom-Jonsson \cite{BouckJohn21} later offered an alternative construction based on valuations and non-Archimedean geometry.
	\par 
	The construction given in Theorem \ref{thm_const_geod_ray_sm} in the ample setting was carried out by the author in \cite{FinNarSim}, where it was also verified that it coincides with the constructions of Boucksom-Jonsson and Ross-Witt Nyström.
	The equivalence between the latter two follows also from Darvas-Xia \cite{DarvXiaTest}.
	\par 
	One may reasonably expect that, in the big setting, in addition to the construction from Theorem \ref{thm_const_geod_ray_sm}, all of the aforementioned constructions can also be performed, and that a similar equivalence statement holds. 
	We discuss one part of this story in Proposition \ref{prop_two_rays_ccccc}, but for brevity, we leave a formal statement and verification of the full claim to future work.
	\par 
	\begin{proof}[Proof of Theorem \ref{thm_filt}]
		The proof repeats verbatim the one given in the ample setting from \cite{FinNarSim}, the only difference is that one has to use the big versions of Proposition \ref{thm_isom_ample} and Theorem \ref{thm_char_ample}.
		For the convenience of the reader, we reproduce the argument below.
		\par
		Let us first remark that it is enough to establish Theorem \ref{thm_filt} in the special case when the filtration $\mathcal{F}$ satisfies the additional assumption 
		\begin{equation}\label{eq_filt_bnd_zero}
			\mathcal{F}^0 R(X, L) = \{0\}.
		\end{equation}
		To see this, remark that since $\mathcal{F}$ is bounded, there is $C > 0$, verifying $\mathcal{F}^{C k} H^0(X, L^{\otimes k}) = \{0\}$.
		Consider now another filtration $\mathcal{F}_0$ on $R(X, L)$, defined for any $k \in \nat$, $\lambda \in \real$, as follows $\mathcal{F}_0^{\lambda} H^0(X, L^{\otimes k}) = \mathcal{F}^{\lambda + Ck} H^0(X, L^{\otimes k})$.
		Clearly, $\mathcal{F}_0$ is submultiplicative and bounded whenever $\mathcal{F}$ is submultiplicative and bounded.
		An easy verification shows that establishing Theorem \ref{thm_filt} for $\mathcal{F}_0$ and $\mathcal{F}$ is equivalent. We, hence, assume from now on that $\mathcal{F}$ satisfies (\ref{eq_filt_bnd_zero}).
		\par 
		By (\ref{eq_bas_st}), (\ref{eq_dp_nkt_hkt}) and (\ref{eq_filt_bnd_zero}), we deduce that
		\begin{equation}\label{eq_dp_jump_norm2}
			\Big| 
			d_p(N_{k, 0}, N_{k, 1})
			-
			k
			\cdot
			\sqrt[p]{
			\int_{x \in \real} (-x)^p  d \mu_{\mathcal{F}, k}(x)}
			\Big|
			\leq
			50 \log n_k.
		\end{equation}
		As in (\ref{eq_mab_geod_metr_fs}), from (\ref{eq_dp_gr_e_jumppp}) and (\ref{eq_dp_jump_norm2}), we then deduce 
		\begin{equation}\label{eq_dp_jump_norm3}
			d_p(FS(N_0)_*, FS(N_1)_*)
			=
			\lim_{k \to \infty}
			\sqrt[p]{\int_{x \in \real} (-x)^p  d \mu_{\mathcal{F}, k}(x)}.
		\end{equation}
		Since $N_{k, t} \geq N_{k, 0}$, we have $h^L_t \geq h^L_0$, and so $\dot{h}^L_0 \leq 0$.
		From Theorem \ref{thm_mab_geod} and (\ref{eq_dp_jump_norm3}), for any $p \in [1, +\infty[$, we then obtain
		\begin{equation}\label{eq_dp_jump_norm4}
			\int_{x \in \real} x^p d \mu_{\mathcal{F}}(x)
			=
			\lim_{k \to \infty}
			\int_{x \in \real} x^p  d\mu_{\mathcal{F}, k}(x).
		\end{equation}
		Note that the measures $\mu_{\mathcal{F}, k}$, $k \in \nat$, all have supports contained in the same compact subset.
		As measures of compact support are characterized by their moments, (\ref{eq_dp_jump_norm4}) implies that $\mu_{\mathcal{F}, k}$ converge weakly, as $k \in \nat$, towards $\mu_{\mathcal{F}}$, which finishes the proof.
	\end{proof}		
	
	We conclude the paper by giving an alternative definition of $h^L_t$ in the spirit of Phong-Sturm \cite{PhongSturmTestGeodK} and Ross-Witt Nyström \cite{RossNystAnalTConf}.
	For this, for a continuous metric $h^L$ on $L$ and a smooth volume form $dV_X$ on $X$, we denote by $H_{k, t}$, $t \in [0, 1]$, the ray of Hermitian norms emanating from $\hilb_k(h^L, dV_X)$ and associated with $\mathcal{F}_k$ as in (\ref{eq_bas_st}).
	\begin{prop}\label{prop_two_rays_ccccc}
		For any $t \in [0, +\infty[$, the sequence of metrics $FS(H_{k, t})^{\frac{1}{k}}$ converges, as $k \to \infty$, towards a singular metric $h^L_{t, 0}$, so that for the ray $h^L_t$ from Theorem \ref{thm_const_geod_ray_sm}, we have $h^L_t = (h^L_{t, 0})_*$.
	\end{prop}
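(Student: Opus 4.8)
The plan is to reduce everything to the limiting behaviour of the rays built from the sup-norms $\ban^{\infty}_k(h^L)$, which is already under control by Theorem \ref{thm_const_geod_ray_sm}, and then to show that replacing $\ban^{\infty}_k(h^L)$ by $\hilb_k(h^L, dV_X)$ as the base norm of the ray only perturbs everything by a subexponential factor. Concretely, for each $k$ let $N_{k, t}$ denote the ray emanating from $\ban^{\infty}_k(h^L)$ and associated with $\mathcal{F}_k$ via the construction (\ref{eq_ray_norm_defn0}), so that $N_t := (N_{k, t})_{k = 0}^{\infty}$ is precisely the submultiplicative bounded graded norm appearing in Theorem \ref{thm_const_geod_ray_sm}, with $FS(N_t)_* = h^L_t$.

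The first step is to compare the base norms. The trivial estimate $\| s \|_{\hilb_k(h^L, dV_X)} \leq \big(\int_X dV_X\big)^{1/2} \| s \|_{\ban^{\infty}_k(h^L)}$ together with Proposition \ref{prop_bm_volume} (applied with $\rho \equiv 1$) shows that $d_{+\infty}(\hilb_k(h^L, dV_X), \ban^{\infty}_k(h^L)) = o(k)$, in the notation of Lemma \ref{lem_two_norms_comp0}. Feeding $H_0 := \hilb_k(h^L, dV_X)$ and $N_0 := \ban^{\infty}_k(h^L)$ into Lemma \ref{lem_two_norms_comp0} — which bounds the $d_{+\infty}$-distance between the Hermitian ray (\ref{eq_bas_st}) and the ray (\ref{eq_ray_norm_defn0}) in terms of the $d_{+\infty}$-distance of the base norms plus $\log n_k$ — I would then obtain $d_{+\infty}(H_{k, t}, N_{k, t}) = o(k)$, with a bound that does not depend on $t$.

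The second step is to transport this comparison through the Fubini–Study operator. By Lemmas \ref{lem_fs_sm} and \ref{lem_sm_ray} together with the boundedness (\ref{eq_bound_ray_ccc}), the sequence $FS(N_{k, t})^{1/k}$ converges pointwise, as $k \to \infty$, to $FS(N_t)$. Since $FS$ is monotone by (\ref{eq_fs_mono}) and scales exactly as $FS(\exp(c) N_{k, t}) = \exp(c) FS(N_{k, t})$, the estimate $\exp(-\delta_k) N_{k, t} \leq H_{k, t} \leq \exp(\delta_k) N_{k, t}$ with $\delta_k = o(k)$ gives $\exp(-\delta_k / k) FS(N_{k, t})^{1/k} \leq FS(H_{k, t})^{1/k} \leq \exp(\delta_k / k) FS(N_{k, t})^{1/k}$, so that $FS(H_{k, t})^{1/k}$ converges pointwise to the same limit. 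This limit is the desired metric $h^L_{t, 0} := FS(N_t)$, and $(h^L_{t, 0})_* = FS(N_t)_* = h^L_t$ by the very definition of $h^L_t$ in Theorem \ref{thm_const_geod_ray_sm}.

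I do not expect a serious obstacle here: the whole argument is an assembly of results already established. The one point deserving attention is the last step — verifying that a norm comparison valid only up to a subexponential factor genuinely forces pointwise convergence of the associated Fubini–Study metrics to a single limit, and that this limit is literally $FS(N_t)$ rather than merely its lower semicontinuous regularization. This is exactly where the exact homogeneity and the monotonicity (\ref{eq_fs_mono}) of the $FS$-operator are used, so I would state those two properties carefully and then let the squeeze above run.
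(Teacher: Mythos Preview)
Your proposal is correct and follows essentially the same route as the paper's own proof: use the Bernstein--Markov property (Proposition \ref{prop_bm_volume}) to compare the base norms, transfer this to the rays via Lemma \ref{lem_two_norms_comp0}, and then conclude by the monotonicity (\ref{eq_fs_mono}) of the Fubini--Study operator together with Lemma \ref{lem_fs_sm}. Your write-up is in fact more explicit than the paper's, which compresses the argument to a few lines by pointing to the reasoning around (\ref{eq_nkt_hkt_comp_rays}); the squeeze argument you spell out in the second step is exactly what is needed.
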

	\begin{proof}
		By Proposition \ref{prop_bm_volume} and the reasoning as in (\ref{eq_nkt_hkt_comp_rays}), we obtain that for any $\epsilon > 0$, there is $k_0 \in \nat$, so that for any $t \in [0, 1]$, we have
		\begin{equation}
			H_{k, t} \cdot \exp(-\epsilon k) \leq N_{k, t} \leq H_{k, t} \cdot \exp(\epsilon k).
		\end{equation}
		The result now follows from the above estimate, Lemma \ref{lem_fs_sm} and (\ref{eq_fs_mono}).
	\end{proof}

\bibliography{bibliography}

@article{LuChinhCompMA,
	author = {Lu, Chinh H.},
	date-added = {2025-12-10 08:23:46 +0100},
	date-modified = {2025-12-10 08:23:59 +0100},
	doi = {10.4064/ap200513-2-1},
	fjournal = {Annales Polonici Mathematici},
	issn = {0066-2216},
	journal = {Ann. Pol. Math.},
	keywords = {32W20,32U05,32Q15},
	language = {English},
	number = {1},
	pages = {31--53},
	title = {Comparison of {Monge}-{Amp{\`e}re} capacities},
	volume = {126},
	year = {2021},
	zbl = {1470.32117},
	zbmath = {7364578},
	bdsk-url-1 = {https://doi.org/10.4064/ap200513-2-1}}

@article{VuDucRelNonpp,
	author = {Vu, D.-V.},
	date-added = {2025-12-10 08:20:53 +0100},
	date-modified = {2025-12-10 08:21:04 +0100},
	doi = {10.1007/s10455-021-09780-7},
	fjournal = {Annals of Global Analysis and Geometry},
	issn = {0232-704X},
	journal = {Ann. Global Anal. Geom.},
	keywords = {32U15,32U25,32U40,32U20,32J27},
	language = {English},
	number = {2},
	pages = {269--311},
	title = {Relative non-pluripolar product of currents},
	volume = {60},
	year = {2021},
	zbl = {1471.32059},
	zbmath = {7375616},
	bdsk-url-1 = {https://doi.org/10.1007/s10455-021-09780-7}}

@article{LuNguen,
	author = {Lu, C. H. and Nguy{\^e}n, V.-D.},
	date-added = {2025-12-10 08:20:11 +0100},
	date-modified = {2025-12-10 08:20:23 +0100},
	doi = {10.2422/2036-2145.201909_006},
	fjournal = {Annali della Scuola Normale Superiore di Pisa. Classe di Scienze. Serie V},
	issn = {0391-173X},
	journal = {Ann. Sc. Norm. Super. Pisa, Cl. Sci. (5)},
	keywords = {32W20,32U05,32J27},
	language = {English},
	number = {1},
	pages = {425--462},
	title = {Complex {Hessian} equations with prescribed singularity on compact {K{\"a}hler} manifolds},
	volume = {23},
	year = {2022},
	zbl = {1487.32206},
	zbmath = {7524682},
	bdsk-url-1 = {https://doi.org/10.2422/2036-2145.201909_006}}

@article{BerFreix,
	author = {Berman, R. and Freixas i Montplet, G.},
	date-added = {2025-12-05 13:18:24 +0100},
	date-modified = {2025-12-06 15:22:02 +0100},
	doi = {10.1112/S0010437X14007325},
	fjournal = {Compositio Mathematica},
	issn = {0010-437X},
	journal = {Compos. Math.},
	keywords = {14G40,14G35,32L05,32W20},
	language = {English},
	number = {10},
	pages = {1703--1728},
	title = {An arithmetic {Hilbert}-{Samuel} theorem for singular {Hermitian} line bundles and cusp forms},
	volume = {150},
	year = {2014},
	zbl = {1316.14048},
	zbmath = {6374598},
	bdsk-url-1 = {https://doi.org/10.1112/S0010437X14007325}}

@article{YuChiHou,
	author = {Y.-C. Hou},
	date-added = {2025-12-05 13:16:43 +0100},
	date-modified = {2025-12-05 13:17:41 +0100},
	title = {Quantization for semipositive adjoint line bundles},
	year = {2025},
	bdsk-url-1 = {https://doi.org/10.1007/s11511-011-0067-x}}

@incollection{DemaillyOnCohPSEF,
	author = {Demailly, J.-P.},
	booktitle = {Complex geometry and dynamics. The Abel symposium 2013, Trondheim, Norway, July 2--5, 2013},
	date-added = {2025-11-06 08:28:43 +0100},
	date-modified = {2025-11-06 08:28:57 +0100},
	doi = {10.1007/978-3-319-20337-9_4},
	isbn = {978-3-319-20336-2; 978-3-319-20337-9},
	keywords = {32J27},
	language = {English},
	pages = {51--99},
	publisher = {Cham: Springer},
	title = {On the cohomology of pseudoeffective line bundles},
	year = {2015},
	zbl = {1337.32030},
	zbmath = {6559144},
	bdsk-url-1 = {https://doi.org/10.1007/978-3-319-20337-9_4}}

@article{NarasimhLevi,
	author = {Narasimhan, R.},
	date-added = {2025-11-05 13:31:57 +0100},
	date-modified = {2025-11-05 13:32:05 +0100},
	doi = {10.1007/BF01470950},
	fjournal = {Mathematische Annalen},
	issn = {0025-5831},
	journal = {Math. Ann.},
	language = {English},
	pages = {195--216},
	title = {The {Levi} problem for complex spaces. {II}},
	url = {https://eudml.org/doc/160929},
	volume = {146},
	year = {1962},
	zbl = {0131.30801},
	zbmath = {3213057},
	bdsk-url-1 = {https://eudml.org/doc/160929},
	bdsk-url-2 = {https://doi.org/10.1007/BF01470950}}

@article{XiaPartialOkounkov,
	author = {Xia, M.},
	date-added = {2025-11-03 12:34:12 +0100},
	date-modified = {2025-11-03 12:34:22 +0100},
	doi = {10.2140/gt.2025.29.1283},
	fjournal = {Geometry \& Topology},
	issn = {1465-3060},
	journal = {Geom. Topol.},
	keywords = {14M25,32U05},
	language = {English},
	number = {3},
	pages = {1283--1344},
	title = {Partial {Okounkov} bodies and {Duistermaat}-{Heckman} measures of non-{Archimedean} metrics},
	volume = {29},
	year = {2025},
	zbmath = {8055643},
	bdsk-url-1 = {https://doi.org/10.2140/gt.2025.29.1283}}

@article{RossWNEnvelop,
	author = {Ross, J. and Witt Nystr{\"o}m, D.},
	date-added = {2025-10-08 18:52:09 +0800},
	date-modified = {2025-10-08 18:52:22 +0800},
	doi = {10.5802/afst.1549},
	fjournal = {Annales de la Facult{\'e} des Sciences de Toulouse. Math{\'e}matiques. S{\'e}rie VI},
	issn = {0240-2963},
	journal = {Ann. Fac. Sci. Toulouse, Math. (6)},
	keywords = {32Q15,32A25,32W20,32U15},
	language = {English},
	number = {3},
	pages = {687--727},
	title = {Envelopes of positive metrics with prescribed singularities},
	volume = {26},
	year = {2017},
	zbl = {1421.32032},
	zbmath = {6812747},
	bdsk-url-1 = {https://doi.org/10.5802/afst.1549}}

@article{DPSPseudoeff,
	author = {Demailly, J.-P. and Peternell, T. and Schneider, M.},
	date-added = {2025-10-08 08:50:19 +0800},
	date-modified = {2025-10-08 08:50:36 +0800},
	doi = {10.1142/S0129167X01000861},
	fjournal = {International Journal of Mathematics},
	issn = {0129-167X},
	journal = {Int. J. Math.},
	keywords = {32J27,32Q15,32Q57},
	language = {English},
	number = {6},
	pages = {689--741},
	title = {Pseudo-effective line bundles on compact {K{\"a}hler} manifolds.},
	volume = {12},
	year = {2001},
	zbl = {1111.32302},
	zbmath = {1911897},
	bdsk-url-1 = {https://doi.org/10.1142/S0129167X01000861}}

@article{DDLMonoton,
	author = {Darvas, T. and Di Nezza, E. and Lu, C. H.},
	date-added = {2025-10-07 10:53:10 +0800},
	date-modified = {2025-10-07 10:53:24 +0800},
	doi = {10.2140/apde.2018.11.2049},
	fjournal = {Analysis \& PDE},
	issn = {2157-5045},
	journal = {Anal. PDE},
	keywords = {32Q15,32U05,32W20,32Q20},
	language = {English},
	number = {8},
	pages = {2049--2087},
	title = {Monotonicity of nonpluripolar products and complex {Monge}-{Amp{\`e}re} equations with prescribed singularity},
	volume = {11},
	year = {2018},
	zbl = {1396.32011},
	zbmath = {6887469},
	bdsk-url-1 = {https://doi.org/10.2140/apde.2018.11.2049}}

@book{DemIntroHodge,
	author = {Bertin, J. and Demailly, J.-P. and Illusie, L. and Peters, C.},
	date-added = {2025-10-07 10:46:32 +0800},
	date-modified = {2025-10-07 10:46:55 +0800},
	fseries = {Panoramas et Synth{\`e}ses},
	isbn = {2-85629-049-3},
	issn = {1272-3835},
	keywords = {14C30,14F17,14-02,14D07,13A35,58A14,14D05,81T30,14J32},
	language = {French},
	publisher = {Paris: Soci{\'e}t{\'e} Math{\'e}matique de France},
	series = {Panor. Synth.},
	title = {Introduction {\`a} la th{\'e}orie de {Hodge}},
	volume = {3},
	year = {1996},
	zbl = {0849.14002},
	zbmath = {915648}}

@article{JiShiffman,
	author = {Ji, S. and Shiffman, B.},
	date-added = {2025-10-07 10:46:14 +0800},
	date-modified = {2025-10-07 10:46:26 +0800},
	doi = {10.1007/BF02921329},
	fjournal = {The Journal of Geometric Analysis},
	issn = {1050-6926},
	journal = {J. Geom. Anal.},
	keywords = {32C30,32J99,32J25},
	language = {English},
	number = {1},
	pages = {37--61},
	title = {Properties of compact complex manifolds carrying closed positive currents},
	volume = {3},
	year = {1993},
	zbl = {0784.32009},
	zbmath = {146373},
	bdsk-url-1 = {https://doi.org/10.1007/BF02921329}}

@article{NagataSamuelConj,
	author = {Nagata, M.},
	date-added = {2025-10-06 06:42:08 +0800},
	date-modified = {2025-10-06 06:42:19 +0800},
	doi = {10.1215/kjm/1250777054},
	fjournal = {Memoirs of the College of Science, University of Kyoto, Series A},
	issn = {0368-8887},
	journal = {Mem. Coll. Sci., Univ. Kyoto, Ser. A},
	language = {English},
	pages = {165--175},
	title = {Note on a paper of {Samuel} concerning asymptotic properties of ideals},
	volume = {30},
	year = {1957},
	zbl = {0095.02305},
	zbmath = {3154846},
	bdsk-url-1 = {https://doi.org/10.1215/kjm/1250777054}}

@article{WittNystrMonotonicity,
	author = {Witt Nystr{\"o}m, D.},
	date-added = {2025-10-06 06:34:03 +0800},
	date-modified = {2025-10-06 06:34:14 +0800},
	doi = {10.1512/iumj.2019.68.7630},
	fjournal = {Indiana University Mathematics Journal},
	issn = {0022-2518},
	journal = {Indiana Univ. Math. J.},
	keywords = {32W20,32U40},
	language = {English},
	number = {2},
	pages = {579--591},
	title = {Monotonicity of non-pluripolar {Monge}-{Amp{\`e}re} masses},
	volume = {68},
	year = {2019},
	zbl = {1422.32041},
	zbmath = {7083361},
	bdsk-url-1 = {https://doi.org/10.1512/iumj.2019.68.7630}}

@article{DDLL1,
	author = {Darvas, T. and Di Nezza, E. and Lu, C. H.},
	date-added = {2025-10-02 07:33:17 +0200},
	date-modified = {2025-10-02 07:33:29 +0200},
	fjournal = {Annales de l'Institut Fourier},
	issn = {0373-0956},
	journal = {Ann. Inst. Fourier},
	keywords = {53C55,53B40,53C60,32Q15,32U15,32L05},
	language = {English},
	number = {7},
	pages = {3053--3086},
	title = {{{\(L^1\)}} metric geometry of big cohomology classes},
	volume = {68},
	year = {2018},
	zbl = {1505.53081},
	zbmath = {7058400}}

@article{TrusianiL1,
	author = {Trusiani, A.},
	date-added = {2025-10-02 07:31:25 +0200},
	date-modified = {2025-10-02 07:31:33 +0200},
	doi = {10.1007/s12220-021-00779-x},
	fjournal = {The Journal of Geometric Analysis},
	issn = {1050-6926},
	journal = {J. Geom. Anal.},
	keywords = {32U05,32J27,53C55},
	language = {English},
	note = {Id/No 37},
	number = {2},
	pages = {37},
	title = {{{\(L^1\)}} metric geometry of potentials with prescribed singularities on compact {K{\"a}hler} manifolds},
	volume = {32},
	year = {2022},
	zbl = {1487.32179},
	zbmath = {7471784},
	bdsk-url-1 = {https://doi.org/10.1007/s12220-021-00779-x}}

@article{XiaBig,
	author = {Xia, M.},
	date-added = {2025-10-02 07:29:49 +0200},
	date-modified = {2025-10-02 07:29:59 +0200},
	doi = {10.1515/crelle-2023-0019},
	fjournal = {Journal f{\"u}r die Reine und Angewandte Mathematik},
	issn = {0075-4102},
	journal = {J. Reine Angew. Math.},
	keywords = {53C55,58D27,32Q15,35J96,32W20,32V05},
	language = {English},
	pages = {261--292},
	title = {Mabuchi geometry of big cohomology classes},
	volume = {798},
	year = {2023},
	zbl = {1522.53062},
	zbmath = {7681505},
	bdsk-url-1 = {https://doi.org/10.1515/crelle-2023-0019}}

@article{DiNezzaLuBigNef,
	author = {Di Nezza, E. and Lu, C. H.},
	date-added = {2025-10-02 07:29:31 +0200},
	date-modified = {2025-10-02 07:29:41 +0200},
	doi = {10.1007/s40306-019-00343-4},
	fjournal = {Acta Mathematica Vietnamica},
	issn = {0251-4184},
	journal = {Acta Math. Vietnam.},
	keywords = {53C55,32Q15,32W20,53C25},
	language = {English},
	number = {1},
	pages = {53--69},
	title = {{{\(L^p\)}} metric geometry of big and nef cohomology classes},
	volume = {45},
	year = {2020},
	zbl = {1436.53050},
	zbmath = {7189284},
	bdsk-url-1 = {https://doi.org/10.1007/s40306-019-00343-4}}

@article{GuptaPrakhar,
	author = {Gupta, P.},
	date-added = {2025-10-02 07:28:33 +0200},
	date-modified = {2025-10-02 07:28:43 +0200},
	doi = {10.1090/tran/9360},
	fjournal = {Transactions of the American Mathematical Society},
	issn = {0002-9947},
	journal = {Trans. Am. Math. Soc.},
	keywords = {32U05,32Q15,53C55},
	language = {English},
	number = {6},
	pages = {4129--4172},
	title = {Complete geodesic metrics in big classes},
	volume = {378},
	year = {2025},
	zbmath = {8048831},
	bdsk-url-1 = {https://doi.org/10.1090/tran/9360}}

@article{FinNarSim,
	author = {Finski, S.},
	date-added = {2025-09-01 09:51:25 +0200},
	date-modified = {2025-09-01 09:51:41 +0200},
	doi = {10.1112/plms.70077},
	fjournal = {Proceedings of the London Mathematical Society. Third Series},
	issn = {0024-6115},
	journal = {Proc. Lond. Math. Soc. (3)},
	keywords = {53C55,46B28,32U05,32Q26},
	language = {English},
	note = {Id/No e70077},
	number = {2},
	pages = {47},
	title = {Submultiplicative norms and filtrations on section rings},
	volume = {131},
	year = {2025},
	zbmath = {8086126},
	bdsk-url-1 = {https://doi.org/10.1112/plms.70077}}

@article{GuedjZeriahiMAEnergy,
	author = {Guedj, V. and Zeriahi, A.},
	date-added = {2025-08-29 07:37:25 +0200},
	date-modified = {2025-08-29 07:37:38 +0200},
	doi = {10.1016/j.jfa.2007.04.018},
	fjournal = {Journal of Functional Analysis},
	issn = {0022-1236},
	journal = {J. Funct. Anal.},
	keywords = {32W20},
	language = {English},
	number = {2},
	pages = {442--482},
	title = {The weighted {Monge}-{Amp{\`e}re} energy of quasiplurisubharmonic functions},
	volume = {250},
	year = {2007},
	zbl = {1143.32022},
	zbmath = {5206085},
	bdsk-url-1 = {https://doi.org/10.1016/j.jfa.2007.04.018}}

@article{BouckJohn21,
	author = {Boucksom, S. and Jonsson, M.},
	date-added = {2025-08-01 15:01:58 +0200},
	date-modified = {2025-08-01 15:02:16 +0200},
	doi = {10.5802/aif.3668},
	fjournal = {Annales de l'Institut Fourier},
	issn = {0373-0956},
	journal = {Ann. Inst. Fourier},
	keywords = {32Q26,32P05},
	language = {English},
	number = {2},
	pages = {829--927},
	title = {A non-{Archimedean} approach to {K}-stability. {I}: {Metric} geometry of spaces of test configurations and valuations},
	volume = {75},
	year = {2025},
	zbmath = {8053904},
	bdsk-url-1 = {https://doi.org/10.5802/aif.3668}}

@article{BEGZ,
	author = {Boucksom, S. and Eyssidieux, P. and Guedj, V. and Zeriahi, A.},
	date-added = {2025-05-29 10:08:15 +0200},
	date-modified = {2025-05-29 10:08:28 +0200},
	doi = {10.1007/s11511-010-0054-7},
	fjournal = {Acta Mathematica},
	issn = {0001-5962},
	journal = {Acta Math.},
	keywords = {32W20,32Q20,53C55},
	language = {English},
	number = {2},
	pages = {199--262},
	title = {Monge-{Amp{\`e}re} equations in big cohomology classes},
	volume = {205},
	year = {2010},
	zbl = {1213.32025},
	zbmath = {5836560},
	bdsk-url-1 = {https://doi.org/10.1007/s11511-010-0054-7}}

@misc{FinEigToepl,
	arxiv = {arXiv:2502.01554},
	author = {Finski, S.},
	date-added = {2025-02-14 14:43:31 +0100},
	date-modified = {2025-02-14 14:46:52 +0100},
	keywords = {32U15,32A25,15B05,53D50,58J50},
	title = {{Small eigenvalues of {Toeplitz} operators, {Lebesgue} envelopes and {Mabuchi} geometry, {arXiv}:2502.01554}},
	url = {https://arxiv.org/abs/2502.01554},
	year = {2025},
	bdsk-url-1 = {https://arxiv.org/abs/2502.01554}}

@article{GuedjLuZeriahEnv,
	author = {Guedj, V. and Lu, C. H. and Zeriahi, A.},
	date-added = {2025-01-24 17:56:04 +0100},
	date-modified = {2025-01-24 17:56:20 +0100},
	doi = {10.4310/jdg/1571882428},
	fjournal = {Journal of Differential Geometry},
	issn = {0022-040X},
	journal = {J. Differ. Geom.},
	keywords = {32U05,32J27},
	language = {English},
	number = {2},
	pages = {273--313},
	title = {Plurisubharmonic envelopes and supersolutions},
	volume = {113},
	year = {2019},
	zbl = {1435.32041},
	zbmath = {7122209},
	bdsk-url-1 = {https://doi.org/10.4310/jdg/1571882428}}

@article{BermanBouckBalls,
	author = {Berman, R. and Boucksom, S.},
	date-added = {2025-01-06 12:46:11 +0100},
	date-modified = {2025-01-06 12:46:27 +0100},
	doi = {10.1007/s00222-010-0248-9},
	fjournal = {Inventiones Mathematicae},
	issn = {0020-9910},
	journal = {Invent. Math.},
	keywords = {32J25,32A36,32C05,32J27,32L10,32U15,32W20},
	language = {English},
	number = {2},
	pages = {337--394},
	title = {Growth of balls of holomorphic sections and energy at equilibrium},
	volume = {181},
	year = {2010},
	zbl = {1208.32020},
	zbmath = {5730946},
	bdsk-url-1 = {https://doi.org/10.1007/s00222-010-0248-9}}

@article{Fujita,
	author = {Fujita, T.},
	date-added = {2024-11-08 14:27:08 +0100},
	date-modified = {2024-11-08 14:27:21 +0100},
	doi = {10.2996/kmj/1138039894},
	fjournal = {Kodai Mathematical Journal},
	issn = {0386-5991},
	journal = {Kodai Math. J.},
	keywords = {14C20,14F05},
	language = {English},
	number = {1},
	pages = {1--3},
	title = {Approximating {Zariski} decomposition of big line bundles},
	volume = {17},
	year = {1994},
	zbl = {0814.14006},
	zbmath = {605442},
	bdsk-url-1 = {https://doi.org/10.2996/kmj/1138039894}}

@article{FinSubmToepl,
	author = {Finski, S.},
	date-added = {2024-10-30 15:23:20 +0100},
	date-modified = {2024-11-12 10:19:04 +0100},
	title = {{Toeplitz operators, submultiplicative filtrations and weighted Bergman kernels, arXiv: 2411.05566}},
	year = {2024}}

@article{SamuelAsymptIdeals,
	author = {Samuel, P.},
	date-added = {2024-09-26 17:33:35 +0200},
	date-modified = {2024-09-26 17:33:43 +0200},
	doi = {10.2307/1969764},
	fjournal = {Annals of Mathematics. Second Series},
	issn = {0003-486X},
	journal = {Ann. Math. (2)},
	language = {English},
	pages = {11--21},
	title = {Some asymptotic properties of powers of ideals},
	volume = {56},
	year = {1952},
	zbl = {0049.02301},
	zbmath = {3076685},
	bdsk-url-1 = {https://doi.org/10.2307/1969764}}

@article{FinHYM,
	author = {Finski, S.},
	date-added = {2024-08-21 15:35:53 +0200},
	date-modified = {2025-01-20 16:49:57 +0100},
	title = {{About Wess-Zumino-Witten equation and Harder-Narasimhan potentials, arXiv: 2407.06034, 53 p.}},
	year = {2024}}

@article{ReesValII,
	author = {Rees, D.},
	date-added = {2024-07-26 10:09:31 +0200},
	date-modified = {2024-07-26 10:09:36 +0200},
	doi = {10.1112/jlms/s1-31.2.221},
	fjournal = {Journal of the London Mathematical Society},
	issn = {0024-6107},
	journal = {J. Lond. Math. Soc.},
	language = {English},
	pages = {221--228},
	title = {Valuations associated with ideals. {II}},
	volume = {31},
	year = {1956},
	zbl = {0074.26303},
	zbmath = {3124043},
	bdsk-url-1 = {https://doi.org/10.1112/jlms/s1-31.2.221}}

@article{ReesValI,
	author = {Rees, D.},
	date-added = {2024-07-26 10:09:20 +0200},
	date-modified = {2024-07-26 10:09:26 +0200},
	doi = {10.1112/plms/s3-6.2.161},
	fjournal = {Proceedings of the London Mathematical Society. Third Series},
	issn = {0024-6115},
	journal = {Proc. Lond. Math. Soc. (3)},
	language = {English},
	pages = {161--174},
	title = {Valuations associated with ideals},
	volume = {6},
	year = {1956},
	zbl = {0074.26302},
	zbmath = {3124042},
	bdsk-url-1 = {https://doi.org/10.1112/plms/s3-6.2.161}}

@article{DarvXiaTest,
	author = {Darvas, T. and Xia, M.},
	date-added = {2024-07-10 09:33:15 +0200},
	date-modified = {2024-07-10 09:33:24 +0200},
	doi = {10.1016/j.aim.2022.108198},
	fjournal = {Advances in Mathematics},
	issn = {0001-8708},
	journal = {Adv. Math.},
	keywords = {32Q15,32L05,32U05},
	language = {English},
	note = {Id/No 108198},
	pages = {56},
	title = {The closures of test configurations and algebraic singularity types},
	volume = {397},
	year = {2022},
	zbl = {1487.32132},
	zbmath = {7472324},
	bdsk-url-1 = {https://doi.org/10.1016/j.aim.2022.108198}}

@book{ReesBook,
	author = {Rees, D.},
	date-added = {2024-01-29 09:37:46 +0100},
	date-modified = {2024-01-29 09:37:52 +0100},
	fseries = {London Mathematical Society Lecture Note Series},
	isbn = {0-521-31127-6},
	issn = {0076-0552},
	keywords = {13A15,13E05,13-02},
	language = {English},
	publisher = {Cambridge: Cambridge University Press},
	series = {Lond. Math. Soc. Lect. Note Ser.},
	title = {Lectures on the asymptotic theory of ideals},
	volume = {113},
	year = {1988},
	zbl = {0669.13001},
	zbmath = {42274}}

@article{LazMus,
	author = {Lazarsfeld, R. and Musta{\c{t}}{\u{a}}, M.},
	date-added = {2023-12-07 13:34:15 +0100},
	date-modified = {2023-12-07 13:34:25 +0100},
	doi = {10.24033/asens.2109},
	fjournal = {Annales Scientifiques de l'{\'E}cole Normale Sup{\'e}rieure. Quatri{\`e}me S{\'e}rie},
	issn = {0012-9593},
	journal = {Ann. Sci. {\'E}c. Norm. Sup{\'e}r. (4)},
	keywords = {14C20,14F05,14J99},
	language = {English},
	number = {5},
	pages = {783--835},
	title = {Convex bodies associated to linear series},
	url = {smf4.emath.fr/en/Publications/AnnalesENS/4_42/html/},
	volume = {42},
	year = {2009},
	zbl = {1182.14004},
	zbmath = {5636235},
	bdsk-url-1 = {smf4.emath.fr/en/Publications/AnnalesENS/4_42/html/},
	bdsk-url-2 = {https://doi.org/10.24033/asens.2109}}

@book{LazarBookI,
	author = {Lazarsfeld, R.},
	date-added = {2023-12-06 08:03:56 +0100},
	date-modified = {2023-12-06 08:04:04 +0100},
	fseries = {Ergebnisse der Mathematik und ihrer Grenzgebiete. 3. Folge},
	isbn = {3-540-22533-1},
	issn = {0071-1136},
	keywords = {14-02,14C20},
	language = {English},
	publisher = {Berlin: Springer},
	series = {Ergeb. Math. Grenzgeb., 3. Folge},
	title = {Positivity in algebraic geometry. {I}. {Classical} setting: line bundles and linear series},
	volume = {48},
	year = {2004},
	zbl = {1093.14501},
	zbmath = {2134816}}

@article{DarvDiNezLuSingType,
	author = {Darvas, T. and Di Nezza, E. and Lu, H.-C.},
	date-added = {2023-05-12 14:21:19 +0200},
	date-modified = {2023-05-12 14:21:38 +0200},
	doi = {10.1515/crelle-2020-0019},
	fjournal = {Journal f{\"u}r die Reine und Angewandte Mathematik},
	issn = {0075-4102},
	journal = {J. Reine Angew. Math.},
	keywords = {32W20,32Q15,32U05,53C55},
	language = {English},
	pages = {137--170},
	title = {The metric geometry of singularity types},
	volume = {771},
	year = {2021},
	zbl = {1503.32029},
	zbmath = {7502517},
	bdsk-url-1 = {https://doi.org/10.1515/crelle-2020-0019}}

@article{RossNystAnalTConf,
	author = {Ross, J. and Nystr\"om, D. W.},
	date-added = {2023-01-23 10:52:09 +0100},
	date-modified = {2023-01-23 10:52:35 +0100},
	doi = {10.4310/JSG.2014.v12.n1.a5},
	fjournal = {The Journal of Symplectic Geometry},
	issn = {1527-5256},
	journal = {J. Symplectic Geom.},
	keywords = {32L05,58D17,53C22},
	language = {English},
	number = {1},
	pages = {125--169},
	title = {Analytic test configurations and geodesic rays},
	volume = {12},
	year = {2014},
	zbl = {1300.32021},
	zbmath = {6323724},
	bdsk-url-1 = {https://doi.org/10.4310/JSG.2014.v12.n1.a5}}

@article{Bonavero,
	author = {Bonavero, L.},
	date-added = {2023-01-20 11:27:20 +0100},
	date-modified = {2023-01-20 11:27:27 +0100},
	doi = {10.1007/BF02921793},
	fjournal = {The Journal of Geometric Analysis},
	issn = {1050-6926},
	journal = {J. Geom. Anal.},
	keywords = {32J18,58J99,32J99,32L99},
	language = {French},
	number = {3},
	pages = {409--425},
	title = {Singular holomorphic {Morse} inequalities},
	volume = {8},
	year = {1998},
	zbl = {0966.32011},
	zbmath = {1463783},
	bdsk-url-1 = {https://doi.org/10.1007/BF02921793}}

@article{BouckErik21,
	author = {Boucksom, S. and Eriksson, D.},
	date-added = {2023-01-02 14:56:27 +0100},
	date-modified = {2023-01-02 14:56:40 +0100},
	doi = {10.1016/j.aim.2020.107501},
	fjournal = {Advances in Mathematics},
	issn = {0001-8708},
	journal = {Adv. Math.},
	keywords = {32P05,32L05},
	language = {English},
	note = {Id/No 107501},
	pages = {125},
	title = {Spaces of norms, determinant of cohomology and {Fekete} points in non-{Archimedean} geometry},
	volume = {378},
	year = {2021},
	zbl = {1460.32044},
	zbmath = {7298467},
	bdsk-url-1 = {https://doi.org/10.1016/j.aim.2020.107501}}

@article{DarLuGeod,
	author = {Darvas, T. and Lu, C. H.},
	date-added = {2022-12-23 16:32:45 +0100},
	date-modified = {2022-12-23 16:32:57 +0100},
	doi = {10.2140/gt.2020.24.1907},
	fjournal = {Geometry \& Topology},
	issn = {1465-3060},
	journal = {Geom. Topol.},
	keywords = {32Q26,32U05,53C55},
	language = {English},
	number = {4},
	pages = {1907--1967},
	title = {Geodesic stability, the space of rays and uniform convexity in {Mabuchi} geometry},
	volume = {24},
	year = {2020},
	zbl = {1479.32011},
	zbmath = {7274792},
	bdsk-url-1 = {https://doi.org/10.2140/gt.2020.24.1907}}

@article{BedfordRegulBelow,
	author = {Bedford, E.},
	date-added = {2022-12-18 09:47:36 -0500},
	date-modified = {2022-12-18 09:47:46 -0500},
	doi = {10.1007/BF01536184},
	fjournal = {Mathematische Annalen},
	issn = {0025-5831},
	journal = {Math. Ann.},
	keywords = {31D05},
	language = {English},
	pages = {175--183},
	title = {Envelopes of continuous, plurisubharmonic functions},
	volume = {251},
	year = {1980},
	zbl = {0421.31008},
	zbmath = {3654472},
	bdsk-url-1 = {https://doi.org/10.1007/BF01536184}}

@article{PhongSturmRegul,
	author = {Phong, D. H. and Sturm, J.},
	date-added = {2022-12-15 17:36:53 -0500},
	date-modified = {2022-12-15 17:37:02 -0500},
	doi = {10.1090/S0002-9939-10-10371-2},
	fjournal = {Proceedings of the American Mathematical Society},
	issn = {0002-9939},
	journal = {Proc. Am. Math. Soc.},
	keywords = {31C10,53B35},
	language = {English},
	number = {10},
	pages = {3637--3650},
	title = {Regularity of geodesic rays and {Monge}-{Amp{\`e}re} equations},
	volume = {138},
	year = {2010},
	zbl = {1205.31004},
	zbmath = {5808269},
	bdsk-url-1 = {https://doi.org/10.1090/S0002-9939-10-10371-2}}

@article{PhongSturmTestGeodK,
	author = {Phong, Duong H. and Sturm, Jacob},
	date-added = {2022-12-14 14:24:47 -0500},
	date-modified = {2022-12-14 14:25:01 -0500},
	doi = {10.4310/JSG.2007.v5.n2.a3},
	fjournal = {The Journal of Symplectic Geometry},
	issn = {1527-5256},
	journal = {J. Symplectic Geom.},
	keywords = {53C22,53C56,32L99},
	language = {English},
	number = {2},
	pages = {221--247},
	title = {Test configurations for {{\(k\)}}-stability and geodesic rays},
	volume = {5},
	year = {2007},
	zbl = {1193.53104},
	zbmath = {5263551},
	bdsk-url-1 = {https://doi.org/10.4310/JSG.2007.v5.n2.a3}}

@book{GuedjZeriahBook,
	author = {Guedj, V. and Zeriahi, A.},
	date-added = {2022-12-12 08:34:37 -0500},
	date-modified = {2022-12-12 08:34:52 -0500},
	doi = {10.4171/167},
	fseries = {EMS Tracts in Mathematics},
	isbn = {978-3-03719-167-5; 978-3-03719-667-0},
	keywords = {32-02,32W20,32Q20,32U15,32U20,32U40,35D30},
	language = {English},
	publisher = {Z{\"u}rich: European Mathematical Society (EMS)},
	series = {EMS Tracts Math.},
	title = {Degenerate complex {Monge}-{Amp{\`e}re} equations},
	volume = {26},
	year = {2017},
	zbl = {1373.32001},
	zbmath = {6675408},
	bdsk-url-1 = {https://doi.org/10.4171/167}}

@article{HisamSpecMeas,
	author = {Hisamoto, T.},
	date-added = {2022-12-09 18:19:21 +0100},
	date-modified = {2022-12-09 18:19:38 +0100},
	doi = {10.1515/crelle-2014-0021},
	fjournal = {Journal f{\"u}r die Reine und Angewandte Mathematik},
	issn = {0075-4102},
	journal = {J. Reine Angew. Math.},
	keywords = {32Q15,32L05},
	language = {English},
	pages = {129--148},
	title = {On the limit of spectral measures associated to a test configuration of a polarized {K{\"a}hler} manifold},
	volume = {713},
	year = {2016},
	zbl = {1343.32017},
	zbmath = {6572920},
	bdsk-url-1 = {https://doi.org/10.1515/crelle-2014-0021}}

@article{NystOkounTest,
	author = {Witt Nystr{\"o}m, D.},
	date-added = {2022-12-09 18:17:30 +0100},
	date-modified = {2022-12-09 18:18:01 +0100},
	doi = {10.1112/S0010437X12000358},
	fjournal = {Compositio Mathematica},
	issn = {0010-437X},
	journal = {Compos. Math.},
	keywords = {14D20,32G08,32Q15,32Q26},
	language = {English},
	number = {6},
	pages = {1736--1756},
	title = {Test configurations and {Okounkov} bodies},
	volume = {148},
	year = {2012},
	zbl = {1276.14017},
	zbmath = {6147341},
	bdsk-url-1 = {https://doi.org/10.1112/S0010437X12000358}}

@article{BouckChen,
	author = {Boucksom, S. and Chen, H.},
	date-added = {2022-12-09 18:14:52 +0100},
	date-modified = {2022-12-09 18:15:05 +0100},
	doi = {10.1112/S0010437X11005355},
	fjournal = {Compositio Mathematica},
	issn = {0010-437X},
	journal = {Compos. Math.},
	keywords = {14G25,11G50,14G40},
	language = {English},
	number = {4},
	pages = {1205--1229},
	title = {Okounkov bodies of filtered linear series},
	volume = {147},
	year = {2011},
	zbl = {1231.14020},
	zbmath = {5938396},
	bdsk-url-1 = {https://doi.org/10.1112/S0010437X11005355}}

@article{BermanEnvProj,
	author = {Berman, R.},
	date-added = {2022-10-02 18:16:41 +0200},
	date-modified = {2022-10-02 18:16:56 +0200},
	doi = {10.1353/ajm.0.0077},
	fjournal = {American Journal of Mathematics},
	issn = {0002-9327},
	journal = {Am. J. Math.},
	keywords = {32L10,32A36,32U15,58J37},
	language = {English},
	number = {5},
	pages = {1485--1524},
	title = {Bergman kernels and equilibrium measures for line bundles over projective manifolds},
	url = {muse.jhu.edu/journals/american_journal_of_mathematics/toc/ajm.131.5.html},
	volume = {131},
	year = {2009},
	zbl = {1191.32008},
	zbmath = {5624839},
	bdsk-url-1 = {muse.jhu.edu/journals/american_journal_of_mathematics/toc/ajm.131.5.html},
	bdsk-url-2 = {https://doi.org/10.1353/ajm.0.0077}}

@book{InterpSp,
	author = {Bergh, J{\"o}ran and L{\"o}fstr{\"o}m, J{\"o}rgen},
	date-added = {2022-09-22 12:55:35 +0200},
	date-modified = {2022-09-22 12:56:06 +0200},
	fseries = {Grundlehren der Mathematischen Wissenschaften},
	issn = {0072-7830},
	keywords = {46E35,46E30,46E40,46-02},
	language = {English},
	series = {Grundlehren Math. Wiss.},
	title = {Interpolation spaces. {An} introduction},
	volume = {223},
	year = {1976},
	zbl = {0344.46071},
	zbmath = {3536702}}

@article{BernBrunnMink,
	author = {Berndtsson, B.},
	date-added = {2022-09-22 10:14:03 +0200},
	date-modified = {2022-09-22 13:57:12 +0200},
	doi = {10.1007/s00222-014-0532-1},
	fjournal = {Inventiones Mathematicae},
	issn = {0020-9910},
	journal = {Invent. Math.},
	keywords = {53C55,14J45},
	language = {English},
	number = {1},
	pages = {149--200},
	title = {A {Brunn}-{Minkowski} type inequality for {Fano} manifolds and some uniqueness theorems in {K{\"a}hler} geometry},
	volume = {200},
	year = {2015},
	zbl = {1318.53077},
	zbmath = {6434686},
	bdsk-url-1 = {https://doi.org/10.1007/s00222-014-0532-1}}

@article{FinSecRing,
	author = {Finski, S.},
	date-added = {2022-09-14 08:30:34 +0200},
	date-modified = {2023-10-31 13:28:35 -0400},
	title = {{On the metric structure of section ring, arXiv: 2209.03853, 50 p.}},
	year = {2022}}

@book{PisierBook,
	author = {Pisier, G.},
	date-added = {2022-08-29 09:47:24 +0200},
	date-modified = {2022-08-29 09:47:36 +0200},
	fseries = {Cambridge Tracts in Mathematics},
	isbn = {0-521-66635-X},
	issn = {0950-6284},
	keywords = {46B20,46-02,52A20,52-02,52C07},
	language = {English},
	publisher = {Cambridge: Cambridge University Press},
	series = {Camb. Tracts Math.},
	title = {The volume of convex bodies and {Banach} space geometry},
	volume = {94},
	year = {1999},
	zbl = {0933.46013},
	zbmath = {1313392}}

@article{Mabuchi,
	author = {Mabuchi, T.},
	date-added = {2022-07-14 11:42:17 +0200},
	date-modified = {2022-07-14 11:42:24 +0200},
	fjournal = {Osaka Journal of Mathematics},
	issn = {0030-6126},
	journal = {Osaka J. Math.},
	keywords = {53C55,58B20},
	language = {English},
	pages = {227--252},
	title = {Some symplectic geometry on compact {K{\"a}hler} manifolds. {I}},
	volume = {24},
	year = {1987},
	zbl = {0645.53038},
	zbmath = {4052492}}

@article{DarvWeakGeod,
	author = {Darvas, T.},
	date-added = {2022-07-14 10:41:18 +0200},
	date-modified = {2022-07-14 10:41:31 +0200},
	doi = {10.1017/S1474748015000316},
	fjournal = {Journal of the Institute of Mathematics of Jussieu},
	issn = {1474-7480},
	journal = {J. Inst. Math. Jussieu},
	keywords = {53C55,32W20,32U05},
	language = {English},
	number = {4},
	pages = {837--858},
	title = {Weak geodesic rays in the space of {K{\"a}hler} potentials and the class {{\(\mathcal{E}(X,\omega)\)}}},
	volume = {16},
	year = {2017},
	zbl = {1377.53092},
	zbmath = {6791629},
	bdsk-url-1 = {https://doi.org/10.1017/S1474748015000316}}

@article{ChenGeodMab,
	author = {Chen, X.},
	date-added = {2022-07-14 10:38:17 +0200},
	date-modified = {2022-07-14 10:38:26 +0200},
	doi = {10.4310/jdg/1090347643},
	fjournal = {Journal of Differential Geometry},
	issn = {0022-040X},
	journal = {J. Differ. Geom.},
	keywords = {58D17,53C22,53C21},
	language = {English},
	number = {2},
	pages = {189--234},
	title = {The space of {K{\"a}hler} metrics.},
	volume = {56},
	year = {2000},
	zbl = {1041.58003},
	zbmath = {1782667},
	bdsk-url-1 = {https://doi.org/10.4310/jdg/1090347643}}

@article{ChenHNolyg,
	author = {Chen, H.},
	date-added = {2022-06-23 09:22:01 +0200},
	date-modified = {2022-06-23 09:22:17 +0200},
	doi = {10.24033/msmf.432},
	fjournal = {M{\'e}moires de la Soci{\'e}t{\'e} Math{\'e}matique de France. Nouvelle S{\'e}rie},
	issn = {0249-633X},
	journal = {{M{\'e}m. Soc. Math. Fr., Nouv. S{\'e}r.}},
	keywords = {14-02,14G40,14F05},
	language = {French},
	pages = {112},
	title = {Convergence of {Harder}-{Narasimhan} polygons},
	volume = {120},
	year = {2010},
	zbl = {1208.14001},
	zbmath = {5834815},
	bdsk-url-1 = {https://doi.org/10.24033/msmf.432}}

@article{SzarekComp,
	author = {Aubrun, G. and Lami, L. and Palazuelos, C. and Szarek, S. J. and Winter, A.},
	date-added = {2022-06-20 15:22:01 +0200},
	date-modified = {2022-06-20 15:22:22 +0200},
	doi = {10.1007/s00220-020-03688-2},
	fjournal = {Communications in Mathematical Physics},
	issn = {0010-3616},
	journal = {Commun. Math. Phys.},
	keywords = {46B07,46M05},
	language = {English},
	number = {1},
	pages = {679--724},
	title = {Universal gaps for {XOR} games from estimates on tensor norm ratios},
	volume = {375},
	year = {2020},
	zbl = {1451.46015},
	zbmath = {7188564},
	bdsk-url-1 = {https://doi.org/10.1007/s00220-020-03688-2}}

@article{BedfordTaylor,
	author = {Bedford, E. and Taylor, B. A.},
	date-added = {2022-06-15 16:44:28 +0200},
	date-modified = {2022-06-15 16:44:40 +0200},
	doi = {10.1007/BF02392348},
	fjournal = {Acta Mathematica},
	issn = {0001-5962},
	journal = {Acta Math.},
	keywords = {32U05,31C15,31C10},
	language = {English},
	pages = {1--40},
	title = {A new capacity for plurisubharmonic functions},
	volume = {149},
	year = {1982},
	zbl = {0547.32012},
	zbmath = {3871767},
	bdsk-url-1 = {https://doi.org/10.1007/BF02392348}}

@article{BerBoucNys,
	author = {Berman, R. and Boucksom, S. and Witt Nystr{\"o}m, D.},
	date-added = {2022-06-10 10:33:44 +0200},
	date-modified = {2022-06-10 10:34:07 +0200},
	doi = {10.1007/s11511-011-0067-x},
	fjournal = {Acta Mathematica},
	issn = {0001-5962},
	journal = {Acta Math.},
	keywords = {32U35,32L05,32U15,32L10,31C15},
	language = {English},
	number = {1},
	pages = {1--27},
	title = {Fekete points and convergence towards equilibrium measures on complex manifolds},
	volume = {207},
	year = {2011},
	zbl = {1241.32030},
	zbmath = {6012918},
	bdsk-url-1 = {https://doi.org/10.1007/s11511-011-0067-x}}

@book{RyanTensProd,
	author = {Ryan, R. A.},
	date-added = {2022-06-10 09:14:23 +0200},
	date-modified = {2022-06-10 09:14:34 +0200},
	fseries = {Springer Monographs in Mathematics},
	isbn = {1-85233-437-1},
	issn = {1439-7382},
	keywords = {46-01,47-01,46B22,46B28,46M05,47B10},
	language = {English},
	publisher = {London: Springer},
	series = {Springer Monogr. Math.},
	title = {Introduction to tensor products of {Banach} spaces},
	year = {2002},
	zbl = {1090.46001},
	zbmath = {1716480}}

@article{PhongSturm,
	author = {Phong, D. H. and Sturm, J.},
	date-added = {2022-05-03 16:00:53 +0200},
	date-modified = {2022-05-03 16:01:05 +0200},
	doi = {10.1007/s00222-006-0512-1},
	fjournal = {Inventiones Mathematicae},
	issn = {0020-9910},
	journal = {Invent. Math.},
	keywords = {32W20,53C22},
	language = {English},
	number = {1},
	pages = {125--149},
	title = {{The {Monge}-{Amp{\`e}re} operator and geodesics in the space of {K{\"a}hler} potentials}},
	volume = {166},
	year = {2006},
	zbl = {1120.32026},
	zbmath = {5064660},
	bdsk-url-1 = {https://doi.org/10.1007/s00222-006-0512-1}}

@incollection{BerndtProb,
	author = {Berndtsson, B.},
	booktitle = {Algebraic and analytic microlocal analysis. AAMA, Evanston, Illinois, USA, May 14--26, 2012 and May 20--24, 2013. Contributions of the workshops},
	date-added = {2022-05-03 15:22:36 +0200},
	date-modified = {2022-05-03 15:22:55 +0200},
	doi = {10.1007/978-3-030-01588-6_6},
	isbn = {978-3-030-01586-2; 978-3-030-01588-6},
	keywords = {32J27,32L05},
	language = {English},
	pages = {395--419},
	publisher = {Cham: Springer},
	title = {{Probability measures associated to geodesics in the space of {K{\"a}hler} metrics}},
	url = {citeseerx.ist.psu.edu/viewdoc/summary?doi=10.1.1.313.9535},
	year = {2018},
	zbl = {1420.32014},
	zbmath = {7060658},
	bdsk-url-1 = {citeseerx.ist.psu.edu/viewdoc/summary?doi=10.1.1.313.9535},
	bdsk-url-2 = {https://doi.org/10.1007/978-3-030-01588-6_6}}

@article{DarvLuRub,
	author = {Darvas, T. and Lu, C. H. and Rubinstein, Y. A.},
	date-added = {2022-05-03 15:21:20 +0200},
	date-modified = {2022-05-03 15:21:39 +0200},
	doi = {10.1002/cpa.21857},
	fjournal = {Communications on Pure and Applied Mathematics},
	issn = {0010-3640},
	journal = {Commun. Pure Appl. Math.},
	keywords = {53D50,58D17,58B20},
	language = {English},
	number = {5},
	pages = {1100--1138},
	title = {{Quantization in geometric pluripotential theory}},
	volume = {73},
	year = {2020},
	zbl = {1445.53062},
	zbmath = {7213555},
	bdsk-url-1 = {https://doi.org/10.1002/cpa.21857}}

@incollection{ChenSunQuant,
	author = {Chen, X. and Sun, S.},
	booktitle = {Metric and differential geometry. The Jeff Cheeger anniversary volume. Selected papers based on the presentations at the international conference on metric and differential geometry, Tianjin and Beijing, China, May 11--15, 2009},
	date-added = {2022-05-03 15:20:33 +0200},
	date-modified = {2025-11-10 21:48:20 -0500},
	doi = {10.1007/978-3-0348-0257-4_2},
	isbn = {978-3-0348-0256-7; 978-3-0348-0257-4},
	keywords = {53C55,58D17,53D50},
	language = {English},
	pages = {19--41},
	publisher = {Berlin: Springer},
	series = {Progr. Math.},
	title = {{Space of {K{\"a}hler} metrics. {V}: {K{\"a}hler} quantization}},
	volume = {297},
	year = {2012},
	zbl = {1252.53084},
	zbmath = {6072912},
	bdsk-url-1 = {https://doi.org/10.1007/978-3-0348-0257-4_2}}

@article{DarvasFinEnerg,
	author = {Darvas, T.},
	date-added = {2022-05-03 15:19:58 +0200},
	date-modified = {2022-05-03 15:20:10 +0200},
	doi = {10.1016/j.aim.2015.08.005},
	fjournal = {Advances in Mathematics},
	issn = {0001-8708},
	journal = {Adv. Math.},
	keywords = {53C55,32W20,32U05},
	language = {English},
	pages = {182--219},
	title = {{The {Mabuchi} geometry of finite energy classes}},
	volume = {285},
	year = {2015},
	zbl = {1327.53093},
	zbmath = {6499667},
	bdsk-url-1 = {https://doi.org/10.1016/j.aim.2015.08.005}}

@article{SzekeTestConf,
	author = {Sz{\'e}kelyhidi, G.},
	date-added = {2022-05-02 10:42:28 +0200},
	date-modified = {2022-05-02 10:42:43 +0200},
	doi = {10.1007/s00208-014-1126-3},
	fjournal = {Mathematische Annalen},
	issn = {0025-5831},
	journal = {Math. Ann.},
	keywords = {53C55,57R20,32Q15},
	language = {English},
	number = {1-2},
	pages = {451--484},
	title = {{Filtrations and test-configurations}},
	volume = {362},
	year = {2015},
	zbl = {1360.53075},
	zbmath = {6457998},
	bdsk-url-1 = {https://doi.org/10.1007/s00208-014-1126-3}}

@article{Reboulet21,
	author = {Reboulet, R.},
	date-added = {2022-05-02 10:40:50 +0200},
	date-modified = {2022-05-02 10:41:03 +0200},
	doi = {10.1007/s00209-021-02770-2},
	fjournal = {Mathematische Zeitschrift},
	issn = {0025-5874},
	journal = {Math. Z.},
	keywords = {14Gxx,32Pxx,12Jxx},
	language = {English},
	number = {3-4},
	pages = {2341--2378},
	title = {{The asymptotic {Fubini}-{Study} operator over general non-{Archimedean} fields}},
	volume = {299},
	year = {2021},
	zbmath = {7428899},
	bdsk-url-1 = {https://doi.org/10.1007/s00209-021-02770-2}}

@article{RandriamCrelle,
	author = {Randriambololona, H.},
	date-added = {2022-02-20 13:28:59 +0100},
	date-modified = {2022-02-20 13:29:11 +0100},
	doi = {10.1515/CRELLE.2006.004},
	fjournal = {{Journal f\"ur die Reine und Angewandte Mathematik}},
	issn = {0075-4102},
	journal = {{J. Reine Angew. Math.}},
	language = {French},
	msc2010 = {14G40 14F05},
	pages = {67--88},
	publisher = {De Gruyter, Berlin},
	title = {{M\'etriques de sous-quotient et th\'eor\`eme de Hilbert-Samuel arithm\'etique pour les faiseaux coh\'erents}},
	volume = {590},
	year = {2006},
	zbl = {1097.14020},
	bdsk-url-1 = {https://doi.org/10.1515/CRELLE.2006.004}}

@incollection{BostDwork,
	author = {Bost, J.-B.},
	booktitle = {Geometric aspects of Dwork theory. Vol. I, II},
	date-added = {2022-02-20 13:10:50 +0100},
	date-modified = {2022-02-20 13:11:02 +0100},
	isbn = {3-11-017478-2},
	language = {English},
	msc2010 = {14G40},
	pages = {371--418},
	publisher = {Berlin: Walter de Gruyter},
	title = {{Germs of analytic varieties in algebraic varieties: canonical metrics and arithmetic algebraization theorems}},
	year = {2004},
	zbl = {1067.14021}}

@article{FinOTAs,
	author = {Finski, S.},
	date-added = {2022-02-02 17:28:53 +0100},
	date-modified = {2024-10-07 15:52:05 +0200},
	journal = {Journal of Differential Geometry},
	number = {2},
	pages = {639-721},
	title = {{Semiclassical Ohsawa-Takegoshi extension theorem and asymptotic of the orthogonal Bergman kernel}},
	volume = {128},
	year = {2024}}

@book{DemBookAnMet,
	author = {J.-P. {Demailly}},
	date-added = {2022-02-02 17:28:53 +0100},
	date-modified = {2022-07-18 11:06:17 +0200},
	fjournal = {{Surveys of Modern Mathematics}},
	isbn = {978-1-57146-234-3/pbk},
	journal = {{Surv. Mod. Math.}},
	language = {English},
	msc2010 = {14-02 14F18 14C20 14J40 32C30 32U40},
	pages = {231},
	publisher = {{Somerville, MA: International Press; Beijing: Higher Education Press}},
	title = {{Analytic methods in algebraic geometry}},
	year = {2012},
	zbl = {1271.14001}}

@article{DemRegul,
	author = {J.-P. {Demailly}},
	date-added = {2022-02-02 17:28:53 +0100},
	date-modified = {2022-02-02 17:28:53 +0100},
	fjournal = {{Journal of Algebraic Geometry}},
	issn = {1056-3911; 1534-7486/e},
	journal = {{J. Algebr. Geom.}},
	language = {English},
	msc2010 = {32J25 32C30 32S60},
	number = {3},
	pages = {361--409},
	publisher = {American Mathematical Society (AMS), Providence, RI; University Press, Chicago, IL},
	title = {{Regularization of closed positive currents and intersection theory}},
	volume = {1},
	year = {1992},
	zbl = {0777.32016}}

@article{OhsTak1,
	author = {T. {Ohsawa} and K. {Takegoshi}},
	date-added = {2022-02-02 17:28:53 +0100},
	date-modified = {2022-02-02 17:28:53 +0100},
	fjournal = {{Mathematische Zeitschrift}},
	issn = {0025-5874; 1432-1823/e},
	journal = {{Math. Z.}},
	language = {English},
	msc2010 = {32D15},
	pages = {197--204},
	publisher = {Springer, Berlin/Heidelberg},
	title = {{On the extension of \(L^ 2\) holomorphic functions}},
	volume = {195},
	year = {1987},
	zbl = {0625.32011}}

@article{TianBerg,
	author = {G. Tian},
	date-added = {2022-02-02 17:28:53 +0100},
	date-modified = {2022-02-02 17:28:53 +0100},
	fjournal = {{Journal of Differential Geometry}},
	issn = {0022-040X; 1945-743X/e},
	journal = {J. Diff. Geom.},
	language = {English},
	msc2010 = {53C55 14C20},
	number = {1},
	pages = {99--130},
	publisher = {International Press of Boston, Somerville, MA},
	title = {{On a set of polarized K\"ahler metrics on algebraic manifolds}},
	volume = {32},
	year = {1990},
	zbl = {0706.53036}}

@article{ZeldBerg,
	author = {Zelditch, S.},
	date-added = {2022-02-02 17:28:53 +0100},
	date-modified = {2023-06-29 17:01:27 +0200},
	doi = {10.1155/S107379289800021X},
	eprint = {https://academic.oup.com/imrn/article-pdf/1998/6/317/2123138/1998-6-317.pdf},
	issn = {1073-7928},
	journal = {Internat. Math. Res. Notices},
	pages = {317-331},
	title = {{Szeg{\"o} kernels and a theorem of Tian}},
	url = {https://doi.org/10.1155/S107379289800021X},
	volume = {6},
	year = {1998},
	bdsk-url-1 = {https://doi.org/10.1155/S107379289800021X}}

@article{BouckVol,
	author = {{Boucksom}, S.},
	date-added = {2022-02-02 17:28:53 +0100},
	date-modified = {2022-02-02 17:28:53 +0100},
	fjournal = {{International Journal of Mathematics}},
	issn = {0129-167X; 1793-6519/e},
	journal = {{Int. J. Math.}},
	language = {English},
	msc2010 = {14C20 32J25 32J27},
	number = {10},
	pages = {1043--1063},
	publisher = {World Scientific, Singapore},
	title = {{On the volume of a line bundle.}},
	volume = {13},
	year = {2002},
	zbl = {1101.14008}}

@book{DemCompl,
	author = {Demailly, J.-P.},
	date-added = {2022-02-02 17:28:53 +0100},
	date-modified = {2022-02-02 17:28:53 +0100},
	file = {:Users/siarheifinski/Books/Library/Demailly{\_}2012{\_}Complex Analytic and Differential Geometry.pdf:pdf},
	pages = {455},
	title = {{Complex Analytic and Differential Geometry}},
	year = {2012}}

@book{MaHol,
	author = {Ma, X. and Marinescu, G.},
	date-added = {2022-02-02 17:28:53 +0100},
	date-modified = {2022-02-02 17:28:53 +0100},
	publisher = {Birkh{\"a}user Verlag Basel},
	series = {Progr. Math.},
	title = {Holomorphic Morse inequalities and Bergman kernels},
	volume = {254},
	year = {2007}}

		\bibliographystyle{abbrv}

\Addresses

\end{document}